\documentclass[12pt]{article}
\usepackage{amsmath,latexsym,amssymb,amsthm,enumerate,amsmath,amscd}
\usepackage{amsthm,amsmath,amsfonts,amssymb,amscd,latexsym,stmaryrd}
\usepackage[mathscr]{eucal}
\usepackage{color}
\usepackage[usenames,dvipsnames]{xcolor}
\usepackage{colortbl}
\usepackage[all,cmtip]{xy}
\usepackage{graphicx}
\usepackage{ytableau}
\usepackage{comment}
\definecolor{light-gray}{gray}{0.9}
\definecolor{med-gray}{gray}{0.5}
\definecolor{gray1}{gray}{0.87}
\definecolor{gray2}{gray}{0.74}
\definecolor{gray3}{gray}{0.64}
\definecolor{gray4}{gray}{0.55}
\definecolor{verylight-yellow}{rgb}{1,1,0.7}
\definecolor{beaublue}{rgb}{0.74, 0.83, 0.9}
\definecolor{yellow}{rgb}{1,1,0.2}
\definecolor{vivid-blue}{rgb}{0.2,0,1}
\definecolor{light-pink}{rgb}{1,0.8,1}
\definecolor{med-pink}{rgb}{1,0.6,1}
\definecolor{aqua}{rgb}{0.0, 1.0, 1.0}
\definecolor{light-gray}{rgb}{0.5, 0.9, 0.5}
\definecolor{cadmiumgreen}{rgb}{0.0, 0.42, 0.24}

\usepackage{comment}
\usepackage{hyperref}
\usepackage{tikz}
\usetikzlibrary{calc,shadings,patterns}
\setlength{\topmargin}{0.0in}
\setlength{\textheight}{23cm}
\setlength{\evensidemargin}{0.75cm}
\setlength{\oddsidemargin}{0.75cm}
\setlength{\headsep}{0.1cm}
\setlength{\textwidth}{15.5cm}
\setlength{\parindent}{0.6cm}
\setlength{\unitlength}{1mm}
\theoremstyle{plain}
\newtheorem{theorem}{Theorem}[section]
\newtheorem{proposition}[theorem]{Proposition}
\newtheorem{corollary}[theorem]{Corollary}
\newtheorem{lemma}[theorem]{Lemma}
\theoremstyle{definition}
\newtheorem{definition}[theorem]{Definition}

\newtheorem{remark}[theorem]{Remark}
\newtheorem{example}[theorem]{Example}

\newtheorem{thm}{Theorem}

\newtheorem*{remark*}{Remark}

\newtheorem*{ack}{Acknowledgment}

\numberwithin{equation}{section}
\numberwithin{table}{section}
\setcounter{secnumdepth}{2}
\setcounter{tocdepth}{2}

\definecolor{purple}{rgb}{0.4,0.2,0.4}

\def\rk{\mathrm{rk}}

\def\cha{\mathrm{char}\ }

\DeclareMathOperator{\Ann}{Ann}

\def\<{\left<}
\def\>{\right>}

\def\G{\mathrm{G}}
\def\Hess{\mathrm{Hess}}

\def\ns{\footnotesize \it}

\def\max{\mathrm{max}}

\definecolor{med-gray}{gray}{0.5}
\definecolor{gray1}{gray}{0.87}
\definecolor{gray2}{gray}{0.74}
\definecolor{gray3}{gray}{0.64}
\definecolor{gray4}{gray}{0.48}
\definecolor{verylight-yellow}{rgb}{1,1,0.7}
\definecolor{yellow}{rgb}{1,1,0.2}
\definecolor{vivid-blue}{rgb}{0.2,0,1}
\definecolor{light-pink}{rgb}{1,0.8,1}
\definecolor{med-pink}{rgb}{1,0.6,1}
\definecolor{aqua}{rgb}{0.0, 1.0, 1.0}
\definecolor{light-gray}{rgb}{0.5, 0.9, 0.5}

\def\cha{\mathrm{char}\ }

\setcounter{MaxMatrixCols}{20}

\DeclareMathOperator{\hess}{Hess}
\setcounter{MaxMatrixCols}{20}

\author{Nasrin Altafi\\[.05in]
{\ns Department of Mathematics, KTH Royal Institute of Technology, S-100 44 Stockholm, Sweden.
}\\[.2in] Anthony Iarrobino\\[.05in]
{\ns Department of Mathematics, Northeastern University, Boston, MA 02115,
 USA.
}\\[.2in] 
Leila Khatami\\[0.05in]
{\ns Department of Mathematics, Union College, Schenectady, New York, 
 12308, USA.}
}

\date{April 24, 2020}

\begin{document}
\title{Complete intersection Jordan types in height two\footnote{\textbf{Keywords}: Artinian algebra,  complete intersection, Hessian, Hilbert function, hook code, Jordan type, partition. \textbf{2010 Mathematics Subject Classification}: Primary: 13E10;  Secondary: 05A17, 05E40, 13D40, 13H10, 14C05.}
}

\maketitle

\begin{abstract} We determine every Jordan type partition that occurs as the Jordan block decomposition for the multiplication map by a linear form in a height two homogeneous complete intersection (CI) Artinian algebra $A$ over an algebraically closed field $\sf k$ of characteristic zero or large enough. We show that these CI Jordan type partitions are those satisfying specific numerical conditions; also, given the Hilbert function $H(A)$, they are completely determined by which higher Hessians of $A$ vanish at the point corresponding to the linear form. We also show new combinatorial results about such partitions, and in particular we give ways to construct them from a branch label or hook code, showing how branches are attached to a fundamental triangle to form the Ferrers diagram. \end{abstract}\medskip
\tableofcontents
\section{Introduction.}\label{Introsec}
Let $A$ be a graded Artinian algebra over an infinite field $\sf k$. We assume that $A$ has a single maximal ideal $\mathfrak m$ and that $A/\mathfrak m=\sf k$. The Jordan type $P_\ell=P_{\ell,A}$ of a linear form $\ell$ of $A_1$ is the partition determining the Jordan block decomposition for the (nilpotent) multiplication map $m_\ell$ by $\ell$ on $A$. Such a partition must have diagonal lengths the Hilbert function $H(A)$ (Lemma \ref{diaglem}A). The diagonal lengths of a partition refer to the lengths of diagonals of slope 1 in the Ferrers diagram of the partition (see Figure ~\ref{newfigfordiagonal}). 

\begin{figure}[hb]
$$\begin{array}{ccc}
\scalebox{0.75}{
\begin{ytableau}
\, &*(black! 10)&*(black! 25)&*(black! 45)\cr
*(black! 10)&*(black! 25)&*(black! 45)&*(black! 65)\cr
*(black! 25)&*(black! 45)&*(black! 65)&*(black! 80)\cr
\end{ytableau}}
&\quad \quad \quad \quad \quad&
\scalebox{0.75}{
\begin{ytableau}
\, &*(black! 10)&*(black! 25)&*(black! 45)&*(black! 65)&*(black! 80)\cr
*(black! 10)&*(black! 25)\cr
*(black! 25)&*(black! 45)\cr
*(black! 45)\cr
*(black! 65)\cr
\end{ytableau}}

\end{array}$$
\caption{Ferrers diagrams of partitions  $P=(4^3)$  and $Q=(6, 2^2, 1^2)$ with diagonal lengths $(1,2,3,3,2,1)$.}\label{newfigfordiagonal}

\end{figure}
We denote the set of all partitions having diagonal lengths $T$ by $\mathcal P(T)$. The Sperner number Sp$(T)$ of a Hilbert function sequence $T$ is its height Sp$(T)=\max_i(T_i)$. We say that a pair $(A,\ell)$ with $A$ a graded Artinian algebra and $\ell \in A_1$ a linear element of $A$, has the weak Lefschetz (WL) property if the partition $P_\ell$ has  Sp$(T)$ parts, where $T=H(A)$. This is the smallest possible number of parts possible given $T$ (\cite[Proposition 3.64]{H-W}). We will term such a partition $P$ of diagonal lengths $T$ having Sp$(T)$ parts a \emph{weak Lefschetz} partition for $T$. The pair $(A,\ell)$ is termed \emph{strong Lefschetz} (SL) if $P_\ell=T^\vee$ where $ T=H(A)$. Here $T^\vee=[T]^\vee$ the conjugate partition (switch rows and columns in the Ferrers graph) to $[T]$, the set of values of $T$.\par

 There have been many studies of graded Artinian algebras satisfying the strong or weak Lefschetz property for a generic element $\ell\in A_1$ (see \cite{H-W} and the references cited there).  Recently, there have been studies of more general questions about the Jordan type of pairs $(A,\ell)$ (see \cite{Go,GoZ,H-W,IMM,MW} and references cited.)  
 We study in this paper which Jordan types $P_\ell$ can occur for arbitrary, usually non-generic elements of $A_1$, when $A$ is a graded Gorenstein quotient of the polynomial ring $R={\sf k}[x,y]$.   By a result of F.H.S. Macaulay \cite[\S 14]{Mac} the Artinian Gorenstein algebras of height two (codimension two) are complete intersections (CI).\footnote{F.H.S. Macaulay refers to this also in
  \cite[\S 71]{Mac2}; a homological version of this result was given by F. P. Serre \cite[Proposition 3]{Se}, see also C. Huneke's survey \cite[\S 4]{Hu}.}  
 \begin{definition}\label{CIJTdef}  We say that a partition $P$ of diagonal lengths $T$ is a \emph{complete intersection Jordan type} (CIJT) if it can occur as a partition $P_{\ell,A}$ for a graded complete intersection quotient $A=R/I, R={\sf k}[x,y]$ and some linear form  $\ell\in A_1$.
 \end{definition}
 It is well known (see \cite[\S 58]{Mac2}) that the Hilbert function of a standard-graded CI quotient $A$ of $R$ satisfies $H(A)=T$, a symmetric sequence of the form
\begin{equation}\label{CITeq}
T=(1_0,2_1,\ldots, (d-1)_{d-2},d_{d-1},\ldots, d_{d+k-2},(d-1)_{d+k-1},\ldots , 2_{j-1},1_j).
\end{equation}
Here $k$ is the multiplicity in $T$ of the height $d$, and the subscripts indicate degree.  In order to simplify results we will assume throughout that $\cha \sf k$ is zero, or is greater than $j$, the socle degree of $A$ -- except that in discussing Hessians, we will assume $\cha \sf k$ is zero. It is well known that for these characteristics, for a fixed codimension two algebra $A=R/I$ and a generic linear form $\ell\in A_1$ the pair $(A,\ell)$ is strong Lefschetz.\footnote{See \cite{Bri}, and discussion in \cite[Lemma 2.14]{IMM}]; this result depends on a standard basis argument of J. Brian\c{c}on and has been reproved many times.} 
Here $\ell\in A_1={\sf k}^2$ is \emph{generic} for $A$ if it is not -- up to constant multiple -- one of a finite number of exceptional linear forms. In this paper, we are interested primarily in the Jordan types possible for the exceptional linear forms. We first determine all CIJT partitions of diagonal lengths $T$ (Theorems~\ref{CIpartsthm} and \ref{CIpartsthm2}). As a consequence we give, surprisingly, a criterion for $P$ to be CIJT using just the number of parts of $P$ (Theorem \ref{simplegoodprop}): 
  \begin{thm} A partition $P$ having diagonal lengths $T$ satisfying Equation \eqref{CITeq} is CIJT if and only if its number of parts is $d$ (weak Lefschetz case) or $d+k-1$.
\end{thm}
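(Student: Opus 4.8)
The plan is to translate ``number of parts of $P_\ell$'' into a dimension count and then to compute that dimension directly for a height-two complete intersection; necessity of the stated condition falls out immediately, and only sufficiency requires the constructive machinery. First I would use the fact that for the nilpotent operator $m_\ell$ on $A$ the number of parts of $P_\ell$ equals $\dim_{\sf k}\ker m_\ell$, which equals $\dim_{\sf k}(A/\ell A)$ since $A/\ell A$ is the cokernel of $m_\ell$. Write $A=R/I$ with $I=(F,G)$ a complete intersection minimally generated in degrees $a\le b$. The shape \eqref{CITeq} of $T$ pins these down: the least degree $i$ with $T_i<\dim_{\sf k}R_i=i+1$ is $i=d$, so $a=d$; and since the socle degree of $A$ is $j=a+b-2$ while \eqref{CITeq} gives $j=2d+k-3$, we get $b=d+k-1$. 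After a linear change of variables I may take $\ell=y$, so that $A/\ell A={\sf k}[x]/\overline I$, where $\overline I\subseteq{\sf k}[x]$ is the image of $I$ under $y\mapsto 0$; as ${\sf k}[x]$ is a principal ideal domain, $\overline I=(x^m)$ and the number of parts equals $m$. Because $F,G$ form a regular sequence they are coprime, so $y$ divides at most one of them. If $y\nmid F$ then $\overline F$ is a nonzero scalar multiple of $x^a$ and, as $I$ has no forms of degree below $a$, we get $m=a=d$; if $y\mid F$ then every form in $I$ of degree less than $b$ is divisible by $y$, hence $\overline I=(\overline G)=(x^{\,b})$ and $m=b=d+k-1$. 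This establishes that the number of parts of any CIJT partition is $d$ or $d+k-1$, i.e.\ the necessity direction.

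For sufficiency I would show that \emph{every} $P\in\mathcal P(T)$ with exactly $d$ or exactly $d+k-1$ parts is realized, treating the two part-counts as the two cases $y\nmid F$ and $y\mid F$ above. When $P$ has $d$ parts it is a weak Lefschetz partition, and I would exhibit for each such $P$ a complete intersection together with a form $\ell$ not dividing $F$ realizing it: the generic form already recovers $T^\vee$ by the strong Lefschetz property noted in the introduction, and suitable monomial or near-monomial choices of $F,G$ together with a coordinate form $\ell$ account for the remaining weak Lefschetz partitions. When $P$ has $d+k-1$ parts one is forced into the degenerate situation $\ell\mid F$, and here I would build the algebra from the combinatorial data of $P$ by the construction underlying Theorems \ref{CIpartsthm} and \ref{CIpartsthm2} --- attaching branches to the fundamental triangle according to the hook code --- and then verify that the resulting ideal is a complete intersection with Hilbert function $T$ whose $m_\ell$ has Jordan type exactly $P$.

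The main obstacle is precisely this sufficiency step: necessity is a one-line dimension count, but realizing every eligible partition is the constructive heart of the paper. Concretely, one must check that the branch/hook-code recipe produces two genuinely coprime forms $F,G$ (so that $A$ is Artinian with the prescribed Hilbert function $T$) and that the Jordan decomposition of $m_\ell$ agrees with $P$ block for block; controlling the Jordan type for the non-generic, degenerate forms $\ell$ dividing $F$ is exactly where the combinatorics of the slope-one diagonals and the characteristic hypothesis on ${\sf k}$ are needed. Granting the full classification in Theorems \ref{CIpartsthm} and \ref{CIpartsthm2}, the theorem then follows by matching that list of CIJT partitions against the two part-counts computed above.
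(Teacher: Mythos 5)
Your necessity argument is correct, complete, and genuinely different from the paper's route. The paper obtains ``only if'' as a byproduct of the full classification (Corollaries \ref{goodpartcor} and \ref{goodpartcor2}, hence ultimately Theorems \ref{CIpartsthm} and \ref{CIpartsthm2}), whereas you get it in a few lines: the number of parts of $P_\ell$ is $\dim_{\sf k}\ker m_\ell=\dim_{\sf k}(A/\ell A)$; the shape \eqref{CITeq} pins the generator degrees to $(d,d+k-1)$; and since the two generators of a height-two complete intersection are coprime, $\ell$ divides at most one of them, so after normalizing $\ell=y$ the image of $I$ in ${\sf k}[x]$ is $(x^d)$ (if $y\nmid F$) or $(x^{d+k-1})$ (if $y\mid F$). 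The one wrinkle --- when $k=1$ the two generator degrees coincide and $F$ is not unique --- is harmless, since then $d+k-1=d$ and either case gives $d$. This is more elementary than anything the paper does for this direction, and it transparently identifies the $(d+k-1)$-part case with $\ell$ dividing the degree-$d$ generator, consistent with the final sentence of Theorem \ref{sumOfPartitions}.

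The sufficiency half, however, has a genuine gap that your closing concession only partly covers. First, in the $d$-part (weak Lefschetz) case, ``suitable monomial or near-monomial choices of $F,G$ \ldots account for the remaining weak Lefschetz partitions'' is an assertion, not an argument: the paper's realization device is the inductive construction of Lemma \ref{criterioncor}, which applies only to partitions $(p_1^{n_1},\ldots,p_t^{n_t})$ satisfying the equalities $p_{i-1}=n_{i-1}+n_i+p_i$ of \eqref{good3eq}, and the nontrivial combinatorial content of the theorem is exactly that \emph{every} partition of diagonal lengths $T$ with $d$ or $d+k-1$ parts satisfies these equalities --- a point you never address. Second, even granting Theorems \ref{CIpartsthm} and \ref{CIpartsthm2}, the theorem does not follow by ``matching that list of CIJT partitions against the two part-counts.'' The easy half of the match (every listed CIJT partition has $d$ or $d+k-1$ parts) is immediate; the half you actually need is that every $P\in\mathcal P(T)$ with the right number of parts \emph{occurs on the list}. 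That is precisely the paper's ``$\Leftarrow$'' argument in Theorem \ref{simplegoodprop}: one invokes the description of \emph{arbitrary} branch labels (Lemmas \ref{partlabel} and \ref{partlabel2}), observes that $d+k-1$ parts forces $\min(\mathfrak{v}_0)=1$ so the decreasing-minima condition collapses the vertical part to a single initial interval $\{1,\ldots,e\}$, while $d$ parts forces the vertical part to be empty, and then notes that the horizontal intervals, having decreasing maxima and partitioning $\{e+1,\ldots,d\}$, necessarily tile it consecutively --- yielding the restricted shape of Theorem \ref{CIpartsthm} (resp.\ Theorem \ref{CIpartsthm2}). Nor can the match be made numerically: the paper has no count, independent of the classification, of partitions in $\mathcal P(T)$ with $d$ or $d+k-1$ parts to compare with the $2^d$ (resp.\ $2^{d-1}$) CIJT partitions. (A small terminological slip: the realization recipe attaches branches according to the \emph{branch label} of Definition \ref{branchdef}; the hook code of Section \ref{CIJTsolsec} is a different gadget, used for cell dimensions.) In short: your ``only if'' could replace the paper's and is arguably cleaner; your ``if'' reduces to the paper's classification plus a branch-label argument you have not supplied.
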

By another result of F.H.S. Macaulay the graded Artinian CI quotients $A=R/I$ of socle degree $j$ satisfy $I=\Ann F$ where $F$, the Macaulay dual generator of $A$, is a degree-$j$ element of $ \mathcal E={\sf k}[X,Y]$, the dual to $R$, and 
\begin{equation}\label{dualeq}
I=\{f\in R\mid  f\circ F=0\},
\end{equation} where $R$ acts on $\mathcal E$ by differentiation: $x^iy^j\circ X^uY^v=u_i\cdot v_jX^{u-i}Y^{v-j}$ if $u\ge i$ and $v\ge j$, where $u_i=u(u-1)\cdots (u+1-i)$; otherwise $x^iy^j\circ X^uY^v=0$, and this is extended bilinearly.\par
T.~Maeno and J.~Watanabe in 2009 introduced a method of using higher Hessians of the Macaulay dual generator $F$ to determine the strong Lefschetz property of a graded Artinian algebra \cite{MW}; this was further developed and used by T. Maeno and Y.~Numata \cite{MN} and by R.~Gondim and colleagues \cite{Go,GoZ,CGo}. In particular, R.~Gondim and G. Zappal\`{a}  developed mixed Hessians to study the weak Lefschetz property \cite{GoZ}, and B. Costa and R. Gondim showed that the ranks of the mixed Hessian matrices, evaluated at a point $p_\ell$ corresponding to the linear form $\ell\in A_1$, determine the Jordan type $P_{\ell,A}$ of a graded Artinian Gorenstein algebra $A$ \cite[Theorem 4.10]{CGo}. 
Given a graded Gorenstein algebra $A=R/I$ of Hilbert function $T$ and a linear form $\ell\in R_1$, the Hessian $h^i_\ell(F)$ is the determinant of the homomorphism $m_{\ell^{j-2i}}: A_i\to A_{j-i}$ given by the multiplication map by $\ell^{j-2i}$ (Definition~\ref{hessdef}). When $T$ satisfies Equation \eqref{CITeq} and $k\ge 2$ there are $d$ active Hessians; when $k\ge 1$ there are $d-1$. We show concerning Hessians (see Theorem \ref{2Hessthm})
\begin{thm}\label{thm1} Let $T$ satisfy \eqref{CITeq} for an integer $d\geq 2$ and assume that the characteristic $\cha {\sf k}=0$. Then there is a 1-1 correspondence between the CIJT partitions $P_\ell $ having diagonal lengths $T$, and the $2^d$ (when $k>1$), or $2^{d-1}$ (when $k=1$) subsets of the active Hessians for $T$ that vanish at $\ell$ in $R_1$.
\end{thm}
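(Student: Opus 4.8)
The plan is to recast everything through Macaulay duality and then match the Hessian data against the explicit list of CIJT partitions produced in Theorems~\ref{CIpartsthm} and~\ref{CIpartsthm2}. After a change of coordinates we may take $\ell=x$. Writing $A=R/\Ann F$ with $F\in\mathcal E_j$ as in \eqref{dualeq}, the multiplication $m_x$ on $A$ is, up to transpose, the action of $\partial_X$ on the derivative module $R\circ F$, so $P_x$ is the Jordan type of $\partial_X$ there. Since $T$ is symmetric, each active map $m_{x^{j-2i}}\colon A_i\to A_{j-i}$ is square of size $T_i$, its determinant is the Hessian $h^i_x(F)$, and $h^i_x(F)=0$ exactly when that map fails to be an isomorphism; there are $d$ such maps when $k\ge 2$ and $d-1$ when $k=1$, the extra top one $h^{d-1}_x$ being present precisely because $m_{x^{k-1}}\colon A_{d-1}\to A_{d+k-2}$ is a genuine (non-identity) map iff $k\ge 2$. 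The correspondence to be established sends a CIJT partition $P$ to the set $S_P=\{\,i : h^i_x(F)=0\,\}$ of vanishing active Hessians for any complete intersection $(A,\ell)$ realizing $P_\ell=P$.

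The first and central step is to show that this single bit per active Hessian already determines $P_x$. By the theorem of Costa and Gondim \cite[Theorem 4.10]{CGo}, $P_x$ is read off from the ranks of the Hessian maps, and the Gorenstein symmetry $T_i=T_{j-i}$ lets me recover all graded ranks $\rk(m_x^{s}\vert_{A_i})$ from those of the active middle maps. The codimension-two input I need is that a vanishing active Hessian forces a \emph{predetermined} corank, so that the binary vector $S_P$ determines the whole rank profile, and hence $P_x$. This rigidity is exactly what underlies Theorem~\ref{simplegoodprop}: the number of parts is locked at either $d$ or $d+k-1$, and one checks that the top bit --- whether $h^{d-1}_x$ vanishes --- is what toggles between the weak Lefschetz value $d$ and the value $d+k-1$, while the lower bits $h^0_x,\dots,h^{d-2}_x$ refine $P_x$ within a fixed part-count. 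Granting this, the assignment $P\mapsto S_P$ is well defined (independent of the chosen realization) and injective.

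It remains to count and to prove surjectivity. The hook-code/branch-label construction of Theorems~\ref{CIpartsthm} and~\ref{CIpartsthm2} produces exactly $2^{d}$ partitions of diagonal lengths $T$ when $k>1$, and $2^{d-1}$ when $k=1$, which already equals the number of subsets of the active Hessians. For surjectivity I would realize each prescribed pattern $S$ by a suitable dual generator: choosing $F_S\in\mathcal E_j$ as a product of powers of linear forms in $X,Y$ positioned so that the catalecticant-type determinants $h^i_x$ degenerate for exactly the indices $i\in S$ (the vanishing loci being coincidences among those linear factors and the fixed direction $x$). Since there are $2^{d}$ (resp.\ $2^{d-1}$) patterns and equally many CIJT partitions, a well-defined injection that hits every pattern is forced to be a bijection, which is the assertion; the split into the $d$-part (weak Lefschetz) and $(d+k-1)$-part families of Theorem~\ref{simplegoodprop} then records which patterns contain the top index.

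The main obstacle is the rigidity claim of the second paragraph: that a vanishing active Hessian drops the rank of its map by a predetermined amount, so that the \emph{binary} pattern, and not the full rank profile, recovers $P_x$. This is special to binary forms --- every $F\in\mathcal E_j$ factors completely and $\Ann F$ is automatically a complete intersection --- and fails in higher codimension. I expect to settle it by passing to the explicit normal form of $F$ attached to each branch label, computing the relevant coranks directly as differences of diagonal lengths in the associated Ferrers diagram and matching them against the part-count dichotomy of Theorem~\ref{simplegoodprop}; a semicontinuity argument, together with the exact count of strata versus partitions, then pins the Jordan type down to be constant on each Hessian stratum.
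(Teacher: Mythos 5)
Your overall skeleton matches the paper's: both arguments exploit the cardinality match between the $2^d$ (resp.\ $2^{d-1}$) Hessian-vanishing patterns and the CIJT partitions classified in Theorems~\ref{CIpartsthm} and \ref{CIpartsthm2} (counted in Corollary~\ref{goodcountcor}), so that a well-defined correspondence in one direction plus counting forces a bijection. But there is a genuine gap at what you yourself identify as the ``central step'': the rigidity claim that the binary vanishing pattern of the active Hessians determines the full rank profile, hence $P_\ell$. You do not prove it --- you defer it to ``explicit normal forms'' and ``semicontinuity,'' and the intermediate assertion you lean on, that Gorenstein symmetry $T_i=T_{j-i}$ recovers all graded ranks $\rk(m_{\ell^s}\vert_{A_i})$ from the middle square maps, is unjustified: Costa--Gondim's theorem requires the ranks of all the \emph{mixed} Hessians, and in this paper those ranks (Proposition~\ref{rkseq}) are a \emph{consequence} of the classification, not an input derivable from symmetry alone. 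The semicontinuity-plus-counting route also risks circularity, since pinning the Jordan type constant on each Hessian stratum presupposes the stratification you are trying to establish. The paper fills exactly this hole with Theorem~\ref{sumOfPartitions}, proved by a direct box count in the Ferrers diagram: $h^i_\ell(F)\neq 0$ if and only if the first $i+1$ parts of $P_\ell$ sum to exactly $(i+1)(j+1-i)$, with vanishing forcing a strict deficit. That quantitative criterion is the whole content of the rigidity you assert.

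Your surjectivity step is also not carried out as stated: realizing each pattern $S$ by a dual generator $F_S$ that is a product of powers of linear forms with prescribed coincidences is a sketch with no verification that all $2^d$ patterns (including several Hessians vanishing simultaneously at the same $\ell$) are attainable this way --- note Theorem~\ref{genericthm} shows a general $F$ admits at most one vanishing active Hessian per direction. The paper sidesteps this entirely: realization comes for free because every CIJT partition is by definition realized (via the explicit ideal construction of Lemma~\ref{criterioncor}), and then the proof of Theorem~\ref{2Hessthm} computes, for each CIJT partition $P=\bigl(p_1^{n_1},\ldots,p_c^{n_c},(d-n)^{d-n+k-1}\bigr)$ of Corollary~\ref{goodpartcor}, the partial sums
\begin{equation*}
\sum^{n_1+\cdots+n_i}_{j=1}p_j=(n_1+\cdots+n_i)\bigl(2d+k-1-(n_1+\cdots+n_i)\bigr),
\end{equation*}
with strict inequality at the intermediate indices, so that the non-vanishing set is exactly $\{n_1+\cdots+n_i-1\}$; since ordered partitions of $n\le d$ visibly biject with subsets of $\{0,\ldots,d-1\}$, the bijection is established without ever constructing an $F$ for a prescribed pattern. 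If you prove your rigidity claim by the partial-sum/box-counting route, your $F_S$ construction becomes unnecessary; as written, both load-bearing steps remain open.
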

\begin{example}\label{1221ex} Let $T=(1,2,2,1)$. The active Hessians are $h^0$, the determinant of multiplication by $m_{\ell^3}: A_0\to A_3$, and $h^1$, from multiplication by $m_{\ell}: A_1\to A_2$.
Thus, there are four subsets of these Hessians. When no Hessian is zero, the Jordan type  $P_\ell$ is $(4,2)$, the conjugate of $T$, and $\ell$ is strong Lefschetz. For the CI algebra $R/(x^2,y^3)$, the multiplication $m_y$ has partition $P_y=(3,3)$, and only $h^0$ is zero; the multiplication $m_x$ has partition $P_x=(2,2,2)$ and both 
$h^0,h^1$ are zero, while $m_{x+y}$ has partition $(4,2)$. For the CI algebra $R/(xy,x^3+y^3)$ the multiplication $m_x$ (or $m_y$) has partition $(4,1,1)$, and only $h^1$ is zero. There are two more partitions of diagonal lengths $T$, namely $(3,1,1,1)$, which occurs for $m_x$ in the non-CI algebra $R/(xy,x^3,y^4)$ and $(2,2,1,1)$ which occurs for $m_x$ in $R/(x^2, xy^2,y^4).$ Note that $m_y$ in the latter, non-CI algebra has partition $(4,2)$: that is, certain CIJT partitions may occur also for a non-CI algebra.  See Example \ref{hook1ex} and Figure \ref{fig2a} for further detail when $T=(1,2,2,1)$.
\end{example}
We also determine the CIJT partitions in other, combinatorial ways, involving the attaching of branches to a basic triangle of a partition (Section \ref{JTCIsec}). Our proofs involve a careful combinatorial study of this process of attaching branches, in the spirit of \cite{IY}, and we adapt results from \cite{IY2}. The paper is self-contained.\vskip 0.2cm\noindent
{\bf Outline of results.}
In Lemmas \ref{criterionlem} and Lemma \ref{criterioncor} of Section \ref{JTCIsec}, we state and prove a necessary criterion for a partition having diagonal lengths $T$ satisfying Equation \eqref{CITeq} to occur as the Jordan type of a linear form of some Artinian \emph{complete intersection} algebra. This is very restrictive: there are only $2^d$ partitions satisfying the criterion when $k>1$ and $2^{d-1}$ when $k=1$. We adapt from \cite{IY} the method of adding branches to a basic triangle of $T$ to determine the partition $P$; we introduce a branch label (Definition~\ref{branchdef}) to describe this. We first exhibit in Lemma \ref{partlabel} (for multiplicity $k\ge 2$) and Lemma \ref{partlabel2} (for multiplicity $k=1$), \emph{all} possible partitions that have diagonal lengths $T$ satisfying Equation \eqref{CITeq} above: each of these may occur as a Jordan type for an algebra quotient $A$ of $R$ having Hilbert function $T$, that is not necessarily a complete intersection. This is less restrictive than CIJT and we show there are $2\cdot 3^{d-1}$ such partitions when $k>1$ and $3^{d-1}$ when $k=1$ (Corollary~\ref{PdiagTcor}): these numbers agree with the more general formulas for all $T$ of \cite[Theorem 3.30, (3.35)]{IY}.  In the main results of Section~\ref{JTCIsec}, Theorem~\ref{CIpartsthm} and 
Theorem~\ref{CIpartsthm2}, we determine via their branch labels, all the partitions that can occur as the Jordan type partitions of a linear form for some Artinian complete intersection algebra with the Hilbert function $T$, that is, all the CIJT partitions having diagonal lengths $T$.  Theorem~\ref{equalityCIthm} and Corollary~\ref{goodcountcor} confirm that all the partitions $P$ satisfying the criterion of Lemmas~\ref{criterionlem} and \ref{criterioncor} actually do occur as CIJT partitions. We show Theorem 1, the CIJT criterion using just the number of parts of $P$ and the diagonal lengths $T$ in Theorem \ref{simplegoodprop}.\par
 The first author with M.~Boij had previously determined the Jordan types for complete intersection algebras $A=R/\Ann F$, for a form $F\in \mathcal{E}_j$, when just a single Hessian could vanish (Theorem~\ref{genericthm} in Section \ref{genericFsec}). In Section \ref{arbitraryFsec}, Theorem~\ref{sumOfPartitions}, we determine the numerical condition on a  CIJT partition $P_\ell$ of diagonal lengths $T$ for a specific higher Hessian $h^i_\ell$ to be non-vanishing.  In
Theorem \ref{2Hessthm}, we specify and show the 1-1 correspondence between the sets of vanishing higher Hessians  of $F$ at $p_\ell$  with the CIJT partitions $P_\ell$ for $A=R/\Ann F$; this is a more precise version of Theorem \ref{thm1} above. 
We also provide the possible rank sequences for Hessian matrices at $p_\ell$ of a CIJT partition in Proposition~\ref{rkseq}.\par We report geometric consequences in Section \ref{latticesec}. First, our results imply that there is a lattice structure on the set of CIJT's having given diagonal lengths $T$. In Theorem~\ref{posetprop} we show that this structure coincides with the usual dominance order on these partitions (Definition \ref{dominancedef}). In Theorem \ref{mainHessthm} we show that the Zariski closure in the family $CI_T$ (all CI ideals of diagonal lengths $T$) of the cell $\mathbb V(E_P)$ of algebras having a given CIJT in a direction $\ell$ is the union of smaller or equal CIJT cells in the dominance order. The analogous frontier property is not shared by non-CIJT cells, by an example of J. Yam\'eogo (Remark \ref{frontierrem}). In Section \ref{patternsec} we show a result characterizing simply the CIJT partitions having $d$ parts, and their relation to those having $d+k-1$ parts (Theorem \ref{tablethm}). We end this section with tables of the CIJT partitions for $d\le 5$.\par
In Section \ref{CIJTsolsec} we make the connection with the hook codes of \cite{IY}. In Section \ref{hookcodesec}, we explain the hook code as well as illustrate it in some examples, and in Section \ref{CIJThookcodesec} Proposition~\ref{HCprop} we relate the branch labels and the hook codes for all partitions of diagonal lengths $T$, and in Corollary \ref{goodhookcor}. we apply this to CIJT partitions.  Then in Section \ref{vanishHess&hookcodesec} we prove the correspondence of vanishing Hessians of a complete intersection Jordan type  with its hook code (Proposition  \ref{HookandHess}).
\par
We include throughout diagrams and examples to illustrate the results.

\section{Jordan type for complete intersection Artinian algebras in two variables.}\label{JTCIsec}
This section contains our main results concerning the characterization of partitions of diagonal lengths $T$ satisfying Equation \eqref{CITeq}, and on characterizing the partitions that are CIJT -- that occur as the Jordan type $P_\ell$ of multiplication by a linear form $\ell$ in a graded Artinian complete intersection algebra $A={\sf k}[x,y]/I$.  We first in Section \ref{cell1sec} study ideals $I$ having a given initial monomial ideal determined by a partition $Q$, that is, the ideals $I\in \mathbb V(E_Q)$. By Lemma~\ref{diaglem}, the diagonal lengths of a CIJT partition is a Hilbert function satisfying Equation \eqref{CITeq}, and the algebras $A=R/I$ have Jordan type 
$P_{x,A}=Q$.\par
The Ferrers diagram of a partition $P$ of diagonal lengths $T=(1,2,\ldots, d-1,d, t_d,\ldots)$ with $t_d\le d$ has a filled {\it basic triangle} $\Delta(P)=\Delta_d$ consisting  of all monomials of degrees less than or equal to $d-1$. We regard $P$ as having {\emph{branches}}
glued to the basic triangle: some branches are horizontal, some may be vertical. We associate to each such partition a {\it{branch label}}, a sequence of integers corresponding to the lengths of these glued branches (Definition \ref{branchdef}). Beginning in Section \ref{whichJTCIsec}, we characterize labels associated with partitions having diagonal lengths $T$ satisfying \eqref{CITeq}. In Section \ref{attachbranch1sec} we characterize \emph{all} the partitions having diagonal lengths $T$. In addition to a complete numerical description of all such partitions, we also use the labels to count the number of such partitions having given diagonal lengths $T$. In Section \ref{goodsec} we characterize the CIJT partitions having diagonal lengths $T$.

\subsection{The cell $\mathbb V(E_Q)$ and Jordan type.}\label{cell1sec}
Recall that $R={\sf k}[x,y]$ is the polynomial ring over an infinite field of characteristic zero or characteristic $p>j$ where $j$ is the socle degree of the Artinian algebras we consider. 
By \cite[\S 58]{Mac2} the sequence $T$ occurs as the Hilbert function $T=T(d,k)$ of a graded complete intersection (CI) quotient $A=R/I$ of height $d$, where $d$ occurs $k$ times in $T$.  The ideal generator degrees are $(d,d+k-1)$ and we have (subscripts indicate degree; this is also Equation~\eqref{CITeq})
 \begin{equation}\label{CIT2eq}
T(d,k)=(1_0,2_1,\ldots,(d-1)_{d-2},d_{d-1},\ldots, d_{d+k-2},(d-1)_{d+k-1},\ldots , 2_{j-1},1_j). 
\end{equation}
Here the socle degree $j=2d+k-3$; the sequence $T(d,k)$ is symmetric about $j/2$ and is the Hilbert function of the monomial complete intersection  $R/(x^d,y^{d+k-1})$, of vector space dimension $\sum_i T(d,k)_i=d(d+k-1)=d(j+2-d)$.\par
More generally, a sequence $T$ occurs as the Hilbert function $T=H(A)$ of some graded Artinian quotient $A=R/I$ of $R={\sf k}[x,y]$ having order $d$ ($I\subset \mathfrak m^d, I\nsubseteq \mathfrak m^{d-1}$) and maximal socle degree $j$ ($I\nsupseteq \mathfrak m^j, I \supset \mathfrak m^{j+1}$) if and only if 
\begin{equation}\label{HFeq}
T=(1_0,2_1,\ldots,  d_{d-1},t_d,t_{d+1},\ldots, t_j,0) \text { where } d\ge t_d\ge \cdots \ge t_j>0.
\end{equation}
We will initially consider such general Hilbert functions $T$ for our definitions and Lemma \ref{diaglem} just below, which concerns the cell $\mathbb V(E_Q)$ parametrizing ideals with initial monomial ideal $E_Q$ for partitions $Q$ of diagonal lengths $T$.  Then beginning in Section~\ref{whichJTCIsec} we will restrict to the graded complete intersection sequences $T$ of \eqref{CIT2eq}. We will denote by $G_T$ the smooth projective variety parametrizing graded quotients $A=R/I$ of Hilbert function $T$ \cite{Got,I0,IY}.
\begin{definition}\label{basiccelldef}[The cell $\mathbb V(E_Q)$ of the family $G_T$.]
The \emph{Ferrers diagram} of the partition $Q=(q_1,q_2,\ldots,q_s), q_1\ge q_2\ge \ldots \ge q_s$ is an array of length $q_i$ in the $i$-th row from the top. We denote by $C_Q$ the filling of the Ferrers diagram by monomials, with $i$-th row $\{y^{i-1},y^{i-1}x,\ldots ,y^{i-1}x^{q_i-1}\}$ --  see Figure~\ref{hook1fig}). We denote by $(C_Q)_i$ the degree-$i$ subset of $C_Q$. We denote by $E_Q$ the monomials in $x,y$ not in $C_Q$, and by $(E_Q)$ the ideal they generate. The \emph{diagonal lengths} $T=T(Q)$ are the Hilbert function $T=H(R/E_Q)$, and are the lengths of the lower-left to upper-right diagonals of the Ferrers graph of $Q$:  that is, $t_i=\dim_{\sf k} (C_Q)_i$.\label{diagdef} The \emph{cell $\mathbb V(E_Q)$ determined by $Q$} is all ideals of $R$ having $(E_Q)$ as initial ideal in reverse degree-lex order, using $(y,x)$ as ordered basis for $R_1$; the cell $\mathbb V(E_{Q,\ell})$ is the analogous cell using $(\ell,x)$ as ordered basis.\par
A finite-length simple ${\sf k}[x]$ module $M$ must satisfy $M\cong k[x]/(x^k)$. When such a module occurs as a direct summand in a decomposition of an $A-$ module $M$ under multiplication by $m_a, a\in \mathfrak m_A$ we term the simple module a length-$k$ \emph{string} of $M$.\end{definition}
Evidently, for $\ell=x$ the ideal $(E_Q)=(E_{Q,x})$ has generating set,
\begin{equation}\label{EPeqn}
(E_Q)=(x^{q_1},yx^{q_2},\ldots, y^{i-1}x^{q_i},\ldots, y^{s-1}x^{q_s},y^s),  \qquad i\in [1, s].
\end{equation}
We have
\begin{align} I\in \mathbb V(E_Q)&\Leftrightarrow I=(f_1,\ldots , f_i,\ldots, f_s,f_{s+1}) \text { with }\notag\\
f_i=\,&x^{q_i}g_i, \text { where } g_i=y^{i-1}+h_i, h_i\in (x)\cap R_{i-1}\notag\\
\equiv\,&y^{i-1}x^{q_i}\mod (x^{q_i+1})\cap R_i.\label{mueqn}
\end{align}
We show a key preparatory result.\footnote{Although \cite[Proposition 3.6]{IY} does not use the language of Jordan type, the Lemma \ref{diaglem} here may be regarded as a consequence of the discussion there.  Theorem 3.12 of \cite{IY} determines the dimension of $\mathbb V(E_P)$ in terms of the hook code (our Theorem \ref{celldimthminapp}). The Equation \eqref{mueqn} follows from the standard basis results of either \cite{Bri} or \cite{I0}: see the historical note following Theorem 3.12 of \cite{IY}.}
\begin{lemma}\cite{IY}\label{diaglem} A. Let $A={\sf k}[x,y]/I$  be a graded Artinian quotient of $R={\sf k}[x,y]$, and let $\ell\in A_1$ be a linear form.  The Jordan type partition $P_{\ell,A}$ has diagonal lengths the Hilbert function $T=H(A)$ of $A$, which satisfies \eqref{HFeq}.\par
B. Let $\ell=x$, and $I\in \mathbb V(E_Q)$, define $g_i$ as in Equation \eqref{mueqn} and denote by $\overline{g}_i$ the class of $g_i$ in $A=R/I$. Then we have the following decomposition of $A$ as a direct sum of simple ${\sf k}[x]$-modules (strings):
\begin{equation}\label{Alisteq}
A=\oplus \{\langle \overline{g}_i, x\overline{g}_i \ldots ,x^{q_i-1}\overline{g}_i\rangle,  i\in [1,s]\}.
\end{equation}
The Jordan type $P_{x,A}=Q$.
\end{lemma}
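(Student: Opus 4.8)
The plan is to prove part B directly, since part A then follows as a special case (the Jordan type $P_{x,A}=Q$ has diagonal lengths $T=H(A)$ because the diagonal lengths of $Q$ are defined in Definition~\ref{diagdef} to be exactly $H(R/E_Q)=H(A)$, and the decomposition in \eqref{Alisteq} exhibits $P_{x,A}=Q$). For part B, the central task is to show that the elements $\{x^a\overline{g}_i : i\in[1,s],\, 0\le a\le q_i-1\}$ form a ${\sf k}$-basis of $A$ on which multiplication by $x$ acts as a direct sum of Jordan strings, the $i$-th string having length $q_i$. First I would verify that $x^{q_i}\overline{g}_i=0$ in $A$: this is immediate from \eqref{mueqn}, since $f_i=x^{q_i}g_i\in I$, so $x^{q_i}\overline{g}_i=\overline{f_i}=0$. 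Thus each chain $\langle \overline{g}_i, x\overline{g}_i,\ldots,x^{q_i-1}\overline{g}_i\rangle$ is a ${\sf k}[x]$-submodule isomorphic to a quotient of ${\sf k}[x]/(x^{q_i})$, and $x$ sends each generator down its chain, annihilating the bottom.

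The heart of the argument is a dimension/independence count showing these chains give an internal direct sum exhausting $A$. The total number of listed elements is $\sum_{i=1}^s q_i=|Q|=\dim_{\sf k} A$, the last equality because $\dim_{\sf k} A=\dim_{\sf k} R/I=\dim_{\sf k} R/(E_Q)$ (as $I\in\mathbb V(E_Q)$ has initial ideal $(E_Q)$, and passing to the initial ideal preserves Hilbert function hence total dimension), and the monomials of $C_Q$ number exactly $|Q|$. So it suffices to prove ${\sf k}$-linear independence of the listed elements. For this I would use the initial-term structure: I claim the initial monomial (in reverse degree-lex with ordered basis $(y,x)$) of $x^a\overline{g}_i$, lifted to $R$, is $y^{i-1}x^a$, which is precisely the monomial filling the $(i,a)$-entry of $C_Q$ in Definition~\ref{diagdef}. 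Indeed by \eqref{mueqn} $g_i\equiv y^{i-1}\bmod(x)$, so $g_i=y^{i-1}+h_i$ with $h_i\in(x)$, whence $x^ag_i=y^{i-1}x^a+x^{a+1}(\cdots)$ has leading term $y^{i-1}x^a$. Since the set $\{y^{i-1}x^a : i\in[1,s],\,0\le a\le q_i-1\}$ is exactly the monomial basis $C_Q$ of $A$, and these leading terms are all distinct and form a basis of $A$, a standard triangularity argument shows the $x^a\overline{g}_i$ are linearly independent, hence a basis.

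The main obstacle, and the point needing care, is the triangularity/leading-term argument: I must confirm that reducing $x^a g_i$ modulo $I$ does not destroy the claimed leading term $y^{i-1}x^a$. The correct framing is that $C_Q=\{y^{i-1}x^a\}$ is by hypothesis a set of coset representatives spanning $A$ (since $(E_Q)$ is the initial ideal of $I$, the monomials outside $(E_Q)$, namely $C_Q$, descend to a basis of $R/I$), so I work in $A$ with this fixed basis and express each $x^a\overline{g}_i$ in it. Writing the ``leading'' basis element as the $C_Q$-monomial of maximal rank appearing, the tail $x^{a+1}(\cdots)$ contributes only monomials $y^{i-1}x^{a'}$ with $a'>a$ (same row, further right) plus lower rows after reduction, all of strictly lower rank in a suitable term order on $C_Q$; hence the change-of-basis matrix from $\{x^a\overline{g}_i\}$ to $C_Q$ is unitriangular and invertible. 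I would order $C_Q$ compatibly with this reduction so the triangularity is transparent, and note that the ordered-basis convention $(y,x)$ and reverse degree-lex in \eqref{EPeqn}--\eqref{mueqn} is precisely what makes $y^{i-1}x^{q_i}$ the initial term of $f_i$ and $y^{i-1}x^a$ the initial term of $x^a g_i$. Once independence is established, the decomposition \eqref{Alisteq} and the equality $P_{x,A}=Q$ follow, with the Jordan string lengths read off as the row lengths $q_i$ of $Q$.
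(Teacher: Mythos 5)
Your proof is correct, but the key step is executed by a different mechanism than the paper's. Both arguments propose the same candidate basis $\{x^a\overline{g}_i\}$, both use the same dimension count $\sum q_i=\dim_{\sf k}A$, and both reduce the problem to linear independence; the difference is how independence is proved. The paper first extracts from Equation \eqref{mueqn} the structural fact \eqref{stdidealeq}, that $I={\sf k}[x]\langle f_1,\ldots,f_s\rangle+(y^s)$ as a ${\sf k}[x]$-module, then assumes a relation $\sum\alpha_{i,k}x^k\overline{g}_i=0$, lifts it to $R$, and collects terms by $y$-degree: each collected piece lies in ${\sf k}[x]f_i=x^{q_i}{\sf k}[x]g_i$, which is impossible for exponents $k<q_i$ unless all $\alpha_{i,k}=0$. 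You instead invoke the defining property of the cell $\mathbb V(E_Q)$ directly -- that $(E_Q)$ is the initial ideal of $I$, so the standard monomials $C_Q$ descend to a basis of $A$ -- and run the standard Gr\"obner normal-form triangularity argument: the leading term of $x^ag_i$ is the standard monomial $y^{i-1}x^a$, which is never rewritten during reduction, so the change-of-basis matrix from $\{x^a\overline{g}_i\}$ to $C_Q$ is unitriangular. Your route is arguably cleaner and uses only the definition of the cell; the paper's route has the side benefit of establishing \eqref{stdidealeq}, the explicit ${\sf k}[x]$-module generation of $I$ by the $f_i$, which is a useful structural statement in its own right. You also make explicit how part A follows from part B (after a coordinate change taking $\ell$ to $x$, every graded Artinian ideal lies in some $\mathbb V(E_Q)$, since a monomial Artinian ideal in two variables has complement a Ferrers diagram), a reduction the paper leaves to the citation of \cite{IY}.

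One small slip, which does not affect validity: you say the tail of $x^ag_i$ contributes monomials ``$y^{i-1}x^{a'}$ with $a'>a$ (same row, further right).'' Since everything here is homogeneous of degree $i-1+a$, such monomials have strictly larger degree and cannot appear at all; the tail $x^ah_i$ and all its normal-form reductions consist only of same-degree monomials with strictly smaller $y$-power (strictly lower rows). Consequently you do not need to engineer ``a suitable term order on $C_Q$'': plain graded reverse-lex, exactly the order in Definition \ref{basiccelldef}, already makes the matrix unitriangular.
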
\par
\begin{proof}  Recall the rev-lex ordering $y^i>y^{i-1}x>\cdots >x^i$ on degree-$i$ monomials of $R$. For $I\in \mathbb V(E_Q)$ the degree-$i$ component $I_i$ has initial monomials $E_Q(i)$, highest in the order; they  span a vector space complementary in $R_i$ to the span of $C(Q)_i$:  $R_i=\langle (C_Q)_i\rangle\oplus (E_Q)_i$. Thus, the total number of elements in the putative basis for $A$ given in Equation \eqref{Alisteq} is
the dimension $n=\dim_{\sf k}A$, as the number of elements of degree $i$ is just $H(A)_i$. Note that it follows from last statement $f_i\equiv y^{i-1}x^{q_i}\mod (x^{q_i+1})\cap R_i$ of Equation~\eqref{Alisteq} that 
\begin{equation}\label{stdidealeq}
I={\sf k}[x]\langle f_1,\ldots, f_s\rangle+(y^s).
\end{equation}
Suppose by way of contradiction that there is a relation among these elements
\begin{equation}
\sum \alpha_{i,k}x^k\overline{g}_i=0 \text { in } A, \text { with }\alpha_{i,k}\in {\sf k}.
\end{equation}
Then 
\begin{equation*}\sum \alpha_{i,k}x^k{g}_i\in (f_1,\ldots, f_s) \text { where $i\in [1,s]$ and $k$ in $x^kg_i$ satisfies }0\le k\le q_i-1,
\end{equation*}
and by Equation \eqref{stdidealeq} we have
\begin{equation}
\sum \alpha_{i,k}x^k{g}_i\in {\sf k}[x]\langle f_1,\ldots, f_s\rangle+(y^s).
\end{equation}
This implies that the sum on the left is in the ${\sf k}[x]$ submodule of $R$ generated by $f_1,\ldots, f_s$. Collecting by $y$-degree, we have for each $i$, $\sum_{i,k}\alpha_{i,k}x^kg_i\in {\sf k}[x]f_i$: since $ f_i=x^{q_i}g_i$, and each $k$ is less than $q_i$, each such summand is zero,  and each $\alpha_{i,k}=0$. We have shown that  Equation \eqref{Alisteq} gives a basis of $A$. Noting that $x^{q_i}g_i=f_i\in I$, so $x^{q_i}\overline{g}_{i-1}=0$ in $A$,  we may conclude from \eqref{Alisteq} that $A$ is the direct sum of strings of lengths $q_1,q_2,\ldots, q_s$; hence, the Jordan partition determined by multiplication by $x$ is indeed $P_{x,A}=Q$.  This completes the proof of the Lemma.
\end{proof}\par
\begin{remark}\label{QPrem} Note that while for algebras $A$ in the cell $\mathbb V(E_Q)$, the Jordan partition $P_{x,A}$ is $Q$, we will also consider the Jordan partition $P_\ell$ in other directions $\ell$: these may be different from $Q$.  For example, when $Q=(3,1)$, the monomial ideal $E_Q=(y^2,xy,x^3)$: for $A=R/E_Q$ the Jordan type $P_x=Q$, but $P_{y,A}=(2,1,1)$.  This occurs more generally: for $A=R/(x\ell,y\ell,x^3)$ with $\ell=y+ax$ we have $A\in \mathbb V(E_Q)$ but $P_{\ell,A}=(2,1,1)$. This is why we have used $Q$ in place of $P$ here in defining the cell $\mathbb V(E_Q)$. Of course, for an open dense set of $A=R/I\in \mathbb V(E_Q)$ the homogeneous component does not have a common factor: there is no $\ell\in R_1$ such that $I_2=(\ell)\cap R_2$: for such $A$ the Jordan type $P_{\ell',A}=Q$ for all $\ell'\in R_1$.
\end{remark}

 \subsection{Complete intersection Jordan type criterion.}\label{whichJTCIsec}
 In this section, we provide necessary and sufficient numerical conditions for a partition $P$ of diagonal lengths $T$ satisfying \eqref{CIT2eq} (same as \eqref{CITeq}) to have CIJT. We will henceforth write the partition $P$ in power form
 \begin{equation}\label{partpoweqn}  P=(p_1^{n_1},p_2^{n_2}, \ldots p_t^{n_t}) \text { with }p_1>p_2>\cdots >p_t,
 \end{equation}
 as it will be useful in determining the minimal generators of an ideal $I\in \mathbb V(E_P)$.

\begin{lemma}\label{criterionlem}  
Let $T$ be a sequence satisfying \eqref{CITeq} and assume that $P$ as in \eqref{partpoweqn} is a partition of diagonal lengths $T$. If $P$ has CIJT then for each $i\in [ 2,t]$, we have
\begin{equation}\label{good2eq}
p_{i-1}\geq  n_{i-1}+n_i+p_i.
\end{equation}
\end{lemma}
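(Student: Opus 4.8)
The plan is to work in the cell $\mathbb V(E_P)$ and exploit the explicit generating set of a CI ideal realizing the Jordan type $P$. By Lemma \ref{diaglem}B, if $P$ is CIJT then there is a complete intersection $A = R/I$ with $I \in \mathbb V(E_{P,\ell})$ for a suitable linear form $\ell$, so after a change of coordinates we may assume $\ell = x$ and $P_{x,A} = P$. Writing $P$ in power form as in \eqref{partpoweqn}, the associated monomial ideal $E_P$ has its minimal generators concentrated at the ``corners'' of the Ferrers diagram, namely at the rows where the part size strictly drops: that is, at rows $n_1, n_1+n_2, \ldots$. The key numerical content of \eqref{good2eq} is a statement comparing the degrees of consecutive minimal generators of $E_P$, so the first step is to read off, from \eqref{EPeqn}, exactly which monomials $y^{a}x^{b}$ are the minimal generators of $E_P$ and record their degrees.

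First I would pin down the generator degrees. The corner after the first $n_1$ rows contributes a generator of the form $y^{n_1}x^{p_2}$ (the first place the width shrinks from $p_1$ to $p_2$), and more generally the corner separating the blocks of size $p_{i-1}$ and $p_i$ sits in row $n_1+\cdots+n_{i-1}$ and yields a generator of $x$-degree $p_i$ and $y$-degree $n_1+\cdots+n_{i-1}$. The crucial observation is that since $A$ is a \emph{complete intersection} of height two, $I$ has exactly two minimal generators, of degrees $(d, d+k-1)$ as recorded after \eqref{CIT2eq}. Thus, while $E_P = \mathrm{in}(I)$ may have many minimal monomial generators, the upper-semicontinuity of Betti numbers (or the standard basis description in \eqref{stdidealeq}) forces strong constraints: a general $I \in \mathbb V(E_P)$ has at most as many generators as $E_P$, but a CI forces the number of generators of $E_P$ to be controlled, and more importantly the syzygy/degree pattern of the two CI generators must be compatible with the staircase of $E_P$.

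The heart of the argument is translating ``$I$ is generated by two elements of degrees $d$ and $d+k-1$'' into the inequality \eqref{good2eq}. The cleanest route I would take is via the degrees and the structure of a standard basis. From \eqref{stdidealeq}, $I = {\sf k}[x]\langle f_1,\ldots,f_s\rangle + (y^s)$, where $f_i \equiv y^{i-1}x^{q_i} \bmod (x^{q_i+1})$ and the $q_i$ are the parts of $P$. Being a height-two CI means these standard basis elements all arise as $x$-power multiples of just two minimal generators $F$ and $G$ of degrees $d$ and $d+k-1$. A monomial $y^{i-1}x^{q_i}$ at a corner is the leading term of some $x$-power multiple of $F$ or $G$; comparing the $x$-degree $p_i$ and $y$-degree $n_1+\cdots+n_{i-1}$ across two consecutive corners, and using that multiplying by a power of $x$ raises the $x$-degree while the minimal generator fixes the total degree relationship, yields precisely $p_{i-1} \ge n_{i-1}+n_i+p_i$. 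Concretely, the generator accounting for the drop from $p_{i-1}$ to $p_i$ must have $x$-degree at least $p_i$ and the step in $y$-degree across the block of $n_i$ equal parts of size $p_i$ is $n_i$, while the CI degree bookkeeping forces the $x$-exponent to have decreased by at least $n_{i-1}+n_i$ over the two blocks; this is the inequality.

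The step I expect to be the main obstacle is making rigorous the passage from ``$A$ is a CI, hence $I$ has two generators'' to the corner-by-corner degree inequality, because $E_P$ itself is generally \emph{not} a CI monomial ideal and can have many generators; the CI condition is on $I$, not on its initial ideal. So I would need to argue carefully about how the two honest generators $F, G$ of $I$, together with their $x$-multiples, produce the full staircase of leading terms $E_P$, and that the geometry of this staircase (the $x$-degrees $p_i$ dropping while the $y$-degrees climb through the multiplicities $n_i$) is exactly constrained by having only two source generators of the prescribed degrees $d$ and $d+k-1$. I would organize this by first treating the generic drop governed by the degree-$d$ generator and then showing any further drop must be ``paid for'' using the degree-$(d+k-1)$ generator, which is what couples $n_{i-1}$ and $n_i$ in \eqref{good2eq}. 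Verifying that no drop can occur without such a generator, i.e.\ that a sharper drop would force a third minimal generator and contradict the CI hypothesis, is the decisive and most delicate point.
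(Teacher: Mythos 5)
Your setup coincides with the paper's: you correctly read off the minimal generators of $E_P$ at the corners of the staircase (the monomials $x^{p_i}y^{a_{i-1}}$ with $a_i=n_1+\cdots+n_i$, as in \eqref{generatorsEeq}), and you correctly locate the difficulty in converting ``$I$ is two-generated'' into a corner-by-corner degree inequality. But the decisive step is missing, and the one concrete mechanism you do assert is false. You claim the standard basis elements ``all arise as $x$-power multiples of just two minimal generators $F$ and $G$.'' They do not: Equation \eqref{stdidealeq} says the $f_i$ generate $I$ as a ${\sf k}[x]$-module, i.e.\ $I$ is spanned by $x$-multiples of the $f_i$ --- not that each $f_i$ is an $x$-multiple of the two CI generators. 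In general the corner element $f_{i-1}$ is an $R$-combination of lower corner elements involving $y$-multiples as well (in the paper's construction, $f_{i-1}=x^{p_i-p_{i+1}}f_{i+1}-y^{n_i}f_i$), so the ``compare $x$-exponents across $x$-power multiples'' bookkeeping you propose never gets off the ground. A further sign the route is off: the prescribed generator degrees $(d,d+k-1)$ that you want to feed into the bookkeeping do not enter the paper's argument at all; the inequality is purely local to three consecutive corners.

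The idea your proposal never reaches is the following degree comparison. For each $i\in[2,t]$, the ideal $I$ contains unique elements $f_{i+1}$, $f_i$, $f_{i-1}$ with initial monomials $\mu_{i+1}=x^{p_{i+1}}y^{a_i}$, $\mu_i=x^{p_i}y^{a_{i-1}}$, $\mu_{i-1}=x^{p_{i-1}}y^{a_{i-2}}$. Since $I$ is a complete intersection, its two generators are the corner elements of lowest degree ($f_t$ and $f_{t+1}$), so $f_{i-1}$ must be produced from $f_i$ and $f_{i+1}$; the only relation between the initial monomials $\mu_{i+1}$ and $\mu_i$ is the S-pair
\begin{equation*}
x^{(p_i-p_{i+1})}\mu_{i+1}-y^{(a_i-a_{i-1})}\mu_i,
\end{equation*}
which is homogeneous of degree $p_i+a_i$. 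For this combination (after reduction) to yield $f_{i-1}$, one needs $\deg f_{i-1}=p_{i-1}+a_{i-2}\geq p_i+a_i$, which is exactly \eqref{good2eq}. Your closing remark --- that a sharper drop would force a third minimal generator --- is the right intuition, but it is precisely the statement to be proved, and your proposal offers no mechanism for it; as written, the proof has a genuine gap at its central step.
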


\begin{proof}

Let $E=(E_P)$ be the monomial ideal corresponding to a partition $P$. It is evident from Lemma \ref{diaglem} and Equation \eqref{partpoweqn} that we may write the minimal generators of the monomial ideal $E$ (a basis $B_E$ for $E/\mathfrak mE$)
as  
\begin{equation}\label{generatorsEeq}
B_E=\langle  x^{p_1}, x^{p_2}y^{a_1}, \ldots, x^{p_t}y^{a_{t-1}}, y^{a_t}\rangle \text { where for each } i\in[1, t], a_i=\displaystyle{\sum_{\ell=1}^{i}n_\ell}.
\end{equation}
All but the first and last generators are the monomials corresponding to the inside corners of the Ferrers diagram $F_P$ corresponding to $P$; we include the highest $y$ power and highest $x$-power, the outside corners.

By assumption $P$ occurs as the Jordan type of multiplication by a linear element in some graded Artinian complete intersection quotient  $A=R/I$. Thus for $i\in [2, t]$, the unique elements  $f_{i+1},f_{i}$ of $I$ having initial monomials $\mu_{i+1}=x^{p_{i+1}}y^{a_{i}}$ and $\mu_{i}=x^{p_{i}}y^{a_{i-1}}$, respectively,  must generate $f_{i-1}$ (with initial monomial $\mu_{i-1}=x^{p_{i-1}}y^{a_{i-2}}$). They can do so only if the relation
 
  \begin{equation}\label{relationeq1}
  x^{(p_i-p_{i+1})}\mu_{i+1}-y^{(a_i-a_{i-1})}\mu_i
  \end{equation}
  between the initial forms $\mu_{i+1}$ and $\mu_{i}$, when applied to the two generators kicks out $f_{i-1}$ (after further reduction by multiples of $\mu_{i+1}$). This is only possible if the degree of $f_{i-1}$ is at least that of the terms in \eqref{relationeq1}. This implies that  $p_{i-1}+a_{i-2}\geq p_i+a_i$. By definition of $a_i$'s, this implies the desired inequality $p_{i-1}\geq n_{i-1}+n_i+p_i.$
\end{proof}

We will later show that there must be equality in \eqref{good2eq} for $P$ a CIJT partition  (Corollary~\ref{equalityCIthm}, Equation \eqref{equalityCIeq}). Thus, using this later result, the hypothesis of the following Lemma can be weakened to Equation \eqref{good2eq}.

\par In the proof of the following Lemma, we construct a particular complete intersection ideal such that multiplication $m_x$ by the element $x$ on $A=R/I$ has a given partition $P$ satisfying Equation \eqref{good2eq}. Such an ideal $I$ is in the cell $\mathbb V(E_P)$ (Definition \ref{basiccelldef}). The dimension of the family of all such CI ideals is the dimension of the cell $\mathbb V(E_P)$, which we give in Proposition \ref{HCprop} and Corollary \ref{goodhookcor} below. 
\begin{lemma}[CIJT Criterion]\label{criterioncor}
Let $T$ satisfy Equation \eqref{CITeq} and let $P=(p_1^{n_1}, \ldots p_t^{n_t})$ with $p_1>\cdots >p_t$ be a partition of diagonal lengths $T$. If for each $i\in[2,t]$, the following equality holds, then $P$ can occur as the Jordan type of a linear form for some graded complete intersection algebra $A=R/I$, of Hilbert function $H(A)=T$.
\begin{equation}\label{good3eq}
p_{i-1}= n_{i-1}+n_i+p_{i}
\end{equation}
Furthermore $P$ occurs as the Jordan type of multiplication by $\ell$ on $A=R/I$ if and only if $E_P$ is the monomial initial ideal of $I$ in the $\ell $ direction (that is with $(y,\ell)$ as distinguished coordinates for $R$).
\end{lemma}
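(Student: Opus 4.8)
The plan is to exhibit, for the given partition $P$, a single homogeneous complete intersection ideal $I=(F,G)\subset R$ lying in the cell $\mathbb V(E_P)$; once $\mathrm{in}_x(I)=E_P$ is established, Lemma~\ref{diaglem}(B) immediately gives $P_{x,A}=P$ and $H(A)=H(R/E_P)=T$ for $A=R/I$, so $P$ is CIJT. I work in reverse-degree-lex order with $y>x$, so that (by \eqref{generatorsEeq}) the minimal generators of $E_P$ are the corner monomials
$$\mu_1=x^{p_1},\quad \mu_2=x^{p_2}y^{a_1},\quad \ldots,\quad \mu_t=x^{p_t}y^{a_{t-1}},\quad \mu_{t+1}=y^{a_t},\qquad a_i=\textstyle\sum_{\ell\le i}n_\ell .$$
The ``furthermore'' is then cost-free: for any ideal $I$ and any $\ell$, the initial ideal $\mathrm{in}_\ell(I)$ in the basis $(y,\ell)$ equals $E_Q$ for a unique partition $Q$, and Lemma~\ref{diaglem}(B) read in the $\ell$-direction gives $P_{\ell,A}=Q$; hence $P_{\ell,A}=P$ if and only if $\mathrm{in}_\ell(I)=E_P$.

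The engine is the consecutive syzygy already used in the proof of Lemma~\ref{criterionlem}. Writing $f_i$ for a homogeneous form with leading monomial $\mu_i$ and setting $p_{t+1}=0$, the S-polynomial
$$S_i:=x^{\,p_i-p_{i+1}}f_{i+1}-y^{\,a_i-a_{i-1}}f_i$$
has both leading monomials equal to $x^{p_i}y^{a_i}$, which cancel, and the equality hypothesis \eqref{good3eq} forces $S_i$ to lie in degree $p_i+a_i=p_{i-1}+a_{i-2}=\deg\mu_{i-1}$. First I would take $F:=f_{t+1}$ and $G:=f_t$, the two lowest-degree forms, with leading monomials $y^{a_t}$ and $x^{p_t}y^{a_{t-1}}$ (their degrees $a_t$ and $a_{t-1}+p_t$ are forced to be $d$ and $d+k-1$ once $H(R/I)=T$ is known). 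Choosing the lower-order terms of $F$ and $G$ appropriately, I would run $i=t,t-1,\dots,2$ and arrange that each $S_i$ has leading monomial exactly $\mu_{i-1}$; rescaling then defines $f_{i-1}\in(F,G)$. A downward induction places $f_1,\dots,f_{t-1}$ in $(F,G)$, so $I=(F,G)$ is generated by the two forms $F,G$.

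To finish I would check, via Buchberger's criterion, that $\{f_1,\dots,f_{t+1}\}$ is a standard basis of $I$: the consecutive S-polynomials reduce to zero by construction (each $S_i$ reduces to a scalar multiple of $f_{i-1}$), while a non-consecutive pair $f_i,f_j$ with $j>i+1$ has $\mathrm{lcm}(\mu_i,\mu_j)=x^{p_i}y^{a_{j-1}}$, whose S-polynomial I would reduce to zero through the intermediate $f_{i+1},\dots,f_{j-1}$. This yields $\mathrm{in}_x(I)=(\mu_1,\dots,\mu_{t+1})=E_P$, and hence $H(R/I)=T$; as a consistency check, $F,G$ being coprime binary forms gives $\dim_{\sf k}R/I=(\deg F)(\deg G)=d(d+k-1)=\dim_{\sf k}R/E_P$. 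Thus $I\in\mathbb V(E_P)$ is a complete intersection, and Lemma~\ref{diaglem} gives $P_{x,A}=P$, completing the proof of the main assertion.

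The crux, and the step I expect to cost real work, is the middle one: producing genuine forms $F,G$ whose iterated S-polynomials carry a \emph{nonzero} coefficient on $\mu_{i-1}$ (with no term of higher $y$-degree) at every corner, while $F$ and $G$ remain coprime so that $(F,G)$ is a genuine complete intersection rather than sharing a linear factor. The equality \eqref{good3eq} guarantees only that the degrees line up; the nondegeneracy of these leading coefficients is an additional requirement. I expect to meet it by an explicit inductive choice of the lower-order homogeneous terms of $F$ and $G$ — introducing one free coefficient per corner and fixing it so the corresponding $S_i$ has leading monomial exactly $\mu_{i-1}$ — a construction that also makes coprimality transparent from the resulting factorizations.
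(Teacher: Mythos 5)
Your architecture is the same as the paper's: forms $f_i$ whose leading monomials are the corner generators $\mu_i$ of $E_P$ from \eqref{generatorsEeq}, the consecutive relation $x^{p_i-p_{i+1}}f_{i+1}-y^{n_i}f_i=f_{i-1}$ collapsing the ideal to the two top generators, and Lemma~\ref{diaglem} converting $\mathrm{in}_x(I)=E_P$ into $P_{x,A}=P$ (your reduction of the ``furthermore'' to uniqueness of the partition $Q$ with $\mathrm{in}_\ell(I)=E_Q$ is also sound). But the step you yourself flag as the crux --- choosing the tails of $F=f_{t+1}$ and $G=f_t$ so that every iterated S-polynomial carries a \emph{nonzero} coefficient on $\mu_{i-1}$, with all higher terms in the order removable, and with $F,G$ coprime --- is exactly where the proof lives, and your proposal defers it. As written this is a genuine gap: running the induction downward from the two generators of lowest degree, you must verify a nondegeneracy condition at each of the $t-1$ corners, and the equality \eqref{good3eq} by itself only matches degrees; it does not rule out that for your choice of coefficients some reduction terminates with the coefficient of $\mu_{i-1}$ equal to zero, which would change the initial ideal and hence the Jordan type. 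A secondary slip: the degrees of $F$ and $G$ are forced to be $\{d,\,d+k-1\}$ only as a set, not in the order you assert; when $P$ has $d+k-1$ parts (e.g.\ $P=(2,2,2)$ for $T=(1,2,2,1)$, $E_P=(x^2,y^3)$) one has $a_t=d+k-1$ and $p_t+a_{t-1}=d$.

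The paper closes your gap by reversing the direction of the induction, which dissolves the nondegeneracy issue. It starts at the bottom with $f_1=x^{p_1}$ and $f_2=x^{p_2}y^{a_1}+\sum_{\ell=1}^{a_1}\lambda_{2,\ell}x^{p_2+\ell}y^{a_1-\ell}$ with free parameters, and then \emph{defines}
\begin{equation*}
f_{i+1}:=x^{-(p_i-p_{i+1})}\bigl(f_{i-1}+y^{n_i}f_i\bigr),
\end{equation*}
which is a polynomial because every term of $f_{i-1}$ and of $y^{n_i}f_i$ is divisible by $x^{p_i}$ or better, and whose leading monomial is exactly $\mu_{i+1}$ with coefficient $1$; the degree bookkeeping uses \eqref{good3eq} via $p_{i-1}+a_{i-2}=p_i+a_i$. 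The consecutive relation then holds \emph{identically} by construction --- the coefficient on $\mu_{i-1}$ is $1$ by fiat, and the coefficients are solved for explicitly by the recursion $\Lambda_{i+1}=(\Lambda_i,0,\dots,0)+(0,\dots,0,1,\Lambda_{i-1})$ --- so $I=(f_1,\dots,f_{t+1})=(f_t,f_{t+1})$ with no genericity argument. Your Buchberger verification is then also unnecessary: since every row form $y^{r-1-a_{i-1}}f_i$ has the shape required in \eqref{mueqn}, the membership criterion for the cell $\mathbb V(E_P)$ (which packages the standard-basis statement, proved in Lemma~\ref{diaglem}) gives $\mathrm{in}_x(I)=E_P$ directly. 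Coprimality is likewise free: $x^{p_1}=f_1\in I$ and $f_{t+1}=y^{a_t}+(\text{terms divisible by }x)$ force the common zero locus of $f_t,f_{t+1}$ to be the origin, so they form a regular sequence. If you keep your top-down plan, you must supply the per-corner nonvanishing argument you postponed; if you adopt the paper's bottom-up definition, that obligation disappears.
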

\medskip

\begin{proof}

We inductively define $t+1$ polynomials $f_1, \ldots, f_{t+1}$ in $R$ such that 
\begin{itemize}
\item[(1)] For $i\in [1, t+1]$, $f_i$ is a homogeneous polynomial with leading term $x^{p_i}y^{a_{i-1}}$, where $a_i=\displaystyle{\sum_{j=1}^in_i}.$
\item[(2)] For $i\in[2,t]$, $f_{i-1}=\left[x^{(p_{i}-p_{i+1})}\, f_{i+1}\right]-\left[f_{i}\, y^{n_i}\right].$
\end{itemize}
\medskip

Let 
$$
f_1=x^{p_1} \mbox{ and }
f_2=x^{p_2} y^{a_1}+\displaystyle{\sum_{\ell=1}^{a_1}\lambda_{2,\ell} \, x^{(p_2+\ell)}\, y^{(a_1-\ell)}},
$$ where $\lambda_{2,1}, \ldots, \lambda_{2,a_1}\in {\sf k}$ are arbitrary parameters. Then $f_1$ and $f_2$ satisfy condition (1) above. Now assume that $i\in[2,t]$ and that $f_1, \ldots, f_i$ are defined in a way that they satisfy conditions (1) and (2). Suppose that 
$$\begin{array}{ll}
f_{i-1}&=x^{p_{i-1}}y^{a_{i-2}}+\displaystyle{\sum_{\ell=1}^{a_{i-2}}\lambda_{i-1,\ell} \, x^{(p_{i-1}+\ell)}\, y^{(a_{i-2}-\ell)}} \mbox{ and}\\ \\
f_{i}&=x^{p_{i}}y^{a_{i-1}}+\displaystyle{\sum_{\ell=1}^{a_{i-1}}\lambda_{i,\ell} \, x^{(p_{i}+\ell)}\, y^{(a_{i-1}-\ell)}}.
\end{array}$$

In order for $f_{i+1}$ to satisfy (1) and (2) we find $\lambda_{i+1,1}, \ldots, \lambda_{i+1,a_i}\in {\sf k}$ such that 

$$\begin{array}{ll}
f_{i+1}&=x^{p_{i+1}}y^{a_{i}}+\displaystyle{\sum_{\ell=1}^{a_{i}}\lambda_{i+1,\ell} \, x^{(p_{i+1}+\ell)}\, y^{(a_{i}-\ell)}} \mbox{ and}\\ \\
f_{i-1}&=\left[x^{(p_{i}-p_{i+1})}\, f_{i+1}\right]-\left[f_{i}\, y^{n_i}\right].
\end{array}$$

We have 
\begin{small}
$$\begin{array}{ll}
\left[x^{(p_{i}-p_{i+1})}\, f_{i+1}\right]-\left[f_{i} \, y^{n_i}\right]&=\left[ x^{p_i} y^{a_i}+\displaystyle{\sum_{\ell=1}^{a_i}\lambda_{i+1,\ell} \, x^{(p_i+\ell)}\, y^{(a_i-\ell)}}\right]-\left[x^{p_i} y^{a_i}+\displaystyle{\sum_{\ell=1}^{a_{i-1}}\lambda_{i,\ell} \, x^{(p_i+\ell)}\, y^{(a_i-\ell)}} \right]\\ \\ 
&=\displaystyle{\sum_{\ell=1}^{a_{i-1}}(\lambda_{i+1,\ell}-\lambda_{i,\ell}) \, x^{(p_i+\ell)}\, y^{(a_i-\ell)}}+\displaystyle{\sum_{\ell=a_{i-1}+1}^{a_i}\lambda_{i+1,\ell} \, x^{(p_i+\ell)}\, y^{(a_i-\ell)}}.
\end{array}$$
\end{small}
By construction, the degree of the polynomial $\left(\left[x^{(p_{i}-p_{i+1})}\, f_{i+1}\right]-\left[f_{i}\, y^{n_i}\right]\right)$ is $(p_i+a_i)$. On the other hand, by assumption, we also have $p_{i-1}=p_i+n_i+n_{i-1}$. This implies that $p_{i-1}+a_{i-1}=p_i+a_i$ and therefore the degree of the polynomial $\left(\left[x^{(p_{i}-p_{i+1})}\, f_{i+1}\right]-\left[f_{i}\, y^{n_i}\right]\right)$ is the same as the degree of $f_{i-1}$. Finally, setting $\left(\left[x^{(p_{i}-p_{i+1})}\, f_{i+1}\right]-\left[f_{i}\, y^{n_i}\right]\right)=f_{i-1}$, we uniquely determine the coefficients $\lambda_{i+1,1}, \ldots, \lambda_{i+1,a_i}$ of $f_{i+1}$ in terms of $\lambda_{i-1,1}, \ldots, \lambda_{i-1,a_{i-2}}$ and  $\lambda_{i,1}, \ldots, \lambda_{i,a_{i-1}}$. In fact, if we let $\Lambda_i=(\lambda_{i,1}, \ldots, \lambda_{i,a_{i-1}})$, then for $i\in[2, t]$, we have $$\Lambda_{i+1}=(\Lambda_i, \underbrace{0, \ldots, 0}_{n_i})+(\underbrace{0, \ldots, 0}_{n_{i-1}+n_i-1}, 1, \Lambda_{i-1}).$$

Therefore, for $i=1, \ldots, t+1$, we have constructed polynomials $$f_i=x^{p_{i}}y^{a_{i-1}}+\displaystyle{\sum_{\ell=1}^{a_{i-1}}\lambda_{i,\ell} \, x^{(p_{i}+\ell)}\, y^{(a_{i-1}-\ell)}}$$ satisfying conditions (1) and (2) above.

Now consider the ideal $I$ of $R$ generated by polynomials $f_1, \ldots, f_{t+1}$ constructed above. Condition (2) implies that $I$ is in fact generated by $f_t$ and $f_{t+1}$. Thus $A=R/I$ is complete intersection. Furthermore, by construction of $I$, multiplication by $x$ in $A$ has Jordan type $P_x=P$, and  the Hilbert function of $H(A)$ is the diagonal lengths of $P$ (see Definition \ref{basiccelldef}, also \cite[Definition 3.3  and Lemma 3.4]{IY}, concerning the cell $\mathbb V(E)$). 
\end{proof}

\begin{example}
Consider the partition $P=(6, 2,2,2)$ satisfying Equation \eqref{good2eq} from Lemma \ref{criterioncor} above. The diagonal lengths of $P$ are $T=(1,2,3,3,2,1)$ which is of the form in Equation \eqref{CITeq}. Following the proof of the Lemma, we construct a complete intersection Artinian algebra $A={\sf k}[x,y]/I$ in which multiplication by $x$ has Jordan type $P$. 

We have $p_1=6$, $n_1=1$, $p_2=2$, $n_2=3$. Thus $a_1=1$ and $a_2=1+3=4.$ We set 
$$\begin{array}{ll}f_1&=x^{p_1}=x^6, \\ \\ f_2&=x^{p_2}y^{a_1}+\alpha x^{(p_2+1)}\, y^{(a_1-1)}=x^2y+\alpha x^3 \mbox{ and}\\ \\f_3&=y^{a_2}+\displaystyle{\sum_{\ell=1}^{a_2} \beta_{\ell} \, x^{\ell}\, y^{a_2-\ell}}=y^4+\beta_1 xy^3+\beta_2 x^2y^2+\beta_3x^3y+\beta_4x^4. \end{array}$$ Here $\alpha\in {\sf k}$ is an arbitrary parameter and  $(\beta_1, \beta_2, \beta_3, \beta_4)=(\alpha, 0, 0, 0)+(0, 0, 0, 1).$
Thus $$f_3=y^4+\alpha xy^3+x^4.$$

Then for each $\alpha\in {\sf k}$, $$A=\frac{{\sf k}[x,y]}{\langle f_2,f_3\rangle}=\frac{{\sf k}[x,y]}{\langle x^2(y+\alpha x),y^4+x(\alpha y^3+x^3)\rangle}$$ is the desired complete intersection Artinian algebra. 

Looking at one such algebra with $\alpha=0$, namely $A=\frac{{\sf k}[x,y]}{\langle x^2y,y^4+x^4\rangle},$ we can easily see that multiplication by $x$ in the basis $\{1, x, \ldots, x^5, y, xy, y^2, xy^2, y^3, xy^3\}$ for $A$ is in Jordan form with Jordan type $P=(6, 2, 2,2).$
 
\end{example}

\subsection{Partitions of diagonal lengths $T$, a combinatorial characterization.}\label{attachbranch1sec}
In this section, we provide a complete combinatorial characterization of partitions of diagonal lengths $T$, where $T$ is a Hilbert function satisfying Equation \eqref{CIT2eq} (and \eqref{CITeq}).
\bigskip

\noindent{\bf Labeling.}  
Let $T=(1_0,2_1,\ldots,(d-1)_{d-2},d_{d-1},\ldots, d_{d+k-2},(d-1)_{d+k-1},\ldots , 2_{j-1},1_j)$.  Let $P$ be a partition having diagonal lengths $T$. In \cite[\S 3.1]{IY2}, A. Iarrobino and J.~Yam\'{e}ogo show that the Ferrers diagram of $P$ is obtained from $\Delta_d$ by attaching $d+1$ ``branches" of lengths $0, k-1, k, \ldots, d+k-2$.\footnote{The result in \cite{IY2} is rather more general, for $T$ satisfying \eqref{HFeq}; the special case for $T$ satisfying Equation \eqref{CITeq} can be readily shown.} We note that when $k=1$, this sequence contains two 0's. Attaching a branch of length zero at a position in $\Delta_d$ represents leaving a gap at the corresponding position of $\Delta_d$. If $k>1$ then in the Ferrers diagram of a partition having diagonal lengths $T$ there is only one gap, a position with no new branch attachment, while for $k=1$ there are two gaps. 
\medskip

\noindent{\bf Convention.} We count the columns of a Ferrers diagram from left to right and its rows from top to bottom. Its boxes correspond to the monomials in $x,y$ (see Example~\ref{431ex} and Figure \ref{hook1fig}).
\medskip

We next define the {\it branch label} associated to a partition $P$ of diagonal lengths $T$ satisfying Equation \eqref{CITeq}. Although related to the concepts of \cite{IY2} this label, an ordered sequence of non-negative integers giving the branch lengths, is new here.

\begin{definition}[Branch label]\label{branchdef}
Let $T=(1, 2, \ldots,(d-1), d^k, (d-1),\ldots, 2,1)$ as in Equation \eqref{CIT2eq}. We label a partition $P$ of diagonal lengths $T$ by a $(d+1)$-tuple $\mathfrak{b}$, as follows. Recall that the Ferrers diagram of $P$ is formed by attaching branches to the $d+1$ attachment places of the basic triangle $\Delta_d$. If $k\geq 2$ then there is only one ``gap" in the Ferrers diagram of $P$, while there are two gaps when $k=1$. The branches below the highest gap are attached to $\Delta_d$ vertically, while the branches above the gap are attached horizontally. The branch label $\mathfrak{b}$ keeps track of the lengths of these attachments by listing the lengths of the vertical attachments listed from left to right, followed by a 0 indicating the gap, and then the lengths of the horizontal attachments listed from top to bottom. When $k\geq 2$, each branch in in fact ``thickened" by an extra $k-2$ boxes which we do not count in measuring each branch in the label. See Figure~\ref{simplebranchlabel} for an illustration of the correspondence between a partition and its branch label.

Let $s=\max\{0, k-2\}.$ Then the branch label $\mathfrak{b}$ for a partition $P$ of diagonal lengths $T$ is obtained by reordering $(0, k-1-s, k-s, \ldots, k+d-2-s)$ described below. If $k=1$ then the branch label $\mathfrak{b}$ is a reordering of the sequence $0,0, 1, \dots, d-1$, while for $k\geq 2$, the branch label $\mathfrak{b}$ is a reordering of $0, 1, \dots, d$. Let $\mathfrak{b}=(\mathfrak{b}(0), \mathfrak{b}(1), \dots, \mathfrak{b}(d))$. We first locate the entry of $\mathfrak{b}$ that corresponds to the gap where the switch between vertical and horizontal attachments occurs, as follows. 
$$e=\max\{i\, |\, \mbox{there is a gap at the position corresponding to } x^iy^{d-i}\}.$$ Then 
\begin{equation}\label{Beqn}
\begin{array}{l}

\mathfrak{b}(i)=\left\{
\begin{array}{lcl} 0 && i=e\\ 
\mbox{[Length of the }(i+1)\mbox{-st column of } P] -(d-i)-s&&i<e\\
\mbox{[Length of the }(i-e)\mbox{-th row of }  P] -(d-i+e+1)-s&&i>e.
 \end{array}\right.
 \end{array}
 \end{equation}
 
  \medskip
  
Conversely, assume that $\mathfrak{b}=(b_0, \ldots, b_d)$ is a reordering of  $(0,0, 1, \ldots, d-1)$ when $k=1$, and a permutation of $\{0, 1, 2, \ldots, d\}$ for $k>1$. The Ferrers diagram assigned to $\mathfrak{b}$ is obtained from $\Delta_d$ through the following attachment process.  
\begin{align}
&\text{Let $e=\max\{i\, |\, b_i=0\}$. For $0\leq i < e$ a vertical branch of length $b_i+s$ is attached}\notag\\
&\text{ at the end of the $(i+1)$-st column of $\Delta_d$;}\notag\\
&\text{while for $e<i \leq d$ a horizontal branch of length $b_i+s$ is attached}\notag\\
&\text{ at the end of the $(i-e)$-th row of $\Delta_d$.}\label{PfromBeqn} 
\end{align}

%
%
%
%
\end{definition}
\begin{figure}
$$\begin{array}{|c|c|c|c|}
\hline
\mbox{Partition:}&
\scalebox{0.7}{
\begin{ytableau}
\,&&&&*(black! 50)&*(black! 50)\cr
\,&&\cr
\,&&*(black! 50)\cr
\,\cr
*(black! 50)\cr
*(black! 50)\cr
*(black! 50)\cr
\end{ytableau}}
&
\scalebox{0.7}{\begin{ytableau}
\,&&&&*(black! 50)&*(black! 50)&*(black! 50)\cr
\,&&&*(black! 50)&*(black! 50)&*(black! 50)&*(black! 50)\cr
\,&\cr
\,&*(black! 50)\cr
*(black! 50)&*(black! 50)\cr
\end{ytableau}}
&
\scalebox{0.7}{
\begin{ytableau}
\,&&&&*(black! 20)&*(black! 20)&*(black! 50)&*(black! 50)&*(black! 50)\cr
\,&&&*(black! 20)&*(black! 20)&*(black! 50)&*(black! 50)&*(black! 50)&*(black! 50)\cr
\,&\cr
\,&*(black! 20)\cr
*(black! 20)&*(black! 20)\cr
*(black! 20)&*(black! 50)\cr
*(black! 50)&*(black! 50)\cr
\end{ytableau}}
\\
\hline
\mbox{Diagonal Lengths:}&(1,2,3,4,3,2,1)&(1,2,3,4^2,3,2,1)&(1,2,3,4^3,3,2,1)
\\
\hline
\mbox{Attachment Lengths:}&
\begin{array}{lllll}
&&&&2\\
&&&\boxed{\boxed{0}}\\
&&1&\\
&0&&\\
3&&
\end{array}
&
\begin{array}{llll}
&&&3\\
&&&4\\
&&\boxed{\boxed{0}}&\\
1&2&
\end{array}
&
\begin{array}{cccc}
&&&\textcolor{gray}{2\, +}\,  \, 3\\
&&&\textcolor{gray}{2\, +}\, \, 4\\
&&\boxed{\boxed{0}}&\\
\textcolor{gray}{2}&\textcolor{gray}{2}\\
\textcolor{gray}{+}&\textcolor{gray}{+}\\
1&2&
\end{array}
\\
\hline
\mbox{Branch Label:}&
(3,0,1, 0,2)&(1,2,0,3,4)&(1,2,0,3,4)
\\
\hline
\end{array}$$
\caption{An illustration of the correspondence between a partition of given diagonal lengths satisfying Equation \ref{CIT2eq} and its branch label. The branches below the highest gap which is indicated by the boxed 0's, are attached vertically and are listed in the label from left to right, while the branches above the gap are attached horizontally and are listed in the label from top to bottom. }\label{simplebranchlabel}
\end{figure}

\begin{example}\label{labeldefnexample}

Consider the partition $P_1=(6,3^2,1^4)$ illustrated in Figure~\ref{simplebranchlabel} on the left. Then the  diagonal lengths of $P_1$ are given by $T_1=(1,2,3,4,3,2,1)$. Using the same notations as in definition \ref{branchdef}, for $P_1$ we have $d=4$ $k=1$. Thus the branch label of $P_1$ will be a $5$-tuple, say $(b_0, b_1,b_2, b_3, b_4)$, with entries $0, 0, 1, 2, 3$. As we see in the figure, the Ferrers diagram of $P_1$ consists of the basic triangle $\Delta_4$, shown in white, and three branch attachments, shown in dark gray. There are two ``gaps" in the Ferrers diagram, one corresponds to the monomial $xy^3$ and the other one corresponds to $x^3y$. Thus $e=3$, representing the gap at $x^3y$. As the diagram illustrates, in the Ferrers diagram of $P_1$, the branches to the left of this gap are attached to $\Delta_4$ vertically while the ones above the gap are attached horizontally. Since $e=0$, by definition we have $b_3=0$. Moreover, the entries of the branch label to the left of $b_3$, list the length of the branch (vertical) attachments from left to right, giving us the sequence $3,0,1$, and the entries of the branch label to the right of $b_3$ will list the lengths of attachments from top to bottom, for $P_1$ there is only one horizontal attachment of length 2. Thus the branch label of $P_1$ is $(3,0,1,0,2)$.

Next we consider $P_2=(7^2,2^3)$ illustrated in Figure~\ref{simplebranchlabel} in the middle. Here the diagonal lengths are given by $T_2=(1,2,3,4^2,3,2,1)$. We have $d=4$ and $k=2$. We also see that in the Ferrers diagram of $P_2$ is obtained from $\Delta_4$ by attaching four branches of lengths $1,2,3,4$. We also note that the (only) gap corresponds to $x^2y^2$. Thus in the branch label of $P_2$, which will be a $5$-tuple $(b_0, \dots, b_4)$, we have $b_2=0$. Listing the lengths of the vertical branches from left to right, we see that the vertical portion of the label consists of $1,2$, while the horizontal part, listed from top to bottom will be $3,4$. Thus the branch label of $P_2$ is $\mathfrak{b}=(1,2,0,3,4)$. 

Finally, consider $P_3=(9^2, 2^5)$ illustrated in Figure~\ref{simplebranchlabel} on the right. The diagonal lengths of $P_3$ are given by $T_3=(1,2,3,4^4,3,2,1)$. In this case, the actual lengths of the branch attachments to $\Delta_4$ are $3,4,5,6$. However, using the notation introduced in the definition \ref{branchdef}, here we have $s=\max\{0,4-2\}=2$ and therefore we label the partition $P_3$ with $0, 3-2,4-2,5-2,6-2$. Following the same rules as before, the branch label for $P_3$ is $(1,2,0,3,4)$. 
\end{example}
\begin{example}
Conversely, given a branch label and a sequence satisfying Equation \ref{CITeq} as the diagonal lengths, we can uniquely determine the corresponding partition. For example, the unique partition $Q$ with diagonal lengths $(1, 2,3,4^7,3,2,1)$ and branch label $(2,0, 3, 4, 1)$ is obtained by attaching branches to the basic triangle $\Delta_4$ as follows. The entry 0 corresponds to a gap at the bottom of the second column of $\Delta_4$. Each non-zero entry in the branch label corresponds to a branch attachment to $\Delta_4$. Since in the diagonal lengths sequence we have $s=\max\{0, 7-2\}=5$, the actual length of each attached branch is the corresponding entry in the label plus 5. Thus the partition $Q$ is obtained from $\Delta_4$ by attaching a vertical branch of length 7 to the first column and attaching horizontal branches of  lengths 8, 9 and 6 to rows one, two and three, respectively. Thus $Q=(4+8, 3+9, 2+6, 1, 1^{7})=(12^2, 8, 1^8)$. See Figure~\ref{labeltopartsimplefig}.

\begin{figure}
    $$\begin{array}{|c|c|c|c|}
    \hline
        \mbox{Diagonal Lengths}
        &\mbox{Branch Label}
        &
        \mbox{Attachment Lengths}
        &
        \mbox{Partition}\\
        \hline
        (1,2,3,4^7,3,2,1)  &
         (2, 0, 3,4,1)&  \begin{array}{lllll}
&&&&\textcolor{gray}{5\,+\, }\, 3\\
&&&\textcolor{gray}{5\,+\, }\, 4\\
&&\textcolor{gray}{5\,+\, }\, 1\\
&\boxed{\boxed{0}}\\
\textcolor{gray}{5}\\
\textcolor{gray}{+}\\
2
\end{array}
&
\scalebox{0.5}{
\begin{ytableau}
\,&&&&*(black! 20)&*(black! 20)&*(black! 20)&*(black! 20)&*(black! 20)&*(black! 50)&*(black! 50)&*(black! 50)\cr
\,&&&*(black! 20)&*(black! 20)&*(black! 20)&*(black! 20)&*(black! 20)&*(black! 50)&*(black! 50)&*(black! 50)&*(black! 50)\cr
\,&&*(black! 20)&*(black! 20)&*(black! 20)&*(black! 20)&*(black! 20)&*(black! 50)\cr
\cr
*(black! 20)\cr
*(black! 20)\cr
*(black! 20)\cr
*(black! 20)\cr
*(black! 20)\cr
*(black! 50)\cr
*(black! 50)\cr
\end{ytableau}}\\
\hline

    \end{array}
    $$
    \caption{Constructing the partition with diagonal lengths $T=(1,2,3,4^7,3,2,1)$ and branch label $(2,0,3,4,1)$. See Example~\ref{labeldefnexample}.}
    \label{labeltopartsimplefig}
\end{figure}
\end{example}
\par For an illustration of all partitions of diagonal lengths $T=(1, 2^k,1)$ for $k=4$ and $k=1$, and their corresponding labels for branches attached to $\Delta_2$, see Figure \ref{altbranch2kdiagram}.

\begin{figure}
\begin{center}
\includegraphics[scale=.475]{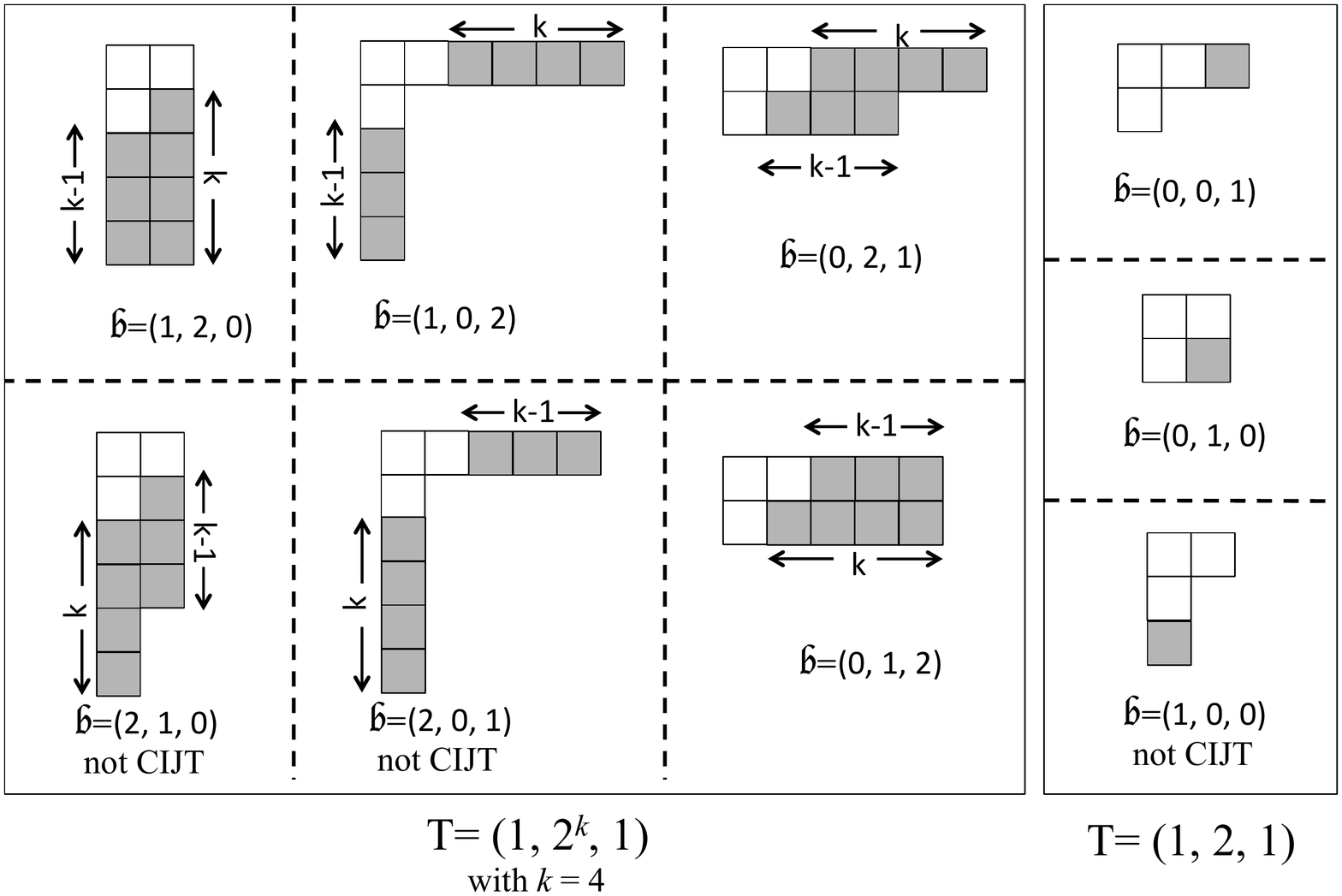}\vspace{-0.5 in}
\end{center}
\caption{Jordan Types and their associated branch labels for $T=(1,2^k,1)$, with $k\in \{1,4\}$.}\label{altbranch2kdiagram}
\end{figure}
The following lemma provides a characterization of branch labels associated with partitions of diagonal lengths $T$ when $k>1$.
\begin{lemma}[Branch labels when $k>1$]\label{partlabel}
Assume that $T=(1, 2, \ldots, d^k, \ldots, 2,1)$, satisfying Equation \eqref{CITeq} with $k>1$. If $P$ is a partition having diagonal lengths $T$, then it can be labeled by a sequence $\mathfrak{b}$ 
of the form $\mathfrak{b}=\left(\mathfrak{v}_0 \ldots, \mathfrak{v}_\epsilon, 0, \mathfrak{h}_{\epsilon+1} \ldots, \mathfrak{h}_c\right)$ where the $\mathfrak{v}_i$'s and $\mathfrak{h}_i$'s are distinct subintervals of $\{1, \ldots, d\}$ such that 

\begin{equation}
\begin{array}{l}
\displaystyle{\left(\cup_{0}^\epsilon\mathfrak{v}_i\right)} \cup \displaystyle{\left(\cup_{\epsilon+1}^c\mathfrak{h}_i\right)}=\{1, \ldots, d\},\\ 
\min(\mathfrak{v}_0)>\min(\mathfrak{v}_1)>\cdots >\min(\mathfrak{v}_\epsilon) \mbox{ and } \\ 
\max(\mathfrak{h}_{\epsilon+1})>\max(\mathfrak{h}_{\epsilon+2})>\cdots >\max(\mathfrak{h}_c).
\end{array}\label{BCITTeqn}
\end{equation}

Conversely, the Ferrers diagram associated as in \eqref{Beqn} to a branch label of the above form \eqref{BCITTeqn} represents a partition $P_{\mathfrak {b}}$ of diagonal lengths $T$.

\end{lemma}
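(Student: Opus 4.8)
The plan is to translate the partition condition on $P$ --- that its row lengths, equivalently its column lengths, are weakly decreasing --- directly into ``step'' conditions on the branch label, and then to recognize those step conditions as the interval structure \eqref{BCITTeqn}. First I would fix notation. By the result of \cite{IY2} recalled above, since $P$ has diagonal lengths $T$ its label $\mathfrak{b}=(b_0,\dots,b_d)$ is a permutation of $\{0,1,\dots,d\}$ with a single $0$, occurring at position $e=\max\{i:b_i=0\}$; the entries $b_0,\dots,b_{e-1}$ record the vertical branches on columns $1,\dots,e$ and $b_{e+1},\dots,b_d$ record the horizontal branches on rows $1,\dots,d-e$, as in Definition~\ref{branchdef}. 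Writing $s=k-2\ge 0$, the construction \eqref{PfromBeqn} gives the length of the $q$-th column (for $1\le q\le e$) as $L_q=(d-q+1)+b_{q-1}+s$, and the length of the $r$-th row (for $1\le r\le d-e$) as $\lambda_r=(d-r+1)+b_{e+r}+s$.

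For the forward direction I would use that, because $P$ is a partition, $L_1\ge L_2\ge\cdots\ge L_e$ and $\lambda_1\ge\cdots\ge\lambda_{d-e}$. Substituting the formulas above, these inequalities are exactly $b_q\le b_{q-1}+1$ for $1\le q\le e-1$ and $b_{e+r+1}\le b_{e+r}+1$ for $1\le r\le d-e-1$: reading each of the vertical block and the horizontal block in its listed order, the entries never jump up by more than $1$. Since the entries are distinct, a step that is not $+1$ is strictly negative, so each block decomposes uniquely into maximal runs of the form $a,a+1,\dots,b$; these runs are the claimed subintervals $\mathfrak{v}_0,\dots,\mathfrak{v}_\epsilon$ and $\mathfrak{h}_{\epsilon+1},\dots,\mathfrak{h}_c$. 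The key point is that a later run cannot climb back over an earlier one in the same block: if a later interval $I'$ had $\max I'\ge \max I$ for an earlier interval $I$, then since both are intervals and a downward step occurred between them ($\min I'<\max I$), the integer $\max I$ would lie in $I'$ as well, contradicting distinctness. Hence the maxima strictly decrease along each block, and because the intervals are pairwise disjoint this is equivalent to the stated $\min(\mathfrak{v}_0)>\cdots>\min(\mathfrak{v}_\epsilon)$ and $\max(\mathfrak{h}_{\epsilon+1})>\cdots>\max(\mathfrak{h}_c)$; together the runs exhaust $\{1,\dots,d\}$.

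For the converse I would reverse this. A label of the form \eqref{BCITTeqn} has each interval listed increasingly, and the conditions on minima and maxima force each later interval to lie entirely below the previous one in its block, so the step conditions $b_q\le b_{q-1}+1$ and $b_{e+r+1}\le b_{e+r}+1$ hold; these in turn give $L_1\ge\cdots\ge L_e$ and $\lambda_1\ge\cdots\ge\lambda_{d-e}$. It then remains to check that monotone columns together with monotone rows really do assemble into a genuine Ferrers diagram, with no hidden compatibility condition at the gap. Here I would observe that the two families of attached boxes occupy disjoint regions: the vertical branches lie in rows $\ge d-e+1$ and columns $\le e$, while the horizontal branches lie in rows $\le d-e$ and columns $\ge e+2$. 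Consequently each attached box is supported by its left and upper neighbours precisely because of the two monotonicity chains, and the last extended row $\lambda_{d-e}=(e+1)+b_d+s\ge e+1$ strictly exceeds the length $e$ of the first unextended row $d-e+1$, so the transition at the gap is automatic. Thus $\mathbb P_{\mathfrak b}$ is a partition, and since its attached branches again have lengths $0,k-1,k,\dots,d+k-2$, its diagonal lengths are $T$ by \cite{IY2}.

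I expect the main obstacle to be exactly this last verification: proving that the separate row- and column-monotonicities are \emph{jointly} sufficient, i.e.\ that no extra junction condition is concealed at the gap where the vertical attachments give way to the horizontal ones. The cleanest route is the disjoint-region decomposition just described, which reduces validity of the whole diagram to the two one-dimensional monotonicity statements already secured. A secondary, more routine point is the equivalence between the ``step $\le +1$'' formulation and the interval formulation \eqref{BCITTeqn}, which relies essentially on distinctness of the entries through the climbing-back argument above.
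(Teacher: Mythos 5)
Your proof is correct and takes essentially the same route as the paper's: both translate the partition condition into the step inequalities $b_{i+1}\le b_i+1$ away from the gap (the paper's Equation \eqref{partineq}), decompose each block into maximal increasing runs, use distinctness of the entries to force the strictly decreasing minima/maxima (your ``climbing-back'' argument is the paper's $D_v$/$D_h$ descent argument), and reverse this for the converse. The only divergence is that you explicitly verify the junction at the gap -- that the separate column- and row-monotonicities jointly assemble into a genuine Ferrers diagram -- a point the paper dispatches with the remark that by construction only the first $e$ columns and first $d-e$ rows need checking; your disjoint-region verification is a correct (and welcome) filling-in of that step, not a different method.
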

\par\noindent
{\bf Note.} Because the intervals are non-overlapping, we may replace $\max$ by $\min$ or vice versa in either chain of inequalities in \eqref{BCITTeqn}.

\begin{proof}
Let $\mathfrak{b}=(b_0, \ldots, b_d)$ be a permutation of $\{0, \ldots, d\}$ and let $P$ be the sequence of numbers counting the lengths of rows of the Ferrers diagram associated with $\mathfrak{b}$ as constructed in \eqref{Beqn} above. Then $P$ is a partition if and only if going from left to right no column is followed by a longer column and going from top to bottom no row is followed by a longer row. By construction of the diagram, we only need to check the first $e$ columns and the first $d-e$ rows of the Ferrers diagram, where $e$ is such that $\mathfrak{b}_e=0$. Thus $P$ is a partition if and only if for $0\leq i <e$, $$b_{i+1}+d-(i+1)+k-2\leq b_{i}+d-i+k-2,$$ and for $e<i\leq d$, $$b_{i+1}+d-(e-i)+k-2\leq b_{i}+d-(e-i-1)+k-2.$$ 

Thus $P$ is a partition if and only if for all $i \neq e$ 
\begin{equation}\label{partineq}
b_{i+1}\leq b_i+1.
\end{equation}

This in particular means that in $\mathfrak{b}$, going from each entry to the next, avoiding $b_e=0$, the value either goes up by exactly 1 or it drops. In fact if $P$ is a partition then the corresponding $\mathfrak{b}$ is the concatenation of intervals of the form $\Big(b, \, b+1,\, \ldots, \,  b+x\, \Big)$. 
\vskip 0.2cm
To prove the statement of the Lemma, we first assume that $P$ is a partition of diagonal lengths $T$ and that $\mathfrak{b}$ is its corresponding branch label. We will show that $\mathfrak{b}$ has the form described in the Lemma. Let $e=\mathfrak{b}^{-1}(0)$.

We define $D_v=\{i\, |\,  1\leq i < e \mbox{ and } b_{i}<b_{i-1}\}.$ The elements of $D_v$ correspond to the positions in ``the vertical part" of sequence $\mathfrak{b}$ where there is a descent -- the entries drop. In other words, if $a$ and $a'$ are two consecutive elements in $D_v$, and if $a<i<a'$, we have $i\not \in D_v$, and therefore by \eqref{partineq}, $b_i=b_{i-1}+1$. Thus $$(b_a, \ldots, b_{a'-1})=(b_a, b_a+1, \ldots, b_a+[a'-a-1]).$$

We also note that by the definition of $D_v$, the integer $b_{a'}$ is strictly smaller than its previous entry, which, as seen above, is equal to $b_a+a'-a-1$. Since the entries of $\mathfrak{b}$ are distinct and $b_a, \ldots,  b_a+(a'-a-1)$ are already in $\mathfrak{b}$, we can conclude that when $a<a'$ in $D_v$, we have $b_a'<b_a$.

 \medskip

Next, we make a similar analysis of  the ``horizontal part" of $\mathfrak{b}$. Let $D_h=\{i\, |\,  e  < i< d \mbox{ and } b_{i+1}<b_{i}\}.$ Assume that $a$ and $a'$ are two consecutive elements in $D_h$. If $a<i<a'$ then $i\not \in D_h$, and by \eqref{partineq}, $b_{i+1}=b_{i}+1$. Thus $$(b_{a+1}, \ldots, b_{a'})=(b_{a'}-(a'-a-1), \ldots, b_{a'}-1, b_{a'}).$$

We also note that by the definition of $D_h$, $b_{a}$ is strictly greater than the next entry in $\mathfrak{b}$, namely $b_{a'}-(a'-a-1)$. Again, using the fact that the entries of $\mathfrak{b}$ are distinct and that $b_{a'}-(a'-a-1), \ldots,  b_{a'}$ are already in $\mathfrak{b}$, we conclude that if $a<a'$ in $D_h$, then $b_a'<b_a$.

\medskip

Therefore, if $P$ is a partition having diagonal lengths $T$, then it can be labeled by a sequence $\mathfrak{b}$ satisfying the conditions \eqref{BCITTeqn} of the Lemma, as desired. 
\bigskip

To prove the converse, assume that $\mathfrak{b}$ satisfies Equation \eqref{BCITTeqn}. We will show that the corresponding sequence $P_{\mathfrak {b}}$ formed by the lengths of rows of the Ferrers diagram defined in \eqref{PfromBeqn} is a partition having diagonal lengths $T$. 

Let $\mathfrak{b}=\left(\mathfrak{v}_0 \ldots, \mathfrak{v}_\epsilon, 0, \mathfrak{h}_{\epsilon+1} \ldots, \mathfrak{h}_c\right)$ and assume that for $0\leq i \leq \epsilon$, $\min(\mathfrak{v}_i)=m_i$ and $|\mathfrak{v}_i|=a_i$, and for $\epsilon < i \leq c$, $\max(\mathfrak{h}_i)=M_i$ and $|\mathfrak{h}_i|=a_i$. Then by assumption 
\begin{equation}\label{BcondCIT}
\begin{array}{l}

\mathfrak{v}_i=\{m_i, m_i+1, \ldots, m_i+a_i-1\},\\ 
m_0>\cdots>m_\epsilon,\\ 
 \mathfrak{h}_i=\{M_i-a_i+1, \ldots, M_i\}, \\
 M_{\epsilon+1}>\cdots >M_c, \mbox{ and } \\ 
 \mathfrak{b}(e)=0 \mbox{ for } e=a_0+\cdots+a_\epsilon.
\end{array}
\end{equation}

Now assume that $i\neq e$ and compare $\mathfrak{b}(i)=b_i$ and $\mathfrak{b}(i+1)=b_{i+1}$. If there exists $j$ such that both $b_i$ and $b_{i+1}$ belong to $\mathfrak{v}_j$ or $\mathfrak{h}_j$, then we obviously have $b_{i+1}=b_i+1$. If $b_i$ and $b_{i+1}$ are in different subintervals of $\mathfrak{b}$, then by the assumption about $\mathfrak{b}$, one of the following occurs:
\begin{itemize}
\item[(a)] There exists $0\leq j < \epsilon$ such that $b_i=\max(\mathfrak{v}_j)$ and $b_{i+1}=\min(\mathfrak{v}_{j+1})$;
\item[(b)] $b_i=\max(\mathfrak{v}_\epsilon)$ and $b_{i+1}=0$;
\item[(c)] There exists $\epsilon+1\leq j < c$ such that $b_i=\max(\mathfrak{h}_j)$ and $b_{i+1}=\min(\mathfrak{h}_{j+1})$.
\end{itemize}

Using the assumptions \eqref{BcondCIT} about $\mathfrak{b}$, we have that for $0\leq j < \epsilon$, 

$$\max(\mathfrak{v}_j)=m_j+a_j-1\geq m_j>m_{j+1}=\min(\mathfrak{v}_{j+1}),$$

and for $\epsilon+1\leq j < c$

$$\max(\mathfrak{h}_j)=M_j\geq M_{j+1}\geq M_{j+1}-a_j+1=\min(\mathfrak{h}_{j+1}).$$

Therefore, the inequality $b_{i+1}\leq b_i+1$ holds for all $i\neq e$. Thus by \eqref{partineq} $P$ is a partition of diagonal lengths $T$, as claimed.
\end{proof}

\begin{example}
We revisit the partition $P=(19^2, 11^3, 5^3, 3^8)$ with diagonal lengths $T_P=(1, 2, \ldots, 9, 10^2, 9, \ldots, 2, 1)$ and branch label $\mathfrak{b}_P=\big(6,7,8,1,2,0,9,10,3,4,5\big)$ from Example \ref {labeldefnexample}. In light of Lemma \ref{partlabel}, we can partition the label of $P$ into increasing vertical and horizontal subintervals as follows. $$\mathfrak{b}_P=\big(\{6,7,8\},\{1,2\},0,\{9,10\},\{3,4,5\}\big).$$ Here $e=5$, the vertical subintervals are $\mathfrak{v}_0=\{6,7,8\}$, $\mathfrak{v}_1=\{1,2\}$,  and the horizontal subintervals are $\mathfrak{h_2}=\{9,10\}$ and $\mathfrak{h_3}=\{3,4,5\}$. We also note that. as indicated in the inequalities \eqref{BCITTeqn}, the minima (equivalently maxima) of the vertical subintervals are decreasing as we move from one subinterval to the next. Similar inequalities also hold in the horizontal part of the label, while
there is no such requirement on how the minima of a vertical subinterval and of a horizontal subinterval compare.\par 

\end{example}

Next we prove a similar Lemma for the case $T=(1, \ldots, d-1, d^k, d-1, \ldots, 1)$ with $k=1$. 

We note that in this case, the Ferrers diagram of a partition $P$ having diagonal lengths $T$ is obtained from $\Delta_d$ by attaching $d-1$ branches of lengths $1, \ldots, d-1$ and leaving two gaps. We also note that since $P$ is a partition, the space between the two gaps in its Ferrers diagram must be ``filled up". In other words, If $P$ has gaps at positions corresponding to the monomials $x^vy^{d-v}$ and $x^hy^{d-h}$ with $v<h$, then for $1 \leq i \leq h-v-1$, a vertical branch of length $i$ is attached to column $v+i+1$  (equivalently,  a horizontal branch of length $i$ is attached to row $(d-h+i+1)$). Now each partition $P$ of diagonal lengths $T$ is obtained by attaching branches of lengths $h-v, \ldots, d-1$ to the remaining attachment places where the attachments to columns $1, \ldots, v$ are all vertical and attachments to rows $1, \ldots, d-h$ are all horizontal. In fact, each such $P$ is labeled by a $(d+1)$-tuple $\mathfrak{b}$ defined as follows.
\medskip
\begin{equation}\label{label2eqn}
\begin{array}{l}
\mathfrak{b}(i)=\left\{
\begin{array}{lcl} 0 && i=v, \, h\\ 
v+i+1&& v<i <h\\ 
\mbox{[Length of the }(i+1)\mbox{-st column of } P] -(d-i)&&i<v\\
\mbox{[Length of the }(i-h)\mbox{-th row of }  P] -(d-i+h+1)&&i>h\\
 \end{array}\right.
 \end{array}
 \end{equation}

Using the same arguments as in the proof of Lemma \ref{partlabel} we can show the following Lemma for this case, that the multiplicity of $d$ in $T$ is one.

\begin{lemma}[Branch labels when $k=1$]\label{partlabel2}
Assume that $T=(1, 2, \ldots, d-1,d,d-1, \ldots, 2,1)$. If $P$ is a partition having diagonal lengths $T$, then it can be labeled by a sequence $\mathfrak{b}$ 
of the form $\mathfrak{b}=\left(\mathfrak{v}_0 \ldots, \mathfrak{v}_\epsilon, 0, 1, \ldots, g-1, 0,  \mathfrak{h}_{\epsilon+1} \ldots, \mathfrak{h}_c\right)$
where $0<g\leq d$, and the $\mathfrak{v}_i$'s and $\mathfrak{h}_i$'s are distinct non-overlapping subintervals of $\{g, \ldots, d-1\}$ such that 
\begin{equation}\label{branchmult12beqn}
\begin{array}{l}
\displaystyle{\left(\cup_{0}^\epsilon\mathfrak{v}_i\right)} \cup \displaystyle{\left(\cup_{\epsilon+1}^c\mathfrak{h}_i\right)}=\{g, \ldots, d-1\},\\ 
\min(\mathfrak{v}_0)>\min(\mathfrak{v}_1)>\cdots> \min(\mathfrak{v}_\epsilon) \mbox{ and } \\ 
\max(\mathfrak{h}_{\epsilon+1})>\max(\mathfrak{h}_{\epsilon+2})>\cdots> \max(\mathfrak{h}_c).
\end{array}
\end{equation}
Conversely, the Ferrers diagram associated to a branch label of the form described above represents a partition of diagonal lengths $T$.

\end{lemma}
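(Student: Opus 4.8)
The plan is to mirror the structure of the proof of Lemma~\ref{partlabel}, adapting the combinatorial argument to the $k=1$ situation where there are two gaps rather than one. The key new feature is that the interval strictly between the two gaps is forced: once we know the gap positions $v<h$, every attachment place in between is filled by a branch whose length is determined by the requirement that $P$ be a partition (no row or column may be followed by a longer one). So my first step is to record, exactly as in the discussion preceding the statement, that the portion of $\mathfrak{b}$ strictly between the two zeros must be the consecutive run $1, 2, \ldots, g-1$ (where $g=h-v$), since the intermediate vertical branch attached to column $v+i+1$ has length $i$. This pins down the middle block of the claimed form and explains why the remaining branch lengths range over $\{g, \ldots, d-1\}$.

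Next I would reduce to the ``partition test'' inequality. Precisely as in Lemma~\ref{partlabel}, the sequence of row lengths produced from $\mathfrak{b}$ via \eqref{label2eqn} is a genuine partition if and only if, reading left to right and skipping the two gap positions $i=v$ and $i=h$, one has $b_{i+1}\le b_i+1$ for every other $i$; the computation that turns the geometric ``no-longer-column / no-longer-row'' condition into this inequality is identical to the $k>1$ case because the constant shift $(d-i)$ (respectively $(d-i+h+1)$) behaves the same way and the $s=\max\{0,k-2\}=0$ thickening disappears when $k=1$. Having isolated this inequality, the argument splits into the vertical part (indices $i<v$) and the horizontal part (indices $i>h$), and in each I would introduce the descent sets $D_v$ and $D_h$ exactly as before. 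Between two consecutive descents the entries must increase by exactly $1$, so each maximal run is a subinterval; distinctness of the entries, together with the fact that the run immediately before a descent already exhausts a block of consecutive integers, forces the minima of the vertical subintervals to strictly decrease and the maxima of the horizontal subintervals to strictly decrease. This yields the conditions \eqref{branchmult12beqn}.

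For the converse I would run the same verification as in Lemma~\ref{partlabel}: given $\mathfrak{b}$ of the stated form, I check the inequality $b_{i+1}\le b_i+1$ at every non-gap index. Within a single subinterval $\mathfrak{v}_j$ or $\mathfrak{h}_j$ equality holds by construction; at a junction between two consecutive vertical (or two consecutive horizontal) subintervals the drop is guaranteed by $\min(\mathfrak{v}_j)>\min(\mathfrak{v}_{j+1})$ (respectively $\max(\mathfrak{h}_j)>\max(\mathfrak{h}_{j+1})$); and the two transitions into and out of the forced middle run $0,1,\ldots,g-1,0$ need a direct check, using $\min(\mathfrak{v}_\epsilon)\ge g$ and $\min(\mathfrak{h}_{\epsilon+1})\ge g$ so that the step down to $0$ and the step up from the run are both admissible. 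Confirming that the resulting diagonal lengths are exactly $T$ follows because the $d-1$ branch lengths $1,\ldots,d-1$ are each used once and attached at the prescribed places, which is the defining property of $\Delta_d$ with diagonal lengths $T$.

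The only genuinely delicate point, and the one I would flag as the main obstacle, is the bookkeeping at the two gaps and the forced middle block. In the $k>1$ proof there is a single switch from vertical to horizontal attachments, so the descent analysis is uninterrupted; here the run $1,\ldots,g-1$ sits between the gaps and must be shown both to be forced (it is not free data, unlike the $\mathfrak{v}_i$ and $\mathfrak{h}_i$) and to interface correctly with the descent inequalities on either side. I would handle this by treating the lower gap position $v$ and upper gap position $h$ asymmetrically in the shift formulas \eqref{label2eqn} and verifying that the inequality test is vacuous at $i=v,h$ by design, so that the middle run never participates in a descent set. Once that is set up cleanly, the remainder is a routine transcription of the $k>1$ argument.
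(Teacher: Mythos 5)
Your proposal is correct and follows essentially the same route as the paper, whose entire proof of Lemma~\ref{partlabel2} is the remark that the arguments of Lemma~\ref{partlabel} apply verbatim, after the preceding discussion establishes the forced middle run $1,\ldots,g-1$ between the two gaps and the labeling \eqref{label2eqn}. Your plan reproduces exactly that adaptation (reduction to $b_{i+1}\le b_i+1$ away from the switch point, the descent sets $D_v$ and $D_h$, and the interface checks at the two gaps), and in fact supplies more detail than the paper does; the only slight imprecision is calling the test at $i=v$ ``vacuous,'' since the constraint at the lower gap is automatically satisfied (indeed it is what forces the middle run), but you handle this correctly by treating the middle block as forced.
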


\begin{example}
Applying Lemma \ref{partlabel2} to a partition $Q=(19^2, 11^3, 5^3, 3^8)$ with diagonal lengths $T_Q=(1, 2, \ldots, 9, 10, 9, \ldots, 2, 1)$ and branch label $\mathfrak{b}_Q=\big(7,8, 6, 0, 1,2,0,9,3,4,5\big)$, we can write the branch label as $$\mathfrak{b}_Q=\big(\{7,8\}, \{6\}, 0, 1,2 ,0,\{9\},\{3,4,5\}\big).$$

Here $g=3$, the vertical part of the label consists of two subintervals $\mathfrak{v}_0=\{7,8\}$ and $\mathfrak{v}_1=\{6\}$, the horizontal part consists of subintervals $\mathfrak{h}_2=\{9\}$ and $\mathfrak{h}_3=\{3,4,5\}$. 

\end{example}
\subsubsection{Counting partitions having diagonal lengths $T=(1, \ldots, d-1,d^k ,d-1,\ldots, 1)$.}
We give a direct proof of the special case of \cite[Theorem 3.30, Equation 3.35]{IY} for sequences $T$ satisfying Equation \eqref{CITeq}.
\begin{corollary}\label{PdiagTcor} Let $T$ satisfy Equation \eqref{CITeq}.  Then when $k>1$ there are $2\cdot 3^{d-1}$ partitions having diagonal lengths $T$; when $k=1$ there are $3^{d-1}$ such partitions.  These are exactly the partitions that can occur as the Jordan types of linear forms in an algebra having Hilbert function $T$.
\end{corollary}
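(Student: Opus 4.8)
The plan is to use the branch-label descriptions of Lemmas~\ref{partlabel} and~\ref{partlabel2} to replace the enumeration of partitions by an enumeration of admissible labels, and then to count the labels by a direct combinatorial bijection. By those two lemmas the assignment $P\mapsto\mathfrak{b}$ is a bijection between the partitions of diagonal lengths $T$ and the branch labels of the stated form, so it suffices to count such labels in each case. The realization claim is separate and easy, so I would dispose of it last.

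For $k>1$, I would first observe, using the partition condition \eqref{partineq} together with the fact that the entries of $\mathfrak{b}$ are distinct, that the maximal increasing runs of a label of the form \eqref{BCITTeqn} are exactly the subintervals $\mathfrak{v}_i,\mathfrak{h}_i$, and that these constitute a partition of $\{1,\ldots,d\}$ into consecutive intervals. The key point is that the label is reconstructed uniquely from the unordered data of (i)~a partition of $\{1,\ldots,d\}$ into intervals and (ii)~a choice, for each interval, of whether it is vertical or horizontal: the vertical intervals are then forced to be listed with strictly decreasing minima and the horizontal ones with strictly decreasing maxima, and every colouring is admissible since \eqref{BCITTeqn} imposes no condition linking the two blocks. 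This gives a bijection between labels and pairs (interval partition, $V/H$-colouring). Since a partition of $\{1,\ldots,d\}$ into exactly $m$ intervals amounts to choosing $m-1$ of the $d-1$ internal gaps, the count is
\[
\sum_{m=1}^{d}\binom{d-1}{m-1}2^{m}=2\sum_{j=0}^{d-1}\binom{d-1}{j}2^{j}=2\cdot 3^{d-1}.
\]

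For $k=1$, I would run the same argument after isolating the forced central block. By Lemma~\ref{partlabel2} a label is determined by an integer $g$ with $0<g\le d$, which produces the central segment $0,1,\ldots,g-1,0$, together with a partition of the remaining set $\{g,\ldots,d-1\}$ into intervals and a $V/H$-colouring of them, under the same decreasing-min/decreasing-max conventions. The bijection of the previous paragraph applies to the set $\{g,\ldots,d-1\}$ of size $d-g$, giving, for fixed $g$, exactly $2\cdot 3^{(d-g)-1}$ labels when $g<d$ and the single empty label when $g=d$; distinct values of $g$ give distinct labels since $g$ is read off as one more than the number of entries strictly between the two zeros. Writing $n=d-g$ and summing over $g$,
\[
1+\sum_{n=1}^{d-1}2\cdot 3^{n-1}=1+\bigl(3^{d-1}-1\bigr)=3^{d-1}.
\]

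Finally, the statement that these are exactly the partitions occurring as Jordan types of linear forms in some algebra of Hilbert function $T$ follows from Lemma~\ref{diaglem}: part~A shows that any Jordan type has diagonal lengths equal to the Hilbert function, hence equal to $T$, while part~B shows that for any partition $Q$ of diagonal lengths $T$ the monomial algebra $A=R/E_Q$ has $H(A)=T$ and $P_{x,A}=Q$, so $Q$ is realized. The step I expect to require the most care is the bijective reformulation for $k>1$ -- verifying that the maximal increasing runs are intervals and that the ordering within each of the vertical and horizontal blocks is forced -- together with the bookkeeping of the degenerate $g=d$ term in the $k=1$ sum; once these are in place the identity $\sum_{j}\binom{d-1}{j}2^{j}=3^{d-1}$ closes both cases.
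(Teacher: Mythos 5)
Your proposal is correct and follows essentially the same route as the paper: both count the branch labels of Lemmas~\ref{partlabel} and~\ref{partlabel2} by pairing an interval partition of $\{1,\ldots,d\}$ (resp.\ $\{g,\ldots,d-1\}$ after the forced central block) with a vertical/horizontal designation whose internal ordering is forced, and both close with the identity $\sum_{m}\binom{d-1}{m-1}2^{m}=2\cdot 3^{d-1}$. Your treatment is if anything slightly more careful than the paper's: you justify injectivity via maximal increasing runs, your $k=1$ indexing $1+\sum_{n=1}^{d-1}2\cdot 3^{n-1}=3^{d-1}$ avoids the index slips in the paper's displayed sums, and you make explicit the realization step via Lemma~\ref{diaglem}A,B, which the paper leaves implicit.
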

\begin{proof}
In light of Lemma \ref{partlabel}, to count the number of partitions having diagonal lengths $T$ with $k>1$, it is enough to count the number of branch labels of the form given in the Lemma. Each such label is uniquely determined by first, partitioning the set $\{1, \ldots, d\}$ into subintervals, then breaking that set up into two subsets, one subset for the vertical part of the label and one subset for the horizontal part of the label. When this designation is made, there is a unique way to arrange the subintervals for each part of the label, with decreasing minima for the vertical part and decreasing maxima for the horizontal part.  For each $x$ between 1 and $d$ , there are ${d-1 \choose x-1}$ ways to divide the interval $\{1, \ldots, d\}$ into $x$ subintervals, we simply need to choose $x-1$ ``cutting positions'' from the $d-1$ spaces between elements of $\{1, \ldots, d\}$. Once we divide up the interval into $x$ subintervals, there are $2^x$ ways of designating the vertical and horizontal roles to them. So the total number of valid labels for a partition having diagonal lengths $T$, so the total number of partitions of diagonal lengths $T$ is  
\begin{equation}\label{countdiatT1eq}
\sum_{x=1}^{d}{d-1 \choose x-1} \cdot 2^x=2 \cdot 3^{d-1}.
\end{equation}
\smallskip
Similarly, for $k=1$, using Lemma \ref{partlabel2} to generate labels that correspond to partitions of diagonal lengths $T$, we first need to choose a non-negative integer $g\in \{1, \ldots, d\}$ to represent the distance between the two gaps. For $g=d$ there is only one label, namely $(0, 1, \ldots, d-1, 0)$. For $0\leq g <d$, making a valid label is in fact equivalent to partitioning $\{g, \ldots, d-1\}$ into subintervals, and then dividing up the subintervals into two groups, one for the vertical part and one for the horizontal part of the label. The order in which these intervals appear is forced by the conditions on their maxima and minima. Thus, for  $0\leq g <d$, we can produce $$\sum_{x=1}^{d-g} {d-g-1 \choose x-1} \cdot 2^x=2 \cdot 3^{g-1}$$
distinct branch labels. Therefore, the total number of partitions having diagonal lengths $T$ when $k=1$ is $$\sum_{g=1}^{d}2 \cdot  3^{g-1} +1=3^{d-1}.$$
\end{proof}

\subsection{Partitions having complete intersection Jordan type.}\label{goodsec}
In this section we present some of our main results. Using the branch labels defined and studied in Section \ref{attachbranch1sec}, we characterize CIJT partitions having given diagonal lengths. Recall that 
we say a partition $P$ of diagonal lengths $T$ satisfying Equation~\eqref{CITeq} has CIJT (complete intersection Jordan type) if it can occur as the Jordan type of a linear form $\ell\in A_1$ for some graded CI algebra of Hilbert function $T$.

 \begin{theorem}[Branches of CIJT Partitions, $k> 1$]\label{CIpartsthm}
Let $T=(1, 2, \ldots, d^k, \ldots, 2,1)$ as in Equation \eqref{CIT2eq} with $d\geq 2$ and $k>1$. A partition $P$ of diagonal lengths $T$ has CIJT if and only if there exist an integer $0\leq e\leq d$, and an increasing sequence  $0=a_0<a_1< \cdots<a_c=d-e$ such that the branch label of $P$ satisfies $
\mathfrak{b}=\Big(\mathfrak{v}, 0, \mathfrak{h}_1, \ldots \mathfrak{h}_c\Big)$,
where $\mathfrak{v}$ is the (possibly empty) ordered interval $\{x\,|\, 0\leq x <e\}$, and for $i=1, \ldots c$, $\mathfrak{h}_i$ is the ordered interval $\{x\,|\, d-a_i< x \leq d-a_{i-1}\}$.  
\end{theorem}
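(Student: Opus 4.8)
The plan is to combine the combinatorial characterization of branch labels from Lemma \ref{partlabel} with the numerical CIJT criterion of Lemmas \ref{criterionlem} and \ref{criterioncor}, and to show that these two conditions together force the branch label to have the very rigid shape asserted. The strategy is to translate the CIJT inequality \eqref{good2eq} (which, by the remark following Lemma \ref{criterionlem}, becomes the equality \eqref{good3eq} for a genuine CIJT partition) into a statement about the vertical and horizontal subintervals of the branch label.

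First I would fix a partition $P$ of diagonal lengths $T$ and write its label in the form $\mathfrak{b}=\left(\mathfrak{v}_0, \ldots, \mathfrak{v}_\epsilon, 0, \mathfrak{h}_{\epsilon+1}, \ldots, \mathfrak{h}_c\right)$ guaranteed by Lemma \ref{partlabel}, and separately write $P=(p_1^{n_1}, \ldots, p_t^{n_t})$ in power form. The crucial preliminary step is a dictionary: I would express the part sizes $p_i$ and multiplicities $n_i$, equivalently the inner corners of the Ferrers diagram, in terms of the branch label. The multiplicities $n_i$ correspond to the widths of the attachment intervals, and the differences $p_{i-1}-p_i$ correspond to the branch lengths $b$ (shifted by the triangle geometry); the parameter $s=\max\{0,k-2\}$ enters uniformly and cancels in the relevant differences. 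Having set up this dictionary, I would show that the equality \eqref{good3eq}, $p_{i-1}=n_{i-1}+n_i+p_i$ for all inner corners, is equivalent to the requirement that the vertical attachments form a \emph{single} interval $\mathfrak{v}=\{0,1,\ldots,e-1\}$ (i.e. $\epsilon=0$ and $\mathfrak{v}_0$ is all of $\{1,\ldots,e\}$ after reindexing), while the horizontal side may still break into several subintervals $\mathfrak{h}_1,\ldots,\mathfrak{h}_c$ filling $\{d-a_i<x\le d-a_{i-1}\}$.

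The main obstacle, I expect, will be correctly handling the asymmetry between the vertical and horizontal parts of the label. The equality \eqref{good3eq} is imposed at \emph{every} inner corner, but its geometric meaning differs on the two sides of the gap: on the vertical side a descent in the label forces the next column to be strictly shorter by the branch drop, and the equality constraint rules out any descent among the vertical subintervals, collapsing them to one interval; on the horizontal side the constraint is automatically compatible with multiple descending subintervals because of how row lengths accumulate. I would carry this out by checking the constraint at a corner sitting at the junction of two consecutive vertical subintervals and deriving a contradiction with distinctness of label entries unless $\epsilon=0$, and then verifying that on the horizontal side \eqref{good3eq} holds for \emph{any} decomposition into descending subintervals. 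Finally, for the converse, given a label of the asserted form I would invoke Lemma \ref{criterioncor}: since such a label satisfies the equality \eqref{good3eq} at every inner corner (by the dictionary, now run in reverse), Lemma \ref{criterioncor} produces an explicit complete intersection $A=R/I$ with $P_x=P$, so $P$ is CIJT. The one point requiring care in the converse is confirming that the equality is preserved across the single gap, i.e. across the transition from the vertical interval $\mathfrak{v}$ to the first horizontal interval $\mathfrak{h}_1$, which follows from the boundary bookkeeping of the basic triangle $\Delta_d$.
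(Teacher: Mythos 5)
Your overall architecture is the paper's: Lemma \ref{partlabel} for the admissible shapes of branch labels, a corner-by-corner translation of the numerical criterion for necessity, and Lemma \ref{criterioncor} for sufficiency; your converse, including the check that the equality holds across the transition from $\mathfrak{v}$ to $\mathfrak{h}_1$, is essentially identical to the paper's. But the forward direction as you describe it has two genuine problems. First, you take the equality \eqref{good3eq} as a known necessary condition for CIJT, citing the remark after Lemma \ref{criterionlem}. That remark defers to Theorem \ref{equalityCIthm}, which the paper proves as an immediate consequence of Corollaries \ref{goodpartcor} and \ref{goodpartcor2} --- that is, of the very theorem you are proving. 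At this stage of the development only the inequality \eqref{good2eq} of Lemma \ref{criterionlem} is available as a necessary condition, and the paper's forward direction is careful to use only it; your plan as stated is circular.

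Second, and independently of the circularity, your checklist establishes only part of the asserted label shape. Checking corners at junctions of consecutive vertical subintervals yields $\epsilon=0$, i.e.\ a single vertical interval $\mathfrak{v}_0=\{m,\ldots,m+e-1\}$ --- and note that the contradiction there must come from the criterion, not from ``distinctness of label entries,'' since every branch label is a permutation and has distinct entries. What is missing is the proof that $m=1$, equivalently that the horizontal intervals fill exactly $\{e+1,\ldots,d\}$; your claim that the horizontal side is ``automatically compatible with multiple descending subintervals'' is false when $m>1$. Concretely, for $T=(1,2,3,3,2,1)$ the label $(2,0,3,1)$ (written $(2,E,3,1)$ in Figure \ref{fig9}) has the single vertical interval $\{2\}$ and horizontal intervals $\{3\},\{1\}$ with decreasing maxima, so it passes every check you list; yet it encodes $P=(6,3,1,1,1)$, which violates \eqref{good2eq} at the last corner ($3<1+3+1$) and is non-CIJT in Figure \ref{fig9}. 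The failing corner is precisely the one across the gap, between the last horizontal block and the vertical block --- the corner you propose to examine only in the converse. The paper closes exactly this hole with a dedicated argument: if $m>1$, then $1$ lies in the horizontal part, forcing rows of length strictly less than $m+e+k-1$ immediately above the vertical block, while that block has smallest part $e$ with multiplicity $m+e+k-2$, so $n_t+p_t=m+2e+k-2\geq m+e+k-1$ and \eqref{good2eq} fails. This gap-corner analysis in the forward direction is the essential step your plan omits.
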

\medskip

\begin{proof}

``$\Leftarrow"$

First assume that $P$ is a partition with a branch label $\mathfrak{b}$ as described above. We will argue that $P$ is a CIJT partition. Given the form of $\mathfrak{b}$, the associated partition $P$ is 
\begin{equation}\label{goodpart}
P=\left(\bigcup_{i=1}^c\Big(2d-(a_{i-1}+a_i)+k-1\Big)^{a_i-a_{i-1}},  \big(e\big)^{e+k-1}\right).
\end{equation}
For $1<i<c$, 
$$\Big(2d-(a_{i-2}+a_{i-1})+k-1\Big)=(a_{i-1}-a_{i-2})+(a_i-a_{i-1})+\Big(2d-(a_{i-1}+a_i)+k-1\Big).$$
This shows that the ``horizontal" part of $P$ satisfies the criterion \eqref{good2eq} of Lemma~\ref{criterioncor} (Fig. \ref{thmfig}).
\medskip

On the other hand,  since by assumption $a_c=d-e$, we also have $$2d-(a_{c-1}+a_c)+k-1=(e+k-1)+(a_c-a_{c-1})+e.$$
Thus, by Lemma~\ref{criterioncor}, the partition $P$ is indeed a CIJT partition, as desired.
\bigskip\par
``$\Rightarrow$" Now assume that $P$ is a CIJT partition and let $\mathfrak{b}$ be its corresponding branch label in the form given by Lemma~\ref{partlabel}. 
\medskip

We first show that the ``vertical part" of $\mathfrak{b}$ is either empty or is the single interval $\{1, \ldots, e\}$ for a positive integer $1\leq e \leq d $.
\medskip

By way of contradiction, first assume that the vertical part of $\mathfrak{b}$ consists of at least two distinct subintervals. This in particular implies that there exists an integer $j$, $1\leq j<e$ such that $\mathfrak{b}(j)< \mathfrak{b}(j-1)$ (in going from one subinterval to the next the first entry drops). Since columns $j-1$ and $j$ of the partition have, respectively, the lengths $\mathfrak{b}(j-1)+d-j+1+k-2$ and $\mathfrak{b}(j)+d-j+k-2$, the assumption $\mathfrak{b}(j)< \mathfrak{b}(j-1)$ implies that there is a drop of at least 2 from column $j-1$ to column $j$ in $P$. Therefore, in this case $P$ fails the criterion from Lemma \ref{criterionlem} and is not a CIJT partition. Thus the assumption that $P$ has CIJT, implies that the ``vertical" part of $\mathfrak{b}$ consists of at most one interval. There is nothing to prove if $e=0$, when the vertical part of $\mathfrak{b}$ is empty, or if $e=d$, when the vertical part of $\mathfrak{b}$ is the whole set $\{1, \ldots,d\}$. Now assume that $0<e<d$: we will show that in this case the vertical part of $\mathfrak{b}$ is $\{1, \ldots, e\}$. We showed that the vertical part of $\mathfrak{b}$ is a single interval, say of the form $\mathfrak{v}=\{x\,|\, m\leq x<m+e\}$, for an integer $m \in \{1, \ldots, d\}$. We will next show that $m=1$. 

By way of contradiction assume that $m>1$. Then $1$ is not in $\mathfrak{v}$ and therefore it has to be in the horizontal part of $\mathfrak{b}$. Let $i=\mathfrak{b}^{-1}(1)$. Then by \eqref{partineq}, starting in row $i-e$ and going down through the horizontal part of $\mathfrak{b}$, since the entries can not go down they have to go up by one. Indeed, for $i\leq j\leq d$, $\mathfrak{b}(j)=j-i+1$, and these entries all correspond to rows of length $1+d-i+e+k-1$ in $P$. Furthermore, since $m\in \mathfrak{v}$, $d-i+1<m$. Thus  $1+d-i+e+k-1<m+e+k-1$. 

On the other hand, using the assumption $\mathfrak{v}=\{m, \ldots, m+e-1\}$ again, we see that the smallest part of $P$, which is generated by the vertical part of $\mathfrak{b}$, has size $e$ and multiplicity $m+e+k-2$, which add up to $m+2e+k-2$. Since by assumption $e\geq 1$, $m+2e+k-2\geq m+e+k-1$. Thus in this case $P$ fails the ``criterion" which contradicts the assumption that $P$ is a CIJT partition. Thus in this case $m=1$, as desired. This completes the proof of the claim that the vertical part of $\mathfrak{b}$ is in fact of the form $\mathfrak{v}=\{x\,|\,1\leq x\leq e\}$ for an integer $0\leq e \leq d $. 
\par
Using Lemma \ref{partlabel} we write $\mathfrak{b}=\big(\mathfrak{v}, 0, \mathfrak{h}_1, \ldots, \mathfrak{h}_c\big),$
where $\mathfrak{h}_i=\{ M_i-x_i+1, \ldots, M_i\}$ for a decreasing sequence $M_1>\cdots>M_c$. 
Since the intervals $\mathfrak{h}_i$ partition the interval $\{x\, |\, e<x\leq d\}$, and their maxima are arranged in a decreasing order, setting $$a_i=\displaystyle{\sum_{1\leq j\leq i}x_i},$$ we see that for $1\leq i\leq c$, $\mathfrak{h}_i's$ have the desired form. This completes the proof of the theorem. 
\end{proof}

\begin{figure}
\begin{center}
\includegraphics[scale=.4]{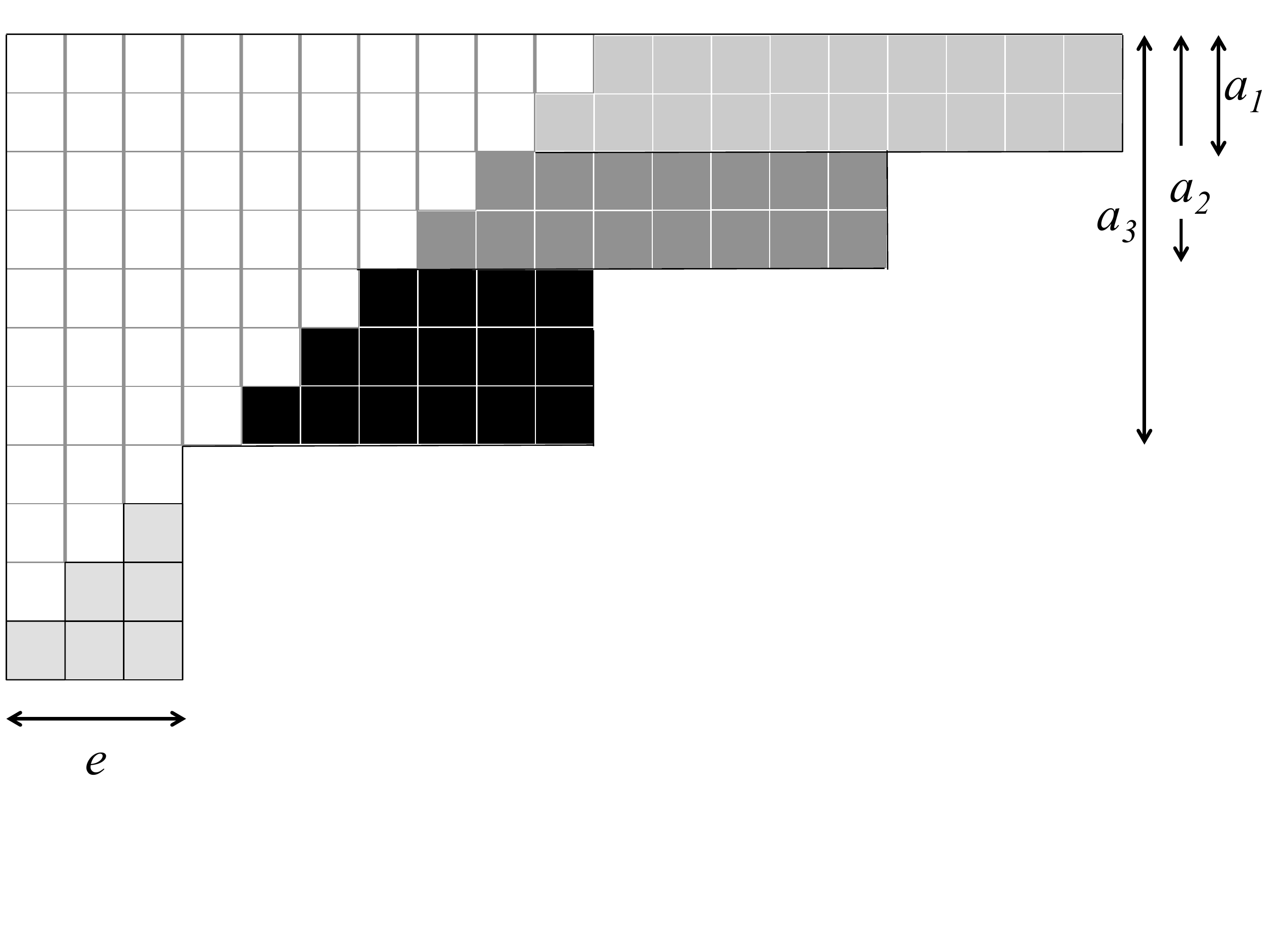}
\end{center}
\vspace{-0.7 in}

\caption{Illustration for Theorem \ref{CIpartsthm} of a CIJT partition with diagonal lengths $T=(1, 2, \ldots, 9, 10^2, 9, \ldots, 2, 1)$ and branch label $\mathfrak{b}=(\{1, 2, 3\}, 0, \{9,10\}, \{7, 8\},\{4, 5, 6\}).$}\label{thmfig}

\end{figure}

We note that Theorem \ref{CIpartsthm} also establishes a one-to-one correspondence between the set of CIJT partitions and the set of all increasing sequences $0=a_0<\cdots<a_c\leq d$, for $c=0, \ldots, d$. (We always have $e=d-a_c$.) Each such increasing sequence $(a_i)$ can also be uniquely determined by its differences $(n_i=a_i-a_{i-1})_i$ which is an ordered partition of $n$ (a partition of $n$ in which the order of parts matter) for $n\in \{0, 1, \ldots, d\}$. Thus by Theorem \ref{CIpartsthm} and  \eqref{goodpart} we get the following corollary which characterizes all CIJT partitions for a given Hilbert function. 
\bigskip

\begin{corollary}[CIJT partitions, $k>1$]\label{goodpartcor}
Let $T=(1, 2, \ldots, d^k, \ldots, 2,1)$ with $d\geq 2$ and $k>1$. A partition $P$ can occur as the Jordan type of a linear form for some complete intersection algebra of Hilbert function $T$ satisfying \eqref{CITeq} if and only if there exists an integer $n\in [0,d]$ and an ordered partition $n=n_1+\cdots+n_c$ (empty partition when $n=0$) such that 
 \begin{equation}\label{goodpartcorpart}P=\big(p_1^{n_1}, \ldots, p_c^{n_c}, (d-n)^{d-n+k-1}\big),\end{equation} where $p_i=k-1+ 2d-n_i-2\displaystyle{\sum_{j<i} n_j}$, for $1\leq i \leq c$.
\end{corollary}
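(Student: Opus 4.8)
The plan is to treat Corollary~\ref{goodpartcor} as a direct repackaging of Theorem~\ref{CIpartsthm} together with the explicit partition formula \eqref{goodpart}, rather than re-running the CIJT argument from scratch. The essential move is a dictionary between two equivalent encodings of the same combinatorial data: on one side, the branch labels $\mathfrak{b}=(\mathfrak{v},0,\mathfrak{h}_1,\dots,\mathfrak{h}_c)$ of Theorem~\ref{CIpartsthm}, where $\mathfrak{v}=\{1,\dots,e\}$ and each $\mathfrak{h}_i$ is the interval $\{d-a_i<x\le d-a_{i-1}\}$ for an increasing sequence $0=a_0<a_1<\cdots<a_c=d-e$; on the other side, the ordered partitions $n=n_1+\cdots+n_c$ of an integer $n\in[0,d]$. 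First I would set $n=d-e$ (so that $e=d-n$) and define $n_i=a_i-a_{i-1}$ for $1\le i\le c$. Since the $a_i$ are strictly increasing with $a_0=0$ and $a_c=n$, the tuple $(n_1,\dots,n_c)$ is a composition of $n$ into positive parts, i.e.\ an ordered partition of $n$, and this correspondence between strictly increasing sequences ending at $n$ and ordered partitions of $n$ is a bijection. The empty partition corresponds to $n=0$ (equivalently $e=d$, $c=0$), which matches the stated convention.

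Next I would verify that under this substitution the partition \eqref{goodpart} produced by Theorem~\ref{CIpartsthm} is exactly \eqref{goodpartcorpart}. The ``vertical'' part $(e)^{e+k-1}$ of \eqref{goodpart} becomes $(d-n)^{d-n+k-1}$, matching the last block of \eqref{goodpartcorpart}. For the horizontal blocks, \eqref{goodpart} gives parts $p_i=2d-(a_{i-1}+a_i)+k-1$ each with multiplicity $a_i-a_{i-1}=n_i$. I would rewrite $a_{i-1}+a_i$ in terms of the $n_j$: using $a_i=\sum_{j\le i}n_j$ and $a_{i-1}=\sum_{j<i}n_j$, we get $a_{i-1}+a_i=2\sum_{j<i}n_j+n_i$, whence
$$p_i=2d-\Big(2\sum_{j<i}n_j+n_i\Big)+k-1=k-1+2d-n_i-2\sum_{j<i}n_j,$$
which is precisely the formula for $p_i$ in the statement. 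This is a short index computation with no subtlety; the only thing to watch is that the multiplicity of the $i$-th horizontal part is $a_i-a_{i-1}=n_i$, which reads off the composition correctly.

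Finally I would close the logical loop in both directions. For the forward direction, given a CIJT partition $P$ with diagonal lengths $T$, Theorem~\ref{CIpartsthm} supplies the sequence $(a_i)$, and the dictionary above yields $n$ and the ordered partition $(n_i)$ with $P$ of the form \eqref{goodpartcorpart}. For the converse, any choice of $n\in[0,d]$ and ordered partition $(n_i)$ determines a strictly increasing sequence $(a_i)$ ending at $d-e$ with $e=d-n$, hence by the ``$\Leftarrow$'' half of Theorem~\ref{CIpartsthm} the associated partition is CIJT; since that associated partition is \eqref{goodpart}, which equals \eqref{goodpartcorpart}, we are done. The main point to be careful about is bookkeeping: confirming that the map between increasing sequences and ordered partitions is genuinely a bijection (so that no CIJT partition is missed and none is double-counted) and that the boundary case $n=0$ is handled consistently. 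I do not expect any real obstacle here; the content is entirely carried by Theorem~\ref{CIpartsthm}, and Corollary~\ref{goodpartcor} is its translation into the $(n;n_1,\dots,n_c)$ parametrization that will be convenient for the later counting and Hessian results.
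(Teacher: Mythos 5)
Your proposal is correct and is essentially identical to the paper's own argument: the authors likewise obtain the corollary directly from Theorem~\ref{CIpartsthm} by the bijection $n_i=a_i-a_{i-1}$ between increasing sequences $0=a_0<\cdots<a_c=d-e$ and ordered partitions of $n=d-e$, then rewrite \eqref{goodpart} as \eqref{goodpartcorpart} via the same substitution $a_{i-1}+a_i=2\sum_{j<i}n_j+n_i$. The only cosmetic difference is bookkeeping at the boundaries: you spell out the $n=0$ convention, while the paper instead remarks after the corollary on the case $n=d$, where the trailing block $(d-n)^{d-n+k-1}$ is empty and $P$ has exactly $d$ parts.
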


We note that in Corollary \ref{goodpartcor}, if $n=d$ then the partition in \eqref{goodpartcorpart} is in fact $\big(p_1^{n_1}, \ldots, p_c^{n_c}\big)$, which has $d$ parts.
\medskip

Using arguments similar to the ones used in the proof of Theorem \ref{CIpartsthm}, we show the following theorem, which characterizes branch labels associated with CIJT partitions of diagonal lengths $T=(1, \ldots, d-1,d, d-1,\ldots, 1)$ (Fig. \ref{CIpartsthm2fig}). An entirely similar argument to that used for Corollary~\ref{goodpartcor} also yields Corollary~\ref{goodpartcor2}.

\begin{theorem}[Branches of CIJT partitions, $k=1$]\label{CIpartsthm2}
Assume that $T=(1, 2, \ldots,d-1,$ $ d, d-1,\ldots, 2,1)$ with $d\geq 2$. A partition $P$ of diagonal lengths $T$ has CIJT if and only if there exists an integer $1\leq v\leq d-1$, and an increasing sequence  \par $0=a_0<a_1< \cdots<a_c=d-v$ such that $\mathfrak{b}=\big(0,  \mathfrak{v},  0, \mathfrak{h}_1, \ldots \mathfrak{h}_c\big),$ where $\mathfrak{v}$ is the (possibly empty) interval $\{x\, |\, 1\leq x <v\}$ and for $i=1, \ldots c$, each $\mathfrak{h}_i$ is the ordered interval $\{x\,|\, d-a_i\leq x < d-a_{i-1}\}$.

\end{theorem}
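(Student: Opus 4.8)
The plan is to prove Theorem~\ref{CIpartsthm2} (the $k=1$ case) by closely paralleling the structure of the proof of Theorem~\ref{CIpartsthm}, making the adjustments forced by the fact that there are now \emph{two} gaps in the Ferrers diagram rather than one. Recall from the discussion preceding Lemma~\ref{partlabel2} that for $k=1$ the branch label has the shape $\mathfrak{b}=\big(\mathfrak{v}_0,\ldots,\mathfrak{v}_\epsilon,0,1,\ldots,g-1,0,\mathfrak{h}_{\epsilon+1},\ldots,\mathfrak{h}_c\big)$, where the forced run $0,1,\ldots,g-1$ between the two gaps corresponds to the branches that must be attached to ``fill up'' the region between the gaps at $x^v y^{d-v}$ and $x^h y^{d-h}$. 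The first gap sits at position $v$, so in the claimed answer the label opens with a leading $0$ (the gap at $v$), followed by the vertical interval $\mathfrak{v}=\{1,\ldots,v-1\}$, the forced run, the second gap, and finally the horizontal intervals. The key translation to keep in mind is that the CIJT criterion of Lemma~\ref{criterionlem} (a drop of at most the combined multiplicities between consecutive distinct parts) is exactly the same numerical obstruction as before, so the combinatorial heart of both directions is unchanged.

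For the direction ``$\Leftarrow$'', I would start from a label of the stated form, read off the associated partition $P$ via \eqref{PfromBeqn}, and verify it satisfies the equality criterion \eqref{good3eq} of Lemma~\ref{criterioncor}; then Lemma~\ref{criterioncor} immediately produces a complete intersection with $P_x=P$. As in the proof of Theorem~\ref{CIpartsthm}, the horizontal intervals $\mathfrak{h}_i$ give parts of $P$ whose sizes telescope, so that for $1<i<c$ the size of the $(i-1)$-st horizontal part equals the sum of the two neighboring multiplicities plus the next part size, which is precisely the equality in \eqref{good3eq}; the boundary computation linking the last horizontal part to the smallest part (the one coming from $\mathfrak{v}$, now of size $v$) uses $a_c=d-v$ just as $a_c=d-e$ was used before. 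Because $k=1$ the ``thickening'' offset $s=\max\{0,k-2\}=0$ vanishes, which only simplifies the bookkeeping relative to the $k>1$ case.

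For the direction ``$\Rightarrow$'', I would assume $P$ is CIJT, take its label in the general form of Lemma~\ref{partlabel2}, and argue first that the vertical part consists of a single interval and then that this interval must be exactly $\{1,\ldots,v-1\}$ preceded by the gap at $0$. The single-interval argument is identical to the one in Theorem~\ref{CIpartsthm}: two distinct vertical subintervals force a descent $\mathfrak{b}(j)<\mathfrak{b}(j-1)$ in the vertical range, hence a drop of at least $2$ between consecutive columns of $P$, contradicting the criterion of Lemma~\ref{criterionlem}. Pinning down that the vertical interval abuts the first gap (equivalently that the smallest forced-run value $1$ cannot be displaced into the horizontal part) again mimics the $m>1$ argument of Theorem~\ref{CIpartsthm}, comparing the size-plus-multiplicity total of the offending smallest part against the preceding row and exhibiting a violation of \eqref{good2eq}. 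Finally, the horizontal intervals are reindexed via $a_i=\sum_{j\le i}x_j$ exactly as before to match the claimed form $\mathfrak{h}_i=\{x\mid d-a_i\le x<d-a_{i-1}\}$.

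The main obstacle I anticipate is purely bookkeeping: getting the indexing of the two gaps, the forced run $1,\ldots,g-1$, and the half-open versus closed conventions in $\mathfrak{h}_i=\{x\mid d-a_i\le x<d-a_{i-1}\}$ (note this is $\le,<$ for $k=1$, versus $<,\le$ in the $k>1$ theorem) to line up consistently with the length-offset formula \eqref{label2eqn}. The conceptual content is the same as in Theorem~\ref{CIpartsthm}, but the presence of the second gap means I must carefully track that the parameter $v=e$ plays the role of the former $e$ while the forced run between gaps never participates in the CIJT criterion violations, since those branches are already in a maximal telescoping chain and automatically satisfy \eqref{good3eq}. Verifying that the criterion of Lemma~\ref{criterionlem} is never triggered \emph{inside} the forced run—so that no additional constraints beyond those on $\mathfrak{v}$ and the $\mathfrak{h}_i$ arise—is the one place where the $k=1$ case genuinely differs and deserves an explicit check.
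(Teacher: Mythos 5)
Your proposal is correct and follows essentially the same route as the paper, which itself proves Theorem~\ref{CIpartsthm2} only by remarking that the arguments of Theorem~\ref{CIpartsthm} carry over via Lemma~\ref{partlabel2}: you verify the telescoping equalities \eqref{good3eq} and invoke Lemma~\ref{criterioncor} for sufficiency, and for necessity you rule out a nonempty pre-gap vertical part by the descent argument and the size-plus-multiplicity violation of \eqref{good2eq}, then reindex the horizontal intervals by partial sums. One trivial slip: in your opening description the leading $0$ is the gap at position $0$ (not at $v$) and $\mathfrak{v}=\{1,\ldots,v-1\}$ \emph{is} the forced run rather than a separate item, but your later argument uses the correct reading, so nothing is affected.
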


\begin{figure}
\vspace{-.4 in}

\begin{center}
\includegraphics[scale=.45]{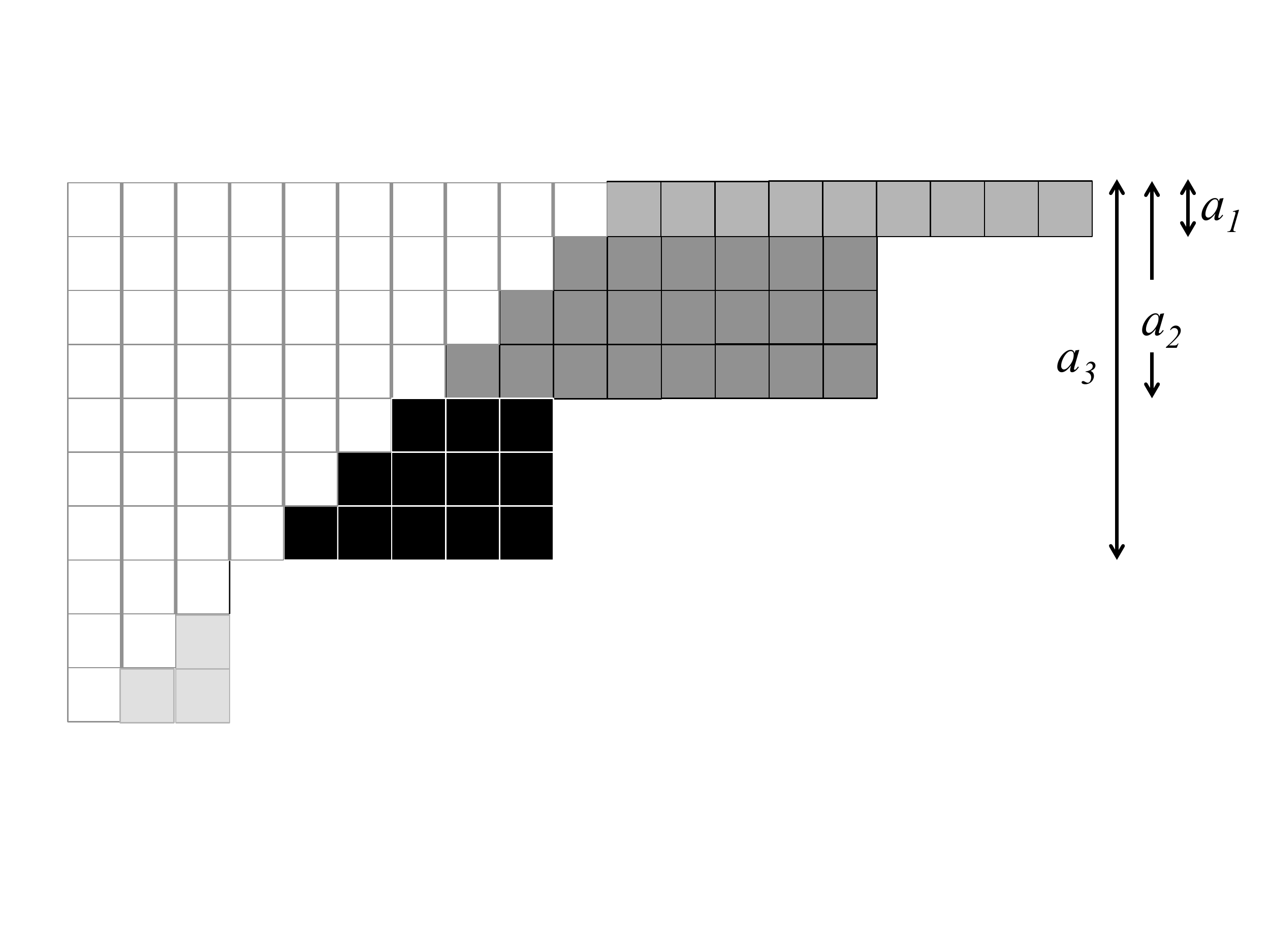}\vspace{-.8 in}
\end{center}
\caption{Illustration of Theorem \ref{CIpartsthm2} for a CIJT partition with diagonal lengths $T=(1, \ldots, 9,10,9, \ldots, 1)$ and branch label $\mathfrak{b}=\big(0, \{1,2\}, 0, \{9\}, \{6,7,8\},\{3,4,5\}\big)$. }\label{CIpartsthm2fig}
\end{figure}

\begin{corollary}[CIJT partitions, $k=1$]\label{goodpartcor2}
Let $T=(1, 2, \ldots,d-1,  d, d-1, \ldots, 2,1)$ with $d\geq 2$. A partition $P$ can occur as the Jordan type of a linear form for some complete intersection algebra of Hilbert function $T$ if and only if there exists an integer $n\in [0,d-1]$ and an ordered partition $n=n_1+\cdots+n_c$ (empty partition when $n=0$) such that  $P$ has $d$ parts of the form\begin{equation}\label{goodpartcor2part}P=\big(p_1^{n_1}, \ldots, p_c^{n_c}, (d-n)^{d-n}\big),\end{equation} where $p_i=2d-n_i-2\displaystyle{\sum_{j<i} n_j}$, for $1\leq i \leq c$.

\end{corollary}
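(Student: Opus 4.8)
The plan is to obtain Corollary~\ref{goodpartcor2} from Theorem~\ref{CIpartsthm2} by exactly the translation used to pass from Theorem~\ref{CIpartsthm} to Corollary~\ref{goodpartcor}, now bookkeeping the two gaps and the forced fill that are special to $k=1$. First I would record the dictionary furnished by Theorem~\ref{CIpartsthm2}: a partition $P$ of diagonal lengths $T$ is CIJT precisely when its branch label has the shape $\mathfrak{b}=(0,\mathfrak{v},0,\mathfrak{h}_1,\dots,\mathfrak{h}_c)$ with $\mathfrak{v}=\{1,\dots,v-1\}$ and $\mathfrak{h}_i=\{x\mid d-a_i\le x<d-a_{i-1}\}$ for an increasing sequence $0=a_0<\cdots<a_c=d-v$. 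Such data amounts to the pair consisting of $v$ together with the differences $n_i=a_i-a_{i-1}$, and $(n_1,\dots,n_c)$ is an arbitrary ordered partition of $n:=a_c=d-v$. Setting $v=d-n$ then converts the index set $\{(v,(a_i))\}$ into $\{(n,(n_i))\}$ with $n$ ranging over the required values. Both directions of the asserted equivalence run through this bijective dictionary, so the forward implication (CIJT gives the stated form) and the converse (each stated $P$ occurs, since its reconstructed label meets the hypotheses of Theorem~\ref{CIpartsthm2}, equivalently the equality criterion \eqref{good3eq} of Lemma~\ref{criterioncor}) are handled simultaneously.

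Next I would read off $P$ from $\mathfrak{b}$, in parallel with \eqref{goodpart}. For a position $p$ in the horizontal part the associated branch sits on row $p-v$ of $\Delta_d$, so that row of $P$ has length $\mathfrak{b}(p)+\bigl(d-(p-v)+1\bigr)=\mathfrak{b}(p)+d-p+v+1$. Within a single increasing run $\mathfrak{h}_i$ one has $\mathfrak{b}(p+1)=\mathfrak{b}(p)+1$, so this quantity is constant along the run; hence $\mathfrak{h}_i$ contributes a block of $n_i=|\mathfrak{h}_i|$ equal parts. Evaluating at the smallest position $p=v+a_{i-1}+1$ of $\mathfrak{h}_i$, where $\mathfrak{b}(p)=\min\mathfrak{h}_i=d-a_i$, gives the part $p_i=2d-a_i-a_{i-1}=2d-n_i-2\sum_{j<i}n_j$, exactly the claimed value. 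The basic triangle $\Delta_d$ together with the forced vertical fill of lengths $1,\dots,v-1$ makes each of the bottom $v$ rows have length exactly $v$, contributing the block $(v)^v=(d-n)^{d-n}$. Collecting the blocks yields \eqref{goodpartcor2part}, while the total part count $\sum_i n_i+(d-n)=d$ records that every CIJT partition here has exactly $d$ parts, consistent with Theorem~1; partition-ness (weakly decreasing parts) is inherited from the branch-label characterization of Lemma~\ref{partlabel2}.

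The step needing the most care, and the likely obstacle, is the boundary value $n=0$: this forces $v=d$ and $c=0$, which lies just outside the range $1\le v\le d-1$ stated in Theorem~\ref{CIpartsthm2}. Here the label degenerates to $(0,1,\dots,d-1,0)$ with no horizontal branches, and the resulting partition is $(d^d)$. I would dispatch this case directly rather than through the theorem, exhibiting $(d^d)$ as the Jordan type $P_x$ of the monomial complete intersection $R/(x^d,y^d)$, whose multiplication by $x$ splits into $d$ strings of length $d$; this shows $(d^d)$ is CIJT and completes the range $n\in[0,d-1]$. The remaining work is purely clerical: verifying that the two gaps and the fill are placed consistently with the conventions of Definition~\ref{branchdef} and Lemma~\ref{partlabel2}, so that the row-length computation above is valid for every $0<v<d$.
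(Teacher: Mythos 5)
Your proposal is correct and takes essentially the paper's own route: the paper disposes of Corollary~\ref{goodpartcor2} with the one-line remark that the argument for Corollary~\ref{goodpartcor} (the bijective dictionary between the branch labels of Theorem~\ref{CIpartsthm2} and ordered partitions $n=n_1+\cdots+n_c$, followed by the row-length computation analogous to \eqref{goodpart}) applies verbatim, and that is exactly what you carry out, with the part values $p_i=2d-a_i-a_{i-1}=2d-n_i-2\sum_{j<i}n_j$ and the bottom block $(d-n)^{d-n}$ checked correctly. Your explicit handling of $n=0$ via $P=(d^d)=P_{x,\,R/(x^d,y^d)}$ is a worthwhile extra precision the paper leaves implicit, since that case corresponds to $v=d$, just outside the range $1\leq v\leq d-1$ as Theorem~\ref{CIpartsthm2} is stated.
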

We now show that a CIJT partition must satisfy equality in Equation \eqref{good2eq}.
\begin{theorem}[Combinatorial criterion for CIJT]\label{equalityCIthm}
Let $T=(1, 2, \ldots, d^k, \ldots, 2,1)$ with $d\geq 2$. A partition $P$ of diagonal lengths $T$ can occur as the Jordan type of a linear form for some complete intersection algebra of Hilbert function $T$ if and only if $P=(p_1^{n_1}, \ldots, p_t^{n_t})$ such that for each $i\in[2,t]$, 
\begin{equation}\label{equalityCIeq}
p_{i-1}=n_{i-1}+n_i+p_i.
\end{equation}
\end{theorem}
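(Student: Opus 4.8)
The statement I need to prove, Theorem~\ref{equalityCIthm}, asserts that a partition $P=(p_1^{n_1},\ldots,p_t^{n_t})$ of diagonal lengths $T$ is CIJT if and only if the \emph{equality} \eqref{equalityCIeq} holds for every $i\in[2,t]$. Since Lemma~\ref{criterioncor} already shows that the equality condition \eqref{good3eq} is \emph{sufficient} for $P$ to be CIJT, the ``if'' direction is done. The entire content lies in the ``only if'' direction: I must upgrade the inequality $p_{i-1}\ge n_{i-1}+n_i+p_i$ of Lemma~\ref{criterionlem} to an equality for every CIJT partition. My plan is therefore to combine the necessary inequality of Lemma~\ref{criterionlem} with a global counting constraint coming from the diagonal lengths $T$, and show that the inequalities cannot be strict at any index without violating that constraint.

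\textbf{Key steps.} First I would recall from Lemma~\ref{criterionlem} that any CIJT $P$ satisfies $p_{i-1}\ge n_{i-1}+n_i+p_i$ for all $i\in[2,t]$. The natural strategy is to sum or telescope these inequalities and compare against an exact identity forced by the shape of $T$. Concretely, writing $a_i=\sum_{\ell=1}^i n_\ell$ as in \eqref{generatorsEeq}, the inequality rearranges to $p_{i-1}+a_{i-2}\ge p_i+a_i$, i.e. the sequence $p_i+a_i$ (the degree of the generator $f_i$ with leading monomial $x^{p_i}y^{a_{i-1}}$, shifted) is weakly decreasing in a controlled way. I would then invoke the explicit description of CIJT partitions already established: by Theorem~\ref{CIpartsthm} (for $k>1$) and Theorem~\ref{CIpartsthm2} (for $k=1$), together with Corollaries~\ref{goodpartcor} and \ref{goodpartcor2}, every CIJT partition has the exact closed form $P=(p_1^{n_1},\ldots,p_c^{n_c},(d-n)^{d-n+k-1})$ with $p_i=k-1+2d-n_i-2\sum_{j<i}n_j$. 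From this closed form one computes directly that consecutive parts satisfy $p_{i-1}-p_i = (2d-n_{i-1}-2\sum_{j<i-1}n_j)-(2d-n_i-2\sum_{j<i}n_j) = n_{i-1}+2n_{i-1}\cdots$—more carefully, the difference telescopes to exactly $n_{i-1}+n_i$, and the junction between the horizontal block and the final vertical block $(d-n)^{d-n+k-1}$ likewise gives equality, as was verified inside the proof of Theorem~\ref{CIpartsthm} via the identity $2d-(a_{c-1}+a_c)+k-1=(e+k-1)+(a_c-a_{c-1})+e$. Thus equality \eqref{equalityCIeq} holds at every index.

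\textbf{Alternative self-contained route.} If I wanted a proof independent of the explicit classification, I would argue by a dimension/degree count. Summing the inequalities $p_{i-1}+a_{i-2}\ge p_i+a_i$ over $i$ telescopes the left and right sides, and the total $\sum_i (p_i+a_i)$-type quantity is pinned down by $\sum_i n_i p_i = \dim_{\sf k}A = d(d+k-1)$ (the number of boxes in the Ferrers diagram, equivalently $\sum_i T_i$) together with the symmetry of $T$ about $j/2$. Any strict inequality at a single index would then force the total number of boxes to exceed $d(d+k-1)$, contradicting that $P$ has diagonal lengths $T$.

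\textbf{Main obstacle.} The delicate point is the boundary behavior at the two ends of the chain, $i=2$ and $i=t$, and in particular the transition at the ``gap'' where vertical branches meet horizontal ones. The interior equalities follow cleanly from telescoping, but one must check that neither the first part $p_1$ (the top $x$-power, an outer corner) nor the passage from the horizontal block to the terminal vertical block of size $d-n$ introduces slack; this is exactly where the hypotheses $a_c=d-e$ and the precise multiplicities of the extremal parts enter. I expect verifying that this junction forces equality—rather than merely inequality—to be the crux, and I would handle it by matching the closed form of Corollaries~\ref{goodpartcor} and \ref{goodpartcor2} against the inequality at that single index, exactly as the junction identity in the proof of Theorem~\ref{CIpartsthm} does.
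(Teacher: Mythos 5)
Your proposal is correct and takes essentially the same route as the paper: the paper's own proof of Theorem~\ref{equalityCIthm} is the single line that the statement is an immediate consequence of Lemma~\ref{criterioncor} (sufficiency of the equalities) and Corollaries~\ref{goodpartcor} and~\ref{goodpartcor2} (necessity, read off from the closed form of CIJT partitions), which is exactly the combination your main argument uses, with your computation $p_{i-1}-p_i=n_{i-1}+n_i$ and the junction check $p_c=n_c+(d-n+k-1)+(d-n)$ merely making explicit what the paper calls ``immediate.'' Your alternative telescoping route is not needed for the verdict and is left incomplete, but your primary argument stands as written.
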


\begin{proof}
The statement is an immediate consequence of Lemma \ref{criterioncor} and Corollaries \ref{goodpartcor} and \ref{goodpartcor2}.
\end{proof}

\bigskip

\subsubsection{Counting CIJT partitions having diagonal lengths $T=(1, \ldots, d^k, \ldots, 1)$.}
\begin{corollary}\label{goodcountcor}
Assume that the sequence $T$ satisfies Equation \eqref{CIT2eq}. Then there are $2^d$ CIJT partitions having diagonal lengths $T$  if $k>1$, and $2^{d-1}$ if $k=1$.
\end{corollary}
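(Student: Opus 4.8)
The plan is to reduce the enumeration to the explicit parametrizations of CIJT partitions already established in Corollaries \ref{goodpartcor} and \ref{goodpartcor2}, and then to carry out an elementary count. Recall that for $k>1$, Corollary \ref{goodpartcor} identifies the CIJT partitions of diagonal lengths $T$ with the partitions of the form \eqref{goodpartcorpart} arising from a choice of an integer $n\in[0,d]$ together with an ordered partition (composition) $n=n_1+\cdots+n_c$ (empty when $n=0$); for $k=1$, Corollary \ref{goodpartcor2} gives the analogous description by \eqref{goodpartcor2part} with $n\in[0,d-1]$. Thus it suffices to show that these parametrizing data are in bijection with the CIJT partitions, and then to count the data.

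First I would verify injectivity, namely that distinct pairs $(n,(n_1,\ldots,n_c))$ yield distinct partitions, so that the number of CIJT partitions equals the number of parametrizing pairs rather than merely being bounded by it. In the power form of \eqref{goodpartcorpart} the exponents $n_1,\ldots,n_c$ are the multiplicities of the parts $p_1>\cdots>p_c$, and the trailing multiplicity recovers $d-n$, provided the $p_i$ are genuinely distinct and strictly larger than the trailing part $d-n$. Both facts follow by direct computation from the formula for $p_i$: one finds $p_i-p_{i+1}=n_i+n_{i+1}\geq 2$ (the $k-1$ summands cancel, so this holds for both ranges of $k$), whence $p_1>\cdots>p_c$; and a similar computation gives $p_c-(d-n)=(k-1)+(d-n)+n_c>0$ when $n<d$, while for $n=d$ (only in the case $k>1$) the trailing block is empty. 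Hence the power form is reduced, the integer $n$ and the composition $(n_1,\ldots,n_c)$ are reconstructed from $P$, and the parametrization is a bijection.

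It then remains to count the pairs, which I would do by sending $(n,(n_1,\ldots,n_c))$ to its sequence of partial sums $0=a_0<a_1<\cdots<a_c=n$, equivalently to the subset $\{a_1,\ldots,a_c\}$ of $\{1,\ldots,m\}$, where $m=d$ when $k>1$ and $m=d-1$ when $k=1$; the empty subset corresponds to $n=0$ and the empty composition. This is visibly a bijection onto the set of all subsets of $\{1,\ldots,m\}$, of which there are $2^m$, giving $2^d$ CIJT partitions when $k>1$ and $2^{d-1}$ when $k=1$. Equivalently one may argue purely arithmetically, using that a composition of $n\geq 1$ can be chosen in $2^{n-1}$ ways, so that the total is $1+\sum_{n=1}^{m}2^{n-1}=2^m$. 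I expect the only non-routine step to be the injectivity check of the second paragraph, since the final enumeration is a one-line identity: that step is exactly where one confirms that the explicit formulas of Corollaries \ref{goodpartcor} and \ref{goodpartcor2} produce partitions in genuine reduced power form, so that no two distinct parametrizing data collapse to the same partition.
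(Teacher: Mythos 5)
Your proposal is correct and follows essentially the same route as the paper: both reduce via Corollaries \ref{goodpartcor} and \ref{goodpartcor2} to counting pairs $(n,(n_1,\ldots,n_c))$ and then use the standard count of $2^{n-1}$ compositions of $n$ (equivalently, subsets of cutting points) to obtain $2^d$, respectively $2^{d-1}$. Your explicit injectivity verification via $p_i-p_{i+1}=n_i+n_{i+1}$ is a welcome refinement of a step the paper instead secures through the one-to-one correspondence with increasing sequences $0=a_0<\cdots<a_c$ noted after Theorem \ref{CIpartsthm}, but it does not change the substance of the argument.
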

\begin{proof}
First assume that $k>1$. Then by Corollary \ref{goodpartcor}, the number of CIJT partitions having diagonal lengths $T$ is the total number of ordered partitions of $n$ for $0\leq n \leq d$. If $1\leq n \leq d$,  then there are $2^{d-1}$ ordered partitions of $n$, and for $n=0$, there is the empty partition by our convention. So the total number is 
\begin{equation}\label{goodcount1eq}
\sum_{n=1}^{d}2^{n-1}+1=2^{d}.
\end{equation}\noindent

Here is a direct way of counting: Each CIJT branch label is uniquely determined by first choosing an integer $e\in [0,d]$, the size of the vertical part, and then choosing an ordered partition of $d-e$ whose partial sums  give the increasing sequence $a_1<\cdots<a_c$ (the sequence is empty when $e=d$). For each $e\in [0,d)$, the interval  $\{x\,|\, e<x\leq d\}=\{e+1, \ldots, d\}$ can be divided into subintervals by choosing the cutting points from the $d-e-1$ spaces between the elements of the set. Thus for each such $e$, we get $2^{d-e-1}$ labels. There is also one more label for the case $e=d$ (all branches attached vertically). So the total number is $\sum_{e=0}^{d-1}2^{d-e-1}+1=2^{d}.$\par
In Theorem \ref{2Hessthm} we will give a 1-1 correspondence between CIJT partitions of diagonal lengths $T$ and subsets of the non-vanishing Hessians from the $d$ active Hessians. This will give another way to verify the count  of $2^d$ CIJT partitions, which is the number of such subsets.
\medskip

Now assume that $k=1$. A similar argument, using Corollary \ref{goodpartcor2} in place of Corollary \ref{goodpartcor}, implies that the total number of CIJT partitions having diagonal lengths $T$ is the same as the total number of ordered partitions of $n$ for $0\leq n \leq d-1$, which is $2^{d-1}$. 
\end{proof}

\begin{theorem}\label{simplegoodprop}
Let $T=(1, 2, \ldots, d^k, \ldots, 2,1)$ with $d\geq 2$. Assume that $P$ is a partition having diagonal lengths $T$. Then $P$ can occur as the Jordan type of a linear form for some complete intersection algebra of Hilbert function $T$ if and only if $P$ has either $d$ parts (is weak Lefschetz) or $P$ has $d+k-1$ parts. 
\end{theorem}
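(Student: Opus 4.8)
The plan is to prove Theorem~\ref{simplegoodprop} as an essentially bookkeeping consequence of the branch-label characterizations already established, namely Corollary~\ref{goodpartcor} (for $k>1$) and Corollary~\ref{goodpartcor2} (for $k=1$). The key observation is that both corollaries give CIJT partitions as an explicit family indexed by an ordered partition $n = n_1 + \cdots + n_c$ of some integer $n$, and in each case one can simply read off the total number of parts of $P$ directly from the exponents in \eqref{goodpartcorpart} and \eqref{goodpartcor2part}. So the whole statement reduces to counting parts in these normal forms, together with the converse direction, which says that \emph{no partition of diagonal lengths $T$ having a part-count other than $d$ or $d+k-1$ can be CIJT}.

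First I would handle the forward ($\Rightarrow$) direction. Assume $P$ is CIJT of diagonal lengths $T$. By Corollary~\ref{goodpartcor} (when $k>1$) there is an integer $n\in[0,d]$ and an ordered partition $n=n_1+\cdots+n_c$ with
\[
P=\big(p_1^{n_1}, \ldots, p_c^{n_c}, (d-n)^{d-n+k-1}\big).
\]
The number of parts contributed by the $p_i^{n_i}$ blocks is $\sum_{i=1}^c n_i = n$, while the final block $(d-n)^{d-n+k-1}$ contributes $d-n+k-1$ parts \emph{provided $d-n>0$}, and contributes nothing when $n=d$ (since there is no zero part). Hence if $n<d$ the total number of parts is $n+(d-n+k-1)=d+k-1$, and if $n=d$ the total is exactly $d$ (the weak Lefschetz case, as already remarked following Corollary~\ref{goodpartcor}). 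The case $k=1$ is identical using Corollary~\ref{goodpartcor2}: there $n\in[0,d-1]$ and $P=\big(p_1^{n_1}, \ldots, p_c^{n_c}, (d-n)^{d-n}\big)$ always has exactly $d$ parts (and indeed $d+k-1=d$ when $k=1$, so the two alternatives coincide). This establishes that every CIJT partition has $d$ or $d+k-1$ parts.

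For the converse ($\Leftarrow$) direction, I would argue that these two values of the part-count already account for \emph{all} CIJT partitions, so that a partition of diagonal lengths $T$ with $d$ or $d+k-1$ parts is forced to be one of them — but this needs an extra input, since a priori there could be non-CIJT partitions of diagonal lengths $T$ with $d$ or $d+k-1$ parts. The cleanest route is to invoke the combinatorial criterion of Theorem~\ref{equalityCIthm}, which says $P$ is CIJT if and only if $p_{i-1}=n_{i-1}+n_i+p_i$ for all $i\in[2,t]$. Given a partition $P=(p_1^{n_1},\ldots,p_t^{n_t})$ of diagonal lengths $T$ with a prescribed number of parts, I would show that the equality constraints, together with the diagonal-length condition that pins down the largest part and the socle degree $j=2d+k-3$, determine $P$ recursively and force it into the normal form of Corollary~\ref{goodpartcor} or \ref{goodpartcor2}; equivalently, one checks that the number of partitions of diagonal lengths $T$ with exactly $d$ parts (respectively $d+k-1$ parts) equals the number of CIJT partitions with that many parts. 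The main obstacle here is precisely this converse: one must verify that having the right \emph{number} of parts is not merely necessary but sufficient, i.e.\ that among all $2\cdot 3^{d-1}$ partitions of diagonal lengths $T$ (Corollary~\ref{PdiagTcor}) those with part-count $d$ or $d+k-1$ are exactly the $2^d$ CIJT ones. I expect this to follow from a counting match: the weak Lefschetz partition with $d$ parts is unique (the conjugate-type extreme), while the partitions with $d+k-1$ parts correspond bijectively to ordered partitions of $n\in[1,d]$, and comparing these tallies against the branch-label enumeration in Corollary~\ref{goodcountcor} closes the argument.
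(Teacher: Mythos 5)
Your forward direction is correct and coincides with the paper's: one simply reads the part count off the normal forms of Corollaries \ref{goodpartcor} and \ref{goodpartcor2}, including the degenerate case $n=d$, where the rectangle $(d-n)^{d-n+k-1}$ is empty and the count is $d$. The gap is in the converse, and you flag it yourself (``a priori there could be non-CIJT partitions of diagonal lengths $T$ with $d$ or $d+k-1$ parts''), but neither of your two proposed routes is actually carried out, and the second rests on false tallies. Invoking Theorem \ref{equalityCIthm} would require \emph{proving} that diagonal lengths $T$ together with the part-count hypothesis force the equalities $p_{i-1}=n_{i-1}+n_i+p_i$; that implication is the real content of the theorem, and you only assert that it ``determines $P$ recursively.'' Your fallback counting match is broken in two places: the weak Lefschetz CIJT partition with $d$ parts is \emph{not} unique --- there are $2^{d-1}$ of them (Theorem \ref{tablethm}(b); e.g.\ $(6,4,2)$, $(5,5,2)$, $(6,3,3)$, $(4,4,4)$ for $T=(1,2,3,3,2,1)$, all corresponding to $n=d$ with the $2^{d-1}$ ordered partitions of $d$) --- and ``ordered partitions of $n\in[1,d]$'' number $2^d-1$, not the required $2^{d-1}$. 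Moreover, even with correct tallies of CIJT partitions by part count, a counting argument must also compute the number of \emph{all} partitions of diagonal lengths $T$ having exactly $d$ (resp.\ $d+k-1$) parts, which you never do; that computation is where the work lies.

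What actually closes the converse --- and is the paper's proof --- is a short forcing argument with branch labels (Lemmas \ref{partlabel} and \ref{partlabel2}). The number of parts of $P$ equals the length of its first column, namely $d$ plus the vertical attachment there: so $P$ has $d$ parts iff the vertical part of $\mathfrak{b}$ is empty, and for $k>1$, $P$ has $d+k-1$ parts iff the first label entry is $1$, i.e.\ $\min(\mathfrak{v}_0)=1$. Since the minima of the vertical intervals are strictly decreasing positive integers, $\min(\mathfrak{v}_0)=1$ forces a \emph{single} vertical interval $\{1,\ldots,e\}$. In either case the horizontal intervals then partition the consecutive range $\{e+1,\ldots,d\}$ with strictly decreasing maxima, which forces them to be consecutive top-down segments --- exactly the form required by Theorems \ref{CIpartsthm} and \ref{CIpartsthm2}, whence $P$ is CIJT. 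If you wanted to rescue your counting plan, it is this same forcing that shows there are exactly $2^{d-1}$ partitions of diagonal lengths $T$ with $d$ parts and $2^{d-1}$ with $d+k-1$ parts (for $k>1$), so the counts only ``match'' after one has in effect reproved the paper's argument.
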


We note that if $k=1$ then $d+k-1=d$. Thus for the case of $k=1$, a partition $P$ having diagonal lengths $T$ has CIJT if and only of it has $d$ parts (is weak Lefschetz).

\begin{proof}

``$\Rightarrow$" This is immediate from Corollaries \ref{goodpartcor} and \ref{goodpartcor2}.
\bigskip

``$\Leftarrow$" Assume that $P$ is a partition having diagonal lengths $T$ with either $d$ or $d+k-1$ parts. 

If $k=1$ then, since $P$ is a partition having diagonal lengths $T$, by Lemma \ref{partlabel2} its corresponding branch label has the form $\mathfrak{b}=
\left(\mathfrak{v}_0, \ldots, \mathfrak{v}_\epsilon, 0, 1, \ldots, g-1, 0, \mathfrak{h}_{\epsilon+1}, \ldots, \mathfrak{h}_c\right)$ for a an integer $0<g\leq d$. By assumption, $P$ has exactly $d$ parts. This in particular implies that no vertical branch is attached to the first column of $P$. Thus the branch label of $P$ has no vertical part and by Theorem \ref{CIpartsthm2}, $P$ is a CIJT partition. 

Now assume that $k>1$. Then by Lemma \ref{partlabel}, the branch label of $P$ has the following form. 
$$\mathfrak{b}=
\left(\mathfrak{v}_0, \ldots, \mathfrak{v}_\epsilon, 0, \mathfrak{h}_{\epsilon+1}, \ldots, \mathfrak{h}_c\right)
$$

where $\mathfrak{v}_i$s and $\mathfrak{h}_i$s are distinct subintervals of $\{1, \ldots, d\}$ such that 

$$
\begin{array}{l}
\displaystyle{\left(\cup_{0}^\epsilon\mathfrak{v}_i\right)} \cup \displaystyle{\left(\cup_{\epsilon+1}^c\mathfrak{h}_i\right)}=\{1, \ldots, d\},\\ 
\min(\mathfrak{v}_0)>\min(\mathfrak{v}_1)>\cdots >\min(\mathfrak{v}_\epsilon) \mbox{ and } \\ 
\max(\mathfrak{h}_{\epsilon+1})>\max(\mathfrak{h}_{\epsilon+2})>\cdots >\max(\mathfrak{h}_c).
\end{array}
$$

If $P$ has $d$ parts then there is no vertical branch attachment in $P$, which implies that the vertical part of $\mathfrak{b}$ is empty. On the other hand, if $P$ has $d+k-1$ parts then in $P$ a branch of length $k-1$ is vertically attached at the bottom of the first column of $\Delta_d$. This simply means $\min(\mathfrak{v}_0)=1$. Since the minima of $\mathfrak{v}_i$'s  form a decreasing sequence, the vertical part of $\mathfrak{b}$ includes only one vertical subinterval of the form $\{1, \ldots, e\}$. As we saw in the proof of Theorem \ref{CIpartsthm} when the horizontal part of $\mathfrak{b}$, with the given condition on the maxima, partitions $\{e+1, \ldots, d\}$, then $\mathfrak b$ has the form described in Theorem \ref{CIpartsthm}, hence, by the  Theorem, $P$ is a CIJT partition.
\end{proof}\par
When $k\ge 2$ we describe the relation between the CIJT partitions with $d $ parts and with $d+k-1$ parts precisely in Theorem \ref{tablethm}.

\section{Vanishing of Hessians and CIJT Partitions.}\label{corrsec}\par
In this section, we find all possible Jordan types which occur for Artinian CI algebra in $R={\sf k}[x,y]$, using the vanishing of Hessians.  After defining Hessians we first report in Section \ref{genericFsec} a special case, those Jordan types where only one Hessian vanishes (Theorem \ref{genericthm} of N. Altafi and M. Boij). In Section \ref{arbitraryFsec} we show our main results, Theorem~\ref{sumOfPartitions} and Theorem~\ref{2Hessthm} characterizing the CIJT partitions in terms of the vanishing of Hessians. In Section \ref{latticesec} we show that dominance of CIJT partitions arises from the subsets of Hessians that vanish (Theorem \ref{posetprop}) and that the closures of the cells $\mathbb V(E_P)$ satisfy a frontier property (Theorem \ref{mainHessthm}).\par
In Section \ref{patternsec} we give a further combinatorial description of the CIJT partitions, and we show a one-to-one correspondence between those with $d$ parts and those with $d+k-1$ parts when $k\ge 2$ (Theorem \ref{tablethm}).\par
Assume that $A$ is a graded Artinian CI algebra with the Hilbert function $T= (1,2,\dots , d^k , \dots ,2,1_j)$, with $k\geq 1$, socle degree $j$, and Macaulay dual generator $F\in \mathcal E_j={\sf k}[X,Y]_j$. Recall that $R$ acts on $\mathcal E$ by differentiation (Equation \eqref{dualeq}). Throughout Section \ref{corrsec}, we will assume $\cha{\sf k}=0$ for statements involving the Hessian; however, if these are rephrased using the rank of multiplication maps, they are valid also in characteristic $p>j$, the socle degree of $A$. For example Theorem \ref{sumOfPartitions} may be so rephrased, as there we use the combinatorial properties of Jordan type, and then using Lemma \ref{Hesslem}, rephrase in terms of the vanishing of Hessians.
\begin{definition}[$i$-th Hessian]\label{hessdef}\cite{MW},\cite[Def. 3.75]{H-W}.
Let $A=R/\Ann(F)$, where $F\in \mathcal{E}_j$ be a standard graded Artinian Gorenstein ${\sf k}$-algebra, and suppose $\cha {\sf k}=0$. Let $\mathcal{B}_{i}=\left(\alpha_1,\dots ,\alpha_r\right)$ be a ${\sf k}$-linear basis of $A_{i}$. Let $i\in [0,d-1]$. Then $t_i= (i+1)$. The $t_i\times t_i $ matrix 
\begin{equation}\label{Hesskeqn}
\hess^i(F):=\left[\alpha^{(i)}_u\alpha^{(i)}_v\circ F\right]
\end{equation}
is called the  $i$-th Hessian matrix of $F$ with respect to the basis $\mathcal{B}_i$.
We denote by $h^i(F)$ the Hessian determinant $h^i(F)=\det\left(\hess^i(F)\right)$, which is a bihomogeneous form in the coefficients of $F$ and in $X,Y$, respectively having bidegree $\left(t_i, t_i\cdot (j-2t_i)\right)$. Up to a non-zero constant multiple $h^i(F)$ is independent of the basis $\mathcal{B}_{i}$: thus we may regard it as an element of the projective space $\mathbb P^j\times \mathbb P^{(t_i)(j-2t_i)-1}$.
An ``active Hessian'' of $A$ from Hilbert function $T$ of Equation \eqref{CIT2eq} is one of  $h^0(F),h^1(F),\ldots, h^{d-2}(F)$, as well as $h^{d-1}(F)$ if $k\geq 2$.
\end{definition}

We note that when $i=1$ the form $\hess^1(F)$ coincides with the usual Hessian. See Example \ref{3.4ex}.
For $\ell = ax+by$ we denote by $h^i_{\ell}=h^i_{(a,b)}$ the Hessian evaluated at the point ${\sf p}_\ell=(a,b)$.
\begin{lemma}\label{Hesslem}\cite[Theorem 3.2]{MW}, \cite[Proof of Theorem 3.36]{H-W}. Let $A=R/\Ann F$ be an Artinian Gorenstein quotient of $R$, let $\ell=ax+by$ be a linear form, pick bases $\mathcal{B}_{i}$ and $\mathcal{B}_{j-i}$ for $A_i, A_{j-i}$, respectively.  Consider the linear multiplication map, $m_{\ell^{j-2i}}$
\begin{equation}\label{hess2eqn}
m_{\ell^{j-2i}}: A_i\to A_{j-i}, \quad  h\to \ell^{j-2i}\cdot h.
\end{equation}
There is a non-zero constant $c_{i,A}$ such that the determinant of the multiplication map satisfies \begin{equation}\label{hesseqn}
\det (m_{\ell^{j-2i}})=c_{i,A}h^i(F)_{(a,b)}.
\end{equation}
Furthermore, for a fixed $T$, for a general enough dual generator $F$ we have that each $h^i(F)$ has $(i+1)(j-2i)$ distinct roots (no multiple roots).
\end{lemma}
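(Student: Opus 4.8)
The plan is to prove the determinantal identity \eqref{hesseqn} by factoring $\det(m_{\ell^{j-2i}})$ through the Gorenstein socle pairing, and then to treat the distinctness of roots as a genericity statement. Since $A=R/\Ann F$ is Artinian Gorenstein of socle degree $j$, multiplication gives a perfect pairing $A_i\times A_{j-i}\to A_j\cong{\sf k}$, where the identification $\tau\colon A_j\to{\sf k}$ is $\tau(\overline p)=p\circ F$ for $p\in R_j$ (well defined because $(\Ann F)_j\circ F=0$). As $T$ is symmetric, $\dim_{\sf k}A_i=\dim_{\sf k}A_{j-i}=t_i$, so $m_{\ell^{j-2i}}$ is a square map. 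Fixing bases $\mathcal B_i=(\alpha_1,\dots,\alpha_{t_i})$ and $\mathcal B_{j-i}=(\beta_1,\dots,\beta_{t_i})$, let $M=[\tau(\alpha_u\beta_v)]$ be the constant, invertible Gram matrix of the pairing, and introduce the symmetric bilinear form $B_\ell(g,g')=\tau\!\left(\ell^{j-2i}gg'\right)$ on $A_i$, with Gram matrix $S=[\tau(\ell^{j-2i}\alpha_u\alpha_v)]$. Since $B_\ell(g,g')=\langle g',m_{\ell^{j-2i}}(g)\rangle$ is the composite of $m_{\ell^{j-2i}}\colon A_i\to A_{j-i}$ with the pairing, we get $\det S=\det(m_{\ell^{j-2i}})\cdot\det M$ with $\det M\neq0$ independent of $\ell$.

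The heart of Part 1 is to identify $S$ with the evaluated Hessian. Each entry is
\[
\tau\!\left(\ell^{j-2i}\alpha_u\alpha_v\right)=\left(\ell^{j-2i}\alpha_u\alpha_v\right)\circ F=\ell^{j-2i}\circ\left(\alpha_u\alpha_v\circ F\right),
\]
and $G_{uv}:=\alpha_u\alpha_v\circ F\in\mathcal E_{j-2i}$ is exactly the $(u,v)$ entry of $\hess^i(F)$. A direct computation from the differentiation rule $x^py^q\circ X^uY^v=u_pv_qX^{u-p}Y^{v-q}$ shows that for any binary form $G$ of degree $m=j-2i$ and $\ell=ax+by$ one has $\ell^{m}\circ G=m!\,G(a,b)$. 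Hence $S=(j-2i)!\,\hess^i(F)({\sf p}_\ell)$, so $\det S=\big((j-2i)!\big)^{t_i}h^i(F)_{(a,b)}$ and
\[
\det(m_{\ell^{j-2i}})=\frac{\big((j-2i)!\big)^{t_i}}{\det M}\,h^i(F)_{(a,b)}=c_{i,A}\,h^i(F)_{(a,b)},
\]
with $c_{i,A}\neq0$ since $\det M\neq0$ and $(j-2i)!\neq0$ when $\cha{\sf k}=0$. This is the Maeno--Watanabe argument \cite{MW} adapted to our conventions, and it presents no real obstacle.

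For the second assertion, note first that $h^i(F)$ is a nonzero binary form in $X,Y$ of degree exactly $t_i(j-2i)=(i+1)(j-2i)$: it is homogeneous of that degree by the formula just proved, and it is not identically zero because a generic $\ell$ is strong Lefschetz, so $m_{\ell^{j-2i}}$ is then an isomorphism. Having $(i+1)(j-2i)$ distinct roots is therefore equivalent to the nonvanishing of the discriminant $\Delta_i(F)$ of $h^i(F)$, a polynomial in the coefficients of $F$. The parameter space of dual generators $F$ realizing $H(R/\Ann F)=T$ is irreducible (it is dominated by the smooth irreducible family $G_T$, cf.\ \cite{Got,I0,IY}), so each condition $\{\Delta_i\neq0\}$ is either empty or open dense; intersecting over the finitely many $i\in[0,d-1]$ then gives the claim for a general $F$, provided each $\Delta_i$ is not identically zero.

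Showing $\Delta_i\not\equiv0$ — that is, exhibiting one $F$ with $h^i(F)$ squarefree — is the main obstacle. The case $i=0$ is transparent: by Part 1, $h^0(F)_{(a,b)}\propto\det(m_{\ell^{j}})=\tau(\overline{\ell^{j}})=j!\,F(a,b)$, so the roots of $h^0(F)$ are exactly the roots of the binary form $F$, which are distinct for a general complete intersection $A=R/(f,g)$ (e.g.\ $F=X^3-Y^3$ for $T=(1,2,2,1)$). For $i>0$ I would produce the witness by a transversality (Bertini-type) argument: the assignment $\ell\mapsto[S]$ defines a morphism $\phi_F\colon\mathbb P^1\to\mathbb P\!\left(\Sym^2A_i^{\ast}\right)$ whose pullback of the symmetric-determinant hypersurface $D$ is the divisor of $h^i(F)$, and a repeated root forces $\phi_F$ either to meet the corank-$\geq2$ locus $\mathrm{Sing}(D)$, of codimension $3$, or to be tangent to $D$ at a smooth point. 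The delicate step is to check that as $F$ ranges over its irreducible parameter space the entries $G_{uv}=\alpha_u\alpha_v\circ F$ vary enough that the universal map $(F,\ell)\mapsto\phi_F(\ell)$ is sufficiently nondegenerate, forcing a general $\phi_F$ to avoid $\mathrm{Sing}(D)$ and to meet $D$ transversally; everything else in Part 2 is formal.
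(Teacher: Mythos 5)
Your Part 1 is correct and complete, and in fact more than the paper offers: the paper dismisses the determinantal identity \eqref{hesseqn} as ``straightforward,'' citing \cite[Theorem 3.2]{MW}, whereas you actually prove it, factoring $\det(m_{\ell^{j-2i}})$ through the socle pairing $\tau$ and using the identity $\ell^m\circ G=m!\,G(a,b)$ for a binary form $G$ of degree $m$ (which checks out against the paper's differentiation rule: only the term with $p=u$ survives, contributing $\binom{m}{u}u!(m-u)!=m!$). Your bookkeeping $\det S=\det(m_{\ell^{j-2i}})\cdot\det M$ with $M$ the invertible Gram matrix, and $S=(j-2i)!\,\hess^i(F)({\sf p}_\ell)$, is sound, and the basis-dependence of $c_{i,A}$ is harmless since $h^i(F)$ is only defined up to a nonzero scalar. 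The observation $h^0(F)\propto F$ and the reduction of the second assertion to nonvanishing of the discriminants $\Delta_i$ on an irreducible parameter space are also correct.

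The genuine gap is in Part 2 for $i>0$, and you flag it yourself: you never establish $\Delta_i\not\equiv0$. Your Bertini-type plan correctly identifies the dichotomy (a multiple root of $h^i(F)$ forces $\phi_F$ either into the corank-$\geq2$ locus of the symmetric determinantal hypersurface $D$ or into tangency with $D$ at a smooth point), but ruling out both for general $F$ requires precisely the nondegeneracy of the universal map $(F,\ell)\mapsto\phi_F(\ell)$ that you leave unverified --- and this is not formal: the image curves $\phi_F(\mathbb P^1)$ are highly constrained (their entries are the catalecticant-type contractions $\alpha_u\alpha_v\circ F$), so one cannot invoke a generic-translate Kleiman--Bertini argument without real work. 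The paper sidesteps this entirely by a different mechanism: it identifies $h^i(F)$, for $i<d$, with a Wronskian determinant $W(I_{j-2i})$ of the degree-$(j-2i)$ component of $I=\Ann(F)$, and then quotes \cite[Proposition 4.9]{IY}, a Schubert-calculus computation showing that the Wronskian of a general vector space of binary forms has simple roots; alternatively it appeals to Theorem \ref{genericthm} (Altafi--Boij), which asserts the $(i+1)(j-2i)$ distinct roots directly. So to close your argument you would either need to carry out the transversality verification (the hard content), or --- more in the spirit of the paper --- pass through the Wronskian identification, where simplicity of roots for general $F$ is already known. A third elementary route, consistent with your reduction, would be to exhibit one explicit $F$ per $i$ and $T$ with $h^i(F)$ squarefree, but you do not produce such witnesses for $i>0$ either.
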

\begin{proof} The first statement is straightforward.  The simplicity of the roots for a general enough $F$ may be concluded from Theorem \ref{genericthm} below, or as follows.  The Equation~\ref{hesseqn} leads to an equality between the Hessian $h^i(F)$ for $i<d$ and a certain Wronskian determinant $W(I_{j-2i})$ where $I=\Ann(F)$.\footnote{To define the Wronskian determinant for a vector space of homogeneous forms goes beyond the scope of the paper, but see \cite[\S 2]{IY}.} By Proposition \cite[4.9]{IY}, an argument involving Schubert calculus, and in a more general setting, we may conclude that $W(I_{j-2i})$ has distinct roots for a general enough $F$.
\end{proof}\par

Since we are primarily interested in the vanishing or non-vanishing of $h^i_\ell$, the value of the nonzero constant in Equation \eqref{hess2eqn} is not important for us.

\subsection{CI Jordan type for a ``general enough'' dual generator $F$.}\label{genericFsec}
The following previously found result provides the list of all Jordan types and their loci for linear forms of an Artinian CI algebra, $A=R/\Ann F$, for a general enough degree $j$ form, $F\in {\sf k}[X,Y]$, the polynomial ring. Given $(A,\ell\in A_1)$ we denote by $f_1^\vee= h^{d-1}_\ell$.
\begin{theorem}[N. Altafi and M. Boij]\label{genericthm}
Let $R={\sf k}[x,y]$ over a field $\sf k$ of characteristic zero, and $A=R/\Ann(F)$ of Hilbert function $H(A)=(1,2,\dots , d^k , \dots ,2,1)$, be a CI algebra where $F\in {\sf k}[X,Y]$, is a sufficiently general homogeneous polynomial for an integer $j\geq 2$, in the following sense: we assume that $F$ is outside the union of all sets of forms $F\in \mathcal E_j$ such that for some linear form $\ell\in R$, two or more of the active Hessians 
\begin{equation}\label{Hesslisteq}
h_\ell^i(F), 0\le i\le d-2, \text { and also $f_1^\vee$ if $k\geq 2$ },
\end{equation}
 have simultaneous roots $p_\ell$. We also assume that $F$ is general enough so that the Hessian $h^i(F)$ that has zeroes has no multiple roots.\footnote{The Hessian matrices have as entries forms in the coefficients of $F$, so each resultant of two of them or discriminant of one is a homogeneous polynomial in the coefficients of $F$: each such polynomial determines a codimension one subvariety of zeroes in $\mathbb P^j=\mathbb P(\mathcal E_j)$, so the ``general'' $F$ is any $F$ in the complement of a union of these resultant subvarieties of codimension one in $\mathbb P^j$; thus, a ``general'' $F$ here is one belonging to a specific open Zariski-dense subset of $\mathbb P^j$.} 
 Then, 
 \begin{itemize}
 \item[$(a)$] If $k=1$, there are exactly $d$ different Jordan types for linear forms of $A$: here $d-1$ of them correspond to each choice of  $i$ satisfying $0\leq i\leq d-2$ where $h^i_\ell(F)=0$ (see Equation \eqref{genJtypeeq}),  and there is one Jordan type for a strong Lefschetz element.
 \item[$(b)$] If $k\geq 2$, there are exactly $d+1$ different Jordan types for linear forms of $A$: here $d-1$ of them correspond to each choice of  $i$ satisfying $0\leq i\leq d-2$ where $h^i_\ell(F)=0$ (see Equation \eqref{genJtypeeq}),  there is one Jordan type for the roots of $f^{\vee}_1$ (see Equation \eqref{genJtype2eq}), and there is one Jordan type for a strong Lefschetz element.
 \end{itemize}
 
 The Jordan type $P_\ell$ of a linear form $\ell$ where, for some integer $i\in [0,d-2]$ the Hessian $h^i_\ell(F)=0$  has an order one zero, and no other active Hessian is zero, is the maximum consecutive subsequence (with $d$ elements) of 
\begin{equation}\label{genJtypeeq}
(\dots, j-2i+7,j-2i+5,j-2i+3,j-2i,j-2i,j-2i-3, j-2i-5, j-2i-7,\dots),
\end{equation}
 for which every entry is greater than or equal to $k$ and less than or equal to $j+1$.
Moreover, for each $i\in [0,d-2]$ there are exactly $(i+1)(j-2i)$ distinct linear forms, corresponding to the roots $p_\ell$ of $h^i(F)=0$, which all have the same Jordan type given in Equation \eqref{genJtypeeq}.
\par
If $k\geq 2$, the Jordan type of linear forms where $f_1^{\vee}=0$, and all other Hessians are non-zero, is 
\begin{equation}\label{genJtype2eq}
 (j+1,j-1, j-3,\dots ,j+1-2(d-2),1^k),
 \end{equation}
 with $d+k-1$ parts.
Moreover, there are exactly $d$ distinct linear forms, corresponding to the roots of  $f_1^{\vee}=0$, with the same Jordan type of Equation \eqref{genJtype2eq}.
All the other linear forms have the strong Lefschetz Jordan type, $H(A)^\vee$.\par
\end{theorem}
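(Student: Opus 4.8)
The plan is to convert the statement about vanishing Hessians into one about ranks of central multiplication maps, and then read off the Jordan type from the string decomposition. By Lemma~\ref{Hesslem}, for $\ell=ax+by$ one has $h^i_\ell(F)=0$ exactly when the square map $m_{\ell^{j-2i}}\colon A_i\to A_{j-i}$ (both sides of dimension $t_i$) fails to be an isomorphism, and $\det m_{\ell^{j-2i}}$ is, up to a nonzero scalar, the binary form $h^i(F)$ in the point variables $(a,b)$, of degree $(i+1)(j-2i)$. The genericity hypothesis on $F$ guarantees that the root sets of the active Hessians are pairwise disjoint and that each $h^i(F)$ with $i\le d-2$ has $(i+1)(j-2i)$ simple roots; since a simple zero of the determinant of a square matrix family forces corank exactly one (corank $\ge 2$ would make all maximal minors vanish, hence a double zero), at every special $\ell$ precisely one central map has corank $1$ while all others are isomorphisms. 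This already pins down, for each fixed $i$, a single central-rank profile common to all $(i+1)(j-2i)$ roots of $h^i(F)$.

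Next I would compute the Jordan type forced by such a profile. Because $A=R/\Ann F$ is a complete intersection, $P_\ell$ is a CIJT partition of diagonal lengths $T$, and by Lemma~\ref{diaglem} its string decomposition determines the central ranks through the count $r_m=\#\{\text{strings }[a_s,b_s]\colon a_s\le m,\ b_s\ge j-m\}$. The strong Lefschetz type is $T^\vee=(j+1,j-1,\dots,k)$, whose strings are all symmetric (from degree $m$ to $j-m$), giving full ranks $r_m=t_m$. Imposing ``corank one at level $i$, full rank elsewhere'' I would show the minimal admissible change is to replace the two symmetric strings of lengths $j-2i+1$ and $j-2i-1$ by two strings of common length $j-2i$ (from $i$ to $j-i-1$ and from $i+1$ to $j-i$): a direct check of the spanning count shows this lowers $r_i$ by one, leaves every other $r_m$ unchanged, and preserves the Hilbert function $T$. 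The resulting partition is $T^\vee$ with the entries $j-2i+1,\,j-2i-1$ replaced by $j-2i,\,j-2i$, which is exactly the length-$d$ window of \eqref{genJtypeeq}; one checks it satisfies the equality criterion \eqref{equalityCIeq}, so it is CIJT.

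The level $i=d-1$ (the case $f_1^\vee$, present when $k\ge2$) I would treat separately and concretely, since there $A_{d-1}$ and $A_{d+k-2}$ lie in the flat part and the map is $m_{\ell^{k-1}}\colon A_{d-1}\to A_{d+k-2}$. Using $I_{d+k-2}=R_{k-2}f_1$, where $f_1$ is the degree-$d$ generator of $I$, a short divisibility argument shows $\ker m_{\ell^{k-1}}\neq 0$ iff $\ell\mid f_1$, and that the kernel is one-dimensional (spanned by $f_1/\ell$) when $\ell$ is a simple factor of $f_1$. Hence the special forms here are exactly the $d$ linear factors of the squarefree form $f_1$, each of corank one, and the same string-count analysis produces the partition $(j+1,j-1,\dots,j+1-2(d-2),1^k)$ of \eqref{genJtype2eq}, with $d+k-1$ parts.

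Finally I would assemble the global count: the non-strong-Lefschetz types are one for each $i\in[0,d-2]$, together with the $f_1^\vee$ type when $k\ge2$, while every remaining $\ell$ avoids all Hessian vanishing and is strong Lefschetz with type $T^\vee$; this gives $d$ types for $k=1$ and $d+1$ for $k\ge2$. The crux, and the step I expect to be the main obstacle, is the rigidity in the second paragraph: that a corank-one drop at a single level forces precisely the minimal string modification, equivalently that the assignment $P\mapsto\{\text{levels where the central rank drops}\}$ is injective on CIJT partitions. I would secure this from the classification in Corollaries~\ref{goodpartcor} and~\ref{goodpartcor2}, computing the drop-set of each partition in that list and matching the $2^d$ (resp.\ $2^{d-1}$) CIJT partitions bijectively with the $2^d$ (resp.\ $2^{d-1}$) subsets of active levels, so that $\{i\}$ determines a unique partition, namely the one found above. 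A secondary point is that, once the strings are pinned, the non-central multiplication maps introduce no further degeneracy, which is immediate from the same spanning-count formula.
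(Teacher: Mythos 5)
Your proposal is correct in substance, but note that the paper itself contains no proof of this statement: Theorem~\ref{genericthm} is quoted as prior work of Altafi and Boij, and the paper instead proves the more general Theorems~\ref{sumOfPartitions} and~\ref{2Hessthm}, which classify the Jordan type attached to an \emph{arbitrary} subset of vanishing active Hessians. Your route essentially specializes that machinery to singleton vanishing sets: the ``rigidity'' step you correctly flag as the crux --- injectivity of $P\mapsto\{\text{levels where the central rank drops}\}$ on CIJT partitions --- is exactly the content of Theorem~\ref{sumOfPartitions} combined with the classification in Corollaries~\ref{goodpartcor} and~\ref{goodpartcor2}, i.e.\ Theorem~\ref{2Hessthm}; since those results are proved independently of Theorem~\ref{genericthm}, invoking them here is not circular. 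Two of your ingredients are genuinely additional and sound: first, the observation (via Jacobi's formula for the derivative of a determinant, so that corank $\geq 2$ kills the adjugate and forces a double root) that a simple root of $h^i(F)$ yields corank exactly one of $m_{\ell^{j-2i}}$, which together with the disjointness of root sets built into the genericity hypothesis reduces each special $\ell$ to a singleton vanishing set; second, the direct kernel computation at the top level, using $I_{d+k-2}=R_{k-2}f_1$ to show $\ker m_{\ell^{k-1}}\neq 0$ if and only if $\ell\mid f_1$, with one-dimensional kernel spanned by $f_1/\ell$ at a simple factor. The second point is in fact more robust than the statement's blanket ``no multiple roots'' hypothesis, since for $k\geq 3$ the determinant $h^{d-1}$ vanishes to order $k-1$ at each factor of $f_1$, so the simple-root argument is unavailable at that level; your divisibility argument sidesteps this and directly yields the count of exactly $d$ special forms (the distinct factors of a squarefree $f_1$). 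Your string-count verification that replacing the symmetric strings of lengths $j-2i+1$ and $j-2i-1$ by two strings $[i,\,j-i-1]$ and $[i+1,\,j-i]$ drops $r_i$ by one, fixes all other central ranks, and preserves $T$; your identification of the resulting partition with the length-$d$ window of \eqref{genJtypeeq}; and the check of the equality criterion \eqref{equalityCIeq} are all accurate (they match, e.g., the tables in Figure~\ref{fig9} and Example~\ref{3.4ex}), and the degree count $(i+1)(j-2i)$ for the number of roots of $h^i(F)$ agrees with Lemma~\ref{Hesslem}.
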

\begin{remark}\label{wLpLoci}
We can use Theorem \ref{genericthm} to determine the weak Lefschetz loci for an Artinian CI algebra $A=R/\Ann F$, where $F\in \mathcal{E}_j$ is a sufficiently general form (in the sense of Theorem \ref{genericthm}). Assume that $H(A)=\left(1,2,\dots , d-1,d^k,d-1,\dots ,2,1\right)$ is the Hilbert function of $A$. If $k=1$, every linear form is a weak Lefschetz element for $A$, and corresponds to the Jordan type partition of Equation \eqref{genJtypeeq}. But if $k\geq 2$, the linear form $\ell =ax+by$ is a weak Lefschetz element for $A$ (for a general $F$) if and only if $f_1^{\vee}(p_\ell)\neq 0$,
and this is equivalent under the hypothesis of $F$ sufficiently general, to $P_\ell$ not being the Jordan type partition of Equation \eqref{genJtype2eq}.
\end{remark}

\begin{example}\label{3.4ex}
\begin{itemize}
\item[(i)] Let $A=R/{\Ann F}$ be a complete intersection algebra where $F = (X+Y)^4+(X-Y)^4+(X+2Y)^4\in \mathcal{E}_4$. Here $F$ is a sufficiently general form according to the assumption of Theorem \ref{genericthm}. The Hilbert function $H(A)=(1,2,3,2,1)$, $d=3,k=1$ and $h^0_\ell(F), h^1_\ell(F)$ are the only  active Hessians. By the Theorem there are exactly $3$ different Jordan types for $\ell\in A_1$: $H(A)^{\vee}=\left(5,3,1\right)$, $\left(5,2,2\right)$ (for linear forms $\ell$, where $h^1_\ell(F)=0$) and $(4,4,1)$ (for linear forms $\ell$, where  $h^0_\ell(F)=0$);  they each have $3$ parts so they are weak Lefschetz Jordan types.
\item[(ii)] Let $A=R/{\Ann F}$ be a complete intersection algebra where $F = (X+Y)^5+(X-Y)^5+(X+2Y)^5\in \mathcal{E}_5$; this is a  sufficiently general form. The Hilbert function $H(A)=(1,2,3,3,2,1), d=3,k=2$ and $h^0_\ell(F), h^1_\ell(F)$ and $h^2_\ell(F)$ are the active Hessians. Therefore, there are exactly four different Jordan types: $H(A)^{\vee}=\left(6,4,2\right)$ (for the general linear form), $\left(6,3,3\right)$, (for linear forms $\ell$ satisfying $h^1_\ell(F)=0$), $(5,5,2)$, for linear forms $\ell$ satisfying $h^0_\ell(F)=0$), and $(6,4,1,1)$, (for linear forms satisfying $h^2_\ell(F)=0$). The first three Jordan types are weak Lefschetz Jordan types but the last one, which corresponds to the roots of $h^2(F)$ (or $f_1^{\vee}$) does not have weak Lefschetz Jordan type.
\item[(iii)] Let $F=X^2Y^3$. Then $\Ann F=(x^3,y^4), H(R/\Ann F)=(1,2,3,3,2,1)$. from Figure \ref{Hessfig} using standard monomial bases $\mathcal{B}_{i}=(x,y), \mathcal{B}_{2}=(x^2,xy,y^2)$ we have the Hessian determinants $h^0(F)=F, h^1(F)=-24X^2Y^4, h^2(F)=-12^3Y^3$, with common root $p_\ell=(1,0), \ell=x$. Thus $F$ is not sufficiently general in the sense of Theorem \ref{genericthm}. But for $\ell\not=x,y$ up to scalar, $P_\ell=(6,4,2)$ is strong-Lefschetz. It is readily seen that $P_{x,A}=(3^4)$, and $P_{y,A}=(4^3)$.
\end{itemize}
\end{example}
\begin{figure}\small
$\qquad\qquad\qquad Hess^1(F)=\left(\begin{array}{cc}x^2&xy\\xy&y^2\end{array}\right)\circ F=\left(\begin{array}{cc}2Y^3&6XY^2\\6XY^2&6X^2Y\end{array}\right)$\par
$\qquad\qquad\qquad Hess^2(F)=\left(\begin{array}{ccc}x^4&x3y&x^2y^2\\x^3y&x^2y^2&xy^3\\x^2y^2&xy^3&y^4\end{array}\right)\circ F=\left(\begin{array}{ccc}0&0&12Y\\0&12Y&12X\\12Y&12X&0\end{array}\right).$
\caption{Hessian matrices for $F=X^2Y^3$, see Example \ref{3.4ex}(iii).}\label{Hessfig}
\end{figure}
\normalsize

\subsection{CI Jordan types for an arbitrary dual generator $F$.}\label{arbitraryFsec}
This section contains our main results showing that the CIJT partitions $P=P_\ell$ of diagonal lengths $T$  correspond 1-1 to the sets of Hessians $h_\ell^i(F)$ that can vanish for a linear form $\ell $ in a complete intersection $A$ of Hilbert function $T$.\par
For an Artinian CI algebra $A=R/\Ann F$ where the form $F\in \mathcal{E}_j$ is not general enough in the above sense of Theorem \ref{genericthm}, then several different active Hessians may have simultaneous roots $\ell$, so there are more possible Jordan types. 
We first determine the set of CIJT partitions having a particular non-vanishing Hessian (Theorem~\ref{sumOfPartitions}). We apply this in Theorem \ref{2Hessthm} to show the 1-1 correspondence between CIJT partitions and the $2^d$ (when $k\ge 2$), or $2^{d-1}$ (when $k=1$) subsets of the active Hessians: this count agrees with the number of different complete intersection Jordan types we showed in Corollary \ref{goodcountcor}.  In Proposition \ref{rkseq} we show that the rank of all the multiplication maps $\Hess_\ell^i(F)$ by different powers of $\ell$ when $P_\ell$ is a CIJT partition are determined by which Hessians vanish.
\par
Let $T=\left(1,2,\dots ,d^k,\dots ,2,1\right)$ as in Equation~\ref{CIT2eq} and consider the conjugate partition $T^\vee=\left(2d+k-2,2d+k-4,\dots ,k+2,k\right)$, the strong Lefschetz partition. For a CI Artinian algebra $A$ that is strong Lefschetz \footnote{Every quotient of $R={\sf k}[x,y]$ when $\cha {\sf k}=0$ or $\cha {\sf k}> j$ is strong Lefschetz by \cite{Bri}.} a generic linear form $\ell$ has Jordan type $T^\vee$. Then the higher Hessians are all non-zero at the point $p_\ell$. We now show that the $i$-th Hessian $h^{i}_{\ell}=h^i(F)_{p_\ell}$ is non-zero for an integer $i\in [0,d-1]$ {if and only if} the sum of the first $i+1$ parts of the Jordan type partition $P_\ell$ is equal to the sum of the first $i+1$ parts of $T^\vee$, which is the sum in Equation \eqref{corrformula}. We will write $P_\ell=(p_1,p_2,\ldots)$ with $p_1\ge p_2\ge\cdots$. Recall that the socle degree $j=(2d+k-3)$ and $|P|=|T|=\sum T_i=d(j+2-d)$.
\begin{theorem}[When is a Hessian non-zero?]\label{sumOfPartitions}
Let $P_\ell=(p_1,p_2,\dots ,p_{d},\dots )$ be the Jordan type partition for a linear form $\ell$ of an Artinian CI algebra $A=R/\Ann(F)$ of Hilbert function $H(A)=(1,2,\dots ,d^k,\dots ,2,1)$, for an integer $k\geq 1$.
Then for each $i\in \left[0,d-1\right]$ 
 we have
\begin{equation}\label{corrformula}
h_\ell^{i}(F)\neq 0 \hspace*{0.1 cm} \Longleftrightarrow \hspace*{0.1 cm}\sum^{i+1}_{j=1}p_j=(i+1)(2d+k-i-2)=(i+1)(j+1-i).
\end{equation}
In particular, $P_\ell$ has $d$ parts unless $k\ge 2$ and $h^{d-1}_\ell=0$.
\end{theorem}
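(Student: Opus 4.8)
The plan is to establish the biconditional in Equation \eqref{corrformula} by translating the vanishing of the $i$-th Hessian into a statement about the rank of a multiplication map, and then translate that rank into a partition-theoretic quantity via the theory of Jordan type. By Lemma \ref{Hesslem}, the Hessian $h^i_\ell(F)$ is (up to a nonzero constant) the determinant of the multiplication map $m_{\ell^{j-2i}}\colon A_i\to A_{j-i}$. Since $\dim_{\sf k} A_i = T_i = i+1 = \dim_{\sf k} A_{j-i}$ (by symmetry of $T$ for $0\le i\le d-1$), this is a square map, and $h^i_\ell(F)\neq 0$ if and only if $m_{\ell^{j-2i}}$ has full rank $i+1$. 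So the whole problem reduces to computing $\mathrm{rank}\, m_{\ell^{j-2i}}$ in terms of the Jordan type partition $P_\ell$.

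The key step is the rank formula for powers of a nilpotent operator in terms of its Jordan block sizes. If $m_\ell$ on $A$ has Jordan type $P_\ell=(p_1,p_2,\dots)$, then $\mathrm{rank}\, m_\ell^{m} = \sum_r \max(p_r - m, 0)$, the standard count of how many boxes survive after stripping $m$ columns from each part. Applying this with $m=j-2i$ gives $\mathrm{rank}\, m_{\ell^{j-2i}} = \sum_r \max(p_r-(j-2i),0)$. The map is full rank $i+1$ precisely when exactly $i+1$ parts of $P_\ell$ exceed $j-2i$, and moreover each such surviving part contributes its full excess in a way that matches the source dimension. The clean way to phrase this: the map $m_{\ell^{j-2i}}\colon A_i\to A_{j-i}$ is injective (equivalently surjective, since both sides have dimension $i+1$) if and only if at least $i+1$ parts of $P_\ell$ have size strictly greater than $j-2i$. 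I would then argue that, combined with the constraint that $P_\ell$ has diagonal lengths $T$ (Lemma \ref{diaglem}A), injectivity of this map forces the sum of the first $i+1$ parts to equal the maximal possible value $(i+1)(j+1-i)$, which is exactly the sum of the first $i+1$ parts of the strong Lefschetz partition $T^\vee$.

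To make the equivalence with the sum precise I would use a dominance/extremality argument. For any partition of diagonal lengths $T$, the partial sum $\sum_{r=1}^{i+1}p_r$ is bounded above by $\sum_{r=1}^{i+1}(T^\vee)_r = (i+1)(j+1-i)$, because $T^\vee$ is the dominance-maximal partition among those with diagonal lengths $T$ (it is the strong Lefschetz type). The forward direction is then: if $h^i_\ell\neq 0$ the map is an isomorphism, so the Jordan blocks passing through the relevant diagonals are as long as possible, yielding equality in the partial-sum bound. Conversely, equality of the partial sum at index $i+1$ forces exactly $i+1$ parts to exceed $j-2i$ with the extremal total, which by the rank formula makes $m_{\ell^{j-2i}}$ full rank, hence $h^i_\ell\neq 0$. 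The final clause, that $P_\ell$ has $d$ parts unless $k\ge 2$ and $h^{d-1}_\ell=0$, follows by taking $i=d-1$: when $k\ge 2$ the index $i=d-1$ is active and $j-2i = k-1\ge 1$, so the number of parts (which equals the number of $p_r > 0 = $ the count relevant at the last diagonal) is $d$ exactly when $h^{d-1}_\ell\neq 0$; when $k=1$ there is no active Hessian at $i=d-1$ and $P_\ell$ always has $d$ parts by the weak Lefschetz minimum.

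The main obstacle I expect is making the dominance bound and its equality case fully rigorous using only the diagonal-length constraint, rather than hand-waving ``as long as possible.'' Specifically, I need a clean lemma that for partitions of fixed diagonal lengths $T$, the partial sums $\sum_{r=1}^{m}p_r$ are maximized simultaneously by $T^\vee$, and I need to verify that the rank formula $\mathrm{rank}\,m_{\ell^{j-2i}}=\sum_r\max(p_r-(j-2i),0)$ interacts correctly with the diagonal-length bookkeeping so that full rank is genuinely equivalent to the stated partial-sum equality. I would handle this by relating each surviving Jordan string to the diagonals it crosses, using the string decomposition in Lemma \ref{diaglem}B, which makes the passage between block lengths, diagonal lengths, and the rank of $m_{\ell^{j-2i}}$ entirely explicit.
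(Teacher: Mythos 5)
Your overall architecture matches the paper's: reduce $h^i_\ell(F)\neq 0$ to full rank of the square map $m_{\ell^{j-2i}}\colon A_i\to A_{j-i}$ via Lemma \ref{Hesslem}, then translate that rank into box-counting on a Ferrers diagram constrained by the diagonal lengths $T$. But there is a genuine error at your pivotal step. The formula $\mathrm{rank}\, m_\ell^{m}=\sum_r\max(p_r-m,0)$ computes the rank of $\ell^m$ on \emph{all} of $A$, not on the graded piece $A_i\to A_{j-i}$, and your ``clean'' criterion --- injectivity of $A_i\to A_{j-i}$ iff at least $i+1$ parts exceed $j-2i$ --- is false. A counterexample sits in the paper's own Example \ref{1221ex}: for $A=R/(x^2,y^3)$, $\ell=x$, one has $P_\ell=(2,2,2)$ with $T=(1,2,2,1)$, $j=3$; take $i=1$. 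All three parts exceed $j-2i=1$, yet $h^1_x=0$, because the three strings occupy the degree intervals $[0,1]$, $[1,2]$, $[2,3]$, so only one string crosses from degree $1$ to degree $2$ and $m_\ell\colon A_1\to A_2$ has rank $1$. What matters is not merely the length of a part but its position in degrees: by the string decomposition of Lemma \ref{diaglem}B the $r$-th string starts in degree $r-1$ and covers degrees $[r-1,\,p_r+r-2]$, so the rank of $A_i\to A_{j-i}$ equals the number of $r\le i+1$ with $p_r\ge j-i-r+2$, and full rank requires this for \emph{every} $r\in[1,i+1]$ --- strictly stronger than $p_{i+1}>j-2i$ when $r<i+1$. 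The paper's proof is precisely the degree-aware version of your plan: it counts boxes of the first $i+1$ parts by degree ($\tfrac{i(i+1)}{2}$ boxes in degrees below $i$, then $(i+1)(j+1-2i)$ boxes in degrees $[i,j-i]$ when the map is an isomorphism, then $\tfrac{i(i+1)}{2}$ boxes in degrees above $j-i$), obtaining exactly the extremal total $(i+1)(j+1-i)$, with the converse by contraposition showing a rank drop forces a strict deficit. Your dominance bound (partial sums of any partition of diagonal lengths $T$ are at most those of $T^\vee$) is correct and is implicitly what the paper exploits, but your equality-case analysis must run through string positions rather than part sizes alone; as written it certifies $(2,2,2)$ as having $h^1\neq 0$, which is wrong.

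A second, smaller slip is your justification of the final clause: when $k=1$ you claim $P_\ell$ ``always has $d$ parts by the weak Lefschetz minimum,'' but $\mathrm{Sp}(T)=d$ is only a lower bound on the number of parts, not automatically attained (e.g.\ $(2,2,1,1)$ has diagonal lengths $(1,2,2,1)$ and four parts, and $(2,2,2)$ has three). The correct route inside this theorem is to apply the equivalence at $i=d-1$: when $k=1$ one has $j-2(d-1)=0$, so $m_{\ell^{0}}$ is the identity and $h^{d-1}_\ell\neq 0$ trivially, whence \eqref{corrformula} gives $\sum_{r=1}^{d}p_r=d(j+2-d)=|T|$, i.e.\ exactly $d$ parts; when $k\ge 2$ the same computation shows that $h^{d-1}_\ell\neq 0$ forces $d$ parts, so more than $d$ parts can occur only when $h^{d-1}_\ell=0$.
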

\begin{proof}
$``\Rightarrow"$ Suppose that for some integer $i\in [0,d-1]$ we have $h_\ell^i(F)\neq 0$; then the multiplication map
$m_{\ell^{j-2i}}:A_i\to A_{j-i}$ is an isomorphism and has the maximal rank, that is $i+1$. Therefore, $p_n\geq  j+1-2i=2d-2i+k-2$, for every $n\in\left[0,i\right]$, which correspond to all the basis elements of $A_m$'s, for $m\in \left[i,2d+k-i-3\right]$. Since the multiplication map $m_{\ell^{i-r}}:A_r\rightarrow A_i$, for every $r\in \left[0,i-1\right]$, has trivially maximal rank, the first $i+1$ parts of $P_\ell$ contain $\frac{i(i+1)}{2}$ boxes (i.e. sum to $\frac{i(i+1)}{2}$), corresponding to the basis elements of $A_r$, for every $r\in \left[0,i-1\right]$.

 We claim that there are $\frac{i(i+1)}{2}$ more boxes in the first $i+1$ parts  $P_\ell$.  Since, $\dim_k(A_i)=\dim_k(A_{j-i})=i+1$, and $h_\ell^i(F)\neq 0$, all $i+1$ boxes corresponding to basis elements of $A_{j-i}$ are contained in the first $i+1$ parts in $P_\ell$. Therefore, the $\frac{i(i+1)}{2}$ boxes corresponding to basis elements of $A_r$'s, for $r\in\left[j+1-i,j\right]$, are also contained in the first $i+1$ parts of $P_\ell$. Summing the number of boxes, we have
$$\sum^{i+1}_{j=1}p_j=\frac{i(i+1)}{2}+(i+1)(2d+k-2i-2)+\frac{i(i+1)}{2}=(i+1)(j+1-i).$$
$``\Leftarrow"$ The multiplication map $m_{\ell^{i-r}}:A_r\rightarrow A_i$, for every $r\in \left[0,i-1\right]$, has trivially maximal rank, so the first $i+1$ parts of $P_\ell$ contain $\frac{i(i+1)}{2}$ boxes, corresponding to basis elements of $A_r$, for every $r\in \left[0,i-1\right]$.
Now assume by way of contradiction that for some $i\in [0, d-1]$, $h_\ell^i(F)= 0$; then the  rank of the multiplication map $m_{\ell^{j-2i}}:A_i\rightarrow A_{j-i}$ is at most $i$, for simplicity we may assume it is exactly $i$. This implies that there are $j+1-2i$ more boxes in $i$ parts among $p_0,p_1,\dots ,p_i$, and there are at most $j-2i$ more boxes  in the remaining part among $p_0,p_1,\dots ,p_i$. On the other hand, similarly to the previous case,  there are at most $\frac{i(i+1)}{2}$ more boxes in the first $i+1$ parts of $P_\ell$, corresponding to the basis elements of $A_r$'s, for $r\in\left[j+1-i,j\right]$. Therefore, 
\begin{align*}
\sum^{i+1}_{j=1}p_j&<\frac{i(i+1)}{2}+(i)(2d+k-2i-2)+(2d+k-2i-3)+\frac{i(i+1)}{2}\\
&<(i+1)(j+1-i),
\end{align*}
contradicting our assumption. We have shown $\Leftarrow$.
\end{proof}\par

The following example illustrates Theorem \ref{sumOfPartitions}.
\begin{example}\label{sumOfPartitionex}
Let $A=R/\Ann(F)$ be an Artinian CI algebra with the Hilbert function {$H(A)=(1, 2, \ldots, 9,10^2,9, \ldots, 2, 1)$} where $d=10,k=2,j=19$. Let the partition $P_\ell=({19}^2,{15}^2,{10}^3,3^4)$ be the Jordan type for a linear form $\ell$, see the Ferrers diagram in Figure \ref{vanishHess}. Each box of the Ferrers diagram represents a basis element of $A_i$, for $i\in\left[0,19\right]$ and we label the boxes by the degree of the elements. The boxes with labels $5,6,13$ and $14$ are indicated in the Figure \ref{vanishHess}. Integers $0,1,2,\dots ,9$, written in the left of the rows indicate the order $i$ of the Hessian $h^i_{\ell}(F)$ for that row.

Using the equivalence in Equation \eqref{corrformula}, we can determine the set of vanishing Hessians for $A$ and $\ell$ from the partition $P_\ell$.

First, since $p_1=19$, Equation \eqref{corrformula} implies that $h^0_\ell(F)=0$. In fact, $p_1$ represents the power of $\ell$ which is zero: in other words, we have $\ell^{19}=0$ which implies that $m_{\ell^{19}}:A_0\rightarrow A_{19}$ is not an isomorphism and therefore $h^0_\ell(F)=0$.

We see that $h_\ell^6(F)\neq 0$ by showing that $m_{\ell^7}:A_6\rightarrow A_{13}$ has maximum rank, that is, $7$. By looking at the partition and boxes with labels $6$ and $13$ we see that they are all in the first $7$ rows of the partition. Moreover, every box with label less than $6$ has to be on the left of the boxes with label $6$, and every box with label greater than $13$ has to be to the right of the boxes with label $13$. Therefore the number of boxes in the first $7$ rows of $P_\ell$ is exactly $21+56+21=98=7(14)$ as in Equation \ref{corrformula} for $i=6$.\par

Now we look at the boxes labeled with $5$ and $14$ to determine if the map $m_{\ell^9}:A_5\rightarrow A_{14}$ has maximal rank. We see in the Ferrers diagram that one box with label $14$ is moved to the rows below the those labelled with $5$. This shows that  $\ell^9$ has rank $5$, that is, one less than the maximal rank, and therefore $h_\ell^5(F)=0$. The number of boxes in the first $6$ rows is equal to $88$ which is less than $90=6(15)$, as shown by Theorem \ref{sumOfPartitions}.

Similarly, we see that  $h_\ell^1(F)\neq 0$ and $h_\ell^3(F)\neq 0$ and that all the other Hessians are zero.
\end{example}

\begin{figure}
\begin{center}
\includegraphics[scale=1.025]{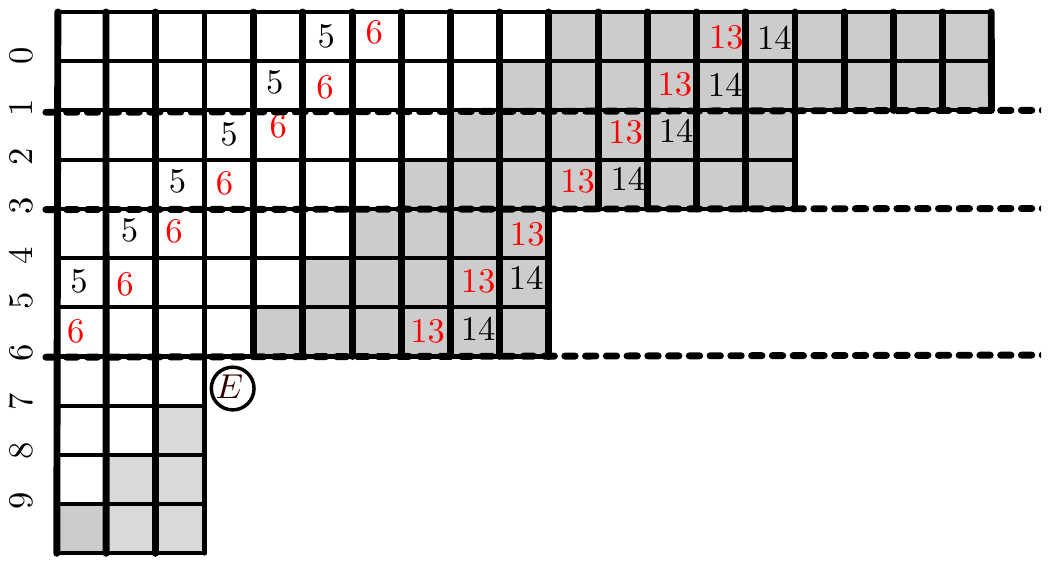}
\end{center}
\vspace{-.2 in}

\caption{Vanishing Hessians for CIJT partition $P=({19}^2,{15}^2,{10}^3,3^4)$ of diagonal lengths $T=(1, 2, \ldots, 10^2, \ldots, 2, 1)$. See Example \ref{sumOfPartitionex}.}\label{vanishHess}

\end{figure}

In Section \ref{JTCIsec}, we provided the list of possible complete intersection Jordan types having diagonal lengths $T=(1,2,\dots , d^k , \dots ,2,1)$, for $d\geq 2$ and $k\geq 1$. In Corollaries~\ref{goodpartcor} and ~\ref{goodpartcor2}, we specified all such partitions explicitly. Now, using Theorem \ref{sumOfPartitions}, we determine the set of Hessians which vanish for each possible CIJT partition. An ordered partition of zero is the empty partition.
\begin{theorem}[Hessians and partitions]\label{2Hessthm}
Assume that $T=\left( 1,2,3, \dots , d^k,\dots ,3,2,1\right)$, satisfies Equation \eqref{CITeq} for $d\geq 2$ and $k\geq 2$ $(k=1$, respectively), and assume that $\cha {\sf k}=0$. Then there is a 1-1 correspondence between the CIJT partitions $P_\ell$ of diagonal lengths $T$, and the $2^d$ (when $k>1$), or $2^{d-1}$ (when $k=1$) subsets of the active Hessians for $T$ that vanish at $\ell$ in $R_1$. \par
In particular, let $P$ be a partition of diagonal lengths $T$. The following are equivalent.
\begin{enumerate}[i.]
\item $P=P_{\ell,A}$ for a linear form $\ell\in R$ and an Artinian complete intersection algebra $A=R/\Ann F$, and there is an ordered partition $n=n_1+\cdots +n_c$ of an integer $n$ satisfying $0\leq n\leq d$ (or $0\leq n\leq d-1$, respectively) such that $h^{n_1+\cdots +n_i-1}_\ell(F)\neq 0$, for each $i\in\left[1,c\right]$, and the remaining Hessians are zero;
\item $P$ satisfies
\begin{equation}\label{JTpartition}
P=\big(p_1^{n_1}, \ldots, p_c^{n_c}, (d-n)^{d-n+k-1}\big),
\end{equation}
 where 
$p_i=k-1+2d-n_i-2\displaystyle(n_1+\cdots +n_{i-1})$, for $1\leq i \leq c$.
\end{enumerate}
\end{theorem}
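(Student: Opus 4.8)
The plan is to deduce the theorem by merging the explicit description of CIJT partitions from Corollaries~\ref{goodpartcor} and~\ref{goodpartcor2} with the Hessian-vanishing criterion of Theorem~\ref{sumOfPartitions}. The equivalence of~(i) and~(ii) is largely already contained in those corollaries: a partition $P$ of diagonal lengths $T$ is CIJT exactly when it has the form~\eqref{JTpartition}, so the genuinely new content is the translation of the data $(n;n_1,\dots,n_c)$ into a prescribed set of non-vanishing active Hessians. Thus the heart of the argument is a single partial-sum computation fed into Theorem~\ref{sumOfPartitions}, followed by a counting/bijection step matching Corollary~\ref{goodcountcor}.

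Write $P=(p_1^{n_1},\dots,p_c^{n_c},(d-n)^{d-n+k-1})$ as in~\eqref{JTpartition}, set $a_0=0$ and $a_i=n_1+\cdots+n_i$ (so $a_c=n$), and abbreviate $R(m)=m(2d+k-1-m)$, the quantity on the right of~\eqref{corrformula} when $i+1=m$. Since $p_\ell=k-1+2d-n_\ell-2a_{\ell-1}$, the first $a_i$ parts of $P$ are precisely $p_1^{n_1},\dots,p_i^{n_i}$, and I would compute
\begin{equation*}
\sum_{m=1}^{a_i}p_m=\sum_{\ell=1}^{i}n_\ell\big(k-1+2d-n_\ell-2a_{\ell-1}\big)=(k-1+2d)\,a_i-\sum_{\ell=1}^{i}\big(n_\ell^2+2n_\ell a_{\ell-1}\big).
\end{equation*}
Using $n_\ell=a_\ell-a_{\ell-1}$, the last sum telescopes to $\sum_{\ell=1}^{i}(a_\ell^2-a_{\ell-1}^2)=a_i^2$, giving $\sum_{m=1}^{a_i}p_m=a_i(2d+k-1-a_i)=R(a_i)$. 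By Theorem~\ref{sumOfPartitions} this is exactly the statement $h^{a_i-1}_\ell(F)\neq 0$ for each $i\in[1,c]$, the non-vanishing Hessians asserted in~(i).

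It remains to show that every \emph{other} active Hessian vanishes. The partial sum $S_m=\sum_{m'\le m}p_{m'}$ is a \emph{linear} function of $m$ on each block where the parts of $P$ are constant (the head blocks $[a_{i-1},a_i]$, and the tail block $[n,d+k-1]$ of parts equal to $d-n$), whereas $R$ is strictly concave ($R''=-2$). Since $S_m=R(m)$ at every head endpoint $a_0,\dots,a_c$, the strict concavity of $g=R-S$ forces $g(m)>0$, i.e.\ $S_m<R(m)$, in the interior of each head block, so $h^{m-1}_\ell=0$ there by Theorem~\ref{sumOfPartitions}. For the tail I would compute directly that $S_d-R(d)=(k-1)(n-d)$, which is negative exactly when $k\ge2$ and $n<d$, and zero when $n=d$; combined with $S_n=R(n)$ and the concavity of $g$ on $[n,d]$ (so $g(m)$ exceeds the chord from $(n,0)$ to $(d,g(d))$), this yields $S_m<R(m)$ for all $m\in(n,d]$. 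Hence no spurious non-vanishing Hessian appears, and in particular $h^{d-1}_\ell\neq0$ iff $n=d$ when $k\ge2$ (while $h^{d-1}$ is inactive when $k=1$).

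Finally, to obtain the $1$–$1$ correspondence, observe that the set of non-vanishing active Hessians is $\{a_1-1,\dots,a_c-1\}$, contained in $\{0,\dots,d-1\}$ (resp.\ $\{0,\dots,d-2\}$ when $k=1$). It is empty exactly when $n=0$; when $n\ge1$ its maximum is $a_c-1=n-1$, while $a_1-1,\dots,a_{c-1}-1$ range over all subsets of $\{0,\dots,n-2\}$ as $(n_1,\dots,n_c)$ ranges over the ordered partitions of $n$. Reading off the maximum recovers $n$ and then the ordered partition uniquely, so $P\mapsto\{\text{non-vanishing active Hessians}\}$ is a bijection onto the full power set of the active Hessians, and the count $2^d$ (resp.\ $2^{d-1}$) matches Corollary~\ref{goodcountcor}. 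The step I expect to require the most care is the vanishing claim (that \emph{no} Hessian outside $\{a_i-1\}$ survives): the head blocks fall out cleanly from strict concavity, but the tail block, whose slope $d-n$ is small, must be handled separately through the identity $S_d-R(d)=(k-1)(n-d)$ together with the concavity of $R-S$ on $[n,d]$.
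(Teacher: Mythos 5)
Your proposal is correct and follows the same route as the paper's proof: the equivalence of (i) and (ii) rests on Corollaries \ref{goodpartcor} and \ref{goodpartcor2}, the non-vanishing at the block endpoints $a_i$ comes from the partial-sum identity $\sum_{m\le a_i}p_m=a_i(2d+k-1-a_i)$ fed into Theorem \ref{sumOfPartitions} (the paper expands the same sum term by term; your telescoping via $n_\ell^2+2n_\ell a_{\ell-1}=a_\ell^2-a_{\ell-1}^2$ is a cleaner version of the identical computation), and the $1$--$1$ correspondence is secured against the count $2^d$ (resp.\ $2^{d-1}$) of Corollary \ref{goodcountcor}. Two points of comparison. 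First, where the paper verifies the strict inequality $\sum_{j\le a_{i-1}+t}p_j<(a_{i-1}+t)\big(2d+k-1-(a_{i-1}+t)\big)$ for $1\le t\le n_i-1$ by direct expansion, you derive it from strict concavity of $g=R-S$ vanishing at the block endpoints: same inequality, different packaging. Second---and this is a genuine improvement---the paper's vanishing computation is stated only for $i\in[1,c]$, so it covers Hessian indices in $[0,n-1]$ and leaves the tail indices in $[n,d-1]$ implicit (they do follow from the same computation applied to the final block $(d-n)^{d-n+k-1}$, whose part size satisfies the same formula as the $p_i$, but the paper does not say so); your identity $S_d-R(d)=(k-1)(n-d)$ combined with the chord argument on $[n,d]$ handles exactly this omitted case, and your explicit ``read off the maximum, then the differences'' inversion makes the bijectivity argument self-contained rather than purely count-based. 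One pedantic caveat: the claim ``$S_m<R(m)$ for all $m\in(n,d]$'' fails at $m=d$ when $k=1$, where in fact $S_d=R(d)$; this is harmless because $h^{d-1}$ is inactive for $k=1$, as your parenthetical acknowledges, but the strict inequality should be asserted only on $(n,d)$ in that case.
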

\begin{proof}
First we observe that the number of subsets of active Hessians for a complete intersection algebra $A$ having the Hilbert function $T$ is $2^d$ when $k\ge 2$ and $2^{d-1}$ when $k=1$, which agrees with the number of complete intersection Jordan types having diagonal lengths $T$ from Corollary \ref{goodcountcor}. Thus, to prove the statement we show that for each partition $P$ in (ii) above, the set of Hessians which vanish is given in (i).

Suppose that for an integer $n\in [0,d]$ and for an ordered partition $n=n_1+\cdots +n_c$, we have a partition $P$ satisfying Equation \eqref{JTpartition}. By Corollary~\ref{goodpartcor} or Corollary ~\ref{goodpartcor2} the partition $P$ occurs as a CIJT partition. For every $i\in\left[1,c\right]$, we have that
\small
\begin{align*}
\sum^{n_1+\cdots +n_i}_{j=1}p_j =&\sum^{n_1}_{j=1}p_j+\sum^{n_1+n_2}_{j=n_1+1}p_j+\cdots +\sum^{n_1+\cdots n_i}_{j=n_1+\cdots n_{i-1}+1}p_j\\
=&n_1(2d+k-1-n_1)+\\
&n_2(2d+k-1-n_1-2n_1)+\\
\vdots &\qquad\qquad\qquad\qquad+\\
&n_i\left(2d+k-1-n_i-2(n_1+\cdots +n_{i-1})\right)\\
=& (n_1+\cdots +n_{i})(2d+k-1)-n_1^2-n_2(n_2+2n_1)-\cdots -n_i(n_i+2(n_1+\cdots n_{i-1}))\\
=&(n_1+\cdots +n_{i})(2d+k-1-(n_1+n_2+\cdots +n_i)).
\end{align*}
\normalsize
Using Theorem \ref{sumOfPartitions}, we conclude that $h^{n_1+\cdots n_i-1}_\ell(F)\neq 0$.
Now we show that for every integer $1\leq t\leq n_i-1$ and for every $i\in[1,c]$, we have $h^{n_1+\cdots +n_{i-1}-1+t}_\ell(F)=0$ (here we set $n_0:=0$). By Theorem \ref{sumOfPartitions}, we may write
\begin{align*}
\sum^{n_1+\cdots +n_{i-1}+t}_{j=1}p_j=&\sum^{n_1+\cdots +n_{i-1}}_{j=1}p_j+\sum^{n_1+\cdots +n_{i-1}+t}_{j=n_1+\cdots +n_{i-1}+1}p_j\\
=&(n_1+\cdots +n_{i-1})(2d+k-1-(n_1+n_2+\cdots +n_{i-1}))\\
+&t(2d+k-1-n_i-2(n_1+\cdots +n_{i-1}))\\
<&(n_1+\cdots +n_{i-1}+t)(2d+k-1-(n_1+\cdots +n_{i-1}+t));
\end{align*}
the last inequality holds since $1\leq t\leq n_i-1$. Thus $P$ satisfies (i). This completes the proof.
\end{proof}
\begin{remark}
Let $d,k\geq 2$ and set $T=\left(1,2,\dots ,d^k,\dots , 2,1\right)$. Consider a branch label $\mathfrak{b}=\big(\mathfrak{v}, 0, \mathfrak{h}_1, \ldots ,\mathfrak{h}_c\big),$ where $\mathfrak{v}$ is the (possibly empty) ordered interval $\{x\,|\, 0\leq x <d-e-1\}$, for some $e\in\left[-1, d-1\right]$, and
for an increasing sequence $a_0<a_1<\cdots <a_c=e$ and each $i\in\left[0,c\right]$, the interval $\mathfrak{h}_i$ is the ordered interval $\{x\,|\, d-a_i\leq x < d-a_{i-1}\}$, where $a_0:=-1$. Suppose that $P$ is the Jordan type partition with diagonal lengths $T$ and branch label $\mathfrak{b}$, for linear form $\ell\in R$  of an Artinian complete intersection algebra $A=R/\Ann F$. Theorem \ref{2Hessthm}, implies that for every $i\in \left[1,c\right]$, the Hessian $h^{a_i}_\ell\neq 0$ and the remaining Hessians are zero.\par In fact, each $n_i$, for $i\in \left[1,c\right]$ in the ordered partition of $0\leq n\leq d$ in Theorem \ref{2Hessthm} is equal to $a_i-a_{i-1}$. \par A similar correspondence holds for $k=1$.
\end{remark}

In the following example, we use Theorem \ref{2Hessthm} to show how to determine all Jordan type partitions possible for a linear form of an Artinian CI algebra having a certain given Hilbert function and having given sets of vanishing Hessians.

\begin{example}
Let  $T=\left( 1,2,3,3,2,1\right)$. The active Hessians for an Artinian CI algebra $A=R/\Ann F$ with the Hilbert function $T$ are $h^0, h^1$ and $h^2$. Thus, there are $2^3=8$ complete intersection Jordan types with diagonal lengths $T$. As discussed in Remark~\ref{wLpLoci}, such CIJT partitions have either three or four parts depending on whether the top Hessian vanishes or not. For example for $\ell$ and $F$ where we have $h^0_\ell(F)\neq 0$, $h^1_\ell(F)\neq 0$ and $h^2_\ell(F)\neq 0$, we have $n_1=n_2=n_3=1$, and therefore $P_\ell=(6,4,2)$, is the strong Lefschetz Jordan type. But for $\ell$ and $F$ where we have  $h^0_\ell(F)\neq 0$, $h^1_\ell(F) \neq  0$ and $h^2_\ell(F)= 0$, we have $n_1=n_2=1$, and the partition is $P=(6,4,1,1)$. A complete list of CIJT partitions with diagonal lengths $(1,2,3,3,2,1)$ and the ranks of the corresponding Hessians is included in Figure~\ref{fig9}. For each CIJT partition, the active Hessians that are zero are indicated in bold with $\ast$.
\end{example}

R. Gondim and G. Zappal\`{a} in \cite{GoZ} introduced mixed Hessians, which can be used to compute the ranks of multiplication maps by powers of linear forms.

\begin{definition}
Let $\mathcal{B}_k=\left(\alpha_1,\dots ,\alpha_r\right)$ be a ${\sf k}$-linear basis of $A_k$ and $\mathcal{B}_l=\left(\beta_1,\dots ,\beta_s\right)$ be a ${\sf k}$-linear basis of $A_l$, and assume $\cha {\sf k}=0$. The matrix 
$$
\hess^{(k,l)}(F):=\left[\alpha^{(k)}_i\beta^{(l)}_j\circ F\right]
$$
is the mixed Hessian matrix of $F$ of mixed order $(k,l)$ with respect to the bases $\mathcal{B}_k$ and $\mathcal{B}_l$. Denote by $h^{(k,l)}(F)$ the determinant $ h^{(k,l)}(F):=\det\left(\hess^{(k,l)}(F)\right)$, which we term the ``mixed Hessian'' of order $(k,l)$.
\end{definition}
We observe that the mixed Hessian of $F$ of order $(k,k)$, $\hess^{(k,k)}(F)$, coincides with the $k$-th Hessian of $F$, $\hess^k(F)$.
\par 
We notice that an immediate consequence of Theorem \ref{2Hessthm} is that for an Artinian complete intersection algebra with a given Hilbert function and a linear form for which a set of higher Hessians vanish there is exactly one CIJT partition. This implies -- as we will show -- that there is only one collection of ranks of higher and mixed Hessians for vanishing Hessians that occurs for a pair $(\ell,A)$ where $A$ is any Artinian CI algebra, but the Jordan type $P_\ell$ is fixed.

\begin{proposition}[Ranks of mixed and higher Hessians]\label{rkseq}
Assume that $A=R/\Ann F$ is an Artinian complete intersection $\sf k$-algebra of $\cha{\sf k}=0$ with the Hilbert function \linebreak$T=\left(1,2,\dots ,d^k,\dots ,2,1\right)$, for $d\geq 2$ and $k\geq 1$. Denote the socle degree of $A$ by $j=2d+k-3$.
Assume further that for a linear form $\ell\in R$, and a non-negative integer $m$, we have 
$$
h^m_{\ell}(F)=h^{m+1}_{\ell}(F)=\cdots =h^{m+n}_{\ell}(F)=0, \hspace*{0.1cm}\text{and if}\hspace*{0.2cm}m\neq 0, \hspace*{0.1cm}\text{then}\hspace*{0.2cm}\hspace*{0.2cm} h^{m-1}_{\ell}(F)\neq 0.
$$
Then
\begin{itemize}
\item[$(i)$]  If $m+n\leq d-2$ and $h^{m+n+1}_{\ell}(F)\neq 0$ (recall $h_\ell^{d-1}\ne 0$) then for every $i\in[0,n]$ we have 
\begin{align*}
\rk\hess^{m+i,s}_{\ell}(F)=\left\{
                \begin{array}{ll}
                     \max\{j+i-(n+s),m\} \hspace*{0.35cm} \text{if}\hspace*{0.2 cm} s\in \left[j-(m+n),j-(m+i)\right],\\
                    m+i+1\hspace*{3cm}\text{if}\hspace*{0.2 cm} s\in \left[d,j-(m+n+1)\right],\\
                \end{array}
              \right.           
\end{align*}
In particular
\begin{equation}\label{HessRkSeq1eq}
\rk\hess^{m+i}_{\ell}(F)=m, \rk\hess^{m+n-i}_{\ell}(F)=m+n-2i, \hspace*{0.2 cm} \text{for}\hspace*{0.15 cm} \text{every}\hspace*{0.2 cm} i\in\left[0,\big\lfloor\frac{n}{2}\big\rfloor\right].
\end{equation}

\item[$(ii)$] If $k\geq2$, and $m+n=d-1$, then for every $i\in\left[0,n+\big\lfloor \frac{k}{2}\big\rfloor-1\right]$ and every $s\in \left[d,j-(m+i)\right]$ we have 
\begin{equation*}
\rk\hess^{m+i,s}_{\ell}(F)=\max\{2m+n+i+1-s,m\}.
\end{equation*}
\end{itemize}
\end{proposition}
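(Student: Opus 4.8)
The plan is to reduce the computation of mixed‑Hessian ranks to a count of Jordan strings, and then to read off that count from the explicit shape that the vanishing hypothesis forces on $P_\ell$. First I would record the reduction. For a binary form $G$ of degree $e$ one has $\ell^{e}\circ G=e!\,G(p_\ell)$, so applying this to the entries $\alpha^{(m+i)}_u\beta^{(s)}_v\circ F$ of the mixed Hessian and using Gorenstein duality (the perfect pairing $A_{j-s}\times A_s\to{\sf k}$ via $\circ F$) together with Lemma~\ref{Hesslem} and the rank formula of \cite{GoZ}, the rank of $\hess^{m+i,s}_\ell(F)$ equals the rank of the graded multiplication map $m_{\ell^{\,s-(m+i)}}\colon A_{m+i}\to A_s$, the second index $s$ recording the target degree. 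Next I would fix the graded Jordan basis of $(A,m_\ell)$ supplied by Lemma~\ref{diaglem}, in which the $r$-th largest part $p_r$ of $P_\ell$ is a string occupying exactly the degrees $r-1,r,\dots,r+p_r-2$. In this basis $m_{\ell^{\,s-a}}$ carries the degree‑$a$ box of a string to its degree‑$s$ box whenever the latter exists, whence
\begin{equation*}
\rk\big(m_{\ell^{\,s-a}}\colon A_a\to A_s\big)=\#\{\,r\le a+1 : p_r+r\ge s+2\,\},
\end{equation*}
the number of strings meeting both degree $a$ and degree $s$. This single identity drives everything.

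Second, I would extract the two features of $P_\ell$ that the hypothesis supplies, both through Theorem~\ref{2Hessthm} and Theorem~\ref{sumOfPartitions}. The vanishing block forces a constant stretch of parts: in case $(i)$ one gets $p_{m+1}=\dots=p_{m+n+2}=j-2m-n$, while in case $(ii)$, where $n=d-1-m$, the partition terminates in the plateau $p_{m+1}=\dots=p_{d+k-1}=d-m$. The non‑vanishing of $h^{m-1}_\ell$ (when $m\ge 1$) says $m_{\ell^{\,j-2(m-1)}}\colon A_{m-1}\to A_{j-(m-1)}$ has full rank $m$, and feeding this into the string identity yields the crucial uniform bound $p_r+r\ge j-m+3$ for every $r\le m$.

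Third comes the count itself. Writing $a=m+i$ and splitting $\{r\le a+1\}$ into the top rows $r\le m$ and the block rows $r=m+1+i'$ (where $p_r+r=j-m-n+1+i'$ in case $(i)$, resp. $d+1+i'$ in case $(ii)$), the uniform bound shows every top row clears the threshold $s+2$ throughout the admissible range $s\le j-(m+i)$, contributing exactly $m$; the block rows then contribute $\max\{0,(j+i-(n+s))-m\}$ on $[\,j-(m+n),\,j-(m+i)\,]$ and all $i+1$ of them on $[\,d,\,j-(m+n+1)\,]$. Summing gives $\max\{j+i-(n+s),m\}$ and $m+i+1$ respectively, and the same computation with the plateau produces $\max\{2m+n+i+1-s,m\}$ in case $(ii)$ (using $2m+n+1=m+d$). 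The pure‑Hessian formulas \eqref{HessRkSeq1eq} then fall out by specialization: $\rk\hess^{m+i}_\ell$ is the value at the top endpoint $s=j-(m+i)$, namely $\max\{m+2i-n,m\}=m$ for $i\le\lfloor n/2\rfloor$, and $\rk\hess^{m+n-i}_\ell$ is the value at source $m+n-i$, endpoint $s=j-(m+n-i)$, namely $m+n-2i$.

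The hard part will not be the arithmetic of the count but the two structural inputs and the boundary bookkeeping. I expect the main obstacle to be establishing that $h^{m-1}_\ell\ne 0$ forces $p_r+r\ge j-m+3$ uniformly over the top rows (so that they contribute unconditionally and the two regimes meet cleanly exactly where the block rows begin to fail the threshold), and, in case $(ii)$, pinning down the precise largest admissible $i$, namely $i=n+\lfloor k/2\rfloor-1$, for which the source degree $m+i$ still lies on the correct side of the symmetric point of the Hilbert‑function plateau, so that the truncation $r\le a+1$ in the string identity reproduces the stated maximum. The degenerate case $m=0$ (no top rows and no $h^{m-1}$ constraint) and the identification of the second mixed‑Hessian index with the target degree of the multiplication map must also be stated explicitly, but both are routine once the string identity is in place.
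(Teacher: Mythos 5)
Your proposal is correct and follows essentially the same route as the paper: both reduce $\rk\hess^{m+i,s}_\ell(F)$ to the rank of $m_{\ell^{\,s-(m+i)}}\colon A_{m+i}\to A_s$, count the Jordan strings (rows of the Ferrers diagram, string $r$ occupying degrees $r-1,\dots,p_r+r-2$) that span both diagonals, use Theorem~\ref{2Hessthm} for the constant stretch $p_{m+1}=\cdots=p_{m+n+2}=j-(2m+n)$ (resp.\ the plateau $p_{m+1}=\cdots=p_{d+k-1}=n+1$ in case $(ii)$), and use $h^{m-1}_\ell\neq 0$ to force the top $m$ rows to clear the threshold uniformly. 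The only (immaterial) divergence is that for $s\in[d,\,j-(m+n+1)]$ you obtain full rank $m+i+1$ directly from the same string count, where the paper instead factors $m_{\ell^{\,j-(m+i)-(m+n+1)}}$ through $A_{m+n+1}$ and invokes $h^{m+n+1}_\ell(F)\neq 0$.
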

\begin{proof}
First we show $(i)$. Note that the assumption in $(i)$ implies by Theorem \ref{sumOfPartitions} that the Jordan type partition $P_\ell=\left(p_1,p_2,\dots, p_{d}\right)$ has exactly $d$ parts ($\ell$ is weak Lefschetz). By Theorem \ref{2Hessthm}, we have that 
\begin{equation}\label{equalparts}
p_{m+1}=p_{m+2}=\cdots =p_{m+n+2}=j-(2m+n),
\end{equation}

For each $i\in\left[0,n\right]$ and $s\in\left[j-(m+n),j-(m+i)\right]$, the rank of $\hess^{m+i,s}_\ell(F)$ is equal to the number of rows in the Ferrers diagram of $P_\ell$ between $p_1$ and $p_{m+i+1}$ containing all the boxes between the diagonals of degree $m+i$ and degree $s$. Suppose $m>0$ then $h^{m-1}_\ell(F)\neq 0$ implies that the first $m$ parts of $P_\ell$ contain all the boxes between the diagonals of degree $m-1$ and degree  $j-(m-1)$ and so they contain all the boxes between the diagonals of degree $m+i$ and degree  $s$. On the other hand, the number of rows of $P_\ell$ between  $p_{m+1}$ and $p_{m+i+1}$ containing all the boxes between the diagonals of degree $m+i$ and degree  $s$ is equal to the number of those rows with size larger than $s-(m+i)$. By Equation (\ref{equalparts}), it is equal to
$$ \max\{j-(2m+n)-(s-(m+i)),0\}=\max\{j+i-(m+n+s),0\}.$$ We conclude that for $i\in \left[0,n\right]$ and $s\in\left[j-(m+n),j-(m+i)\right]$,
$$\rk\hess^{m+i,s}_{\ell}(F)=m+\max\{j+i-(m+n+s),0\}.$$
If $m=0$ then the number of rows between $p_1$ to $p_{i+1}$ with size larger than $s-i$ is equal to $\max\{j-n-(s-i),0\}$ and so the $\rk\hess^{m+i,s}_{\ell}(F)=\max\{j+i-(n+s),0\}$.

Now the assumption $h^{m+n+1}_\ell(F) = h^{m+n+1,j-(m+n+1)}_\ell(F)\neq 0$ implies that \linebreak$ m_{\ell^{j-2(m+n+1)}}:A_{m+n+1}\rightarrow A_{j-(m+n+1)}$ has maximal rank. Notice that since $m+n+1\leq d-1$ , the multiplication map $ m_{\ell^{n+1-i}}:A_{m+i}\rightarrow A_{m+n+1}$ trivially has maximal rank for all $i\in \left[0,n\right]$. Therefore, $m_{\ell^{j-(n+i+1)}}:A_{m+i}\rightarrow A_{j-(m+n+1)}$, for $i\in \left[0,n\right]$ has maximal rank which means that $h^{m+i,j-(m+n+1)}\neq 0$. Consequently,  $h^{m+i,s}\neq 0$, for all $i\in \left[0,n\right]$ and $s\in\left[d,j-(m+n+1)\right]$, which means that $\rk\hess^{m+i,s}_{\ell}(F)=m+i+1$.

To show $(ii)$ we first notice that the assumption in $(ii)$ implies that for all $i\in\left[0,n+\big\lfloor \frac{k}{2}\big\rfloor-1\right]$, $h^{m+i}_\ell(F)=0$. Using Theorem \ref{2Hessthm} we get that 
$$
p_{m+1}=p_{m+2}=\dots =p_{d+k-1}=n+1.
$$
Similar to the proof of $(i)$, for $i\in\left[0,n+\big\lfloor \frac{k}{2}\big\rfloor-1\right]$ and $s\in \left[d,j-(d-r+i)\right]$ the rank of $\hess^{m+i,s}_\ell(F)$ is equal to $m$ plus the number of rows between $p_{m+1}$ and $p_{d+k-1}$ with size larger than $s-(m+i)$. In another words,
$$\rk\hess^{m+i,s}_{\ell}(F)=m+\max\{n+1-(s-(m+i)),0\}=\max\{2m+n+i+1-s,m\}.$$

\end{proof}

\begin{remark}[Uniqueness of ranks of Hessian matrices]\label{rankremark}
Lemma \ref{criterioncor}, Theorem \ref{2Hessthm}, and Proposition \ref{rkseq} determine the sets of possible ranks of higher and mixed Hessian matrices for a partition $P$ having CIJT: Lemma \ref{criterioncor} is the criterion for $P$ to have CIJT, Theorem  \ref{2Hessthm} determines which Hessians vanish for such $P$.
Thus, there is no Artinian CI algebra $A$ such that the ranks of the active Hessian matrices are different from the ones given in Proposition \ref{rkseq}.
\end{remark}

\subsection{Lattice structure on the CI Jordan types, and dominance of partitions.}\label{latticesec}

Let $T=(1, \ldots, d^k, \ldots ,1)$ and $P$ be a CIJT partition of diagonal lengths $T$. Given a linear form $\ell\in R_1$, of Jordan type $P$ in an Artinian CI algebra $A=R/\Ann (F)$ we define  the set ${\mathcal H}_{P,\ell}={\mathcal H}_{P.\ell}(F)$ to be the set of all integers $i$ such that $h^i_{\ell}(F)\neq 0$.
\begin{definition}\label{dominancedef}
Let $P=(p_1,\ldots, p_t), p_1\ge \cdots \ge p_t$ and $Q=(q_1,q_2,\ldots,q_{t^\prime}), q_1\ge \cdots \ge q_{t^\prime}$ be partitions of the integer $n$. The dominance partial order is 
\begin{equation}\label{dominanceeq}
Q\le P \Leftrightarrow \sum_{j=0}^iq_j\leq \sum_{j=0}^ip_j \text { for all } i\le \min\{t,t'\}.
\end{equation}
\end{definition}
\begin{theorem}\label{posetprop}[Dominance and closure]
Let $T=(1, \ldots, d^k, \ldots ,1)$ satisfy Equation~\eqref{CIT2eq} and asuume that $P=P_\ell=(p_0, p_1, \ldots)$ and $Q=Q_{\ell'}=(q_0, q_1,\ldots)$ are CIJT partitions having diagonal lengths $T$. Then $Q\leq P_\ell$ in the dominance order if and only if ${\mathcal H}_{Q,{\ell'}}\subseteq {\mathcal H}_{P,\ell}$. 
\end{theorem}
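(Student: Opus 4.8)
The plan is to convert both sides of the claimed equivalence into statements about partial sums of partitions and then exploit the strict concavity of the partial-sum sequence of $T^\vee$. For a partition $P=(p_1,p_2,\dots)$ write $S_P(i)=\sum_{m=1}^{i+1}p_m$ for the sum of its first $i+1$ parts, so that by Definition \ref{dominancedef} one has $Q\le P$ in dominance precisely when $S_Q(i)\le S_P(i)$ for every $i\ge 0$. Set $\sigma(i)=(i+1)(j+1-i)$; a direct computation shows this is exactly the sum of the first $i+1$ parts of the strong Lefschetz partition $T^\vee$, and Theorem \ref{sumOfPartitions} then reads, for $0\le i\le d-1$,
\[
i\in\mathcal H_{P,\ell}\iff S_P(i)=\sigma(i).
\]
I would work throughout within this active range; the complementary indices are either trivially present for all $\ell$ (for $k=1$ one has $h^{d-1}_\ell\neq0$ always) or are governed by Gorenstein symmetry, so they lie in every $\mathcal H$ simultaneously and cannot affect the containment $\mathcal H_{Q,\ell'}\subseteq\mathcal H_{P,\ell}$.

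The crucial structural input, which I would extract from the classification of CIJT partitions, is that for a CIJT partition $P$ the sequence $S_P$ is the piecewise-linear interpolant of one fixed concave curve through the nodes recorded by $\mathcal H_{P,\ell}$. Concretely, extend $\sigma$ to $\hat\sigma$ on $[-1,d+k-2]$ by $\hat\sigma(-1)=0$, $\hat\sigma(i)=\sigma(i)$ for $0\le i\le d-1$, and $\hat\sigma(i)=|T|$ for $d-1\le i\le d+k-2$. Its successive differences are $j+1,\,j-1,\dots,k,\,0,\dots,0$, a non-increasing sequence, so $\hat\sigma$ is concave (strictly so on $[-1,d-1]$). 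By Corollary \ref{goodpartcor} (resp. Corollary \ref{goodpartcor2}) the explicit shape \eqref{JTpartition} shows the parts of $P$ are constant on the blocks cut out by $\mathcal H_{P,\ell}$, so $S_P$ is affine between consecutive elements of $N_P:=\{-1\}\cup\mathcal H_{P,\ell}\cup\{d+k-2\}$; and by the reformulation above its value at each node of $\mathcal H_{P,\ell}$ equals $\sigma$ there, while $S_P(-1)=0$ and $S_P(d+k-2)=|T|$. Hence $S_P$ coincides with the interpolant $L_{N_P}$ of $\hat\sigma$ through $N_P$. I would record the two boundary checks that pin this down: the largest part $p_1=j+1-\min\mathcal H_{P,\ell}$ gives exactly the slope of the chord of $\hat\sigma$ from $(-1,0)$ to the first node, and the tail $(d-n)^{\,d-n+k-1}$ reaches the value $|T|$ precisely at $i=d+k-2$, so the only slope changes of $S_P$ occur on $N_P$.

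With this in hand both implications are short, using that chords of a concave function lie below it, whence $S_P=L_{N_P}\le\hat\sigma$ on the active range for every CIJT $P$. For $(\Rightarrow)$, if $Q\le P$ and $i\in\mathcal H_{Q,\ell'}$ then $\sigma(i)=S_Q(i)\le S_P(i)\le\hat\sigma(i)=\sigma(i)$, forcing $S_P(i)=\sigma(i)$, i.e. $i\in\mathcal H_{P,\ell}$. For $(\Leftarrow)$, assume $\mathcal H_{Q,\ell'}\subseteq\mathcal H_{P,\ell}$, so $N_Q\subseteq N_P$ with the same endpoints. The decisive elementary fact is monotonicity of concave interpolation: for concave $\phi$ and nested node sets $N_Q\subseteq N_P$ sharing endpoints, $L_{N_Q}\le L_{N_P}$ pointwise. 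Indeed, on each elementary interval $[\alpha,\beta]$ of $N_Q$ the function $L_{N_Q}$ is the straight chord $\phi(\alpha)\to\phi(\beta)$, whereas $L_{N_P}$ agrees with $\phi$ at $\alpha,\beta$ and at every intermediate node of $N_P$, where $\phi$ lies on or above that chord by concavity; a piecewise-linear function meeting or exceeding a line at all its breakpoints stays above the line throughout. Taking $\phi=\hat\sigma$ gives $S_Q=L_{N_Q}\le L_{N_P}=S_P$ on $[-1,d+k-2]$, and both equal $|T|$ beyond, so $Q\le P$.

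The main obstacle is the structural step of the second paragraph: verifying that $S_P$ is genuinely the interpolant $L_{N_P}$, that is, that the parts of a CIJT partition are exactly constant between consecutive vanishing-pattern indices and that the left-hand and tail pieces meet $\hat\sigma$ at $-1$ and at $d+k-2$. This is precisely where the explicit description \eqref{JTpartition} of CIJT partitions (Corollaries \ref{goodpartcor} and \ref{goodpartcor2}, and the bijection of Theorem \ref{2Hessthm}) together with Theorem \ref{sumOfPartitions} is indispensable; everything else is the concavity bookkeeping above, plus the minor care for the trivially nonvanishing Hessians when $k=1$.
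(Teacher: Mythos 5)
Your proof is correct, and its second half takes a genuinely different route from the paper's. The forward implication is the same in both arguments: a direct partial-sum comparison via Theorem \ref{sumOfPartitions} (you phrase it through $S_P\le\hat\sigma$ on the active range; the paper uses the strict inequality $\sum_{j\le i} p_j<(i+1)(j+1-i)$ when $h^i_\ell=0$). For the converse, the paper first reduces to the case $\mathcal H_{P,\ell}\setminus\mathcal H_{Q,\ell'}=\{\alpha\}$ and then runs an explicit case analysis on branch labels (splitting a horizontal interval; splitting the vertical part, with separate computations for $k=1$ and $k\ge2$), checking the dominance inequalities on the affected subpartitions by hand. You instead use the explicit shape \eqref{JTpartition} from Corollaries \ref{goodpartcor}/\ref{goodpartcor2} together with Theorem \ref{2Hessthm} to identify the partial-sum profile $S_P$ of an arbitrary CIJT partition with the piecewise-linear interpolant of the concave function $\hat\sigma$ through the nodes $\{-1\}\cup\mathcal H_{P,\ell}\cup\{d+k-2\}$ — your boundary checks (first chord slope $j+1-\min\mathcal H_{P,\ell}=j+2-n_1=p_1$, and tail slope $(|T|-\sigma(n-1))/(d+k-1-n)=d-n$) are exactly the computations that make this identification exact, and the interior node values $S_P(a_i)=\sigma(a_i)$ come from Theorem \ref{sumOfPartitions} with no circularity, since that theorem and Theorem \ref{2Hessthm} precede this result in the paper — and you then invoke monotonicity of concave interpolation under refinement of node sets. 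This handles arbitrary nested Hessian sets in one stroke, with no single-element reduction and no case split on $k$ or on vertical versus horizontal branches, and it re-derives for free that $T^\vee$ dominates every CIJT partition. What the paper's route buys in exchange is finer combinatorial information: it shows concretely how the branch label and Ferrers diagram change when one nonvanishing Hessian is added, which is the content of Example \ref{posetex} and Figure \ref{poset}, and which your interpolation argument deliberately abstracts away. One small caveat applies to both proofs: $\mathcal H_{P,\ell}$ is defined as the set of \emph{all} $i$ with $h^i_\ell(F)\neq0$, and like the paper you silently work with the active range $i\in[0,d-1]$ (resp. $[0,d-2]$ for $k=1$); your disclaimer that the remaining indices behave uniformly and cannot affect the containment is at the same level of rigor as the paper's own usage of Theorem \ref{sumOfPartitions}, which is likewise stated only for the active range.
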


\begin{proof}
First assume that $Q\leq P$ in the dominance order. If $i\not \in {\mathcal H}_P$, then by Theorem~\ref{sumOfPartitions}, $$\sum_{j=0}^ip_j<(i+1)(j+1-i).$$ Since $Q\leq P$, we also also have $\sum_{j=0}^iq_j<\sum_{j=0}^ip_j.$
Therefore $$\sum_{j=0}^iq_j<(i+1)(j+1-i).$$ Thus by Theorem \ref{sumOfPartitions}, $i\not \in {\mathcal H}_Q$. This shows that ${\mathcal H}_P\subseteq {\mathcal H}_Q$.
\bigskip

Conversely, to prove that ${\mathcal H}_P\subseteq {\mathcal H}_Q$ implies $Q\leq P$, it is enough to prove that if ${\mathcal H}_P\setminus {\mathcal H}_Q$ has only one element, say $\alpha$, then $Q\leq P$. We write the elements of ${\mathcal H}_Q$ in increasing order as $a_1<\cdots<a_c$, and we assume that $t\in[1,c+1]$ is such that $a_{t-1}<\alpha<a_t$ (here $a_{0}=-1$ and $a_{c+1}=d$). 

By Theorem \ref{sumOfPartitions} for each $a_i$, since $a_i$ is in both ${\mathcal H}_P$ and ${\mathcal H}_Q$, we get $$\displaystyle{\sum^{a_i}_{j=0}q_j} = \displaystyle{\sum^{a_i}_{j=0}p_j}=(n_i+1)(j+1-n_i).$$ Thus, in order to prove $P\leq Q$, it is enough to prove that for all $a_{t-1}<i<a_t$, 
$$\displaystyle{\sum^{i}_{j=a_t+1}q_j} \leq  \displaystyle{\sum^{i}_{j=a_t+1}p_j}.$$

\noindent{\bf Case 1.} Assume that $t\leq c$. Then $\alpha<a_c$ and therefore in this case for all $a_{t-1}<i<a_t$ we are in the horizontal part of $P$ and $Q$. In fact, the branch labels of $P$ and $Q$ are the same except for one of the horizontal intervals of $Q$ that is now divided into two subintervals for $P$ through the introduction of the new ``cut'', corresponding to the new added element $\alpha$. For simplicity, assume that the horizontal interval in $Q$ consists of $\{a, a+1, \ldots, a+h\}$ that are attached to the rows of $\Delta_d$ with lengths $u, (u-1), \ldots, (u-h)$. Thus the corresponding subpartition of $Q$ has the form $\big((a+u)^{h+1}\big)$.

On the other hand, the addition of the new element $\alpha$ to ${\mathcal H}_Q$ to obtain ${\mathcal H}_P$ is equivalent to breaking up the horizontal interval $[a,a+h]$ in the branch label of $Q$  into two horizontal subintervals, say $[a,a+\bar{h}]$ and $[a+\bar{h}+1, a+h]$ for an integer $\bar{h}$ such that $0\leq \bar{h}< h$. 
Then in $P$, branches of lengths $a+\bar{h}+1, \ldots, a+h$ are added to rows of lengths $u, \ldots, u-h+\bar{h}+1$ of $\Delta_d$, and branches of lengths $a, \ldots, a+\bar{h}$ are added to rows of lengths $u-h+\bar{h}, \ldots, u-h$ of $\Delta_d$.  The corresponding subpartition of $P$ has the form $\big((a+u+\bar{h}+1)^{h-\bar{h}}, (a+u-h+\bar{h})^{\bar{h}+1}\big)$. 

Since $\bar{h}\geq 0$, $a+u+\bar{h}\geq a+u$. Additionally, if $1 \leq i \leq \bar{h}+1$ then
\begin{small}
$$\begin{array}{ll}
(a+u+\bar{h}+1)(h-\bar{h})+(a+u-h+\bar{h})i&=(a+u)(h-\bar{h}+i)+(\bar{h}+1)(h-\bar{h})-(h-\bar{h})i\\ 
&=(a+u)(h-\bar{h}+i)+(h-\bar{h})(\bar{h}+1-i)\\ 
&>(a+u)(h-\bar{h}+i).
\end{array}$$
\end{small} 
Thus $\big((a+u)^{h+1}\big)\leq \big((a+u+\bar{h})^{h-\bar{h}}, (a+u+\bar{h}-1)^{\bar{h}}\big).$ This shows that in this case, $Q\leq P$, as desired.
\medskip

\noindent{\bf Case 2.} Now assume that $a_c<\alpha$. This in particular implies that $a_c< d-1$ and therefore the branch label of $Q$ has a vertical part, say of the form $[1,v]$ for a positive integer $v$. We note that $v=d-a_c-2$. The branch label of $P$ is obtained from the branch label of $Q$ by keeping all the horizontal parts of $Q$ and breaking up its vertical part into two subintervals, say $[1,\bar{v}-1]$ and $[\bar{v},v]$ for some $\bar{v}\in[1,v-1]$, where the first subinterval (which may be empty) will be the vertical part of the branch label of $P$ and the second subinterval will be added to the last horizontal part of the label as a new last interval.

For $0\leq i\leq a_c$, $p_i=q_i$. Thus, in order to show that $Q\leq P$, it is enough to show that the desired inequalities for the partial sums of $P$ and $Q$ hold beyond $a_c$.
\medskip

{\bf Case 2.1.} First assume that $k=1$. Then in order to compare $P$ and $Q$, it is enough to compare the subpartition $\big((v+1)^{v+1}\big)$ of $Q$ with the subpartition\par\noindent $\big((d-a_c-1+\bar{v})^{v-\bar{v}+1},(\bar{v})^{\bar{v}}\big)=\big((v+\bar{v}+1)^{v-\bar{v}+1},(\bar{v})^{\bar{v}}\big)$ of $P$.
\par
Since $\bar{v}\geq 1$, $ v+\bar{v}+1> v+1$. Additionally, if $1 \leq i \leq \bar{v}$ then 
$$\begin{array}{rl}
(v+\bar{v}+1)(v-\bar{v}+1)+\bar{v} \, i=&(v+1)(v-\bar{v}+1)+\bar{v}(v-\bar{v}+1)\\ &+(v+1)i-(v-\bar{v}+1)i\\ 
=&(v+1)(v-\bar{v}+1+i)+(\bar{v}-i)(v-\bar{v}+1)\\ 
 \geq& (v+1)(v-\bar{v}+1+i).
\end{array}$$
Thus $\big((v+1)^{v+1}\big)\leq \big((v+\bar{v}+1)^{v-\bar{v}+1},(\bar{v})^{\bar{v}}\big).$ This shows that in this case, $Q\leq P$, as desired.
\medskip

{\bf Case 2.2.} Now we assume that $k\geq 2$. In this case, we need to compare the subpartition $\big(v^{v+k-1}\big)$ of $Q$ with the following subpartition of $P$. $$\big((d-a_c-1+\bar{v}+k-2)^{v-\bar{v}+1},(\bar{v}-1)^{\bar{v}+k-1}\big)=\big((v+\bar{v}+k-1)^{v-\bar{v}+1},(\bar{v}-1)^{\bar{v}+k-1}\big)$$

We obviously have $ v+\bar{v}+k-1> v+1$. Additionally, if $1 \leq i \leq \bar{v}+k-1$ then 
$$\begin{array}{rl}
(v+\bar{v}+k-1)(v-\bar{v}+1)+(\bar{v}-1) i=&v(v-\bar{v}+1)+(\bar{v}+k-1)(v-\bar{v}+1)\\
&+v\, i-(v-\bar{v}+1)i\\ 
=&v(v-\bar{v}+1+i)+(\bar{v}+k-1-i)(v-\bar{v}+1)\\ 
 \geq& v(v-\bar{v}+1+i).
\end{array}$$
Thus $\big(v^{v+k-1}\big)\leq \big((v+\bar{v}+k-1)^{v-\bar{v}+1},(\bar{v}-1)^{\bar{v}+k-1}\big).$
This completes the proof of the Proposition.
\end{proof}

\begin{example}\label{posetex}
In the first row of Figure \ref{poset}, we start with a complete intersection Jordan type partition $Q=(17^2, 10^5,4, 1^2)$ having diagonal lengths $T=(1, \ldots, 8, 9^2,8, \ldots,1)$. By Theorem \ref{sumOfPartitions}, the set of non-zero Hessians associated with $Q$ is ${\mathcal H}_Q=\{1,6,7\}$. We then form ${\mathcal H}_P=\{1,3,6,7\}$ by adding an extra non-vanishing condition for the Hessians. This, as illustrated in the figure, leads to the CIJT partition $P=(17^2, 12^2, 8^3, 4, 1^2)$, which clearly dominates $Q$. In the second row of Figure \ref{poset} we illustrate a similar relation between the CIJT partitions $Q=(14^2,6^6)$ and $P=(14^2, 10^2, 4^4)$ of diagonal lengths $T=(1, \ldots, 7, 8, 7, \ldots, 1).$
\end{example}

\begin{figure}
\resizebox{.98 \textwidth}{!} 
{
    $\begin{array}{ll}

\includegraphics[scale=.3]{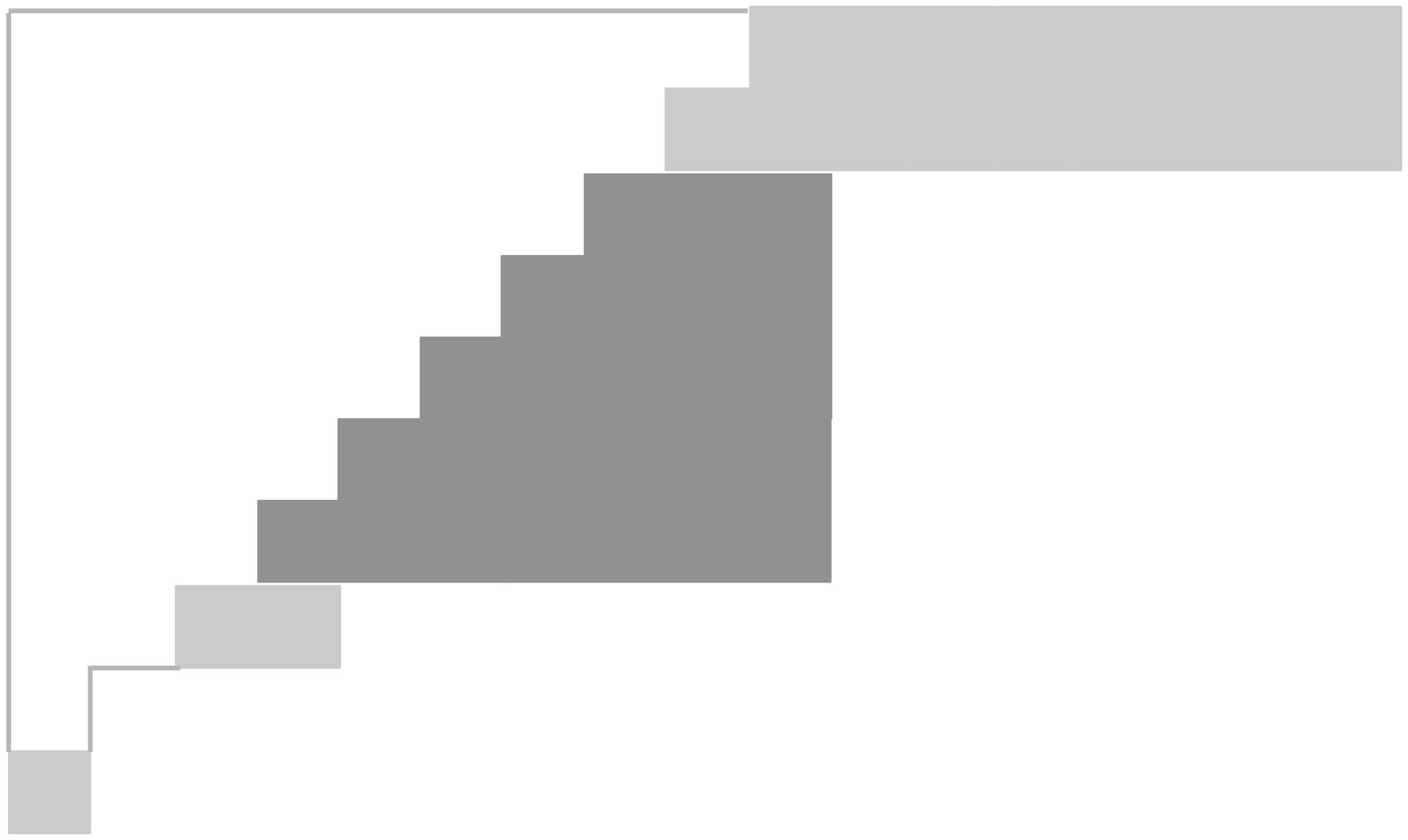}\vspace{-0.3in}&\includegraphics[scale=.3]{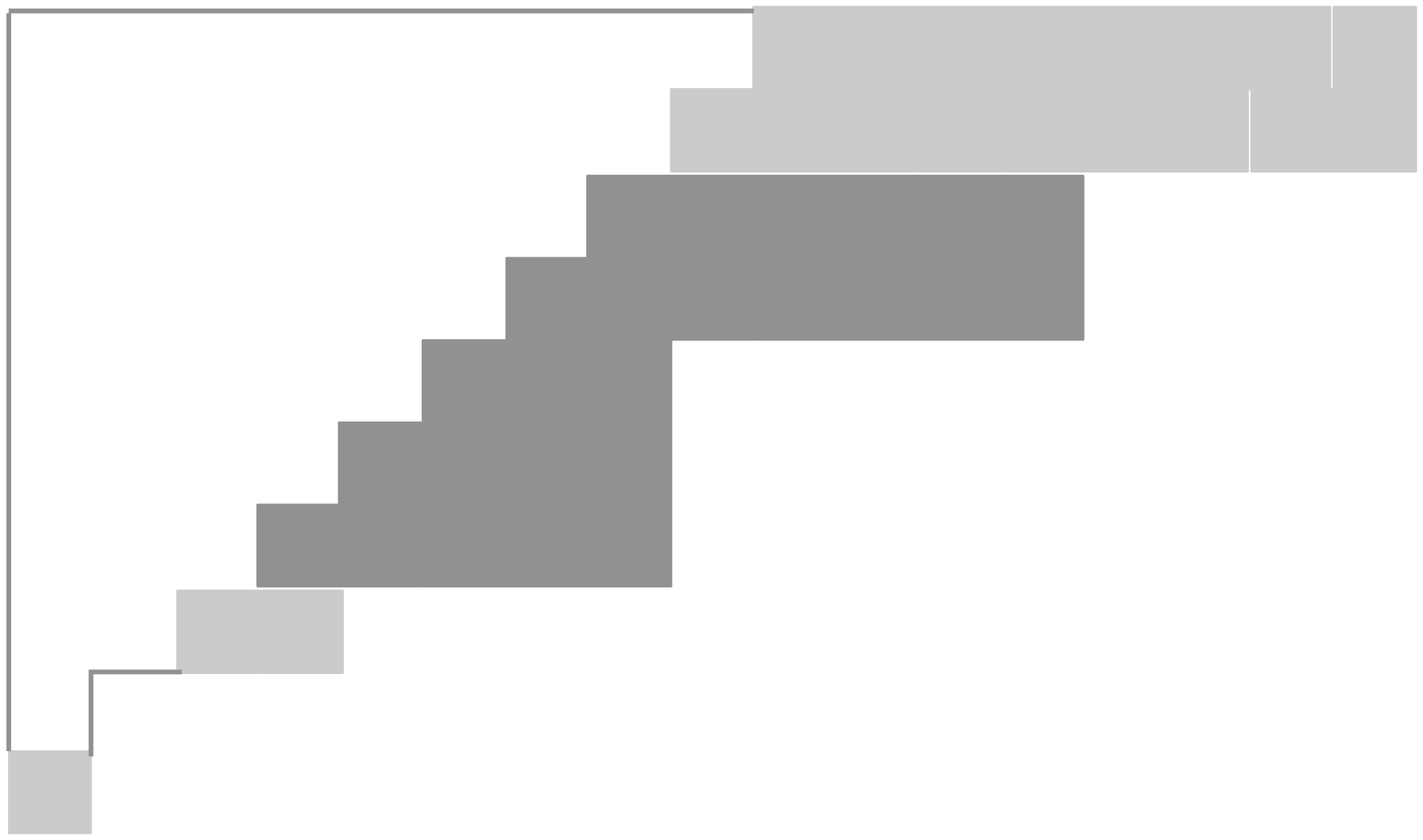}\vspace{-0.3in}\\ 
Q=(17^2, {\bf 10^5}, 4, 1^2)&P=(17^2, {\bf 12^2, 8^3}, 4, 1^2)\\ 
\mathfrak{b}_Q=\big(\{1\}, 0, \{8,9\}, {\bf\{3,4,5,6,7\}}, \{2\}\big)&\mathfrak{b}_P=\big(\{1\}, 0, \{8,9\}, {\bf \{6,7\},\{3,4,5\}}, \{2\}\big)\\ 
\mathcal{H}_Q=\{1,6,7\}&\mathcal{H}_P=\{1, {\bf 3}, 6,7\} \\
\hline
\hline
\includegraphics[scale=.3]{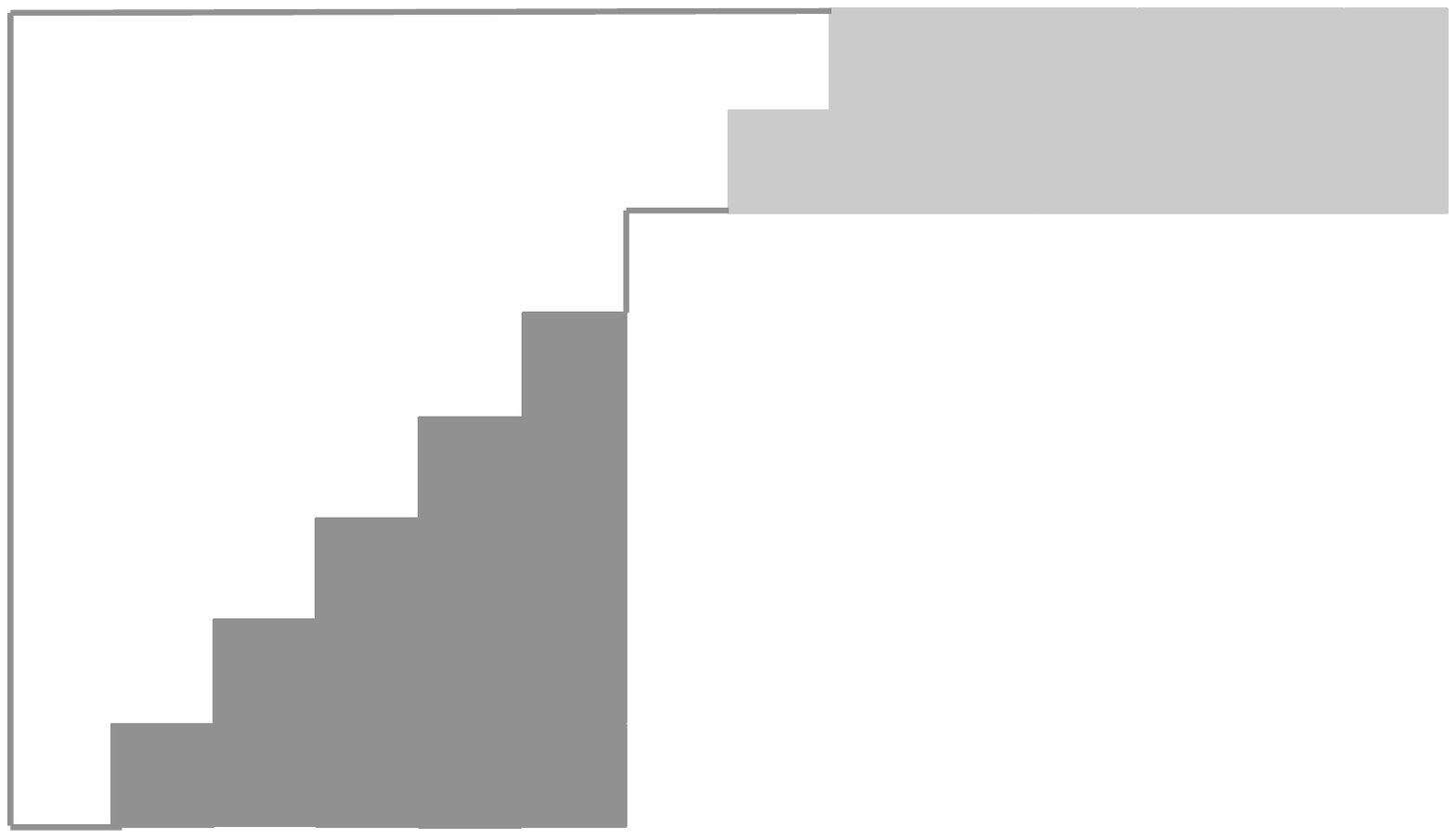}\vspace{-0.4in}&\includegraphics[scale=.3]{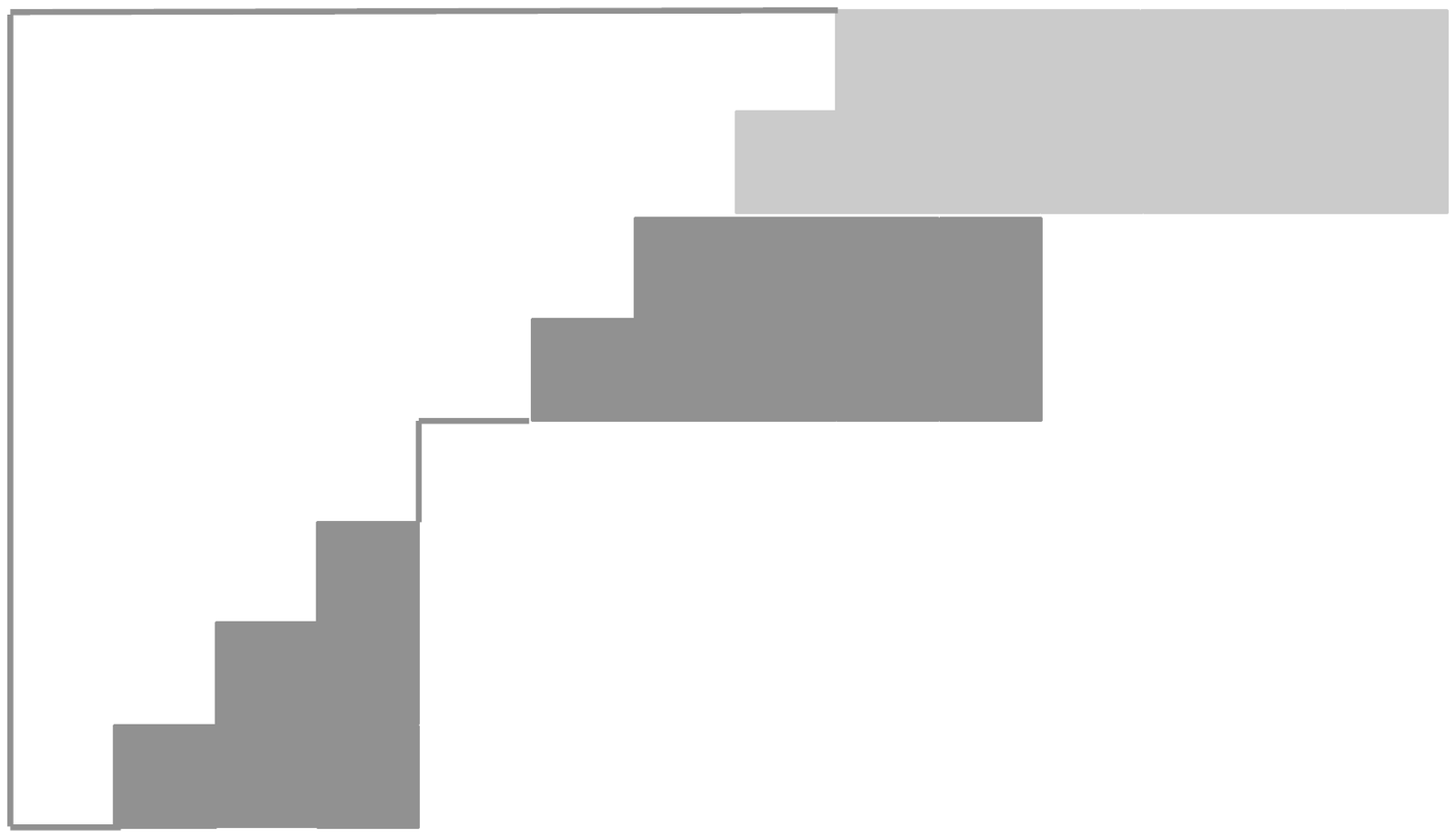}\vspace{-0.4in}\\ 
{Q=(14^2, {\bf 6^6})}&P=(14^2, {\bf 10^2, 4^4})\\ 
\mathfrak{b}_Q=\big( 0,{\bf \{1, 2, 3,4,5\}}, 0, \{6,7\}\big)&\mathfrak{b}_P=\big( 0,{\bf \{1, 2, 3\}}, 0, \{6,7\}, {\bf \{4,5\}}\big)\\ 
\mathcal{H}_Q=\{1,7\}&\mathcal{H}_P=\{1, {\bf 3}, 7\}\\

\end{array}$
}
\caption{An illustration for Example \ref{posetex} showing the effect of adding an extra non-vanishing condition for Hessians on the partition and on its branch label.}\label{poset}
\end{figure}

\par\noindent
\subsubsection{Geometric consequence.}
Recall that for a Hilbert function $T$ that occurs for an Artinian quotient of $R={\sf k}[x,y]$ the projective variety $G_T$ parametrizes graded algebra quotients $A=R/I$ of $R$ having Hilbert function $T$: it is smooth of known dimension \cite{I0,IY}. For $T$ satisfying Equation~\eqref{CIT2eq} this dimension is $(1+2(d-1))$ when $k\ge 2$ and $2(d-1)$ when $k=1$.
 Recall that, given a partition $P$ of $n$ having diagonal lengths $T$, we denote by $\mathbb V(E_{P,\ell})$ the affine cell of $G_T$ parametrizing algebras $A=R/I$ such that $I$ has initial monomial ideal $E_{P,\ell}$ in the direction $\ell$ (Definition \ref{basiccelldef}). For simplicity we may write  $\mathbb V(E_{P})$ for $\mathbb V(E_{P,\ell})$.
\begin{corollary}\label{geomcor}[Proper intersection of CIJT cells of $G_T$]  Let $T$ satisfy Equation \eqref{CIT2eq}, and let  $P,Q$ be CIJT partitions of diagonal lengths $T$, and $P\cap Q$ their intersection in  the poset of partitions.
Then, fixing $\ell$, we have
\begin{align}
\overline{\mathbb V(E_P)}\cap \overline {\mathbb V(E_Q)}&=\overline{ \mathbb V(E_{P\cap Q})} \text { and }\notag\\
\overline{\mathbb V(E_P)}&= \bigcup_ {P'\le P} \mathbb V(E_{P'}).
\end{align}
Furthermore, the codimension of the cell $\mathbb V(E_{P,\ell})$ in $G_T$ is the number of Hessians that vanish at $p_\ell$.\footnote{Later in Corollary \ref{goodhookcor}C we will see that this codimension is the difference between the number of difference-one hooks in the conjugate $T^\vee$ and the number of difference-one hooks of $P$.} The cells $ \overline {\mathbb V(E_P)} $ and $ \overline {V(E_Q)}$ intersect properly.
\end{corollary}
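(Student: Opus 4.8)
The plan is to work throughout inside the complete intersection locus $\CI_T\subseteq G_T$, which is open (socle dimension is upper semicontinuous, so the Gorenstein, hence CI, condition is open) and dense (a general quotient is strong Lefschetz, hence CI); thus codimensions computed in $\CI_T$ and in $G_T$ agree, and I may read $\overline{\mathbb V(E_P)}$ as the closure of $\mathbb V(E_P)\cap \CI_T$. By Theorem~\ref{2Hessthm} the cells $\mathbb V(E_{P,\ell})$ indexed by CIJT partitions $P$ partition $\CI_T$, and the assignment $P\mapsto {\mathcal H}_{P,\ell}$ is a bijection onto the subsets of the active Hessians; by Theorem~\ref{posetprop} it carries the dominance order to inclusion, so the CIJT partitions of diagonal lengths $T$ form a Boolean lattice. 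In particular the meet $P\cap Q$ of two CIJT partitions is again CIJT, with ${\mathcal H}_{P\cap Q}={\mathcal H}_P\cap {\mathcal H}_Q$, and the common lower bounds satisfy $\{P'\mid P'\le P\}\cap\{P'\mid P'\le Q\}=\{P'\mid P'\le P\cap Q\}$.

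The first substantive step is the frontier identity $\overline{\mathbb V(E_P)}=\bigcup_{P'\le P}\mathbb V(E_{P'})$, the union ranging over CIJT $P'$ (these being the only cells present in $\CI_T$); this is the content of Theorem~\ref{mainHessthm}. For the inclusion ``$\subseteq$'' I would argue as follows: by Lemma~\ref{Hesslem} each $h^i_\ell$ is, up to a nonzero scalar, the determinant of a multiplication map, so its zero locus is closed in $\CI_T$; since $\mathbb V(E_P)\subseteq\{h^i_\ell=0\}$ for every $i\notin {\mathcal H}_P$ by Theorem~\ref{sumOfPartitions}, the same holds for the closure, whence $\overline{\mathbb V(E_P)}\subseteq\bigcap_{i\notin {\mathcal H}_P}\{h^i_\ell=0\}=\bigcup_{P'\le P}\mathbb V(E_{P'})$, the last equality again by Theorem~\ref{posetprop}. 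The reverse inclusion — that \emph{every} lower CIJT cell actually appears in the closure — is the main obstacle. I would obtain it by exhibiting, for each covering relation $P'\lessdot P$ in the Boolean lattice (removing a single Hessian from ${\mathcal H}_P$), an explicit one-parameter family of complete intersection ideals of the form constructed in Lemma~\ref{criterioncor} whose general member lies in $\mathbb V(E_P)$ and whose special member lies in $\mathbb V(E_{P'})$, and then concluding by transitivity of specialization. That this completeness genuinely uses the CIJT hypothesis is witnessed by Yam\'eogo's example (Remark~\ref{frontierrem}).

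Granting the frontier identity, the two displayed equalities follow formally. The second is the identity itself. For the first, the cells being pairwise disjoint gives
\begin{align*}
\overline{\mathbb V(E_P)}\cap\overline{\mathbb V(E_Q)}
&=\Big(\bigcup_{P'\le P}\mathbb V(E_{P'})\Big)\cap\Big(\bigcup_{P''\le Q}\mathbb V(E_{P''})\Big)
=\bigcup_{P'\le P,\,P'\le Q}\mathbb V(E_{P'})\\
&=\bigcup_{P'\le P\cap Q}\mathbb V(E_{P'})=\overline{\mathbb V(E_{P\cap Q})},
\end{align*}
using the meet computation of the first paragraph and the frontier identity applied to $P\cap Q$.

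For the codimension I would compute $\dim G_T-\dim\mathbb V(E_{P,\ell})$. By the inclusion established above, $\overline{\mathbb V(E_P)}=\bigcap_{i\notin {\mathcal H}_P}\{h^i_\ell=0\}$ is cut out by $|{\mathcal H}_P^{c}|$ hypersurfaces (each $\{h^i_\ell=0\}$ is a genuine hypersurface in $\CI_T$, as $h^i_\ell$ does not vanish identically, the general member being strong Lefschetz), so its codimension is \emph{at most} $|{\mathcal H}_P^{c}|$, the number of vanishing Hessians; on the other hand the Boolean chain from $T^\vee$ (the dense cell, of codimension $0$) down to $P$ has length $|{\mathcal H}_P^{c}|$, and each covering step is a proper specialization by the degenerations of the previous paragraph, forcing codimension \emph{at least} $|{\mathcal H}_P^{c}|$. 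Hence the codimension equals $|{\mathcal H}_P^{c}|$ (equivalently, one may quote the hook-code dimension formula of \cite{IY}, as recorded in Corollary~\ref{goodhookcor}). Finally, each cell being an affine cell, $\overline{\mathbb V(E_{P\cap Q})}$ is irreducible of codimension $|{\mathcal H}_P^{c}\cup {\mathcal H}_Q^{c}|$; by inclusion--exclusion this is $\le |{\mathcal H}_P^{c}|+|{\mathcal H}_Q^{c}|$, so the intersection carries no excess component and the two closures meet properly.
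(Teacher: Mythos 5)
Your formal deductions are mostly sound --- the meet computation via Theorem~\ref{posetprop}, the derivation of the two displayed equalities from the frontier identity, and the inclusion $\overline{\mathbb V(E_P)}\subseteq\bigcap_{i\notin\mathcal H_P}\{h^i_\ell=0\}$ --- but the proposal has a genuine gap at its center: the reverse inclusion of the frontier identity, that \emph{every} cell $\mathbb V(E_{P'})$ with $P'\le P$ actually lies in $\overline{\mathbb V(E_P)}$, is never proved. You cite Theorem~\ref{mainHessthm} for it, but in the paper that theorem is deduced from this very corollary (its proof reads ``Immediate from Theorem \ref{2Hessthm} and Corollary \ref{geomcor}''), so the citation is circular; and your substitute --- ``exhibiting an explicit one-parameter family'' of CI ideals degenerating from $\mathbb V(E_P)$ into $\mathbb V(E_{P'})$ for each covering relation --- is a plan, not a construction. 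That step is exactly where the geometric content lives: closures of these cells are in general \emph{not} unions of cells (Yam\'eogo's example, Remark~\ref{frontierrem}), so the existence of such degenerations cannot be waved through. The gap then propagates: your sentence ``By the inclusion established above, $\overline{\mathbb V(E_P)}=\bigcap_{i\notin\mathcal H_P}\{h^i_\ell=0\}$'' asserts an equality where only ``$\subseteq$'' had been established, and your codimension lower bound (``each covering step is a proper specialization by the degenerations of the previous paragraph'') rests on the same unbuilt families. (A smaller slip: density of $\CI_T$ does not follow from ``strong Lefschetz, hence CI'' --- SL does not imply CI, as the paper's Example~\ref{1221ex} already shows; the paper simply records that $\CI_T$ is open and dense.)

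The gap is closable with tools you already invoke, and without any explicit degeneration, by treating the cell dimensions as \emph{known} rather than derived: by Theorem~\ref{celldimthminapp} each $\mathbb V(E_{P',\ell})$ is an affine space of dimension the number of difference-one hooks of $P'$, and by Corollary~\ref{goodhookcor} together with Proposition~\ref{HookandHess} its codimension equals $|\mathcal H_{P'}^{c}|$ (this is precisely what the footnote to the corollary points to). Then run your Krull-height estimate in the other direction: inside $\CI_T$, every irreducible component of $Z=\bigcap_{i\notin\mathcal H_P}\{h^i_\ell=0\}$ has codimension at most $c=|\mathcal H_P^{c}|$; by your ``$\subseteq$'' analysis $Z$ is the disjoint union of the cells $\mathbb V(E_{P'})$ with $P'\le P$, of which $\mathbb V(E_P)$ is the unique one of codimension $c$, all others being strictly smaller-dimensional since $\mathcal H_{P'}\subsetneq\mathcal H_P$; hence the generic point of each component lies in $\mathbb V(E_P)$, forcing $Z=\overline{\mathbb V(E_P)}$ --- the frontier identity --- after which your lattice computation and inclusion--exclusion argument for proper intersection go through verbatim. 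This repaired route is in substance what the paper does, though its proof is a two-line reduction: it quotes the affine cell decomposition of $G_T$ from \cite{I0,IY} (Theorem~\ref{celldimthminapp}), uses the second part of Lemma~\ref{criterioncor} to identify the Jordan-type loci with the initial-ideal cells $\mathbb V(E_{P,\ell})$, and reads the closure and codimension statements off Theorem~\ref{posetprop}; the engine is dimension bookkeeping for the cells, not a degeneration argument.
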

\begin{proof} The second part of Lemma  \ref{criterioncor} shows that we may replace ``$P_\ell=P$'' by ``the initial ideal of $I$ in the $(y,\ell)$ direction is $E_P$'': that is, the decomposition of $G_T$ into affine cells corresponding to the initial ideals $E_P$ is the same as that according to the Jordan types $P=P_\ell$.  The decomposition of $G_T$ into affine cells is a result of \cite{I0,IY}, which for our CI Jordan types we show in Lemma \ref{criterioncor}. The rest follows from Theorem~\ref{posetprop}.
\end{proof}
\begin{remark}\label{contrastremark}
The conclusion of Corollary \ref{geomcor} is in contrast to an example of J.~Yam\'{e}ogo where the intersection is not dimensionally proper for the two non-CIJT partitions $P=(5,2,1,1)$ and $Q=(4,2,1,1,1)$ having diagonal lengths $T=(1,2,3,2,1)$ (see \cite[Example 4.1]{Y2} and \cite[Example 1.24]{IY2}).
\end{remark}
We will denote by $CI_T$ the open dense subvariety of $G_T$ parametrizing complete intersections. We now show that the cells of $CI_T$ satisfy a frontier property, the closure of a cell is the union of cells.
\begin{theorem}\label{mainHessthm}[Closure of $\mathbb V(E_P)$] Assume that $T$ satisfies Equation\eqref{CIT2eq}. Then the complete intersection Jordan types (CIJT's) having diagonal lengths $T$ correspond one to one with the subsets of the active Hessians, according to the maps given in Theorem~\ref{2Hessthm}.  The Zariski closure in $CI_T$ of the locus $\mathbb V(E_P)$ of Artinian algebras whose Jordan type is a CIJT partition $P\in \mathcal P(T)$, is the union $\overline{\mathbb V(E_P)}=\bigcup^\prime_{P^\prime\ge P} \mathbb V(E_{P^\prime})$ where the union is over complete intersection partitions $P^\prime$.
\end{theorem}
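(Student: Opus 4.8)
The first assertion---the bijection between CIJT partitions of diagonal lengths $T$ and subsets of the active Hessians---is exactly Theorem~\ref{2Hessthm}, so the plan is to concentrate on the frontier statement, that $\overline{\mathbb V(E_P)}$ is the union of the cells $\mathbb V(E_{P'})$ over CIJT partitions $P'\le P$ in the dominance order. Throughout I fix the direction $\ell$ and regard each active Hessian $h^i_\ell$ as a regular function on $CI_T$: by Lemma~\ref{Hesslem} it agrees up to a nonzero scalar with $\det m_{\ell^{j-2i}}$, so its vanishing is a closed condition. By Theorem~\ref{2Hessthm} the cell $\mathbb V(E_P)\cap CI_T$ is precisely the locally closed locus on which $h^i_\ell\neq 0$ for $i\in\mathcal H_P$ and $h^i_\ell=0$ for the remaining active indices.

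First I would prove the inclusion $\overline{\mathbb V(E_P)}\subseteq\bigcup_{P'\le P}\mathbb V(E_{P'})$ by semicontinuity. If $A'=R/\Ann F'$ lies in the closure, then every active Hessian with $i\notin\mathcal H_P$ vanishes identically on $\mathbb V(E_P)$, hence on its closure, so $h^i_\ell(F')=0$; thus the nonvanishing set satisfies $\mathcal H_{A'}\subseteq\mathcal H_P$. Since $A'$ is a complete intersection, its Jordan type $P'=P_{\ell,A'}$ is CIJT, and Theorem~\ref{posetprop} converts $\mathcal H_{P'}\subseteq\mathcal H_P$ into $P'\le P$. The same remark shows that no non-CIJT stratum can occur in $CI_T$, since by Definition~\ref{CIJTdef} and Lemma~\ref{diaglem} a non-CIJT partition is never the Jordan type of a linear form on a complete intersection.

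The reverse inclusion is the substantive part, and I would reduce it to covering relations $P'\lessdot P$. By Theorem~\ref{posetprop} such a cover removes a single index $\alpha$ from $\mathcal H_P$, so $\mathbb V(E_{P'})$ is carved out of $\mathbb V(E_P)$ by the one extra equation $h^\alpha_\ell=0$. Because $\alpha\in\mathcal H_P$, the function $h^\alpha_\ell$ is not identically zero on the irreducible affine cell $\mathbb V(E_P)$, so $\{h^\alpha_\ell=0\}\cap\overline{\mathbb V(E_P)}$ is a proper closed subset of codimension exactly one, whose generic point acquires precisely one new vanishing Hessian and therefore lands in $\mathbb V(E_{P'})$. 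Here the codimension bookkeeping of Corollary~\ref{geomcor}---codimension of a CIJT cell equals its number of vanishing Hessians---forces $\mathbb V(E_{P'})$ to have codimension one more than $\mathbb V(E_P)$ and hence to fill this boundary divisor; descending through the poset then produces every CIJT $P'\le P$. Alternatively one may invoke the ambient closure formula of Corollary~\ref{geomcor} in $G_T$ and intersect with the open set $CI_T$: since $\mathbb V(E_P)\cap CI_T$ is nonempty by Lemma~\ref{criterioncor} and open in the irreducible cell $\mathbb V(E_P)$, it is dense, so the two closures agree after intersection with $CI_T$, and the non-CIJT strata drop out as above.

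The hard part will be the reverse inclusion, specifically verifying that imposing $h^\alpha_\ell=0$ lands generically in $\mathbb V(E_{P'})$ and not in a deeper stratum---equivalently, that the active Hessians cut $\overline{\mathbb V(E_P)}$ in properly intersecting, codimension-one pieces. This properness is precisely what can fail for arbitrary partitions, as Yam\'eogo's example in Remark~\ref{contrastremark} shows; the argument must therefore genuinely use the CIJT hypothesis through the codimension count of Corollary~\ref{geomcor}, which is what both confines the union to CIJT cells and forces it to run over $P'\le P$.
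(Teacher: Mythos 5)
Your second, ``alternative'' argument is precisely the paper's proof: the paper dispatches this theorem as immediate from Theorem~\ref{2Hessthm} together with Corollary~\ref{geomcor}. That is, one takes the cell-closure formula $\overline{\mathbb V(E_P)}=\bigcup_{P'\le P}\mathbb V(E_{P'})$ in the complete variety $G_T$ (resting on Lemma~\ref{criterioncor}, Theorem~\ref{posetprop}, and the affine cell decomposition of Theorem~\ref{celldimthminapp}), intersects with the open subset $CI_T$, uses that $\mathbb V(E_P)\cap CI_T$ is nonempty (Lemma~\ref{criterioncor}) and open, hence dense, in the irreducible affine cell, and observes that $\mathbb V(E_{P'})\cap CI_T\ne\emptyset$ exactly when $P'$ is CIJT. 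Your semicontinuity argument for the forward inclusion is the right mechanism, and your orientation of the order is the correct one: the closure picks up the cells with \emph{more} vanishing Hessians, i.e.\ $P'\le P$ in dominance, consistently with Corollary~\ref{geomcor} and Theorem~\ref{posetprop}; the ``$P'\ge P$'' in the theorem's display is an internal inconsistency of the statement, which you have silently corrected.

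Your primary route through covering relations, however, has a genuine gap at exactly the step you flag: you need $\{h^\alpha_\ell=0\}\cap\overline{\mathbb V(E_P)}$ (closure taken in $CI_T$) to be \emph{nonempty}. Krull's principal ideal theorem gives that every component of this locus has codimension one once a point exists, but it produces no points: $CI_T$ is only quasi-projective, and a regular function that is not identically zero on a non-complete irreducible variety can be nowhere zero on it --- a priori the hypersurface $h^\alpha_\ell=0$ could meet $\overline{\mathbb V(E_P)}$ (closure in $G_T$) only along the boundary $G_T\setminus CI_T$ of non-CI ideals. Granting nonemptiness, your codimension bookkeeping via Corollary~\ref{geomcor} does legitimately exclude components sitting inside deeper strata (those have codimension at least two more than $\mathbb V(E_P)$), so the descent through covers would finish; but the nonemptiness is essentially the frontier property you are trying to prove, and in the paper it is supplied from outside any divisor argument --- by the closure formula in the \emph{projective} $G_T$ of Corollary~\ref{geomcor} (ultimately \cite{I0,IY}) together with Lemma~\ref{criterioncor}, which constructs actual complete intersections in every cell $\mathbb V(E_{P'})$ for $P'$ CIJT. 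So the divisor route as written is incomplete at that point (and the subtlety is real: Remark~\ref{frontierrem} shows the frontier property fails for non-CIJT cells, so no soft argument can do it); your fallback paragraph is what closes the proof, and it coincides with the paper's.
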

\begin{proof} Immediate from Theorem \ref{2Hessthm} and Corollary \ref{geomcor}.
\end{proof}\par
\begin{remark}\label{frontierrem} The analogous frontier property is not shared by cells corresponding to non-CIJT partitions, even when $T$ satisfies Equation \eqref{CIT2eq}: J. Yam\'{e}ogo showed  that the cell $C$ corresponding to the non-CIJT partition $P(C)= (5,2,1,1)$ of diagonal lengths $T=(1,2,3,2,1)$ has closure that is not the union of cells (\cite{Y1}, see also \cite[Example~3.28, and \S 3F(E)]{IY}).
\end{remark}

\subsection{Pattern of CIJT partitions.}\label{patternsec}

\begin{figure}
\begin{center}
   $\begin{array}{c||c}
P \,\text { with 3 parts}& \iota(P) \,\text { with $k+2$ parts }\\
\hline\hline
(4+k,2+k,k)&(4+k,2+k,1^k)\\
\hline
(4+k,1+k,1+k)&(4+k,2^{k+1})\\
\hline\hline
(3+k,3+k,k)&(3+k,3+k,1^k)\\
\hline
(k+2,k+2,k+2)&(3^{k+2})\\
\hline
\end{array}$
\caption{CIJT partitions for $T=(1,2,3^k,2,1), k\ge 1$. When $k=1$ the two columns are the same.
}\label{fig10.5}
\end{center}
\end{figure}
\begin{figure}
\begin{center}
   $\begin{array}{c||c}
P \,\text { with 4 parts}& \iota(P) \,\text { with $k+3$ parts }\\
\hline\hline

(6+k,4+k,2+k,k)&(6+k,4+k,2+k,1^k)\\
\hline
(6+k,4+k,1+k,1+k)&(6+k,4+k,2^{k+1})\\
\hline
(6+k,3+k,3+k,k)&(6+k,3+k,3+k,1^k)\\
\hline
(6+k,2+k,2+k,2+k)&(6+k,3^{k+2})\\
\hline\hline
(5+k,5+k,2+k,k)&(5+k,5+k,2+k,1^k)\\
\hline
(5+k,5+k,1+k,1+k)&(5+k,5+k,2^{k+1})\\
\hline
(4+k, 4+k,4+k, k)&(4+k, 4+k,4+k,1^k)\\
\hline
(3+k, 3+k,3+k, 3+k)&(4^{k+3})\\
\hline
\end{array}$
\caption{CIJT partitions for $T=(1,2,3,4^k,3,2,1), k\ge 1$.  When $k=1$ the two columns are the same.
The map from $P$ to $\iota(P)$ is by conjugating the rectangular subpartition determined by the smallest part (Theorem \ref{tablethm})}\label{fig11}
\end{center}
\end{figure}
\begin{figure}
\begin{center}
   $\begin{array}{c||c}
P \,\text { with $d=5$ parts}& \iota (P) \,\text { with $k+4$ parts }\\
\hline\hline

(8+k,6+k,4+k,2+k,k)&(8+k,6+k,4+k,2+k,1^k)\\
\hline
(8+k,6+k,4+k,1+k,1+k)&(8+k,6+k,4+k,2^{k+1})\\
\hline
(8+k,6+k,3+k,3+k,k)&(8+k,6+k,3+k,3+k,1^k)\\
\hline
(8+k,6+k,2+k,2+k,2+k)&(8+k,6+k,3^{k+2})\\
\hline
(8+k,5+k,5+k,2+k,k)&(8+k,5+k,5+k,2+k,1^k)\\
\hline
(8+k,5+k,5+k,1+k,1+k)&(8+k,5+k,5+k,1+k,2^{k+1})\\
\hline
(8+k,4+k,4+k,4+k,k)&(8+k,4+k,4+k,4+k,1^k)\\
\hline
(8+k,3+k,3+k,3+k,3+k)&(8+k,4^{k+3})\\
\hline\hline
(7+k,7+k,4+k,2+k,k)&(7+k,7+k,4+k,2+k,1^k)\\
\hline
(7+k,7+k,4+k,1+k,1+k)&(7+k,7+k,4+k,2^{k+1})\\
\hline
(7+k,7+k,3+k,3+k,k)&(7+k,7+k,3+k,3+k,1^k)\\
\hline
(7+k,7+k,2+k,2+k,2+k)&(7+k,7+k,3^{k+2})\\
\hline
(6+k,6+k,6+k,2+k,k)&(6+k,6+k,6+k,2+k,1^k)\\
\hline
(6+k,6+k,6+k,1+k,1+k)&(6+k,6+k,6+k,2^{k+1})\\
\hline
(5+k,5+k,5+k,5+k,k)&(5+k,5+k,5+k,5+k,1^k)\\
\hline
(4+k, 4+k,4+k, 4+k,4+k)&(5^{k+4})\\
\hline
\end{array}$
\caption{CIJT partitions for $T=(1,2,3,4,5^k,4,3,2,1), k\ge 1$.  When $k=1$ $\iota(P)=P$}\label{fig12}
\end{center}
\end{figure}
Recall that we denote by $\mathcal P(T)$ the set of partitions having diagonal lengths $T$. We first state a symmetry condition satisfied by the strings for Jordan decompositions of graded Gorenstein Artinian algebras, then we state and prove a result about the tables of CIJT partitions having a given diagonal lengths $T$.

\subsubsection{Symmetry of Jordan strings for an Artinian Gorenstein algebra.}
T. Harima and J. Watanabe \cite{HW} defined ``central simple modules'' of a Gorenstein Artinian algebra $A$.  From their paper, or from an alternative approach using symmetric decomposition of the algebra with respect to powers of a principle ideal $(\ell)$ of a linear element, one can show that the strings of the Jordan type of $A$ with respect to $\ell$ satisfy the symmetry condition below in Lemma \ref{symlem}. This statement can be found also in B.~Costa and R. Gondim
\cite[Lemma~4.6]{CGo} who use the result of T. Harima and J.~Watanabe (see also \cite[\S 2.6]{IMM}.)\par Recall from Definition~\ref{basiccelldef} that $A$ can be written as the direct sum of simple ${\sf k}[x]$ modules for the ${\sf k}[x]$ action on $A$ defined by the multiplication map $m_\ell$: we term these the \emph{strings} of $m_\ell$. We denote by $V_{i,s}$ the vector space span of generators for the simple modules of length $s$ whose generators lie in degree $i$, and by $W_{i,s}$ the space $\ell^{s-1}V_{i,s}$. Let $j$ be the socle degree of $A$.  We have, as $A$ is a finite-length module over the principle ideal domain ${\sf k}[x]$, 
\begin{lemma} Let $A$ be a graded Artinian algebra, let $\ell\in A_1$. Then, as a module over ${\sf k}[x]$ with $x$ acting as $m_\ell$, we have
\begin{equation}\label{Adecompeq} A=\oplus _{i,s}{\sf k}[x]/(x^s)\cdot  V_{i,s}.
\end{equation}
\end{lemma}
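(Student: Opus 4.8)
The plan is to derive \eqref{Adecompeq} from the structure theorem for finitely generated modules over the principal ideal domain ${\sf k}[x]$, upgraded so as to respect the internal grading of $A$. First I would check that $x$ really acts nilpotently, so that $A$ is a finite-length torsion ${\sf k}[x]$-module supported at the single prime $(x)$: since $A$ is graded Artinian of socle degree $j$ and $\ell\in A_1$, we have $\ell^{\,j+1}=0$, so $m_\ell$ is nilpotent and the ${\sf k}[x]$-action (with $x$ acting as $m_\ell$) makes $A$ a finite-dimensional module annihilated by a power of $x$. The structure theorem then yields an abstract direct-sum decomposition $A\cong\bigoplus_\alpha {\sf k}[x]/(x^{s_\alpha})$ into cyclic modules, each summand being a Jordan string of some length $s_\alpha$ for $m_\ell$ in the sense of Definition \ref{basiccelldef}; the invariant factors are powers of $x$ precisely because $x$ is nilpotent.

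The only point that requires care is making this decomposition homogeneous. Because $m_\ell$ carries $A_i$ into $A_{i+1}$, the operator $x$ raises internal degree by one, so $A$ is a $\mathbb Z$-graded module over the graded PID ${\sf k}[x]$ (with $\deg x=1$), and all the subspaces $\ker m_\ell^{\,m}$ and $\im m_\ell^{\,m}$ are graded subspaces. Running the usual Jordan-basis construction through this filtration — equivalently, invoking the structure theorem in the graded category over ${\sf k}[x]$ — lets me choose the cyclic generators homogeneous: there is a Jordan basis of $m_\ell$ consisting of homogeneous elements, each string having the form $v,\,\ell v,\dots,\ell^{\,s-1}v$ with $v$ homogeneous of some degree $i$ and $\ell^{\,s-1}v\neq 0$, $\ell^{\,s}v=0$.

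Finally I would reorganize the bookkeeping. Grouping the homogeneous strings by the degree $i$ of their generator and their length $s$, and letting $V_{i,s}$ be the span of those degree-$i$ generators of length-$s$ strings, the submodule generated by $V_{i,s}$ is a direct sum of the cyclic modules attached to its basis vectors, hence isomorphic to ${\sf k}[x]/(x^{s})\otimes_{\sf k} V_{i,s}$, which is exactly the summand written ${\sf k}[x]/(x^{s})\cdot V_{i,s}$. Summing over all pairs $(i,s)$ recovers $A$ as an internal direct sum, giving \eqref{Adecompeq}. The explicit case $\ell=x$, $I\in\mathbb V(E_Q)$ treated in Lemma \ref{diaglem} is a concrete instance of this decomposition.

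The main obstacle is the middle step, the homogeneity of the cyclic generators; everything else is the standard finite-length PID structure theorem together with the elementary observation that $m_\ell$ is a degree-one graded endomorphism whose kernels and images are therefore graded.
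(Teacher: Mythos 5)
Your proposal is correct and matches the paper's own justification, which simply invokes the finite-length module structure theorem over the PID ${\sf k}[x]$ (with $x$ acting as the nilpotent $m_\ell$). The one point you rightly flag and fill in --- that the cyclic generators can be chosen homogeneous because $m_\ell$ is a degree-one graded endomorphism, so the structure theorem applies in the graded category --- is left implicit in the paper, and your handling of it is sound.
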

The following symmetry result is a consequence of \cite{HW}; see \cite[Lemma 4.6]{CGo} and \cite[Theorem 2.34]{IMM} for proofs. In \cite[Definition 2.23]{IMM} and in \cite[Definition 4.7]{CGo} the property is that $\ell$ has \emph{symmetric Jordan degree type}.
\begin{lemma}[Symmetry of strings for a Gorenstein Artinian algebra $A$]\label{symlem} Assume that $A=S/I$ is a standard-graded Gorenstein Artinian algebra of socle degree $j$ that is a ${\sf k}[x]$-module under the map multiplication by $\ell\in R_1$. Let $\phi: A_j\to {\sf k}$ and denote by $\langle \cdot,\cdot\rangle_\phi$ the exact pairing $A\times A\to \sf k$, $\langle a,b\rangle_\phi=\phi(ab)$. Then 
\begin{enumerate}[i.]
\item for each pair $(i,s)$ we have an $(i',s) $ such that $W_{i',s}=V_{i,s}^\vee,$ the dual of $V(i,s)$ under $\langle \cdot,\cdot\rangle_\phi$.
\item For each pair $(i,s)$ we have $\dim_{\sf k} V_{i,s}=\dim_{\sf k} V_{j+1-s-i,s}$.
\end{enumerate}
\end{lemma}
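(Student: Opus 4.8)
The plan is to derive both statements from Gorenstein self-duality together with the self-adjointness of the multiplication operator. Write $N=m_\ell$, a degree-raising nilpotent endomorphism of $A$. The form $\langle\cdot,\cdot\rangle_\phi$ with $\langle a,b\rangle_\phi=\phi(ab)$ is symmetric and non-degenerate (this is exactly the Gorenstein property, with $\phi$ spanning the one-dimensional socle in degree $j$), and it is graded in the sense that $\langle A_a,A_b\rangle_\phi=0$ unless $a+b=j$, with $A_a\times A_{j-a}\to{\sf k}$ a perfect pairing. Moreover $N$ is self-adjoint, since $\langle Na,b\rangle_\phi=\phi(\ell ab)=\langle a,Nb\rangle_\phi$. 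First I would record the consequence $(\Im N^k)^\perp=\ker N^k$ for every $k$: an element $b$ is orthogonal to $\Im N^k$ iff $\langle a,N^k b\rangle_\phi=0$ for all $a$, iff $N^k b=0$ by non-degeneracy; dually $(\ker N^k)^\perp=\Im N^k$.

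For part (ii) I would express $\dim V_{i,s}$ through ranks of the graded multiplication maps. A string of length $s$ with generator in degree $i$ occupies degrees $i,\dots,i+s-1$, so it contributes $1$ to $\rk(m_{\ell^k}\colon A_a\to A_{a+k})$ exactly when $i\le a\le i+s-1-k$; summing over strings writes each such rank as a sum of the numbers $\dim V_{i,s}$, and an inclusion–exclusion (a second difference of ranks) recovers each $\dim V_{i,s}$ from these ranks. It then suffices to prove the rank symmetry $\rk(m_{\ell^k}\colon A_a\to A_{a+k})=\rk(m_{\ell^k}\colon A_{j-a-k}\to A_{j-a})$, which is immediate from graded duality: under $A_a\cong (A_{j-a})^\ast$ and $A_{a+k}\cong (A_{j-a-k})^\ast$, self-adjointness identifies $m_{\ell^k}\colon A_a\to A_{a+k}$ with the transpose of $m_{\ell^k}\colon A_{j-a-k}\to A_{j-a}$, and transposition preserves rank. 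Matching the index substitution $i\mapsto j+1-s-i$ in the rank formula then yields $\dim V_{i,s}=\dim V_{j+1-s-i,s}$.

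For part (i) I would produce a decomposition of the form \eqref{Adecompeq} adapted to $\langle\cdot,\cdot\rangle_\phi$ by invoking the structure theory of a self-adjoint nilpotent operator on a quadratic space: $A$ is an orthogonal direct sum of $N$-indecomposable pieces, each either a single Jordan block on which the form is non-degenerate, or a hyperbolic pair of equal-length blocks that are individually isotropic and mutually dual. The grading then pins down the degrees. The bottom $V_{i,s}$, sitting in degree $i$, can pair non-trivially only against elements of degree $j-i$; within its indecomposable piece the only matching space is the top $W_{i',s}$ of the partner block, of the same length $s$, whose top lies in degree $i'+s-1=j-i$, forcing $i'=j+1-s-i$. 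Hence $\langle\cdot,\cdot\rangle_\phi$ restricts to a perfect pairing $V_{i,s}\times W_{i',s}\to{\sf k}$, i.e. $W_{i',s}=V_{i,s}^\vee$; the self-dual blocks are exactly the case $i=i'$. Taking dimensions here gives an alternative route to (ii).

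The main obstacle is the length-refinement in (i): a priori $V_{i,s}$ can pair against tops $W_{i',s'}$ of strings of \emph{other} lengths $s'\ne s$, since the degree constraint alone only forces $i'+s'=j+1-i$. Separating blocks of different sizes under the single form $\langle\cdot,\cdot\rangle_\phi$ is precisely what the quadratic-space structure theorem accomplishes; concretely I expect to prove it by an induction that peels off the longest strings first, using the filtrations $\ker N^s,\ \Im N^s$ and the orthogonality $(\Im N^s)^\perp=\ker N^s$ to check that the form remains non-degenerate on the complementary shorter-string part at each stage.
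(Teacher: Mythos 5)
You should know at the outset that the paper does not prove Lemma \ref{symlem} at all: it cites \cite{HW} (central simple modules) and refers to \cite[Lemma 4.6]{CGo} and \cite[Theorem 2.34]{IMM} for proofs, mentioning as an alternative the symmetric decomposition of $A$ with respect to powers of $(\ell)$. So your self-contained duality argument is a genuinely different route from anything in the text, and it is sound. Your proof of (ii) is essentially complete: writing $r_{a,b}=\rk\big(m_{\ell^{b-a}}:A_a\to A_b\big)$, the second difference $\dim_{\sf k} V_{i,s}=r_{i,i+s-1}-r_{i-1,i+s-1}-r_{i,i+s}+r_{i-1,i+s}$ combined with the transpose identity $r_{a,b}=r_{j-b,j-a}$ (from the perfect pairings $A_a\times A_{j-a}\to{\sf k}$ and self-adjointness of $m_\ell$) gives $\dim_{\sf k} V_{i,s}=\dim_{\sf k} V_{j+1-s-i,s}$; the substitution merely swaps the two boundary correction terms, which enter the formula symmetrically. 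This is more elementary than the central-simple-module machinery of \cite{HW}, and it works verbatim for graded AG algebras in any number of variables and any characteristic, since it uses only ranks and graded duality.

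For (i), the obstacle you flag — separating string lengths under the single form — is indeed the only real work, and your peeling induction does close it. The key computation for the base step: if $s$ is the maximal string length, $i'=j+1-s-i$, and $v\in V_{i,s}$ is orthogonal to $W_{i',s}$, then $\langle N^{s-1}v,u\rangle=\langle v,N^{s-1}u\rangle=0$ for every $u\in V_{i',s}$; moreover $\langle N^{s-1}v,u'\rangle=0$ for bottoms $u'$ of shorter strings (as $N^{s-1}u'=0$) and $\langle N^{s-1}v,N^tz\rangle=\langle N^{s-1+t}v,z\rangle=0$ for $t\geq 1$. These three families span $A_{i'}$, so $N^{s-1}v\perp A_{i'}$, whence $N^{s-1}v=0$ by perfection of $A_{i+s-1}\times A_{i'}\to{\sf k}$, contradicting $v\neq 0$; together with (ii) this makes $V_{i,s}\times W_{i',s}\to{\sf k}$ perfect for maximal $s$. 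Two cautions for the inductive step. First, for non-maximal $s$ and an \emph{arbitrary} decomposition \eqref{Adecompeq} the asserted perfect pairing can fail, since $N^{s-1}A_{i'}$ then also contains contributions from longer strings; so (i) is really a statement about a suitably chosen decomposition, which is exactly what your Gram--Schmidt-style modification of generators (adjusting length-$s$ bottoms by elements of longer strings) produces before passing to the orthogonal complement. Second, the complement of the longest-string block is $N$-stable and graded but is not a subalgebra, so your induction hypothesis must be phrased for a graded space with a non-degenerate pairing of complementary degrees and a self-adjoint degree-one nilpotent — as your appeal to the quadratic-space structure theorem implicitly does. With those two points made explicit (and $\mathrm{char}\,{\sf k}\neq 2$, which the paper's standing hypotheses guarantee), your argument is a complete and arguably more transparent proof than the cited ones.
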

We will say that a partition $P\in \mathcal P(T)$, for $T$ satisfying Equation \eqref{CITeq} is \emph{symmetric} if it has symmetric Jordan degree type in the sense above: the parts can be arranged symmetrically about $j/2$ so as to sum to $T$.
\subsubsection{List of height two CIJT partitions.}
  Assume that $T$ satisfies equation \ref{CIT2eq} with the maximum value $d$ occuring $k$ times. We give in Figures \ref{fig10.5}, \ref{fig11}, and \ref{fig12} tables of CIJT partitions for $T$ of heights $d=3,4,5$, respectively.  To tabulate these we have used Theorem \ref{simplegoodprop}: if the sequence $T$ is a possible Hilbert function for a CI partition, so satisfies equation \eqref{CIT2eq} then the CIJT partitions of diagonal lengths $T$  are exactly those in $\mathcal P(T)$ (having diagonal lengths $T$) that have either $d$ or $d+k-1$ parts.  
 Let $P$ be a CIJT partition of diagonal lengths $T$ and having $d$ parts. Then we denote by $\iota (P)$ the partition obtained by flipping the smallest width rectangle in the Ferrers diagram of $P$. For example, for $P=(6,3,3)$ in $\mathcal P(T), T=(1,2,3,3,2,1)$ we have $\iota(P)=(6,2,2,2)$ (see Figure \ref{fig10.5}).

  \begin{theorem}\label{tablethm} Assume $T$ satisfies Equation \eqref{CIT2eq}, of height $d$ that occurs  $k$ times. Then the set of CIJT partitions of diagonal lengths $T$ satisfy:

\begin{enumerate}[a.]
\item Let $P$ be a CIJT partition having diagonal lengths $T$ and $d$ parts. Then the rectangular block of smallest parts has the form $(a+k)^{a+1}$ for some $a\in [0,d-1]$, and $\iota(P)$ is a CIJT partition having diagonal lengths $T$ and $d+k-1$ parts.  The map $P\to \iota (P)$ is 1-1 onto from the set of CIJT partitions of diagonal lengths $T$ and having $d$ parts, to those having $d+k-1$ parts.\par
Suppose $k\ge 2$ and that $P$ is a CIJT partition with $d$ parts.  Then the vanishing Hessians for $\iota(P)$ are $\mathcal H_{\iota(P)}=\mathcal H_P\cup h^{d-1}$.
\item There are $2^{d-1}$ CIJT partitions $P$ with $d$ parts and having diagonal  lengths $T$, and as well $2^{d-1}$ CIJT partitions having $d+k-1$ parts and diagonal lengths $T$.
\item Let $d\ge 2$. The number of CIJT partitions $P$ of diagonal lengths $T$, having $d$ parts and smallest part 
$a+k$ for $0\le a\le d-2$ is $2^{d-2-a}$; there is a single partition whose smallest part is $d+k-1$, with multiplicity $d$.
(See Figure \ref{smallpartfig}).
\item  For $P$ a CIJT partition of diagonal lengths $T$ and with $d$ parts $P=(p_1,p_2,\ldots, p_d)$ the $i$-th string, of length $p_i$ begins in degree $i-1$: that is the space $V_{i,s}$ of Lemma~\ref{symlem} is $0$ unless $s=p_{i+1}$, and $\dim_{\sf k}V_{i-1,p_i}=1$. \par
The analogous statement is true for a CIJT partition $P$ with $d+k-1$ parts.
\end{enumerate}
\end{theorem}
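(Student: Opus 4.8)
The plan is to reduce every assertion of Theorem~\ref{tablethm} to the explicit parametrization of CIJT partitions by ordered partitions provided by Corollaries~\ref{goodpartcor} and~\ref{goodpartcor2}, after which the block structure, the counts, the Hessian sets, and the string degrees all follow by elementary bookkeeping.

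For part (a) I would begin with a $d$-part CIJT partition $P$. By Corollary~\ref{goodpartcor} (for $k\ge 2$; Corollary~\ref{goodpartcor2} treats $k=1$, where $d+k-1=d$ and $\iota$ is the identity) such a $P$ is the case $n=d$, corresponding to an ordered partition $d=n_1+\cdots+n_c$, with $P=(p_1^{n_1},\dots,p_c^{n_c})$ and $p_i=k-1+2d-n_i-2\sum_{j<i}n_j$. A one-line computation gives $p_i-p_{i+1}=n_i+n_{i+1}>0$, so the parts are strictly decreasing and the block of smallest parts is exactly $(p_c)^{n_c}$ with $p_c=n_c+k-1$; setting $a=n_c-1\in[0,d-1]$ identifies it with the rectangle $(a+k)^{a+1}$. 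Conjugating this rectangle replaces $(n_c+k-1)^{n_c}$ by $(n_c)^{n_c+k-1}$, changing the number of parts to $d-n_c+(n_c+k-1)=d+k-1$ and producing $(p_1^{n_1},\dots,p_{c-1}^{n_{c-1}},(n_c)^{n_c+k-1})$. Comparing with the template of Corollary~\ref{goodpartcor}, this is precisely the $(d+k-1)$-part CIJT partition attached to the ordered partition $(n_1,\dots,n_{c-1})$ of $d-n_c$: the part sizes $p_i$ agree for $i\le c-1$, and the trailing block $(d-n)^{d-n+k-1}$ matches with $d-n=n_c$. Bijectivity of $\iota$ then follows because $(n_1,\dots,n_c)\mapsto(n_1,\dots,n_{c-1})$ carries ordered partitions of $d$ onto ordered partitions of the integers $n\in[0,d-1]$, with inverse appending $d-n$, and these index exactly the $d$-part and $(d+k-1)$-part CIJT partitions. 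Finally, Theorem~\ref{2Hessthm} records that the non-vanishing Hessians of such $P$ are $h^{n_1+\cdots+n_i-1}$ for $i\le c$; passing to $\iota(P)$ drops the final index $n_1+\cdots+n_c-1=d-1$, so (for $k\ge 2$) $\iota$ turns the top Hessian $h^{d-1}$ from non-vanishing into vanishing and leaves all others unchanged, which is the asserted relation between the vanishing sets.

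Parts (b) and (c) are then pure counting of ordered partitions. There are $2^{d-1}$ ordered partitions of $d$, hence $2^{d-1}$ CIJT partitions with $d$ parts; via $\iota$ (or by subtracting from the total $2^d$ of Corollary~\ref{goodcountcor}) there are likewise $2^{d-1}$ with $d+k-1$ parts. For part (c), a $d$-part CIJT partition has smallest part $a+k$ exactly when its ordered partition has last part $n_c=a+1$; fixing $n_c=a+1$ leaves an arbitrary ordered partition of $d-a-1$ for the prefix. For $0\le a\le d-2$ this gives $2^{d-a-2}$ choices, whereas $a=d-1$ forces the empty prefix and the single partition $(d+k-1)^d$, and $\sum_{a=0}^{d-2}2^{d-a-2}+1=2^{d-1}$ is a consistency check against (b).

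For part (d) I would invoke the string decomposition of Lemma~\ref{diaglem}B directly. By Lemma~\ref{criterioncor}, any CI algebra $A=R/I$ realizing $P_\ell=P$ has $E_P$ as its initial monomial ideal in the $\ell$-direction, so $I\in\mathbb V(E_{P,\ell})$; Lemma~\ref{diaglem}B then writes $A$ as a direct sum of ${\sf k}[x]$-strings whose $i$-th generator $\overline{g}_i$ lies in degree $i-1$ and whose string has length the $i$-th part of $P$. When $P$ has $d$ parts these generators occupy the distinct degrees $0,1,\dots,d-1$, so in the notation of Lemma~\ref{symlem} the space $V_{i-1,p_i}$ is one-dimensional and $V_{i,s}=0$ otherwise; the same argument with $d+k-1$ parts yields the analogous statement. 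I expect the only genuine work to lie in part (a): verifying that the conjugated smallest block lands exactly on the Corollary~\ref{goodpartcor} template and that $\iota$ is a bijection compatible with the two ordered-partition indexings, together with reconciling the vanishing-versus-nonvanishing bookkeeping of the Hessian sets. The remaining parts are short counting arguments or a direct citation of the string decomposition.
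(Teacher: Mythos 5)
Your proposal is correct, and on parts (a) and (b) it follows essentially the paper's own route: the paper likewise reads the smallest rectangular block $(a+k)^{a+1}$ off the template of Corollary~\ref{goodpartcor} (Corollary~\ref{goodpartcor2} when $k=1$, where $\iota$ is the identity), flips it to $(a+1)^{a+k}$, matches the result against the $(d+k-1)$-part template, and gets bijectivity from the ordered-partition indexing; your computation $p_i-p_{i+1}=n_i+n_{i+1}$ and the prefix map $(n_1,\dots,n_c)\mapsto(n_1,\dots,n_{c-1})$ just make the paper's ``evidently'' explicit, and your Hessian bookkeeping via Theorem~\ref{2Hessthm} supplies a verification the paper leaves implicit (note that $\mathcal H_{P,\ell}$ as defined in Section~\ref{latticesec} is the \emph{non-vanishing} set, so the displayed formula must be read, as you do, in terms of vanishing sets). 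Where you genuinely diverge is in (c) and (d). For (c) the paper runs a complete induction on $d$: it strips the bottom rectangle, lands in a smaller $T'$ of height $d-(a+1)$ and multiplicity $k+2(a+1)$, counts $2^{h-1}$ partitions there, and closes the loop against the total $2^{d-1}$ from (b); your direct count --- smallest part $a+k$ iff the composition ends in $n_c=a+1$, leaving a free composition of $d-a-1$ --- is shorter and avoids the induction and consistency check. For (d) the paper again inducts, using the symmetry constraint of Lemma~\ref{symlem} to argue that adding the strings of the bottom block symmetrically forces the prescription; you instead combine the ``furthermore'' clause of Lemma~\ref{criterioncor} (any CI realization of $P_\ell=P$ lies in $\mathbb V(E_{P,\ell})$) with the explicit decomposition of Lemma~\ref{diaglem}B, whose generator $\overline{g}_i$ is homogeneous of degree $i-1$ with string length $p_i$. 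This is more direct, needs no symmetry input, and in fact establishes the degree statement for \emph{every} algebra in the cell, not only Gorenstein ones, whereas the paper's route exhibits symmetry alone as the forcing mechanism. Two small points to fix in a final write-up: your phrasing in (b) and (c) (``ordered partitions of $d$,'' ``last part $n_c=a+1$'') is the $k\ge 2$, $n=d$ indexing --- for $k=1$ one should say the smallest part is $d-n$ and count compositions of $n=d-a-1$, which yields the same numbers; and in (d) you should remark that the multiset of (starting degree, length) pairs is an invariant of $A$ as a graded ${\sf k}[x]$-module, so computing $V_{i,s}$ from the particular decomposition of Lemma~\ref{diaglem}B is legitimate.
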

\begin{proof}[Proof of (a)] When $k=1$ the map $\iota$ is the identity map, and, by Corollary \ref{goodpartcor2} the smallest block of $P$ is $(d-n)^{d-n}$: letting $a=d-n-1$, we have the smallest 
block of $P$ is the square block $(a+1)^{a+1}$ as stated in part (a). Here the integer $a\in [0, d-1]$.\par
When $k>1$ and $n=d$ in Corollary \ref{goodpartcor}, then $P=(p_1^{n_1},\ldots,p_c^{n_c})$ in Equation \eqref{goodpartcorpart} where $p_c= k-1+n_c$: letting $a=n_c-1$ we have 
$$P=(P',(a+k)^{a+1}) \text { where } P'=({p_1}^{n_1},\ldots,{p_{c-1}}^{n_{c-1}}).$$ 
Letting  $n' =\sum_{i=1}^{c-1}n_i$ we have $a=d-n'-1$.  Then, from Equation \eqref{goodpartcorpart} again, the last rectangle is $(d-n')^{d-n'+k-1}$, we have\par
$$\iota (P)= (P', (a+1)^{a+k}).$$
This shows the formula of part (a), and evidently from Corollary \ref{goodpartcor} the map $\iota$ is 1-1 onto. Note that $P'$, comprised of the largest parts of $P$, is also a CIJT partition. \vskip 0.2cm\noindent
{\it Proof of (b)}. There are by Theorem 2 of the introduction (Theorem \ref{2Hessthm}) when $k>1$ exactly $2^d$ partitions of diagonal lengths $T$, and when $k=1$ there are $2^{d-1}$. The statement follows.
\vskip 0.2cm\noindent
{\it Proof of (c)}. We show this by complete induction on $d$. If $d=1$ there is a single partition $P=(d)$. 
When $d=2$, then the two CIJT partitions with 2 parts are $(k+2,k)$ (here $a=0$) and $(k+1)^2$ (here $a=1$), 
satisfying  the formula of (b) (see Figure \ref{fig3a}). Now suppose that the count in part (b) is known for
heights of $T$ less than $d$, let $T$ have height $d$ with multiplicity $k$.  We know that the last part 
$p_c^{n_c}$ of $P$ has the form $(a+k)^{a+1}$. Removing this rectangle, we have a partition $P'$ of diagonal lengths $T'$ obtained by removing the top $(a+1)$ rows from the bar graph of $T$. Here $T'$ has height $h=d-(a+1)$, and the multiplicity of $h$ in $T'$ is $k+2(a+1)$.  The number of such partitions $P'$ is $2^{h-1}$ (half the total number of CIJT partitions, by (b) above). Thus there are at most 
$$\sum_{a=0}^{d-1}2^{d-(a+2)}=2^{d-1}$$
partitions $P$ having $d$ parts of diagonal lengths $T$, if we count all potential $P'$. By (b) this is the number of partitions with $d$ parts and diagonal lengths $T$, so all potential $P'$ occur. This proves (c).\vskip 0.2cm\noindent
{\it Proof of (d)}. We show the first claim for CIJT partitions with $d$ parts, also by complete induction on $d$, parallel to the proof of (c). When $d=1$ there is a single such CIJT partition of diagonal lengths $T$, and when $d=2$ the two partitions $(k+2,k)$ and $(k+1,k+1)$, each satisfying the statement about strings.
Suppose the assertion is correct for heights less than $d$. Adding to the strings of $P'$ the strings corresponding to $(a+k)^{a+1}$, can be done in a symmetric way in the sense of Lemma \ref{symlem} only by following the prescription of (d).  Similarly, adding to the $P'$  strings a block $(a+1)^{a+k}$ in a symmetric way follows the prescription.
\end{proof}
 
\begin{figure}
\begin{center}
   $\begin{array}{c|cccccc}
d&k&k+1&k+2&k+3&k+4&k+5\\
\hline
3&2&1&1&&&\\
\hline
4&4&2&1&1&&\\
\hline
5&8&4&2&1&1&\\
\hline
6&16&8&4&2&1&1\\
\hline
\end{array}$
\caption{Number of CIJT partitions in $\mathcal P(T)$ with $d$ parts, and given smallest part. See Theorem~\ref{tablethm} (c.)
}\label{smallpartfig}
\end{center}
\end{figure}

\section{CI Jordan types and their hook codes.}\label{CIJTsolsec}
Hook codes for partitions $P$ of arbitrary diagonal lengths $T$ were introduced in \cite{IY}; they naturally give the
dimension of cells $\mathbb V(E_P)$ of the variety $G_T$ parametrizing Artinian algebras of Hilbert function $H(A)=T$.
We first in Section \ref{hookcodesec} define the hook codes for partitions of arbitrary diagonal lengths $T$ satisfying Equation \eqref{HFeq}.
We then in Section~\ref{CIJThookcodesec} restrict to Hilbert functions satisfying  Equation \eqref{CIT2eq} and determine the possible Jordan types. In Section \ref{vanishHess&hookcodesec} we connect the hook codes for CIJT partitions with the vanishing of Hessians. 
\subsection{Hooks and the affine cells $\mathbb V(E_P)$ of $G_T$.}\label{hookcodesec}
We first describe the difference-one hooks of a partition $P$ - related to standard bases for graded ideals in $R={\sf k}[x,y]$. This is relevant for us as the variety $G_T$ parametrizing graded Artinian quotients of $R$ having Hilbert function $H(A)=T$ has for each linear form $\ell\in A_1$ a decomposition into affine cells  $\mathbb V(E_{Q,\ell})$ where $Q$ runs through the partitions of diagonal lengths $T$. Recall from Section \ref{cell1sec} that the cell ${\mathbb V}(E_{P,\ell})$ for the pair $(y,\ell)$ is the set of Artinian algebras whose generic linear form $\ell$ has Jordan type $P$, it is also determined by $A=R/I$ where the ideal $I$ has initial forms $E_Q$ in the $\ell$ direction.
\subsubsection{Difference-one hooks and the cell ${\mathbb V}(E_P).$}\label{diff1hooksec} For simplicity we take $\ell=x$ in describing initial form and monomials.\par 
Recall from Definition \ref{basiccelldef} that, given a partition $P=(p_1,p_2,\ldots, p_t)$ of $n=\sum p_i$ where $p_1\ge p_2\ge\cdots \ge p_t$, of diagonal lengths $T$ we 
 let $C_P$ be the set of $n$  monomials that fill the Ferrers diagram of $P$ as follows: for $i\in [1,t]$ the, $i-$th row counting from the the top is filled by the monomials $y^{i-1},y^{i-1}x,\ldots ,y^{i-1}x^{p_i-1}$. We let $E_P$ be the complementary set of monomials to $C(P)$ and denote by $(E_P)$ the ideal they generate.
\begin{definition}\label{hookdef}
A \emph{hook} of a partition $P$ is a subset of $C_P$ consisting of a corner monomial $c$, an \emph{arm} 
 $(c,xc,\ldots ,\nu=x^{u-1}c)$ and a \emph{leg} $(c,yc,\ldots , \mu= y^{v-1}c)$, such that $x\nu\in E_P$ and $y\mu\in E_P$ 
 (Figure \ref{hook fig}). The \emph{arm length} is $u$ and the \emph{leg length} is $v$; the hook has arm-leg \emph{difference} 
 $u-v$. We term the monomial $\nu\in C_P$ the \emph{hand}, and the monomial $\mu\in C_P$ the \emph{foot} of the hook.
 \end{definition}
\begin{figure}[!ht]
\begin{center}
$\begin{array}{|c|c|c|c|}
\hline
c&\phantom{c}&\phantom{c}&\cellcolor{gray1}h\\
\hline
\cline{1-1}\\
\cline{1-1}
\cellcolor{gray1}f\\
\cline{1-1}
\end{array}$
\caption{Difference-one hook with hand $h$, foot $f$, corner $c$.}\label{hook fig}
\end{center}
\end{figure}

\begin{example}\label{431ex} Let $P=(4,4,1)$. The hook with corner $x$ in the Ferrers diagram $C_P$  has arm length 3, foot length 2, hand $x^3$, foot $yx$, so has (arm $-$ leg)  difference one (Figure \ref{hook1fig}). Here $T(P)=(1,2,3,2,1),\, \Delta(P)=\Delta _3$ and the degree-$3$-diagonal of $C_P$ has the two spaces corresponding to the monomials $y^2x
$ and $ y^3$.\par
\end{example}

\begin{figure}[!ht]
\begin{center}
$\begin{array}{|c|c|c|c|}
\hline
1&\cellcolor{gray2}x&\cellcolor{gray2}x^2&\cellcolor{gray2}x^3\\
\hline
y&\cellcolor{gray2}yx&yx^2&yx^3\\
\hline
\cline{1-1}
y^2\\
\cline{1-1}
\end{array}$
	\caption{Difference-one hook in the Ferrers diagram of partition $(4,4,1)$.}\label{hook1fig}
\end{center}
\end{figure}\noindent
For the following result, see \cite[\S 3-B,Theorem 3.12, and \S 3-F]{IY}. Recall that we denote by $\mathcal P(T)$ the set of all partitions of $n=|T|$ having diagonal lengths $T$. 
\begin{theorem}\label{celldimthminapp}
The cell $\mathbb V(E_{P,\ell})$ is an affine space of dimension the total number of difference-one hooks in $C_P$.\par
Fix $\ell\in R_1$. The variety $\G_T$ parametrizing all graded quotients $A=R/I$ of Hilbert function $T$ is a projective variety with a finite decomposition into affine cells,
\begin{equation}
\G_T=\bigcup_{P\in \mathcal P(T)}\mathbb V(E_{P,\ell}).
\end{equation}

\end{theorem}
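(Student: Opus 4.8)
The plan is to treat the two assertions separately: first the set-theoretic cell decomposition, and then the affine structure and dimension of each individual cell. Throughout I would fix $\ell=x$ and use the reverse degree-lex order with $y>x$, so that in degree $i$ the leading (initial) monomial of a form is its largest monomial in the order $y^i>y^{i-1}x>\cdots>x^i$. A general $\ell$ reduces to this case: a linear automorphism of $R$ carrying $\ell$ to $x$ induces both an automorphism of $G_T$ and an isomorphism $\mathbb{V}(E_{P,\ell})\cong \mathbb{V}(E_{P,x})$, by the very way $\mathbb{V}(E_{P,\ell})$ is set up in Definition \ref{basiccelldef}.

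For the decomposition I would realize $G_T$ concretely as the incidence variety inside $\prod_i \mathrm{Grass}(R_i,\dim R_i - T_i)$ consisting of tuples of subspaces $(I_i)_i$ satisfying $xI_i\subseteq I_{i+1}$ and $yI_i\subseteq I_{i+1}$. This is a closed subscheme of a product of Grassmannians, hence projective, and its smoothness is the property recalled from \cite{Got,I0,IY} (in two variables it is built as a tower of Grassmann bundles). Then I would invoke Gröbner degeneration: each graded ideal $I$ degenerates flatly to its initial monomial ideal $\mathrm{in}_<(I)$, which has the same Hilbert function $T$. Monomial ideals of $R$ of Hilbert function $T$ are in bijection with partitions of diagonal lengths $T$ via $Q\mapsto E_Q$ (Definition \ref{basiccelldef}), since the complementary standard monomials form a staircase whose diagonal lengths are exactly $T$. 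Hence $\mathrm{in}_<(I)=E_Q$ for a unique $Q\in\mathcal P(T)$, and assigning this $Q$ to $I$ partitions $G_T$ into the fibers $\mathbb{V}(E_Q)$, giving $G_T=\bigcup_{Q\in\mathcal P(T)}\mathbb{V}(E_{Q,\ell})$.

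For the affine structure of a single cell $\mathbb{V}(E_P)$ I would parametrize $I$ by the tails of its reduced standard basis. Writing the minimal generators of $E_P$ as in \eqref{generatorsEeq}, every $I\in\mathbb{V}(E_P)$ has unique generators $f_1,\dots,f_{t+1}$ with $\mathrm{in}_<(f_i)$ the $i$-th generator and $f_i-\mathrm{in}_<(f_i)$ a ${\sf k}$-combination of the standard monomials in $C_P$ of the same degree that are smaller in the order (compare Equation \eqref{mueqn} and Lemma \ref{diaglem}). The only constraints come from requiring the $S$-polynomials of consecutive generators to reduce to zero; because in two variables the first syzygies of a monomial ideal are generated by these consecutive "staircase" syzygies, the conditions are triangular in the tail coefficients and express the higher-degree coefficients as universal polynomials in the lower ones. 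This is precisely the recursion already carried out in the complete intersection case in the proof of Lemma \ref{criterioncor}, now run without the equality hypothesis \eqref{good3eq}. Solving it exhibits $\mathbb{V}(E_P)$ as the graph of a morphism over the space of free tail coefficients, hence as an affine space.

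It then remains to identify the free coordinates with \emph{difference-one hooks} (Definition \ref{hookdef}) and so compute the dimension: a tail coefficient attached to a generator $\mu$ of $E_P$ and a standard monomial $m$ of equal degree is genuinely free exactly when the staircase syzygies do not already pin it down, and the bookkeeping should show the surviving ones are in bijection with the difference-one hooks of $C_P$, the arm-leg difference $1$ being the condition that the corresponding deformation both survives the syzygy elimination and preserves the Hilbert function and initial ideal. I expect this matching to be the main obstacle: one must check, for every $P$, that the count of free parameters after the triangular elimination equals the difference-one hook count, and that no quadratic obstruction intervenes, so the constraint locus is literally an affine space and not merely smooth. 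The cleanest route is to organize this through the hook code of \cite{IY}, which packages the boundary data of $E_P$ so that the free tail positions and the difference-one hooks are read off the same combinatorial object; this is exactly the machinery developed in Section \ref{hookcodesec}, and it is what makes the correspondence transparent.
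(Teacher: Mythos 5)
You should first note what the paper actually does here: it does not prove Theorem \ref{celldimthminapp} at all, but quotes it from \cite[\S 3-B, Theorem 3.12, and \S 3-F]{IY} (with the cell decomposition itself going back to \cite{I0,Got}), so the comparison is with the cited proof rather than with an argument written out in the text. Against that benchmark, your decomposition half is correct and essentially complete: degreewise rev-lex initial spaces of a graded ideal do form a monomial ideal (multiplication by $x$ or $y$ preserves the order on monomials of fixed degree), a monomial ideal of Hilbert function $T$ equals $E_Q$ for a unique $Q\in\mathcal P(T)$, and the fibers of $I\mapsto \mathrm{in}(I)$ are the cells $\mathbb V(E_{Q,\ell})$ by definition. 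Your reduction to $\ell=x$ is also sound, since the degreewise rev-lex initial space with ordered basis $(y',\ell)$ depends only on the filtration by powers of $(\ell)$ and not on the complementary form $y'$, which is why the paper's notation carries only $\ell$. And your standard-basis parametrization of a cell, with tails constrained by the consecutive staircase syzygies, is exactly the strategy of \cite{I0,IY} (compare Equation \eqref{mueqn} and the recursion in the proof of Lemma \ref{criterioncor}).

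The genuine gap is at the point you yourself flag: you never prove that the elimination coming from the S-pair reductions is triangular and obstruction-free, nor that the surviving free tail coefficients are in bijection with the difference-one hooks of $C_P$. This is not a routine verification that can be deferred --- it is the entire content of the theorem. Everything else in your write-up (projectivity via the incidence realization in products of Grassmannians, Gr\"obner degeneration, uniqueness of the standard basis with tails in $C_P$) is standard, whereas the affineness of the constraint locus and the hook count are precisely what \cite[Theorem 3.12]{IY} establishes by an explicit analysis, packaged in the hook code, of which tail positions are pinned down by the syzygy reduction and the Hilbert-function condition and which remain free. Note also that the recursion in Lemma \ref{criterioncor} is weaker evidence than you suggest: there the consecutive relations were used to \emph{define} the next generator in the two-generator (CI) situation, whereas for a general $P$ the ideal has $t+1>2$ generators and the S-pair conditions impose genuine, a priori nonlinear, constraints; that they nonetheless carve out a coordinate affine subspace is exactly what must be checked partition by partition (or by an induction on branches of the kind this paper uses elsewhere). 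As written, your proposal shows only that $\mathbb V(E_{P,\ell})$ sits inside an affine space of tail coefficients cut out by such a system; it is a faithful reconstruction of the cited proof's architecture with its decisive step missing.
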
\noindent

\subsection{The hook codes for partitions having diagonal lengths $T$.}\label{CIJThookcodesec}
We first define hook codes for partitions $P$ of diagonal lengths $T$ satisfying Equation~\eqref{CIT2eq} (that can occur for a graded complete intersection) and  specify our notation for them. We then determine all the hook codes that can occur for partitions of such diagonal lengths $T$ (Proposition \ref{HCprop}), and those that can occur for CIJT partitions (Corollary \ref{goodhookcor}).\vskip 0.2cm\noindent
{\bf Notations for the hook code}.  Our notation $\mathfrak h(P)$ is based on the branch label $\mathfrak b$ for $P$ of Definition \ref{branchdef}, which specifies the lengths of the branches. Thus, our notation $\mathfrak h$ is different than that of \cite{IY2}, which we will denote here by $\mathfrak H(P)$.
\begin{definition}[Hook code]\label{newhookcode} In this section, working with branch labels we replace a $0$ entry by $E$ to indicate it is an omitted attachment point. The entries of a branch label $\mathfrak b$ are a permutation of $\{E,1,\ldots, d\}$. Two such $E$'s occur if the height of $T$ occurs exactly once $(k=1)$, otherwise there is only one $E$. We write the hook code $\mathfrak h$ as $\mathfrak b$ subscripted: for the entry $i_a$ with $i>0$ the subscript $a$ is the number of difference-one hooks of $P$ having as hand the endpoint of the branch of length $i+k-2$ in degree $d+i+k-3$. It is an integer between $0$ and $2$, except that for $i=1$ the highest possible subscript value is $1$. An entry $E$ does not have a subscript, as $E$ is not a hand of a hook. \par
The traditional hook code of \cite[Definition 3.26]{IY}  for $T$ satisfying \eqref{CIT2eq}, is $\mathfrak H(P)=(\mathfrak H(P)_{d},\ldots ,\mathfrak H(P)_j)$ where $\mathfrak H(P)_i\in [0,2]$ is the number of difference-one hooks having the unique possible hand in degree $i$. (When $k\ge 2$ then we have also $\mathfrak H(P)_{d+k-2}\in [0,1]$.)

\end{definition}\noindent
{\bf Examples of hook code.}\label{hookcodeexs}
Consider $T=(1,2,3,3,2,1)$, $P=(6,4,2)$ and $P'=(5,5,1,1)$. Corners of difference-one hooks are indicated in Figure \ref{hookcodefig} by $c$. Here $2_4$ in the hook code $\mathfrak H(P)=(1_3,2_4,2_5)$ indicates that $P$ has  $2$  hooks with a degree $4$ hand.\par
\begin{lemma} Let $P$ be the partition $T^\vee$ for $T$ a CI Hilbert function satisfying Equation \eqref{CIT2eq}, and denote by $\mathfrak H(P)=(a_d,\ldots, a_k)$ its hook code. Then each sequence $a'= (a'_d,a'_{d+1},\ldots ,a'_k)$ satisfying $0\le a'_i\le a_i$ occurs
as the traditional hook code $\mathfrak H({P_{a'}})$ of difference-one hooks for a unique partition $P_{a' }$ having diagonal lengths $T$. The correspondence between partitions and hook codes is 1-1 and takes the conjugate partition $P_{a'}^\vee$ to the complementary hook code $\mathfrak H^\vee({a'})=a-a' $ with respect to $\mathfrak H_P=a$.
\end{lemma}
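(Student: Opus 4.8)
\emph{Strategy.} The plan is to reduce all three assertions to a single degree-by-degree conservation law. Write the traditional hook code coordinatewise as $\mathfrak H(P)=(\mathfrak H(P)_i)_{d\le i\le j}$, $j=2d+k-3$, where by Definition~\ref{newhookcode} the entry $\mathfrak H(P)_i$ counts the difference-one hooks of $P$ whose hand lies in degree $i$. First I would record $a=\mathfrak H(T^\vee)$ by reading the Ferrers diagram of $T^\vee=(2d+k-2,2d+k-4,\dots,k)$ directly: one finds $a_i=2$ for $d+k-1\le i\le j$, $a_{d+k-2}=1$ when $k\ge 2$, and $a_i=0$ for $d\le i\le d+k-3$; when $k=1$ all $d-1$ entries equal $2$. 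In every case $\prod_i(1+a_i)=2\cdot 3^{d-1}$ (resp. $3^{d-1}$), which already matches the number of partitions of diagonal lengths $T$ supplied by Corollary~\ref{PdiagTcor}. The two ingredients I will combine are: (i) the map $P\mapsto\mathfrak H(P)$ is injective on $\mathcal P(T)$, which is part of the hook-code formalism of \cite{IY}; and (ii) the conjugation identity $\mathfrak H(P^\vee)=a-\mathfrak H(P)$. Note that conjugation fixes the degree of each box, hence preserves diagonal lengths, so $P^\vee\in\mathcal P(T)$.

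\emph{Transpose duality.} To attack (ii) I would first set up a duality on hooks. For a box $(r,c)$ of $P$ the unique hook with that corner has arm $u=p_r-c+1$ and leg $v=\mathrm{ht}_c-r+1$, where $\mathrm{ht}_c$ is the $c$-th column length; its hand sits in degree $r+p_r-2$ and its foot in degree $c+\mathrm{ht}_c-2$. Transposing the diagram sends $(r,c)\mapsto(c,r)$, fixes the degree of each box, and interchanges $u$ and $v$; thus it negates the arm--leg difference and carries a difference-one hook of $P$ with hand in degree $i$ to a difference-$(-1)$ hook of $P^\vee$ with foot in degree $i$. Applying this with $P$ and $P^\vee$ interchanged gives $\mathfrak H(P^\vee)_i=g^-_i(P)$, where $g^-_i(P)$ is the number of difference-$(-1)$ hooks of $P$ with foot in degree $i$. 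Writing $g^+_i(P)=\mathfrak H(P)_i$, identity (ii) becomes equivalent to the statement that
\[
N_i(P):=g^+_i(P)+g^-_i(P)=\#\{(r,c)\in P:\{r+p_r,\ c+\mathrm{ht}_c\}=\{i+1,i+2\}\}
\]
is independent of $P\in\mathcal P(T)$, with common value $a_i$.

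\emph{The crux.} The heart of the matter, and the step I expect to be the main obstacle, is this constancy of $N_i$ on $\mathcal P(T)$. Its value is pinned down at the maximal partition: for $T^\vee$ one has $p_r=2d+k-2r$, so $r+p_r=2d+k-r$ dominates $c+\mathrm{ht}_c$ at every box, whence $T^\vee$ has no difference-$(-1)$ hooks and $N_i(T^\vee)=g^+_i(T^\vee)=a_i$. To prove $N_i(P)=N_i(T^\vee)$ for arbitrary $P$ I would induct along the dominance order on $\mathcal P(T)$, using the fact that its covering relations are realized by elementary moves that relocate boxes along fixed diagonals (hence preserve all diagonal lengths); the task is then to verify by a local analysis of the affected rows and columns --- that is, of the quantities $r+p_r$ and $c+\mathrm{ht}_c$ near the altered region --- that such a move neither creates nor destroys a box counted by $N_i$. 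This bookkeeping is the technical core. Alternatively, the identity is precisely the specialization to CI Hilbert functions of the conjugation symmetry of the traditional hook code treated in \cite{IY,IY2}, and may be cited from there.

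\emph{Assembly.} Finally I would combine the pieces. From $N_i(P)=a_i$ and $g^-_i(P)\ge 0$ we get the coordinatewise bound $\mathfrak H(P)_i=g^+_i(P)\le a_i$, so the image of $P\mapsto\mathfrak H(P)$ lies in the box $\prod_i[0,a_i]$. Together with injectivity from (i) and the equality $|\mathcal P(T)|=\prod_i(1+a_i)$ noted above, the map is then a bijection of $\mathcal P(T)$ onto this box; this gives at once that each admissible $a'$ with $0\le a'_i\le a_i$ is the code of a unique partition $P_{a'}$ of diagonal lengths $T$, which is the existence--uniqueness and the 1--1 assertions. The conjugation formula $\mathfrak H(P_{a'}^\vee)=N(P_{a'})-\mathfrak H(P_{a'})=a-a'$ is then immediate from the transpose duality and the constancy, completing the proof.
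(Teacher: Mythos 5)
The first thing to note is that the paper itself states this lemma \emph{without proof}: it is presented as part of the hook-code formalism of \cite{IY} (whose Theorem 3.30 gives the counting formula the paper quotes in Corollary~\ref{PdiagTcor}), and the paper's own machinery for such facts runs through branch labels --- Lemma~\ref{partlabel}, Proposition~\ref{HCprop}, and the observation (caption of Figure~\ref{fig9}) that conjugation reverses the branch label. So your attempt at a self-contained proof takes a genuinely different route, and the parts you actually carry out are correct: the computation of $a=\mathfrak H(T^\vee)$ and of $\prod_i(1+a_i)=2\cdot 3^{d-1}$ (resp.\ $3^{d-1}$), the transpose duality (a difference-one hook of $P$ with hand in degree $i$ transposes to a difference-$(-1)$ hook of $P^\vee$ with foot in degree $i$, whence $\mathfrak H(P^\vee)_i=g^-_i(P)$), the reformulation of the conjugation identity as the conservation law $N_i(P)=a_i$, the check that $T^\vee$ has no difference-$(-1)$ hooks (indeed $c+\mathrm{ht}_c\le r+p_r$ at every box of $T^\vee$), and the final counting assembly are all sound.

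But there is a genuine gap exactly where you predict one, and the mechanism you propose to close it fails. Covering relations in the dominance order on $\mathcal P(T)$ are \emph{not} realized by relocating single boxes along their diagonals. Concretely, for $T=(1,2,3,2,1)$ the relation $(4,4,1)\gtrdot(3,3,3)$ is a cover in $\mathcal P(T)$ (checking the nine partitions in Figure~\ref{fig2a}, no element of $\mathcal P(T)$ lies strictly between), yet the passage removes the two boxes $(1,4),(2,4)$ of degrees $3,4$ and adds $(3,2),(3,3)$ --- a two-box border-strip move. Worse, $(3,3,3)$ admits no single-box diagonal move whatsoever: its unique removable corner $(3,3)$ has degree $4$, while the only addable cells $(1,4)$ and $(4,1)$ have degree $3$. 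So single-box moves do not even connect $\mathcal P(T)$, and the ``local analysis of the affected rows and columns'' you envisage never gets started; any induction along covers must first classify and then handle multi-box relocations, which is precisely the absent technical core. Your fallback of citing \cite{IY,IY2} is legitimate --- the paper does essentially this --- but it is self-defeating in context: you already cite injectivity (your ingredient (i)) from \cite{IY}, and the hook-code theorem there, whose count is exactly $\prod_i(1+a_i)$, contains the full bijection onto the box, i.e.\ the entire content of the lemma, leaving nothing of your argument to do. A repair inside this paper's own toolkit is available: deduce the coordinatewise bound and the bijection from Lemma~\ref{partlabel} and Proposition~\ref{HCprop} (branch labels biject with $\mathcal P(T)$ and determine the hook code, the first entry of each increasing run carrying the extremal subscript), and obtain the complementarity $\mathfrak H(P^\vee)=a-\mathfrak H(P)$ from the reversal of the branch label under conjugation.
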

Thus, the partition $(4,2,2,2)=(5,5,1,1)^\vee$ of diagonal lengths $T=(1,2,3,3,2,1)$ has hook code
$\mathfrak H=(1,0,1)=(1,2,2)-(0,2,1)$ (the hook code for $H^\vee=(6,4,2)$ minus that for $(5,5,1,1)$).  \par
The complement of a CIJT partition $P$ in $T^\vee$ is often not a CIJT partition.

\begin{figure}
\begin{center}
$\begin{array}{lcr}
\begin{array}{|c|c|c|c|c|c|}
\hline
&&&c&c&\textcolor{white}{c}\\
\hline
&c&c&\\
\cline{1-4}
c&\\
\cline{1-2}

\end{array}

&\textcolor{white}{create some space}&

\begin{array}{|c|c|c|c|c|}
\hline
c&\textcolor{white}{c}&\textcolor{white}{c}&c&\textcolor{white}{c}\\
\hline
&&&c&\\
\hline
\\
\cline{1-1}
\\
\cline{1-1}
\end{array}
\end{array}$

\caption{Hook codes $(1_3,2_4,2_5)$ for $P=(6,4,2)$ and $(0_3,2_4,1_5)$ for $P=(5,5,1,1).$ Grading is by the degree of the hand monomial of the hook.}\label{hookcodefig}
\end{center}
\end{figure}

\begin{example}\label{hook1ex} I. We consider partitions having diagonal lengths $T=(1,2,1), (1,2,2,1)$ or $(1,2,3,2,1)$ in Figure \ref{fig2a}: we give the partition along with traditional hook code $\mathfrak H(P)$ where a subscripted integer $a_i$ means there are $a$ hooks (between zero and 2) with hand in degree $i$. We next give the ranks of the Hessian matrices, followed by Y/N according to whether the partition could correspond to a symmetric decomposition (Lemma~\ref{symlem})\footnote{ At one time we wondered if the possibility of symmetric decomposition of the partition could correspond to whether it is CIJT. These tables show that the answer is ``No''.} of $H$, then Y/N for a CIJT partition, and finally we give the (new) hook code -- the branch label subscripted by the new hook lengths, which we term $\mathfrak h$.\vskip 0.2cm
II. We consider the partitions having diagonal lengths $T=(1,2,2,2,1)$, where $d=2,k=3$. First, for the partition $P=(5,3)$ we have $\mathfrak h=(E,2_2,1_1)$, the maximum values possible. The traditional hook code for $P$ is  $\mathfrak H(P)=(1_3,2_4)$ (see Figure \ref{fig3a}): the boxes are $1\times 1$ in degree three and $1\times 2$ in degree four. Any other traditional hook code for $T$ is a pair of subpartitions of these boxes, so here corresponds to a pair of integers $(a_3,b_4)$ with $0\le a_3\le 1, 0\le b_4\le 2$. There are six such pairs. We consider the rest.
 For $P=(4,4)$, we have $\mathfrak h=(E,1_1,2_1)$ and $\mathfrak H(P)=(1_3,1_4)$; for $P=(5,1^4), \mathfrak h=(1_0,E,2_2)$ and $\mathfrak H(P)=(0_3,2_4)$; and for $P=(2,2,2,2)$, $\mathfrak{h} = (1_0,2_1,E)$ and $\mathfrak{H}(P)=(0_3,1_4)$. \par
\end{example}

\begin{figure}
\begin{center}
$T=(1,2,1):\begin{array}{c||c|cc|c|c|}
P&\mathfrak H(P)&\rk\,\Hess^0&Y,N&CIJT&Label\, \mathfrak h\\
\hline\hline
(3,1)&(2_2)&1&Y&Y&(E,E,1_2)\\
\hline
(2,2)&(1_2)&{\bf 0\ast}&Y&Y&(E,1_1,E)\\
\hline
(2,1,1)&(0_2)&-&N&N&(1_0,E,E)\\
\hline
\end{array}$\vskip 0.2cm
\vskip 0.2cm
$T=(1,2,2,1):\begin{array}{c||c|ccc|c|c|}
P&\mathfrak H(P)&\rk\,\Hess^0& \rk\,\Hess^1&Y,N&CIJT&Label\, \mathfrak h\\
\hline\hline
(4,2)&(1_2,2_3)&1&2&Y&Y&(E,2_2,1_1)\\
\hline
(4,1,1)&(0_2,2_3)&1&{\bf 1\ast}&Y&Y&(1_0,E,2_2)\\
\hline
(3,3)&(1_2,1_3)&{\bf 0\ast}&2&Y&Y&(E,1_1,2_1)\\
\hline
\hline
(2,2,2)&(0_2,1_3)&{\bf 0\ast}&{\bf 1\ast}&Y&Y&(1_0,2_1,E)\\
\hline
(3,1,1,1)&(1_2,0_3)&-&-&N&N&(2_0,E,1_1)\\
\hline
(2,2,1,1)&(0_2,0_3)&-&-&Y&N&(2_0,1_0,E)\\
\hline
\end{array}$
\vskip 0.2cm
\vskip 0.4cm

\resizebox{.9 \textwidth}{!} 
{
$T=(1,2,3,2,1):\begin{array}{c||c|ccc|c|c}
P&\mathfrak H(P)&\rk\,\Hess^0& \rk\,\Hess^1&Y,N&CIJT&Label\, \mathfrak h\\
\hline\hline
(5,3,1)&(2_3,2_4)&1&2&Y&Y&(E,E,2_2,1_2)\\
\hline\hline
(4,4,1)&(2_3,1_4)&{\bf 0\ast}&2&Y&Y&(E,E,1_2,2_1)\\
\hline
(5,2,2)&(1_3,2_4)&1&{\bf 1\ast}&Y&Y&(E,1_1,E,2_2)\\
\hline
\hline
(4,2,1^3)&(2_3,0_4)&-&-&N&N&(2_0,E,E,1_2)\\
\hline
(3,3,3)&(1_3,1_4)&{\bf 0\ast}&{\bf 1\ast}&Y&Y&(E,1_1,2_1,E)\\
\hline
(5,2,1,1)&(0_3,2_4)&-&-&N&N&(1_0,E,E,2_2)\\
\hline\hline
(3,3,1^3)&(1_3,0_4)&-&-&Y&N&(2_0,E,E,1_1)\\
\hline
(3,2^3)&(0_3,1_4)&-&-&N&N&(1_0,2_1,E,E)\\
\hline\hline
(3,2,2,1^2)&(0_3,0_4)&-&-&N&N&(1_0,2_0,E,E)\\
\hline
\end{array}$}\vskip 0.2cm
\caption{Jordan type tables. See Example \ref{hook1ex}.}\label{fig2a}
\end{center}
\end{figure}
\vskip 0.2cm
The proof we give of the following Proposition depends on Lemma \ref{partlabel}.
\begin{proposition}[Hook code and branch label]\label{HCprop} A. Let $\mathfrak b$ be the branch label for a partition $P$ of diagonal lengths $T$ of Equation \eqref{CIT2eq} for which the multiplicity $k$ of the height $d$ is at least two. The hook code $\mathfrak h$ is as follows.
\begin{enumerate}[i.]
\item For the increasing subsequences of $\mathfrak b$ before $E$ (so vertical branches), the first subscript is $0$ (hooks), the subsequent subscripts are $1$ (hook).
\item For increasing subsequences of $\mathfrak b$ after (above) $E$ the first subscript is the maximum possible (so $2$ hooks unless the entry of $\mathfrak b$ is $i=1$, in which case the maximum is $1$ hook). The subsequent subscripts are $1$ (hook).
\end{enumerate}
B. Let $\mathfrak b$ be the branch label for a partition $P$ of diagonal lengths $T$ of Equation \eqref{CIT2eq} for which the multiplicity $k$ of the height $d$ is exactly one. There are two $E$'s in the branch label (zero-length branches). Then (i). above applies to increasing sequences of $\mathfrak b$ before the lower $E$ (so vertical branches), (ii) above applies to increasing sequences of $\mathfrak b$ after the higher $E$ (so horizontal branches), and the branches between the two $E$'s are a sequence $(1,\ldots, g)$ from higher to lower (the only hooks there are coming from the square portion of $P$ cut out by horizontals and verticals through the $E$'s.\footnote{Put more simply (i) applies to all increasing sequences below the higher $E$, and (ii) to all increasing sequences above the higher $E$.}
\end{proposition}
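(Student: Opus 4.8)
The plan is to convert the combinatorial description of the branch label $\mathfrak{b}$ supplied by Lemma~\ref{partlabel} (and Lemma~\ref{partlabel2} when $k=1$) into local information about the right and bottom boundaries of the Ferrers diagram of $P$, and then to count difference-one hooks directly from Definition~\ref{hookdef}. The key structural input is the inequality \eqref{partineq}, $b_{i+1}\le b_i+1$: its proof shows that $\mathfrak{b}$ is a concatenation of maximal \emph{increasing runs} in which consecutive entries rise by exactly $1$. I first record the geometric meaning of this: an increasing run in the horizontal part (above $E$) corresponds to a block of consecutive rows of $P$ of equal length, i.e.\ a rectangular protrusion on the right boundary, while an increasing run in the vertical part (below $E$) corresponds to a block of consecutive columns of equal height, a rectangular protrusion on the bottom boundary; a descent, or the passage across an $E$, marks the junction between two such blocks, or between a block and the basic triangle $\Delta_d$. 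Every subscript of the hook code will be read off from the corner of one of these flat blocks.

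\textbf{Locating the hands.} By Definition~\ref{newhookcode} the subscript attached to an entry of value $i$ counts the difference-one hooks whose hand is the unique right-end-of-row cell in degree $d+i+k-3$, the degree of the endpoint of that branch; the uniqueness of a possible hand in each degree of the range $[d,j]$ (equivalently $[d+k-2,j]$, since no right end lies in the lower degrees) is part of the hook-code framework of \cite{IY} and can also be re-derived from the flat-block dictionary. I would check that for a horizontal branch this hand is literally the branch endpoint, the right end of the row it is attached to, whereas for a vertical branch the unique hand in that same degree lies on the triangle side, to the upper right of the branch endpoint. The examples $P=(3,3,3,3)$ and $P=(6,2,2,2)$ of diagonal lengths $(1,2,3,3,2,1)$ illustrate both phenomena and can be used to calibrate the statements.

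\textbf{The counting step.} Using Definition~\ref{hookdef}, a difference-one hook is determined by its hand $\nu=(r,\lambda_r)$ together with a foot $\mu=(\rho,\gamma)$, which must be a column-bottom cell on the anti-diagonal $\rho+\gamma=r+\lambda_r-1$ (one degree below $\nu$) lying weakly to the southwest, $\rho\ge r$, so that the arm exceeds the leg by exactly one. Hence the subscript of an entry equals the number of exposed column-bottoms of $P$ on the anti-diagonal immediately inside its hand, to the southwest of it. I would evaluate this on each flat block. For a \emph{horizontal} block the top row (the first entry of its run) has its right end at the block's north-east corner, and exactly two such column-bottoms are visible below-left of it — one created by the lower step of the block itself and one inherited from the triangle or the preceding block — giving the maximal subscript $2$; the unique exception is the run beginning at value $1$, whose hand sits in the minimal degree $d+k-2$, where only one column-bottom is available, giving $1$. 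Each lower row of the block (a subsequent entry) exposes exactly one new column-bottom, hence subscript $1$. For a \emph{vertical} block the orientation of the hook (arm longer than leg) reverses the outcome: the hand associated to the leading (left-most) column sees no exposed column-bottom to its southwest, since that region is still filled by tall columns, giving subscript $0$, while each subsequent column exposes exactly one, giving subscript $1$. This establishes (i) and (ii) of part~A.

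\textbf{The case $k=1$ and the main obstacle.} For part~B the branch label has two $E$'s, and the requirement that $P$ be a partition forces the entries strictly between them to be the staircase $1,\dots,g-1$ read from higher to lower, as in \eqref{label2eqn}; geometrically this fills the square region of $P$ cut out by the horizontal and vertical lines through the two gaps, and a direct inspection shows that the only difference-one hooks meeting this square have their hands on its boundary, as asserted. The increasing runs below the lower $E$ are genuine bottom protrusions and those above the higher $E$ are genuine right protrusions, so the counts of the previous paragraph apply verbatim to give (i) below the lower $E$ and (ii) above the higher $E$. The hard part will be the core counting step: proving that the number of exposed column-bottoms southwest of each hand is exactly $2$ (not merely at most $2$) for the leading row of every horizontal block in degree $>d+k-2$, and exactly $0$ for the leading column of every vertical block. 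This demands careful bookkeeping of the boundary steps at the junctions between successive flat blocks and at the junction with $\Delta_d$, controlled by the ordering conditions on the minima and maxima of the subintervals in \eqref{BCITTeqn}; this is exactly where the asymmetry between the arm-long and leg-long orientation of difference-one hooks enters and produces the $0$-versus-$2$ dichotomy between the vertical and horizontal leading entries.
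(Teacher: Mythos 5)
Your framework is sound, and it is a genuinely different route from the paper's: the paper proves part A by induction on $d$ (strip the largest entry $b$ from $\mathfrak b$ to get a partition $P'$ of smaller diagonal lengths, then a three-case analysis of how the branch $b$ reattaches -- inside the vertical section, adjacent to a horizontal $b-1$, or at the top -- with the base case $d=2$ checked by table, and part B handled by the same scheme plus the staircase observation), whereas you propose a direct, non-inductive count over the flat-block decomposition of the boundary. Your dictionary is correct: maximal increasing runs of $\mathfrak b$ correspond to rectangular protrusions, and the hooks with a given hand $\nu$ are in bijection with column-bottom cells on the anti-diagonal one degree below $\nu$, weakly below and strictly left of it. You also correctly diagnosed a point the paper's Definition~\ref{newhookcode} glosses over: the endpoint of a vertical branch is a column-bottom, not a row-end, so the cell actually counted is the row-end in the same degree $d+i+k-3$ (your calibration examples $(3,3,3,3)$ and $(6,2,2,2)$ confirm this against Figure~\ref{fig9}).

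The gap is the one you name yourself: the "core counting step" is asserted, not proved, and it is the entire content of the proposition. You must show (not merely make plausible) that the leading entry of each horizontal run in degree $>d+k-2$ has \emph{exactly} two feet, the leading vertical entry exactly zero, and every subsequent entry exactly one -- where "exactly one" also requires excluding extra feet contributed by blocks further southwest, a check you do not perform. Even the upper bound of two per hand is nontrivial: the paper's own proof does not rederive it but cites \cite[Theorem 1.17]{IY2}. One observation that would make your deferred bookkeeping tractable: the bottom cell of the branch labeled $v$ lies in degree $d+v+k-3$ \emph{independently of where the branch is attached}, so the only candidate feet for the hand of entry $i$ are the bottom of the branch labeled $i-1$ and the step cell created at the junction below the hand's own block. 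This reduces the verification to locating $i-1$ relative to $i$ (same run, earlier or later run, vertical versus horizontal, or the exceptional $i=1$ where the foot diagonal lies inside the thickened triangle), with the ordering conditions of \eqref{BCITTeqn} ruling out the bad configurations -- which is in effect the same case split the paper performs, but localized to one branch at a time by its induction. Until that finite case analysis is written out, your text is a correct and well-organized plan rather than a proof.
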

\begin{proof}[Proof of (A)] By induction on $d$.  For $d=2$, $T=(1,2^k,1)$ the six codes (see Figure \ref{fig2a} for $T=(1,2,2,1)$ and Figure \ref{fig3a} for $T=(1,2^k,1)$) are
\begin{equation}\label{h1221eqn}
\begin{array}{c||c|c|c|c|c|c|}
P&(4,2)&(4,1,1)&(3,3)&(2,2,2)&(3,1,1,1)&(2,2,1,1)\\
{\mathfrak h}&(E,2_2,1_1)&(1_0,E,2_2)&(E,1_1,2_1)&(1_0,2_1,E)&(2_0,E,1_1)&(2_0,1_0,E)
\end{array}
\end{equation}
and they satisfy the conditions (i),(ii) of the Proposition.\vskip 0.2cm
Assume that the Proposition is true for some $d=b-1$ where $ b\ge 3$: we will show it for $d=b$. For simplicity we assume $k=2$.  Let $P$ have diagonal lengths $T$ with $d=b$ and $\mathfrak b_P=\mathfrak b$, let $\mathfrak b'$ be $\mathfrak b$ with the entry $b$ removed, and let $P'=P_{\mathfrak b'}$ of diagonal lengths $T'=(1,2,\ldots, (b-1)^2,\ldots, 2,1)$ obtained by glueing the branch lengths $\mathfrak b'$ to the basic triangle $\Delta_{b-1}$. We need to add a branch of length $b$, to $\Delta_b$, and a branch of length $b$ to $\mathfrak b'$ to form $P, \mathfrak b$, respectively. We follow the style of the argument after Equation~\eqref{good2eq} in the proof of Lemma \ref{criterionlem} -- however, here the branch label is that of  Lemma \ref{partlabel}, which is more general.  There are three cases.
\begin{enumerate}[a.]
\item Assume first that the branch is added in the vertical section of $\mathfrak b'$. Then it is the largest element in an increasing interval $(a,\ldots,b)$ in the vertical section.  Vertical branches of length $k$ can only affect the hook length for horizontal branches of length $k+1$, so adding $b$ vertically changes no hook number of a horizontal (after $E$) element of $\mathfrak b$. Also, the endpoint monomial $\nu$ of a row of the vertical section of $\mathfrak b'$ has no
hook count change from the addition of $b$ -- which is too long if $b$ occurs before $\mu$ and does not affect the number of difference-one hooks if added after $\mu$. If the branch $b$ is isolated (the interval $(a,b)$ is just $(b)$) then there are evidently zero difference-one hooks and the entry of $\mathfrak h$ is $b_0$.  If $b$ is immediately preceded by $b-1$ (so $a<b$) then there is a new difference-one hook whose hand $\nu$ is the foot monomial of the branch $b$ and whose foot is  $\nu:x$, the foot monomial of the branch $b-1$. 
\item Assume that the branch $b-1$ is horizontal, that is $\mathfrak b'=(S'',E,S')$ with $b-1\in S'$, and the branch $b$ is added adjacent to $b-1$. If $b$ is added just above $b-1$,  then the endpoint $\nu$ of the branch $b$ has two difference-one hooks, the first
with foot  $\nu:x$ and the second with foot $\mu:y$ just above the next lower generator $\mu$ of $E_P$ (next inside corner). 
If $b$ is added below the branch $b-1$, then by Equation \eqref{BCITTeqn}, $\mathfrak b=(\ldots, E, [a,b],\ldots)$: the interval $[a,b]$ is the first (top) interval of the horizontal part. Inspection shows that the hook count for branches $\{a,a+1,\ldots,b-1\}$ is unchanged from $\mathfrak b^\prime$ to $\mathfrak b$. No other difference-one hooks from $P_{\mathfrak b'}$ are affected.  The  new branch $b$ will have only one hook, with hand $\mu$ and foot $\mu:x$.
\item Suppose that $b$ is added at the top, with hand $\mu$, and not just above $b-1$. First, if also $b-1$ is vertical, then there are two difference-one hooks for the top branch, one with foot $\mu:x$ and one with foot the foot monomial $\nu$ of the vertical $b-1$ labelled branch. Two hooks is the maximum possible, by theory
(see the Appendix and \cite[Theorem 1.17]{IY2}.)\footnote{This maximum of two is also straightforward to verify directly from the construction of $\mathfrak b$ and a combinatorial argument.} No other hook counts are affected when adding the longest branch horizontally at top. \par
 If instead $b-1$ is a horizontal branch, and the next branch after $b$ is $a$, then 
 $\mathfrak b=(\ldots, E, b,a,a+1,\ldots ,b,\ldots)$ as by Lemma \ref{partlabel} the next interval of $\mathfrak b$ is $[a,b-1]$; inspection shows that besides $\mu:x$ as before, there is a single further hook with foot the hand of the branch labelled $(b-1)$.  No other hook count is affected as $b$ has been added to the top.
\end{enumerate} 
This completes the induction step and the proof of (A) of the Proposition. \par
The proof of (B) is entirely similar, one makes use of the fact that between the two $E$'s, the hook lengths are $(0,1,\ldots, s)$ with no gaps, and all other branches are longer.  Those below the lower $E$ are vertical, so, since they are greater than $s$ there are no difference-one hooks with hand in the diagonal between the two $E$'s, and foot in the portion of $P$ below the lower $E$. So all hooks with hands in the middle portion come from the square cut out by the two $E$'s, yielding the statement that they are $(0,1,1,\ldots,1)$.
\end{proof}\par

\begin{figure}[h]
\begin{center}
$\begin{array}{c||c|c|c|c|c|c|}
P&\mathfrak H(P)&\rk\,\Hess^0&rk \Hess^1&Y,N&CIJT&Label\, \mathfrak h\\
\hline\hline
(k+2,k)&\left(1_k,2_{k+1}\right)&1&2&Y&Y&\left(E,2_2,1_1\right)\\
\hline
(k+2,1^{k})&\left(0_k,2_{k+1}\right)&1&{\bf 1\ast}&Y&Y&\left(1_0,E,2_2\right)\\
\hline
(k+1,k+1)&\left(1_k,1_{k+1}\right)&{\bf 0\ast}&2&Y&Y&\left(E,1_1,2_1\right)\\
\hline
\hline
(2^{k+1})&\left(0_k,1_{k+1}\right)&{\bf 0\ast}&{\bf 0\ast}&Y&Y&\left(1_0,2_1,E\right)\\
\hline
(k+1,1^{k+1})&\left(1_k,0_{k+1}\right)&-&-&N&N&\left(2_0,E,1_1\right)\\
\hline
(2^{k},1,1), k \text { even}&\left(0_k,0_{k+1}\right)&-&-& Y &N&\left(2_0,1_0,E\right)\\
\hline
(2^{k},1,1), k \text { odd}&\left(0_k,0_{k+1}\right)&-&-& N &N&\left(2_0,1_0,E\right)\\
\hline
\end{array}$
\caption{Jordan types, and ranks of Hessian matrices, $T=(1,2^k,1)$, for $k\geq 2$. See Example \ref{hook1ex} II. }\label{fig3a}
\end{center}
\end{figure}
Applying Proposition \ref{HCprop} we determine the hook codes of CIJT partitions having diagonal lengths $T$.
\begin{corollary}[Hooks and branch labels of CIJT partitions]\label{goodhookcor}
A.  Let $\mathfrak b$ be the branch label for a CIJT partition $P$ of diagonal lengths $T$ of Equation \eqref{CIT2eq} where the multiplicity $k$ of the height $d$ is at least two. The hook code $\mathfrak h$ is as follows.
\begin{enumerate}[i.]
\item If there is a vertical portion of $P$, then $\mathfrak h=(1_0,2_1,\ldots ,a_1,E, \ldots)$: here $\mathfrak b$ begins with an interval $[1,a]$: the first subscript is $0$ (hooks), the subsequent vertical subscripts are $1$.
\item For increasing subsequences of $\mathfrak b$ after (above) $E$ the first subscript is the maximum possible (so $2$ hooks unless the entry of $\mathfrak b$ is $i=1$, in which case the maximum is $1$). The subsequent subscripts are $1$ (hook).
\end{enumerate}\par
B. Let $\mathfrak b$ be the branch label for a CIJT partition $P$ of diagonal lengths $T$ of Equation \eqref{CIT2eq} where the multiplicity $k$ of the height $d$ is one. Then $\mathfrak B=(E,1_0,2_1,\ldots,g_1,E,\ldots)$: here the portion above the second $E$ follows (ii) above.\par
C. The dimension of the cell $\mathbb V(E_P)$ is the number of difference-one hooks of $P$.
\end{corollary}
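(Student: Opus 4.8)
The plan is to derive all three parts by specializing machinery already in place: Proposition~\ref{HCprop}, which converts an arbitrary branch label into its hook code; Theorems~\ref{CIpartsthm} and~\ref{CIpartsthm2}, which pin down the exact shape of the branch labels of CIJT partitions; and Theorem~\ref{celldimthminapp}, which identifies the dimension of $\mathbb V(E_P)$ with the number of difference-one hooks of $C_P$. Essentially no new combinatorics is required; the task is to feed the rigid branch-label shape of a CIJT partition into the hook-code recipe of Proposition~\ref{HCprop} and read off the subscripts.

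For part A (so $k \geq 2$) I would first invoke Theorem~\ref{CIpartsthm}: a CIJT partition $P$ has branch label $\mathfrak b = (\mathfrak v, 0, \mathfrak h_1, \ldots, \mathfrak h_c)$ in which the vertical portion $\mathfrak v$ is either empty or a \emph{single} ordered interval $[1,a]$. It is precisely this single-interval feature that separates CIJT labels from the general labels of Lemma~\ref{partlabel}, where the vertical part may consist of several intervals. Applying Proposition~\ref{HCprop}(A)(i) to that one increasing interval yields first subscript $0$ and all later vertical subscripts $1$, i.e. $\mathfrak h = (1_0, 2_1, \ldots, a_1, E, \ldots)$, which is A(i). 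The horizontal blocks $\mathfrak h_1, \ldots, \mathfrak h_c$ have the same local form (an increasing interval with decreasing maxima) as in the general case, and rule (ii) of Proposition~\ref{HCprop} depends only on that local form; hence A(ii) is Proposition~\ref{HCprop}(A)(ii) applied unchanged.

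For part B (so $k=1$) I would combine Theorem~\ref{CIpartsthm2} with Theorem~\ref{simplegoodprop}: a CIJT partition with $k=1$ is weak Lefschetz, so it has $d$ parts, so no branch is attached below the lower gap, and its label is $\mathfrak b = (0, 1, \ldots, g-1, 0, \mathfrak h_1, \ldots, \mathfrak h_c)$ with empty vertical part. Proposition~\ref{HCprop}(B) then assigns the forced filled-in block between the two $E$'s the subscripts $(0,1,\ldots,1)$ — those hooks arise only from the square cut out by the two gaps — while the part above the upper $E$ obeys rule (ii); this is exactly the claimed form $\mathfrak B = (E, 1_0, 2_1, \ldots, g_1, E, \ldots)$. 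Part C is immediate from Theorem~\ref{celldimthminapp}, since the number of difference-one hooks of $C_P$ is the dimension of $\mathbb V(E_P)$, and this number is recorded as the sum of the subscripts of $\mathfrak h$.

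The only step demanding genuine care — and the closest thing to an obstacle — is keeping the indexing conventions aligned across the three sources: Theorem~\ref{CIpartsthm} writes the vertical interval as $\{x : 0 \leq x < e\}$ whereas the corollary writes $[1,a]$, and for $k=1$ the middle block reads $1,\ldots,g-1$ in Theorem~\ref{CIpartsthm2} but $1,\ldots,g$ in the corollary. I would fix one convention at the start and verify it against the worked tables in Figures~\ref{fig2a} and~\ref{fig3a}, so that the off-by-one shifts between the statements do not corrupt the conclusion; past this alignment the argument is a direct substitution.
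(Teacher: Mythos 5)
Your proposal is correct and takes essentially the same route as the paper, whose proof simply cites Theorem~\ref{CIpartsthm} and Proposition~\ref{HCprop} for parts A and B, and Theorem~\ref{celldimthminapp} for part C; your substitution of the CIJT label shapes into the hook-code recipe is exactly that argument, with Theorem~\ref{CIpartsthm2} (plus the optional appeal to Theorem~\ref{simplegoodprop}) making the $k=1$ case explicit. Your attention to the indexing mismatches (the interval $\{x : 0\leq x< e\}$ versus $[1,a]$, and $g-1$ versus $g$) is a useful bookkeeping refinement but does not change the proof.
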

\begin{proof} Parts (A) and (B) are immediate from Theorem \ref{CIpartsthm} and Proposition \ref{HCprop}. Part (C) follows from Theorem  \ref{celldimthminapp}.
\end{proof}

\subsection{Vanishing of Hessians and hook codes.}\label{vanishHess&hookcodesec}
Proposition \ref{HCprop} and Corollary \ref{goodhookcor} determine the hook code corresponding to the branch label $\mathfrak b$ of a partition $P$ having diagonal lengths $T=\left(1,2,\dots ,d^k,\dots ,2,1\right)$, for $k\geq 1$. On the other hand, Theorem \ref{2Hessthm} determines the vanishing and non-vanishing Hessians of a linear form of a CI algebra having Jordan type $P$. Conversely, we may also read the information of vanishing and non-vanishing Hessians from the hook code of a partition. For a CI algebra $A$ with the Hilbert function $T$ and a linear form $\ell$, there are $d$ active Hessians $h^i(F)$ when $k>1$, or $ d-1$, respectively, when $k=1$. These each correspond to a multiplication map $m_{\ell^{j-2i}}:A_{i}\rightarrow A_{j-i}$, for $i\in \left[0,d-1\right]$ (or $i\in\left[0,d-2\right]$ when $k=1 $); their vanishing and non-vanishing completely determine the corresponding Jordan type partition $P_\ell$.\par 
In the branch label $\mathfrak{b}$ of $P_\ell$ there are $d$ difference-one hooks, one with hand in degree $j-i$, for each $i$. For a general linear form $\ell$ with the strong Lefschetz partition $T^\vee$, all the Hessians are non-zero, and the corresponding branch label with the hook indices is $\mathfrak{h}_{SL}=\left(E,d_2,{d-1}_2,\dots ,1_1\right)$ by Corollary \ref{goodhookcor}.  The traditional hook code is $\mathfrak {H}_{SL}=(1_{d+k-2},2_{d+k-1},\ldots, 2_j)$ if $k>2$ and $\mathfrak {H}_{SL}=(2_{d+k-1},\ldots, 2_j)$ if $k=1$. That is, for $P=T^\vee$ the number of difference-one hooks with hand in degree $j-i$ is $2$ for $i\le d-2$, and, if $k>1$ is $1$ for $i=d-1$. Any  CIJT partition $P$, where the number of hooks with hands in degree $j-i$ is less than the corresponding one of $T^\vee$ is a partition $P_\ell$ for a CI algebra $A$ where $h_{\ell}^i=0$. We show this next using Theorem \ref{2Hessthm}.

We will denote the traditional hook code $\mathfrak H(P_\ell)$ for a CIJT partition $P_\ell$ by $\mathfrak{H_\ell}$; and the number of hooks with hands in degree $j-i$ by $({\mathfrak{H}_\ell})_i$, for each $i\in\left[0,d-1\right]$.

\begin{proposition}\label{HookandHess}
Let $T=\left(1,2,\dots ,d^k,\dots ,2,1\right)$, for $k\geq 1$. Assume that $P_{\ell}$ is the complete intersection Jordan type partition of a linear form $\ell$ having diagonal lengths $T$ and that the traditional hook code is $\mathfrak{H}_{\ell}$. We have the following,
\begin{itemize}
\item[$(i)$] If $k\geq 2$, then 
\[ h^{d-i}_{\ell}=0\Leftrightarrow(\mathfrak{H}_{\ell})_i < \left\{ \begin{array}{ll}
        1& \mbox{if $i=1$, }\\
      2 & \mbox{if $i\in\left[2,d\right]$}
       .\end{array} \right. \] 
 \item[$(ii)$] If $k=1$, then for each $i\in\left[1,d-1\right]$, we have that
\[ h^{d-1-i}_{\ell}=0\Leftrightarrow(\mathfrak{H}_{\ell})_i < 2 . \] 
 \end{itemize}
\end{proposition}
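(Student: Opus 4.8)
The plan is to observe that both sides of each claimed equivalence have already been computed separately in terms of the branch label $\mathfrak{b}$ of $P_\ell$, so that the proof reduces to matching the two descriptions entry by entry. On the Hessian side, Theorem \ref{2Hessthm} (with Corollary \ref{goodpartcor}) records exactly which active Hessians vanish; on the hook side, Corollary \ref{goodhookcor} records exactly which subscripts attain their maximal value. The first step is to fix the bookkeeping dictionary used throughout: both the subscript $(\mathfrak{H}_\ell)_i$ and the active Hessian $h^{d-i}_\ell$ named in the statement are attached to one and the same object, the branch entry of value $i$ in $\mathfrak{b}$ (whose hand is the endpoint of that branch and which governs the multiplication map realizing $h^{d-i}_\ell$). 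Under this pairing the two cases of the Proposition will turn out to be precisely the two possible maximal subscript values.

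The crux is then the following correspondence. By Theorem \ref{2Hessthm} and the remark following it, a CIJT partition is encoded by an ordered partition $n=n_1+\cdots+n_c$ with partial sums $a_i=n_1+\cdots+n_i$, and the non-vanishing active Hessians are exactly $h^{a_i-1}_\ell$ for $i\in[1,c]$. By Theorem \ref{CIpartsthm} the horizontal part of $\mathfrak{b}$ consists of the consecutive intervals $\mathfrak{h}_i=\{d-a_i+1,\dots,d-a_{i-1}\}$, and by Corollary \ref{goodhookcor} precisely the first (minimal) entry $d-a_i+1$ of each $\mathfrak{h}_i$ receives the maximal subscript, while every later entry of an $\mathfrak{h}_i$ and every vertical entry receives a strictly smaller subscript. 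Since the branch value $d-a_i+1$ corresponds to the Hessian $h^{a_i-1}_\ell$, I would conclude that a branch entry receives the maximal subscript if and only if its associated Hessian lies in the non-vanishing set $\{h^{a_i-1}_\ell\}$. Taking contrapositives yields $h^{d-i}_\ell=0$ if and only if $(\mathfrak{H}_\ell)_i$ is strictly below its maximum, which is the assertion.

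It remains only to record the two maximal values. By Corollary \ref{goodhookcor}(ii) the maximal subscript is $2$ except for the branch entry of value $1$, equivalently the top Hessian $h^{d-1}_\ell$ and the case $i=1$ of part $(i)$, where the maximum is $1$; this produces exactly the split ``$<1$'' versus ``$<2$'' in the statement. For part $(ii)$, where $k=1$, any CIJT partition is weak Lefschetz (Theorem \ref{simplegoodprop}), so $\mathfrak{b}$ has no vertical part and two gaps; the identical argument applies using Corollary \ref{goodhookcor}B for the subscripts and the $k=1$ form (Corollary \ref{goodpartcor2}) of Theorem \ref{2Hessthm} for the vanishing set, and here every relevant maximum equals $2$, so the top-entry exception does not arise.

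The main obstacle is purely the index alignment: one must check that the minimal entry $d-a_i+1$ of $\mathfrak{h}_i$ is the unique entry of that interval whose hand lies in the degree governing $h^{a_i-1}_\ell$, and must track the off-by-one at the top where the maximal subscript drops from $2$ to $1$. I would anchor and verify this dictionary against the strong Lefschetz partition $T^\vee$, where all active Hessians are non-zero and, by Corollary \ref{goodhookcor}, every subscript is maximal, namely $\mathfrak{h}_{SL}=(E,d_2,(d-1)_2,\dots,2_2,1_1)$, so that the equivalence reads off correctly in the generic case before any Hessian is specialized to vanish.
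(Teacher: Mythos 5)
Your proposal is correct and takes essentially the same approach as the paper: the published proof likewise reads both sides off the branch label, citing Theorem \ref{2Hessthm} for the vanishing set and Corollary \ref{goodhookcor} (via Proposition \ref{HCprop}, anchored at $\mathfrak{h}_{\max}$ for $T^\vee$) for the subscripts, and matches them entry by entry under the same dictionary $i \leftrightarrow h^{d-i}_\ell$, with the same treatment of the $i=1$/top-Hessian exception and of the two-gap structure when $k=1$. The only difference is organizational --- the paper runs the case analysis as ``vertical part, then each increasing sequence after $E$'' rather than through the partial sums $a_i$ --- which does not change the substance.
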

\begin{proof}
First, we prove the statement for $k\geq 2$. Using Proposition \ref{HCprop}, we get that the hook code for $T^\vee$ is $\mathfrak{h}_{\max}=\left(E,d_2,{d-1}_2,\dots ,1_1 \right)$, and all the Hessians are non-zero. We show that for any partition $P_\ell$ having diagonal lengths $T$, the zero Hessians are those that correspond to the hook codes strictly less than the one for $T^\vee$.
Suppose that there is a vertical part of $P_\ell$, then we have that $\mathfrak{h}_\ell = \left(1_0,2_1,\dots ,a_1, E, \dots \right)$, where $1\leq a\leq d$. By Theorem \ref{2Hessthm} we get that $h^i_{\ell}=0$, for every $i\in \left[d-a-1,d-1\right]$. On the other hand, if $a>1$ we have that  $(\mathfrak{H}_\ell)_{d-1}=0 <1$ and $(\mathfrak{H}_\ell)_{d-i}=1 <2$, for every $i\in \left[d-a-1,d-2\right]$. 

Let $\{ b, b+1,\dots ,b+m\}$ be an increasing sequence after $E$, where $b\geq 1$ and $b+m\leq d$. Corollary \ref{goodhookcor} implies that $(\mathfrak{H}_\ell)_{b}=1$,  if $b=1$ and  $(\mathfrak{H}_\ell)_{b}=2$ otherwise. By Theorem~\ref{2Hessthm} we have that $h^{d-b}_{\ell}\neq 0$, for every $b\geq 1$.

For other elements in this increasing sequence we have that $(\mathfrak{H}_\ell)_{b+t}=1<2$, for every $t\in\left[1,m\right]$. Theorem \ref{2Hessthm} implies that the Hessians corresponding to these elements are zero, in other word we have that $h^{d-b-t}_{\ell}=0$, for every $t\in \left[1,m\right]$.
This completes the proof when $k\ge 2$.\par
Now assume that $k=1$, Proposition \ref{HCprop} implies that $\mathfrak{h}_{\max}=\left(E,E,d_2,{d-1}_2,\dots ,1_2\right)$; and all the Hessians are non-zero. If there is a vertical part in $P_\ell$ then \linebreak$\mathfrak h_\ell=\left(E,1_1,2_1,\dots , a_1,E,\dots \right)$, where $1\leq a\leq d-1$ and  Theorem \ref{2Hessthm} implies that $h^{d-2}_\ell=h^{d-3}_\ell=\cdots =h^{d-a-1}_\ell=0$. If there is an increasing sequence after the second $E$, $\{b,b+1,\dots ,b+m\}$, where $1\leq b\leq d-1$ and $b+m\leq d-1$, Corollary \ref{goodhookcor}, implies that $(\mathfrak{H}_\ell)_b=2$ and $(\mathfrak{h}_\ell)_{b+t}=1<2$, for every $t\in \left[1,m\right]$. On the other hand Theorem~\ref{2Hessthm} implies that $(\mathfrak{h}_\ell)_{d-b-1}\neq 0$ and $(\mathfrak{H}_\ell)_{d-b-t-1}=0$ for every $t\in \left[1,m\right]$.
\end{proof}\par
The next example with Figure \ref{fig9} illustrates the results in Sections~\ref{JTCIsec}, \ref{corrsec} and \ref{CIJTsolsec}. We list all possible Jordan types for an Artinian algebra with the Hilbert function $\left(1,2,3,3,2,1\right)$ together with their hook codes and branch labels and subset of Hessians which vanish and the ranks of the Hessian matrices. 

Here $d$ is the maximum value of the Hilbert function from Equation \eqref{CIT2eq}.\par
\begin{example}\label{lastex}
[$T=(1,2,3,3,2,1)$] There are $18=2\cdot 3^{d-1}$ partitions having diagonal lengths $T$, (\ref{countdiatT1eq}); There are $2^d=8$ complete intersection partitions, by Theorem~\ref{CIpartsthm}. There are also $2^d=8$ different subsets of Hessians corresponding to complete intersection partitions, by Theorem \ref{2Hessthm}. The number of rank sequences possible for the Hessian triple $(\Hess^0,\Hess^1,\Hess^2)$, satisfying Equation \eqref{HessRkSeq1eq} is equal to $2^3=8$.

The maximum hook code is for the strong Lefschetz partition, $T^\vee=(6,4,2)$, and it is equal to $\mathfrak{h}_{\max}= (E,3_2,2_2,1_1)$, by Proposition \ref{HCprop}. Other complete intersection partitions with different hook codes correspond to the vanishing of some of the Hessians (Proposition \ref{HookandHess}). Some partitions, as $P=(6,2^2,1^2)$ or $P=(4,4,1^4)$ have symmetric Jordan type diagrams in the sense of Lemma \ref{symlem} (indicated by Y in next to last column of Fig.\ref{fig9}) but are not CI Jordan types.  By Corollary \ref{geomcor} the set of loci where $k$ active Hessians vanish meet properly: that is, their codimensions add.  
 \end{example}

\vskip 0.2cm

\begin{figure}
\begin{center}
\resizebox{.97 \textwidth}{!} 
{
   $\begin{array}{c||c|c|cccc|c|}
P&\mathfrak H(P) &\mathfrak{b}&\rk\,\Hess^0& \rk\,\Hess^1&\rk\,\Hess^2&Y,N&CIJT\\
\hline\hline
(6,4,2)&(1_3,2_4,2_5)&(E,3,2,1)&1&2&3&Y&Y\\
\hline\hline
(5,5,2)&(1_3,2_4,1_5)&(E,2,3,1)&{\bf 0\ast}&2&3&Y&Y\\
\hline
(6,3,3)&(1_3,1_4,2_5)&(E,3,1,2)&1&{\bf 1\ast}&3&Y&Y\\
\hline
(6,4,1,1)&(0_3,2_4,2_5)&(1,E,3,2)&1&2&{\bf 2\ast}&Y&Y\\
\hline
\hline
(4,4,4)&(1_3,1_4,1_5)&(E,1,2,3)&{\bf 0\ast}&{\bf 1\ast}&3&Y&Y\\
\hline
(5,5,1,1)&(0_3,2_4,1_5)&(1,E,2,3)&{\bf 0\ast}&2&{\bf 2\ast}&Y&Y\\\hline
(6,2,2,2)&(0_3,1_4,2_5)&(1,2,E,3)&1&{\bf 1\ast}&{\bf 2\ast}&Y&Y\\
\hline
(6,3,1,1,1)&(1_3,0_4,2_5)&\color{red}{(2,E,3,1)}&-&-&-&N&N\\
\hline
(5,3,1^4)&(1_3,2_4,0_5)&\color{red}{(3,E,2,1)}&-&-&-&N&N\\
\hline
\hline
(6,2,2,1,1)&(0_3,0_4,2_5)&\color{red}{(2,1,E,3)}&-&-&-&Y&N\\
\hline
(5,2,2,1^3)&(0_3,2_4,0_5)&\color{red}{(3,1,E,2)}&-&-&-&N&N\\
\hline
(4,4,1^4)&(1_3,1_4,0_5)&\color{red}{(3,E,1,2)}&-&-&-&Y&N\\
\hline
(4,2,2,2,2)&(1_3,0_4,1_5)&\color{red}{(2,3,E,1)}&-&-&-&Y&N\\
\hline
(3,3,3,3)&(0_3,1_4,1_5)&(1,2,3,E)&{\bf 0\ast}&{\bf 0\ast}&{\bf 2\ast}&Y&Y\\
\hline
\hline
(4,2^3,1^2)&(1_3,0_4,0_5)&\color{red}{(3,2,E,1)}&-&-&-&Y&N\\
\hline
(3,3,3,1^3)&(0_3,1_4,0_5)&\color{red}{(3,1,2,E)}&-&-&-&N&N\\
\hline
(3,3,2^3)&(0_3,0_4,1_5)&\color{red}{(2,3,1,E)}&-&-&-&Y&N\\
\hline
\hline
(3,3,2,2,1,1)&(0_3,0_4,0_5)&\color{red}{(3,2,1,E)}&-&-&-&Y&N\\
\hline
\end{array}$}
\caption{Jordan types, hook code $\mathfrak{H}(P)$, branch label $\mathfrak{b}$, and ranks of Hessian matrices for $T=(1,2,3,3,2,1)$. Also Y/N for symmetry condition, Y/N for CIJT. Conjugate partitions are located symmetrically about the center line and have complementary hook codes in $(1_3,2_4,2_5)$ as well as reverse branch labels. Note that there are 8 that are CIJT, and they each correspond to a vanishing subset of the Hessians (indicated in bold with $\ast$). The branch labels for non CIJT are in red. See Example \ref{lastex}}\label{fig9}
\end{center}
\end{figure}
\begin{example}[$T=(1,2,3,4,3,2,1)$] There are $3^3=27$ partitions of diagonal lengths $T$, and 14 rank sequences possible for the Hessian triple $(\Hess^0,\Hess^1,\Hess^2)$. The nine partitions of diagonal lengths $T$ having first part $(7)$ behave exactly like their
remainders, a partition of diagonal lengths $T'=(1,2,3,2,1)$ so there are 4 Y and 5 N for CI. For example,  $(7,5,2,1^2)$ is not symmetric in the sense of Lemma \ref{symlem}, since $(5,2,1^2)$ is not. There are 18 more partitions to consider. 
\end{example}

\begin{remark}\label{extendHessrem} The Hessians for codimension two Artinian complete intersections $A=R/I$ correspond to certain Wronskian determinants associated to the homogeneous components $I_i$ of the ideal. Thus, in codimension two, we may regard the Wronskians, which are defined for all graded ideals, as extensions of the Hessians to non-CI algebras. We can then use Wronskians to study Jordan types occurring in non-CI algebras with Hilbert functions that satisfy the more general Equation \eqref{HFeq}.\par For example, by the D. Hilbert-L. Burch theorem, the graded Artinian quotients $A=R/I$ of ${\sf k}[x,y]$ having socle dimension $t$, are defined by ideals $I$ having $t+1$ generators. The Hilbert functions $T$ corresponding to such graded Artinian algebras $A$ can have descents $t_{i-1}-t_i$ at most $t$ (well-known, see, say \cite{Mac}). There is a formula giving the minimal number of generators $\kappa(T)$ possible for $I$ given that $H(A)=T$ (see \cite[Theorem 4.3, Lemma 4.5]{I0} and \cite[\S 3.1]{AIKY}). The following question generalizes to $t>1$ that answered here for $t=1$, the complete intersection case.\vskip 0.2cm\noindent
{\bf Problem.} Let $T$ be a sequence satisfying Equation \eqref{HFeq}, and let $\kappa(T)$ be the least number of generators for a homogeneous ideal $I$ for which the Artinian algebra $A=R/I$ satisfies $H(A)=T$. Determine all partitions $P=P_{\ell,A}$ having diagonal lengths $T$, which are possible for such $A$ where $I$ has $\kappa(T)$ generators. \vskip 0.2cm
 Some of the combinatorial and geometric aspects of this problem had been studied tangentially in \cite{IY,IY2} where a connection is made between the cells $\mathbb V(E_P)$ and Wronskians. 
The goal of \cite{AIKY} is to answer this Problem.
\end{remark}

\begin{ack} This paper began in a working group on Jordan type at the conference ``Lefschetz Properties and Jordan type in Algebra, Geometry, and Combinatorics'' held at C.I.R.M. at Levico, June 24-29, 2018.  Participants in the working group were Nasrin Altafi, Rodrigo Gondim, Tony Iarrobino, Leila Khatami, Gleb Nenashev, Lisa Nicklasson,  Marti Salat Molto, Giorgio Ottaviani, and Yong-Su Shin; all contributed  to the discussion, which began with a presentation by Rodrigo Gondim of a diagram method of
Barbara Costa and himself \cite{CGo}. Subsequently, the authors resolved some of the questions presented at the conference, resulting in this paper. We thank in particular Rodrigo Gondim for lucidly presenting his work and viewpoint on Hessians at the conference, responding to our questions, and comments.  We thank also Joachim Yam\'{e}ogo for his comments, as well as his original work on hook codes, partitions, and the varieties $G_T$. We appreciate helpful comments of Junzo Watanabe. The first author would like to thank Mats Boij for useful and insightful comments and discussion.  We are grateful to  comments/suggestions from the referee. The first author was supported by the grant VR2013-4545.
  \end{ack}

E-mail addresses:  nasrinar@kth.se (N. Altafi), a.iarrobino@northeastern.edu (A. Iarrobino), khatamil@union.edu (L. Khatami).\vskip 0.3cm

\begin{thebibliography}{99}
 \small
 
\bibitem {AIKY}
N. Altafi, A. Iarrobino, L. Khatami, and J. Yam\'{e}ogo: \emph{Jordan type for graded Artinian algebras in height two}, preprint, 2020.

 \bibitem{Bri}
 J. Brian\c{c}on: \emph{Description de Hilb$^n\mathbb C\{ x,y\}$}, Invent. Math. 41 (1977), 45--89.
 
\bibitem{CGo}
B. Costa and R. Gondim: \emph{Jordan type of graded Artinian Gorenstein algebras}, Adv. in Appl. Math 111 (2019), 101941.

\bibitem{Go}
R. Gondim: \emph{On higher Hessians and the Lefschetz properties}, J. Algebra 489 (2017), 241--263.

\bibitem{GoZ}
R. Gondim and G. Zappal\`{a}: \emph{On mixed Hessians and the Lefschetz properties},  
J. Pure Appl. Algebra 223 (2019), no. 10, 4268--4282.

\bibitem{Got}
L. G\"{o}ttsche:  \emph{Betti numbers for the Hilbert function strata of the punctual Hilbert scheme in two variables}, Manuscripta Math. 66, (1990) 253--259. 

\bibitem{HW}
T. Harima and J. Watanabe: \emph{The central simple modules of Artinian Gorenstein algebras}, J. Pure Appl. Algebra 210 (2007), no. 2, 447--463.

\bibitem{H-W}
T. Harima, T. Maeno, H. Morita, Y. Numata, A. Wachi, and J. Watanabe: \emph{The Lefschetz properties}. Lecture Notes in Mathematics, vol. 2080. Springer, Heidelberg, ISBN: 978-3-642-38205-5 (2013), xx+250 pp. 

\bibitem{Hu}
C. Huneke:
\emph{Hyman Bass and ubiquity: Gorenstein rings}, in: Algebra, K-theory, groups, and education (New York, 1997), in: Contemp. Math., 243, Amer. Math. Soc., Providence, RI, 1999, pp. 55--78.

\bibitem{I0}
A. Iarrobino: \emph{Punctual Hilbert schemes}, Mem. Amer. Math. Soc., Vol. 10 (1977), 111 pp., \#188.

\bibitem{IMM}
A. Iarrobino, P. Marques, and C. McDaniel: \emph{Artinian algebras and Jordan type}, preprint, arXiv:math.AC/1802.07383 v.4 (2019).

\bibitem{IY}
A. Iarrobino and J. Yam\'{e}ogo: {\it The family $G_T$ of graded
Artinian quotients of ${\sf k}[x,y]$ of given Hilbert function}, in: Special issue in honor of Steven L. Kleiman, Comm. Algebra 31 \# 8,(2003), 3863--3916.

\bibitem{IY2}
A. Iarrobino and J. Yam\'{e}ogo: \emph{Graded Ideals in ${\sf k}[x,y]$ and partitions of diagonal lengths $T$, 1A: the hook code}, preprint, 2020.

\bibitem{Mac}
F.H.S. Macaulay: \emph{On a method of dealing with the intersections of plane curves}, Trans. Amer. Math. Soc. 5 (4) (1904) 385--410.

\bibitem{Mac2}  
F.H.S. Macaulay: \emph{The algebraic theory of modular systems}, Cambridge Mathematical Library. Cambridge  University Press, Cambridge, 1916. Reissued with an introduction by P.~Roberts, ISBN: 0-521-45562-6 (1994), xxxii+112 pp.

\bibitem{MN}
T. Maeno and Y. Numata: \emph{Sperner property and finite-dimensional Gorenstein algebras associated to matroids,} 
J. Commut. Algebra 8 no. 4 (2016), 549--570.

\bibitem{MW}
T. Maeno and J. Watanabe: \emph{Lefschetz elements of Artinian Gorenstein algebras and Hessians of homogeneous polynomials}, Illinois J. Math. 53 (2009), 593--603.

\bibitem{Se}
J. P. Serre: \emph{Sur les modules projectifs}, in S\'{e}minaire P. Dubreil, M.-L. Dubreil-Jacotin et C. Pisot, 24 14i\'{e}me ann\'{e}e: 1960/61, in: Alg\`{e}bre et th\'{e}orie des nombres, Fasc. 1 (Exp. 2), Facult\'{e} des Sciences
de Paris Secr\'{e}tariat math\'{e}matique, Paris, 1963. 

\bibitem{Y1}
J. Yam\'{e}ogo: \emph{Fibr\'{e}s en droites amples sur des familles d'id\'{e}aux homog\`{e}nes de $C[x,y]$},  Algebraic geometry (Catania, 1993/Barcelona, 1994),  
Lecture Notes in Pure and Appl. Math., 200, Dekker, New York, 1998, pp. 229--244.

\bibitem{Y2}
J. Yam\'{e}ogo: \emph{D\'{e}composition cellulaire de vari\'{e}t\'{e}s param\'{e}trant des id\'{e}aux homog\`{e}nes de C[[x,y]]. Incidence des cellules. I.} (French) [Cellular decomposition of varieties parametrizing homogeneous ideals in C[[x,y]]. Incidence of cells. I] Compositio Math. 90  no. 1, (1994), 81--98. 


\end{thebibliography}
\end{document}